\documentclass{amsart}
\usepackage{dsfont, amssymb, mathtools, IEEEtrantools, subcaption}
\usepackage{calc,changepage}
\usepackage{csquotes}
\usepackage{hyperref}
\usepackage{enumitem}
\usepackage{booktabs}
\usepackage{diagbox}

\usepackage{xcolor}
\definecolor{wb}{RGB}{51,153,255}
\usepackage{tikz}
\usetikzlibrary{calc}
\usepackage{tikz-cd}

\usepackage[parfill]{parskip}
\numberwithin{equation}{subsection}
\usepackage{amsrefs}

\newcommand{\defeq}{\vcentcolon=}
\newcommand{\eqdef}{=\vcentcolon}

\makeatletter
\def\moverlay{\mathpalette\mov@rlay}
\def\mov@rlay#1#2{\leavevmode\vtop{%
   \baselineskip\z@skip \lineskiplimit-\maxdimen
   \ialign{\hfil$\m@th#1##$\hfil\cr#2\crcr}}}
\newcommand{\charfusion}[3][\mathord]{
    #1{\ifx#1\mathop\vphantom{#2}\fi
        \mathpalette\mov@rlay{#2\cr#3}
      }
    \ifx#1\mathop\expandafter\displaylimits\fi}
\makeatother

\newcommand{\cupdot}{\charfusion[\mathbin]{\cup}{\cdot}}
\newcommand{\bigcupdot}{\charfusion[\mathop]{\bigcup}{\cdot}}

\DeclareFontFamily{U}{mathb}{\hyphenchar\font45}
\DeclareFontShape{U}{mathb}{m}{n}{
      <5> <6> <7> <8> <9> <10> gen * mathb
      <10.95> mathb10 <12> <14.4> <17.28> <20.74> <24.88> mathb12
      }{}
\DeclareSymbolFont{mathb}{U}{mathb}{m}{n}
\DeclareMathSymbol{\precneq}{3}{mathb}{"AC}
\DeclareMathSymbol{\varprec}{3}{mathb}{"A0}


\newtheoremstyle{definitions}
 	{\topsep}
	{\topsep}
	{}
	{}
	{\bfseries}
	{:}
	{.5em}
	{}
  
\newtheoremstyle{lemmata}
	{\topsep}
	{\topsep}
	{\itshape} 
	{}
	{\bfseries}
	{:}
	{.5em}
	{}

\theoremstyle{lemmata}
\newtheorem{Theorem}[subsection]{Theorem}
\newtheorem{Lemma}[subsection]{Lemma}
\newtheorem{Corollary}[subsection]{Corollary}
\newtheorem{Proposition}[subsection]{Proposition}

\theoremstyle{definitions}
\newtheorem{Definition}[subsection]{Definition}

\newtheorem{Remarks}[subsection]{Remarks}
\newtheorem*{Remarks-nn}{Remarks}

\newtheorem{Example}[subsection]{Example}

\newtheorem{Observation}[subsection]{Observation}
\newtheorem{Algorithm}[subsection]{Algorithm}
\newtheorem{Examples}[subsection]{Examples}
\newtheorem{Concluding_Remarks}[subsection]{Concluding Remarks}

\DeclareMathOperator{\ord}{ord}


\title{Goss Polynomials, $q$-adic expansions, and Sheats compositions}
\author{Ernst-Ulrich Gekeler}
\date{\today}
\subjclass{MSC Primary 11T55; Secondary 11F52, 11M38, 33E50, 11B65}

\begin{document}

\begin{abstract}
	The zeroes of Goss polynomials $G_{k, \Lambda}(X)$ for $\Lambda = A \defeq \mathds{F}_{q}[T]$ and similar lattices $\Lambda$ are studied. Generically, the zero distribution follows a simple pattern
	governed by the $q$-adic expansion of $k-1$. However, if $q=p^{f}$ with $f \geq 2$ is a proper power of the prime $p$, irregularities of the $q$-adic sum-of-digits function may lead to deviations from this 
	pattern, to irregular zeroes, and an abundance of trivial zeroes of $G_{k, \Lambda}$ compared to the generic formula. These phenomena are related to properties of the Sheats compositions of natural numbers
	$n$ divisible by $q-1$. Among other things, we give a necessary and sufficient condition for the existence of irregular zeroes of $G_{k,A}$ and a formula for the vanishing order of $G_{k,A}$ at $x=0$.
\end{abstract}

\maketitle

\setcounter{section}{-1}

\section{Introduction} \label{Section.Introduction}

\subsection{} Let $q$ be a prime power, $\mathds{F} = \mathds{F}_{q}$ the field with $q$ elements, $A = \mathds{F}[T]$ its polynomial ring in the indeterminate $T$, and $C_{\infty}$ the completed algebraic closure of 
$K_{\infty} = \mathds{F}((T^{-1}))$ with respect to its absolute value, normalized through $\lvert T \rvert = q$. Associated with $A$ (or with other $\mathds{F}$-lattices $\Lambda$ in $C_{\infty}$), we consider the 
meromorphic function $C_{k} = C_{k,A}$ on $C_{\infty}$:
\begin{equation}
	z \longmapsto \sum_{\lambda \in A} \frac{1}{(z-\lambda)^{k}}.
\end{equation}
It is a basic fact (\cite{Goss80} Proposition 6.5) that there exists a sequence $(G_{k}(X))_{k \in \mathds{N}}$ of monic polynomials of degree $k$ with coefficients in $C_{\infty}$ such that 
\begin{equation}
	C_{k}(z) = G_{k}(t(z)),
\end{equation}
where
\begin{equation}
	t(z) = t_{A}(z) \defeq C_{1}(z) = \sum_{\lambda \in A} \frac{1}{z-\lambda}.
\end{equation}
Upon a suitable normalization (replace $A$ with the lattice $\Lambda = \bar{\pi}A$, where $\bar{\pi}$ is the characteristic-$p$ counterpart of $2\pi \mathrm{i}$, $\pi = 3,\,14\dots$, see \cite{Goss80} Corollary 6.7 or 
\cite{Gekeler88} Section 4), the \textbf{Goss polynomials} $G_{k, \Lambda}$ acquire coefficients in $K = \mathds{F}(T)$. Hence they are analogues of the classical Euler polynomials; see the discussion in \cite{Gekeler13} Section 4.

The $G_{k, \Lambda}$ and their generalizations (allow $\Lambda$ to be an arbitrary separable finite or infinite $\mathds{F}$-lattice in $C_{\infty}$) have numerous applications in a multitude of arithmetical 
questions, among which the following:
\begin{itemize}
	\item expansions at infinity of Drinfeld modular forms, in particular Eisenstein series \cite{Goss80-2}, \cite{Gekeler88}, \cite{Gekeler12}, $v$-adic congruences \cite{Goss14};
	\item Drinfeld quasi-modular forms \cite{BosserPellarin09}, \cite{Pellarin14};
	\item power sums and values at negative integers of the Carlitz-Goss zeta function defined on $s \in \mathds{N}$ by 
	\[
		\zeta_{\mathrm{Goss}}(s) = \sum_{a \in A \text{ monic}} a^{-s}
	\]
	and analytically continued \cite{Goss83}, \cite{Goss96}, \cite{Sheats98}, \cite{Thakur15};
	\item evaluation of Hecke operators \cite{Gekeler88}, \cite{PapanikolasZeng17}.
\end{itemize}

\subsection{} In the special case where $q=p$ is prime, a quite satisfactory description of the $G_{k,A}$ and their relevant properties has been obtained in \cite{Gekeler13-2}. Here all the non-trivial (different from 0) zeroes of $G_{k,A}$
are of integer type, that is, all the zeroes of $C_{k,A}$ satisfy $\lvert z \rvert = q^{i}$ for some $i \in \mathds{N}_{0}$, and there exists a simple formula for the \textbf{vanishing number}
\begin{equation}
	\gamma_{A}(k) \defeq \text{multiplicity of $0$ as a zero of $G_{k,A}$}
\end{equation}
(see \cite{Gekeler13-2} Theorem 6.12). These results remain valid if one replaces $A$ with an arbitrary finite or infinite $\mathds{F}$-lattice $\Lambda$, provided it is \textbf{separable}, which means it has an 
$\mathds{F}$-basis $\{ \lambda_{0}, \lambda_{1}, \dots \}$ with $\lvert \lambda_{0} \rvert < \lvert \lambda_{1} \rvert < \dots$ In this case, the abscissas of the Newton polygon $\mathrm{NP}(G_{k,\Lambda})$ of
$G_{k, \Lambda}$ depend only on the $q$-adic expansion of $k-1$, while its ordinates vary with the $\lvert \lambda_{i} \rvert$, see \cite{Gekeler13}.
\subsection{} The situation is much more complex for $q = p^{f}$ non-prime, and certainly more complex than the author expected in \cite{Gekeler13-2}. By non-archimedean techniques like contour integration
(introduced in \cite{GerritzenvdPut80}), the location of the zeroes of $C_{k, \Lambda}$ (hence those of $G_{k, \Lambda}$) may be restricted to certain critical spheres $\mathbf{S}(q^{r}) \subset C_{\infty}$. Their radii $q^{r}$
are related to the power sums of elements of $\Lambda$
\begin{equation}
	S_{i, \Lambda}(n) \defeq \sum_{\substack{\lambda \in \Lambda \\ \lvert \lambda \rvert < \lvert \lambda_{i} \rvert}} \lambda^{n},
\end{equation}
whose evaluation involves control of the binomial coefficients $\binom{m_{i}}{m_{i-1}} \pmod{p}$ along sequences 
\begin{equation} \label{Eq.Prototype-of-sequences-discussed}
	0 = m_{0} \lneq m_{1} \lneq \dots \lneq m_{h} = n.
\end{equation}
Here the $m_{i}$ (thus $n$, too) are supposed to be divisible by $q-1$, and the \textbf{height} $\mathrm{ht}(n)$ of $n$ ($q$ being fixed) is the maximal length of such a sequence where all the $\binom{m_{i}}{m_{i-1}}$ don't 
vanish $\pmod{p}$. Note that by Lucas, the non-vanishing $\pmod{p}$ of $\binom{b}{a}$ is equivalent to the fact that there is no carryover of digits in the $p$-adic expansion of $b = a+(b-a)$. In this case we write
\begin{equation}
	a <_{p} b \quad \text{(or $a \lneq_{p} b$ if $a <_{p} b$ and $a \neq b$)}.
\end{equation}
\subsection{} Jeffrey Sheats in \cite{Sheats98} showed that there is a distinguished sequence of shape \eqref{Eq.Prototype-of-sequences-discussed} (we call it the \textbf{Sheats sequence} of $n$) characterized as follows.
For $1 \leq i \leq h = \mathrm{ht}(n)$ put $X_{i} \defeq m_{i} - m_{i-1}$. Then
\begin{multline}
	0 \neq X_{i} \pmod {q-1} \text{ for all $i$}, \qquad \sum X_{i} = n, \\ \text{and there is no carryover in the $p$-adic expansions},
\end{multline}
which by definition means that $\mathbf{X} \defeq (X_{1}, \dots, X_{h})$ is a \textbf{composition of length $h$ of $n$}. Define the \textbf{weight} $\mathrm{wt}(\mathbf{X})$ as 
\begin{equation}
	\mathrm{wt}(\mathbf{X}) \defeq \sum i X_{i}.
\end{equation}
The \textbf{Sheats composition} $\mathbf{X} = \mathbf{Sh}(n)$ is the unique composition of $n$ of length $h$ with maximal weight; its weight is also called the \textbf{weight} $\mathrm{wt}(n)$ \textbf{of $n$}. The \textbf{truncated
Sheats composition} $\mathbf{Sh}^{(i)}(n)$ of $n$ of length $i \leq h = \mathrm{ht}(n)$ is 
\begin{equation}
	\mathbf{Sh}^{(i)}(n) = (X_{1}, \dots, X_{i-1}, X_{i} + X_{i+1} + \dots + X_{h}).
\end{equation} 
The uniqueness of this $\mathbf{X} = \mathbf{Sh}(n)$ is a highly non-trivial fact, as is the assertion that the transpose $\mathbf{X}^{t} = (X_{h}, X_{h-1}, \dots, X_{1})$ of $\mathbf{X} = \mathbf{Sh}(n)$ is maximal with respect to
the lexicographic order on the set of all compositions of length $h$ (\cite{Sheats98} Theorem 1.2). His theorem enabled Sheats to show, among other things, that
\begin{equation} \label{Eq.Relation-Sheats-factor-and-weight}
	\deg S_{i,A}(n) = \mathrm{wt}(\mathbf{Sh}^{(i)}(n)) - n
\end{equation}
whenever $S_{i,A}(n) = \sum_{a \in A, \deg a <i } a^{n}$ doesn't vanish, which is the case if and only if $n \equiv 0 \pmod{q-1}$ and $\mathrm{ht}(n) \geq i$. This result has been obtained (should we say conjectured?) by Carlitz in \cite{Carlitz48}, but
with an invalid proof, see the discussion in \cite{Sheats98} Section 1. It will also be crucial for our study of $G_{k,A}$.

\subsection{} As meaningful as it is, Sheats' theorem leaves important questions open. First, it is a sheer existence statement, based on a proof by contradiction, and thus not constructive. For our purposes however, we need 
explicit descriptions and estimates on the terms of the Sheats composition. Second, it doesn't provide qualitative properties of $\mathbf{Sh}(n) = (X_{1}, \dots, X_{h})$; for example, is $X_{1}$ minimal with
$X_{1} \equiv 0 \pmod{q-1}$, $\mathrm{ht}(X_{1}) = 1$ and $\binom{n}{X_{1}} \equiv 0 \pmod{q-1}$, etc.? Third, while the maximal weight property of $\mathbf{Sh}(n)$ explicitly refers to the weights $\mathrm{wt}(X_{i}) = i$ given
by $\log_{q} \lvert T^{i} \rvert = i$ of the basis $\{ T^{i} \mid i \geq 0\}$ of $A$, the lexicographic characterization doesn't. Are there similar results for other weight systems like $r_{i} \defeq \log_{q} \lvert \lambda_{i} \rvert$ that
apply to arbitrary separable lattices with separating basis $\{ \lambda_{i} \}$?

\subsection{} We will offer satisfactory answers to these questions. Among other things, we will develop an algorithm (see \ref{Algorithm.To-determine-Sheats-factor} and Theorem \ref{Theorem.On-correctness-of-Alg}) that produces the largest Sheats factor $\mathrm{Sh}(h) =X_{h}$ of $n$, and thus
the complete Sheats composition. It allows to derive qualitative and extremal properties of $\mathbf{Sh}(n)$, among which the important inequality
\begin{equation}
	X_{i+1} \geq q X_{i} \qquad (1 \leq i < h(n))
\end{equation}
for the Sheats factors (Theorem \ref{Theorem.Inequalitites-for-Sheats-factors}). Together with the rigid-analytic arguments mentioned earlier, these suffice to specify under which circumstances the zero distribution of $G_{k, \Lambda}$ is as in the case of a prime $q$ 
(Theorem \ref{Theorem.Regularity-and-zeroes-of-Ck-Lambda}), and to describe the possible deviations. A crucial role is played by the sequence $(\mu_{i}(k))_{i \in \mathds{N}_{0}}$, which determines the overall shape of the Newton polygon $\mathrm{NP}(G_{k,A})$. Here 
the \textbf{approximation number} $\mu_{i}(k)$ is defined by
\begin{equation}
	\mu_{i}(k) = \min \left\{ n \in (q-1)\mathds{N}_{0} : \binom{n+k-1}{n} \not\equiv 0 \pmod{p} \text{ and } \mathrm{ht}(n) \geq i \right\}.
\end{equation}
If $\mu_{i}(k) <_{p} \mu_{i+1}(k)$ for all $i$ (this inequality anyway holds if $i$ is large enough compared to $k$), then all the zeroes of $C_{k, \Lambda}$ (or of $G_{k, \Lambda}$) for arbitrary separable $\mathds{F}$-lattices
$\Lambda$ are of integer type (Theorem \ref{Theorem.Regularity-and-zeroes-of-Ck-Lambda}); in this case we call $k$ \textbf{regular} (with respect to the given $q$). The regularity condition is fulfilled for all $k$ if $q$ is prime. 
In general, regularity of $k$ is some kind of balancedness property of the $p$-adic expansion coefficients of $k-1$. Empirically (by some numerical experiments) and by heuristic reasoning, \enquote{most} $k$ should be 
regular for a given $q$; it would be an interesting challenge for analytical number theorists to give a precise mathematical meaning to the \enquote{most}.

If $k$ fails to be regular, Theorem \ref{Theorem.Irregular-zeroes-Ck} describes the possible irregular zeroes of $C_{k,A}$ and $G_{k,A}$; but note that this holds for the lattice $A$ only. The case of general lattices $\Lambda$ 
in conjunction with the irregularity of $k$ appears too complex to integrate in one description scheme. Finally, from the equation
\begin{equation}
	\gamma_{A}(k) = \lim_{i \to \infty} \frac{k + \mu_{i}(k)}{q^{i}},
\end{equation}
which comes from contour integration, we derive an explicit formula for $\gamma_{A}(k)$ in Theorem \ref{Theorem.Vanishing-orders-of-Goss-polynomials-in-Cases}. (Recall that the vanishing number $\gamma_{A}(k)$ is the 
multiplicity of 0 as a zero of $G_{k,A}$; it plays a role for example in the expansion of Drinfeld-Eisenstein series at cusps \cite{Gekeler12}.)

Let $R$ be the representative $\pmod{q-1}$ of $k-1$ in $\{0, 1, \dots, q-2\}$, $\ell(k-1)$ the sum of $q$-adic digits of $k-1$ and $[\cdot]$ the Gauß bracket. A coarse version of Theorem 
\ref{Theorem.Vanishing-orders-of-Goss-polynomials-in-Cases} reads as follows: In most cases (specified in the theorem)
\begin{equation} \label{Eq.Equality-for-gamma-A-k-in-most-cases}
	\gamma_{A}(k) = (R+1)q^{[\ell(k-1)/(q-1)]};
\end{equation}
in the other cases (here another irregularity phenomenon takes place), a slightly more complicated but nevertheless explicit formula holds, but the right hand side of \eqref{Eq.Equality-for-gamma-A-k-in-most-cases} is always a lower
bound for $\gamma_{A}(k)$.

\subsection{} Let us briefly outline the plan of the paper. Sections 1 and \ref{Section.Power-sums-and-Sheats-compositions} present known introductory material and Sheats' results tailored to our purposes. In Section \ref{Section.p-adic-and-q-adic-expansions} we introduce and study the action of the cyclic group
$\mathbf{C} = \mathds{Z}/(f)$ on $p$-adic and $q$-adic expansions. It allows to formulate an expedient criterion (Theorem 3.9) for the height $\mathrm{ht}(n)$, and is our basic tool to handle the complications caused by
non-prime $q$. In Section \ref{Properties-of-Sheats-compositions} the algorithm $\mathbf{Alg}$ for determining the Sheats composition is given. It is sufficiently explicit to derive the properties needed for our study of the Goss polynomials $G_{k}$. In particular,
we find in Section \ref{Section.Location-of-zeroes-regular-case} that the $G_{k}$ behave as in the ($q=p$) case whenever $k$ is regular. Before treating irregular zeroes in Section \ref{Section.Irregular-zeroes}, Section \ref{Section.Auxiliary-results} investigates the 
approximation numbers $\mu_{i}(k)$, and gives some estimates.
We are able to determine $\gamma_{A}(k)$ in Section \ref{Section.Determination-of-vanishing-numbers}. While the normal behavior of $\gamma_{A}(k)$ results without difficulty from the preceding, we must carefully 
analyze the effect of $\mathbf{Alg}$ on a sequence of special numbers in the exceptional cases. We collect the accumulated insight about the functions $C_{k, \Lambda}$ and $G_{k,\Lambda}$ in our main theorems 
Theorem \ref{Theorem.Regularity-and-zeroes-of-Ck-Lambda}, Theorem \ref{Theorem.Irregular-zeroes-Ck}, and Theorem \ref{Theorem.Vanishing-orders-of-Goss-polynomials-in-Cases}.

In the concluding Section we describe how the results about $C_{k, \Lambda}$ and $G_{k, \Lambda}$ must be adapted to apply to finite separable lattices $\Lambda$. Last not least we present examples for irregular behavior of
the quantities in question. Focussing on the least prime power $q=4$, the least example of exceptional behavior of $\gamma_{A}(k)$ is $k=21$, where $\gamma_{A}(k) = 6$ instead of the value 3 expected from 
\eqref{Eq.Equality-for-gamma-A-k-in-most-cases}. The least example of irregular $k$ is $k=75$; here $C_{k, A}$ has zeroes $z$ with $\log_{q} \lvert z \rvert = 13/12$ non-integral.

All our results are valid for $q = p^{f}$ with $f \geq 1$, but are either trivial or established in \cite{Gekeler13} and \cite{Gekeler13-2} if $f=1$. Hence the focus is on the case $f \geq 2$ where $q$ is non-prime.

\subsection*{Notation} Throughout, $p$ is a fixed prime number with a power $q = p^{f}$, $\mathds{F} = \mathds{F}_{q}$ the field with $q$ elements, $A = \mathds{F}[T]$ the polynomial ring over $\mathds{F}$, 
$K = \mathds{F}(T)$ its field of fractions, and $K_{\infty} = \mathds{F}((T^{-1}))$ is the completion of $K$ at its infinite place.

We let $\lvert \cdot \rvert$ be the absolute value on $K_{\infty}$, normalized by $\lvert T \rvert = q$, and $C_{\infty} = \widehat{\bar{K}}_{\infty}$ the completed algebraic closure of $K_{\infty}$.

The ring of integers $\{z \in C_{\infty} \mid \lvert z \rvert \leq 1 \}$ and its maximal ideal $\{ z \in C_{\infty} \mid \lvert z \rvert < 1 \}$ are denoted by $O_{C_{\infty}}$ and $\mathfrak{m}_{C_{\infty}}$, respectively, and 
$\log \colon C_{\infty}^{*} \to \mathds{Q}$ is the map $z \mapsto \log_{q} \lvert z \rvert$, extended by $\log 0 = -\infty$. 

By an $\mathds{F}$-\textbf{lattice} in $C_{\infty}$ we mean a (finite or infinite) $\mathds{F}$-subspace $\Lambda$
of $C_{\infty}$ which has finite intersection with each ball $\mathbf{B}(q^{r}) \defeq \{ z \in C_{\infty} \mid \log z \leq r \}$ of radius $q^{r}$, where $r \in \mathds{Q}$. $\mathds{N} = \{1,2,\dots\}$ and $\mathds{N}_{0} = \{0,1,2,\dots \}$
denote the usual sets of natural numbers. We further use self-explaining constructs like $\mathds{Q}_{\geq 0} \defeq \{x \in \mathds{Q} \mid x \geq 0 \}$, etc. Further, $F \defeq \{0,1,\dots, f-1\}$, $P \defeq \{0,1,\dots,p-1\}$,
$Q \defeq \{0,1, \dots, q-1\}$, $Q' \defeq \{0,1,\dots,q-2\}$ are sets of representatives for $\mathds{Z}/(n)$, where $n=f,p,q,q-1,$ respectively.

\section{Non-archimedean Prerequisites} \label{Section.Non-archimedean-prerequisites}

\subsection{} In what follows, we fix an $\mathds{F}$-lattice $\Lambda$ in $C_{\infty}$. For $k \in \mathds{N}$, define the meromorphic function
\begin{equation} \label{Eq.C-k-lambda-meromorphic-function}
	C_{k,\Lambda}(z) \defeq \sum_{\lambda \in \Lambda} \frac{1}{(z-\lambda)^{k}}.
\end{equation}\stepcounter{subsubsection}%
The sum converges locally uniformly on $C_{\infty}$ (in the sense of meromorphic functions) and yields a $\Lambda$-periodic meromorphic function on $C_{\infty}$ with the obvious poles. The \textbf{exponential function of $\Lambda$}
is
\begin{equation} \label{Eq.Exponential-function-of-lattice-Lambda}
	e_{\Lambda}(z) \defeq z \prod_{0 \neq \lambda \in \Lambda} (1- z/\lambda).
\end{equation}\stepcounter{subsubsection}%
It is an entire, surjective, $\Lambda$-periodic function on $C_{\infty}$ with a series expansion
\begin{equation} \label{Eq.Series-expansion-of-exponential-function}
	e_{\Lambda}(z) = \sum_{i \geq 0} \alpha_{i} z^{q^{i}},
\end{equation}\stepcounter{subsubsection}%
where $\alpha_{i} = \alpha_{i}(\Lambda) \in C_{\infty}$ and $\alpha_{0} = 1$. The function
\begin{equation} \label{Eq.Inverse-Exponential}
	t_{\Lambda}(z) \defeq C_{1, \Lambda}(z) = \sum_{ \lambda \in \Lambda} \frac{1}{z- \lambda}
\end{equation}\stepcounter{subsubsection}%
in fact equals $e_{\Lambda}(z)^{-1}$. (For all these properties, consult \cite{Gekeler88}, \cite{Goss96}, or \cite{Thakur04}.) As is well-known and easy to show (see the cited references), there exists a sequence 
$(G_{k,\Lambda}(X))_{k \in \mathds{N}}$ of polynomials with coefficients in the closure of the field $\mathds{F}(\Lambda)$ generated over $\mathds{F}$ by $\Lambda$ such that 
\begin{equation} \label{Eq.Characterization-of-Goss-polynomials}
	C_{k, \Lambda}(z) = G_{k, \Lambda}(t_{\Lambda}(z)).
\end{equation}\stepcounter{subsubsection}%
These polynomials are the \textbf{Goss polynomials} of $\Lambda$ (see \cite{Goss80} Proposition 6.5); besides others, they enjoy the following properties (\cite{Gekeler88}; we omit the subscripts $\Lambda$ as long as
$\Lambda$ is fixed):
\subsubsection{}\label{Subsubsection.Properties-Goss-polynomials} \begin{enumerate}[label=(\roman*)]
	\item $G_{k}$ is monic of degree $k$, with $G_{k}(0) = 0$;
	\item $G_{pk} = (G_{k})^{p}$;
	\item $G_{k}(X) = X^{k}$ for $k \leq q$;
	\item $G_{k}(X) = X(G_{k-1} + \alpha_{1} G_{k-q} + \alpha_{2}G_{k-q^{2}} + \dots)$, where $G_{k}(X) = 0$ for $k \leq 0$ and the $\alpha_{i} = \alpha_{i}(\Lambda)$ are the coefficients of \eqref{Eq.Series-expansion-of-exponential-function}
\end{enumerate}

\subsection{} Throughout, we will assume that $\Lambda$ is \textbf{separable}, that is, it possesses an $\mathds{F}$-basis $\{ \lambda_{0}, \lambda_{1}, \dots, \}$ (a \textbf{separating basis}) such that $\lvert \lambda_{0} \rvert < \lvert \lambda_{1} \rvert < \dots$ The \textbf{critical radii} $q^{r_{i}} \defeq \lvert \lambda_{i} \rvert$ of $\Lambda$ are invariants of $\Lambda$, that is, independent of choices made. We let
\begin{equation}
	\Lambda_{i} \defeq \sum_{0 \leq j < i} \mathds{F} \lambda_{j}
\end{equation}
and
\begin{equation}
	\mathbf{S}(\lvert \lambda_{i} \rvert) \defeq \{ z \in C_{\infty} \mid \lvert z \rvert = \lvert \lambda_{i} \rvert \}
\end{equation}
be the \textbf{$i$-th critical sphere} of $\Lambda$. The principal example for $\Lambda$ is of course $\Lambda = A = \mathds{F}[T]$, with separating basis $\{ T^{i} \mid i \in \mathds{N}_{0} \}$. Here the nature of the zeroes of
$G_{k}$ (their sizes, the Newton polygon $\mathrm{NP}(G_{k})$, the multiplicity $\gamma(k)$ by which $x=0$ occurs as a zero, \dots) has important arithmetic implications. Some properties of $G_{k}$ may be translated via 
\eqref{Eq.Characterization-of-Goss-polynomials} to properties of the function $C_{k}$; in fact, for technical reasons we will mainly work with the $C_{k}$ and then translate back to the Goss polynomials $G_{k}$.

\subsection{} \label{Subsection.Functions-under-Scaling} All the functions introduced so far show a simple behavior under scaling $\Lambda \leadsto c \Lambda$ the lattice $\Lambda$ by $c \in C_{\infty}^{*}$, namely
\begin{align}
	e_{c\Lambda}(cz) 			&= c e_{\Lambda}(z) \\
	\alpha_{k}(c\Lambda)		&= c^{1-q^{k}} \alpha_{k}(\Lambda) \\
	C_{k, c\Lambda}(cz)		&= c^{-k} C_{k, \Lambda}(z) \\
	G_{k, c\Lambda}(c^{-1}X)	&= c^{-k} G_{k,\Lambda}(X).
\end{align}
This entitles us to make without restriction of generality the 
\subsection{Normalizing assumption} \label{Subsection.Normalizing-assumption} $\lambda_{0} = 1$, so $r_{0} = \log \lambda_{0} = 0$.

\textbf{Remark:} For arithmetical purposes it might be useful to apply a different normalization. For example, replacing $A$ with $\Lambda \defeq \bar{\pi}A$, where $\bar{\pi}$ is the characteristic-$p$ substitute of $2 \pi \mathrm{i}$ 
(see \cite{Gekeler88} Section 4), the Drinfeld module associated with $\Lambda$ will be the Carlitz module. As a consequence, the coefficients $\alpha_{i}(\Lambda)$ and therefore those of $G_{k, \Lambda}$ (see \ref{Subsubsection.Properties-Goss-polynomials} (iv)) are in $K$, with arithmetic relevance. Using the formulas of \ref{Subsection.Functions-under-Scaling}, there will be no difficulty to translate back and forth results between
the different normalizations.

\subsection{} We will make heavy use of the non-archimedean residue theorem and the formalism of contour integration as has been introduced in \cite{GerritzenvdPut80}. Let 
\[
	\mathbf{B} = \mathbf{B}(q^{r}) = \{ z \in C_{\infty} \mid \log z \leq r \}
\]
be a \enquote{closed} ball, where $r \in \mathds{Q}$, with corresponding \enquote{open ball} $\mathbf{B}^{-} = \{ z \mid \log z < r\}$ and boundary $\partial \mathbf{B} = \mathbf{S}(q^{r}) = \mathbf{B} \smallsetminus \mathbf{B}^{-}$.
Then $\partial \mathbf{B}$ is an analytic subspace of $\mathbf{B}$. Let $w$ be a coordinate function on $\partial \mathbf{B}$ with absolute value 1, for example $w = z/w_{0}$, where $w_{0} \in C_{\infty}$ is fixed with
$\log w_{0} = r$. Each invertible function $f$ on $\partial \mathbf{B}$ has a Laurent expansion
\begin{equation} \label{Eq.Laurent-expansion-on-Border-of-sphere}
	f(z) = w^{m} \sum_{n \in \mathds{Z}} a_{n}w^{n}
\end{equation}
with $a_{n} \in C_{\infty}$, $\lvert a_{0} \rvert > \max_{n \neq 0} \lvert a_{n} \rvert$ and some 
\begin{equation}
	m \eqdef \ord_{\partial \mathbf{B}}(f) \in \mathds{Z},
\end{equation}
the \textbf{order of $f$ along $\partial \mathbf{B}$}. Let $f$ be meromorphic on $\mathbf{B}$ without zeroes or poles on $\partial \mathbf{B}$. Then the residue formula
\begin{equation} \label{Eq.Residue-formula}
	\sum_{x \in \mathbf{B}} \ord_{x}(f) = \ord_{\partial \mathbf{B}}(f)
\end{equation}
holds, where $\ord_{x}(f)$ is the vanishing order (negative if $f$ has a pole at $x$) of $f$ at $x$ (\cite{GerritzenvdPut80} pp. 93-95).

\subsection{}\label{Subsection.Laurent-expansion-of-C-k-Lambda} A standard calculation, performed in \cite{Gekeler13-2} Section 4 in the case $\Lambda = A$ (which carries over to the general case) yields the Laurent expansion of $C_{k} = C_{k, \Lambda}$ on non-critical spheres.

Let $\mathbf{S} = \mathbf{S}(q^{r})$, where $r_{i} < r < r_{i+1}$, with coordinate $w = z/w_{0}$ as described above. Then the Laurent expansion of $C_{k}$ on $\mathbf{S}$ is
\begin{equation} \label{Eq.Laurent-expansion-of-Ck-on-Sphere}
	C_{k}(z) = \sum_{n \in \mathds{Z}} a_{n}w^{n},
\end{equation}
where
\begin{align*}
	a_{n}		&= (-1)^{k} \binom{k-1+n}{n} w_{0}^{n} \sum_{ \lambda \in \Lambda \smallsetminus \Lambda_{i+1}} \lambda^{-k-n}	&&(n \geq 0) \\
	a_{n}		&= 0																																																	&&(-k \leq n < 0) \\
	a_{-k-n} 	&= \binom{k-1+n}{n} w_{0}^{-k-n} \sum_{\lambda \in \Lambda_{i+1}} \lambda^{n}															&&(n > 0).
\end{align*}
Here the binomial coefficients must of course be evaluated in $\mathds{F} \hookrightarrow C_{\infty}$. In many cases we will be able to isolate dominant terms as in \eqref{Eq.Laurent-expansion-on-Border-of-sphere} for
\eqref{Eq.Laurent-expansion-of-Ck-on-Sphere} and thereby show that $C_{k}$ has no zeroes between the critical spheres $\mathbf{S}(q^{r_{i}})$ and $\mathbf{S}(q^{r_{i+1}})$. Later (Corollary 
\ref{Corollary.Coefficient-condition-Laurent-expansion-of-Ck}) we will see that the $a_{n}$ with $n \geq 0$ are irrelevant for our purposes. As for the others, we define 
\begin{equation}
	S_{i, \Lambda}(n) \defeq \sum_{ \lambda \in \Lambda_{i}} \lambda^{n} = \sum_{\lambda \in \Lambda, \log \lambda < r_{i}} \lambda^{n}.
\end{equation} 
Then the last formula in \eqref{Eq.Laurent-expansion-of-Ck-on-Sphere} becomes
\begin{equation}
	a_{-k-n} = \binom{k-1+n}{n} w_{0}^{-k-n} S_{i+1, \Lambda}(n).
\end{equation}
A first observation about zeroes of $C_{k}$ is

\begin{Proposition}
	$C_{k} = C_{k, \Lambda}$ has no zeroes $z$ with $0 < \lvert z \rvert < 1$.
\end{Proposition}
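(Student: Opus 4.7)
The plan is to exploit the normalizing assumption \ref{Subsection.Normalizing-assumption} together with the strong triangle inequality; no heavy machinery (neither the Laurent expansion of \ref{Subsection.Laurent-expansion-of-C-k-Lambda} nor the residue formula \eqref{Eq.Residue-formula}) should be required for this preliminary result.

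First I would isolate the pole contribution of $C_{k}$ at the origin: since $0 \in \Lambda$, the defining series splits as
\[
	C_{k}(z) = \frac{1}{z^{k}} + \sum_{0 \neq \lambda \in \Lambda} \frac{1}{(z-\lambda)^{k}}.
\]
By the normalizing assumption, $|\lambda_{0}| = 1$ is the minimal absolute value attained on $\Lambda \smallsetminus \{0\}$, so every nonzero $\lambda \in \Lambda$ satisfies $|\lambda| \geq 1$.

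Next I would estimate the two pieces under the assumption $0 < |z| < 1$. The isolated term has absolute value $|z|^{-k} > 1$. For each $\lambda \neq 0$, the inequality $|z| < 1 \leq |\lambda|$ combined with the ultrametric property gives $|z-\lambda| = |\lambda|$, hence
\[
	\left| \frac{1}{(z-\lambda)^{k}} \right| = |\lambda|^{-k} \leq 1 < |z|^{-k}.
\]
Consequently the supremum of $|(z-\lambda)^{-k}|$ over $\lambda \neq 0$ is at most $1$, which is strictly smaller than $|z|^{-k}$, and by the non-archimedean triangle inequality applied to the (convergent) tail sum, the whole tail has absolute value $\leq 1 < |z|^{-k}$.

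Therefore a single summand strictly dominates, yielding $|C_{k}(z)| = |z|^{-k} > 0$ and in particular $C_{k}(z) \neq 0$. There is no real obstacle: the statement is a one-line consequence of the ultrametric inequality once the pole at $\lambda = 0$ has been split off, and the separability of $\Lambda$ enters only through the normalization $\lambda_{0} = 1$ that guarantees $|\lambda| \geq 1$ for all nonzero $\lambda \in \Lambda$.
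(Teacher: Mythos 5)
Your argument is correct and is precisely the paper's proof spelled out in detail: the paper says ``$C_k(z) = z^{-k} + $ smaller terms,'' and you have justified ``smaller'' by splitting off the $\lambda = 0$ term, using the normalization $\lambda_0 = 1$ to get $|\lambda| \geq 1$ for all nonzero $\lambda$, and invoking the ultrametric inequality on the convergent tail. Nothing to add.
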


\begin{proof}
	Suppose $0 < \lvert z \rvert < 1$. Then $C_{k}(z) = z^{-k} +$ smaller terms, hence it cannot be a zero.
\end{proof}

\subsection{} The relationship between zeroes of $C_{k}$ and $G_{k}$ comes out as described below. We follow \cite{Gekeler13} Sections 5 and 6, where also proofs for the next assertions are given. The general assertion $q=p$
in \cite{Gekeler13} is not needed. For $z \in C_{\infty}$ with $\lvert z \rvert \geq 1$, put 
\begin{equation}
	\lvert z \rvert_{\min} \defeq \inf_{\lambda \in \Lambda} \lvert z - \lambda \rvert = \min_{\lambda \in \Lambda} \lvert z - \lambda \rvert.
\end{equation}
Note that $\lvert z \rvert = \lvert z \rvert_{\min}$ if $\lvert z \rvert$ doesn't belong to the value set $\{ \lvert \lambda_{i} \rvert \mid i \in \mathds{N}_{0} \}$ of $\Lambda$. Define the analytic subspaces of $\mathds{A}^{1}/C_{\infty}$:
\begin{equation}
	\Omega_{\Lambda} \defeq \{ z \in C_{\infty} \mid \lvert z \rvert_{\min} \geq 1 \}, \quad \text{and} \quad \mathbf{F}_{\Lambda} \defeq \{ z \in C_{\infty} \mid \lvert z \rvert = \lvert z \rvert_{\min} \geq 1 \}.
\end{equation}
The group $\Lambda$ acts on $\Omega_{\Lambda}$ through shifts $z \mapsto z + \lambda$, and $\mathbf{F}_{\Lambda}$ is a fundamental domain in the sense that each $z \in \Omega_{\Lambda}$ is $\Lambda$-equivalent
with at least one and at most finitely many elements of $\mathbf{F}_{\Lambda}$. For $z \in \Omega_{\Lambda}$ which is $\Lambda$-equivalent with $z' \in \mathbf{F}_{\Lambda}$, define further
\begin{equation}
	\mathrm{type}(z) \defeq \begin{cases} i,	&\text{if $\lvert z' \rvert = \lvert \lambda_{i} \rvert = q^{r_{i}}$ with some $i \in \mathds{N}_{0}$}, \\ i+a,	&\text{if $\lvert z' \rvert = q^{r_{i}(1-a)+r_{i+1}a}$ with $a \in \mathds{Q} \cap (0,1)$} \end{cases}
\end{equation}
(which is independent of the choice of $z'$).

\begin{Proposition}[\cite{Gekeler13} Proposition 5.6]
	The function $t_{\Lambda}$ restricted to $\mathbf{F}_{\Lambda}$ defines a biholomorphic isomorphism of the quotient $\mathbf{F}_{\Lambda}/\Lambda$ (that is, the image of $\mathbf{F}_{\Lambda}$ in $C_{\infty}/\Lambda$,
	which equals $\Omega_{\Lambda}/\Lambda$) with the pointed closed ball $\mathbf{B}(1)\smallsetminus \{0\}$.
\end{Proposition}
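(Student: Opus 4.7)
The plan is to exploit the identity $t_{\Lambda} = 1/e_{\Lambda}$ together with the basic properties of $e_{\Lambda}$: it is entire, $\Lambda$-periodic, has kernel exactly $\Lambda$, and is surjective onto $C_{\infty}$. The normalizing assumption $\lvert \lambda_{0} \rvert = 1$ will be in force throughout.

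First I would check that $t_{\Lambda}$ carries $\Omega_{\Lambda}$ into $\mathbf{B}(1) \smallsetminus \{0\}$. Since $\Lambda \cap \Omega_{\Lambda} = \emptyset$ and $e_{\Lambda}$ vanishes precisely on $\Lambda$, the value $t_{\Lambda}(z) = 1/e_{\Lambda}(z)$ is finite and nonzero on $\Omega_{\Lambda}$. Moreover, for $z \in \Omega_{\Lambda}$ every summand of $t_{\Lambda}(z) = \sum_{\lambda}(z-\lambda)^{-1}$ has absolute value at most $1/\lvert z \rvert_{\min} \leq 1$, so the strong triangle inequality yields $\lvert t_{\Lambda}(z) \rvert \leq 1$. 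Because $t_{\Lambda}$ is $\Lambda$-periodic, this produces a well-defined map $\Omega_{\Lambda}/\Lambda \to \mathbf{B}(1) \smallsetminus \{0\}$.

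Injectivity is then immediate: $t_{\Lambda}(z) = t_{\Lambda}(z')$ forces $e_{\Lambda}(z) = e_{\Lambda}(z')$ and hence $z - z' \in \ker e_{\Lambda} = \Lambda$. For surjectivity, given $x \in \mathbf{B}(1)\smallsetminus\{0\}$, I would use surjectivity of $e_{\Lambda}$ to pick $z_{0}$ with $e_{\Lambda}(z_{0}) = 1/x$, so $\lvert e_{\Lambda}(z_{0}) \rvert \geq 1$. I claim $z_{0} \in \Omega_{\Lambda}$: otherwise some $\lambda_{\ast} \in \Lambda$ would satisfy $u \defeq z_{0} - \lambda_{\ast}$ with $\lvert u \rvert < 1$, and periodicity together with the product formula \eqref{Eq.Exponential-function-of-lattice-Lambda} would give
\[
    \lvert e_{\Lambda}(z_{0}) \rvert = \lvert e_{\Lambda}(u) \rvert = \lvert u \rvert \prod_{0 \neq \lambda \in \Lambda} \lvert 1 - u/\lambda \rvert = \lvert u \rvert < 1,
\]
since every factor equals $1$ by the strong triangle inequality (here $\lvert u \rvert < 1 \leq \lvert \lambda \rvert$ is precisely where the normalization enters). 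This contradiction places $z_{0}$ in $\Omega_{\Lambda}$, whereupon the fundamental-domain property stated in the excerpt supplies a $\Lambda$-translate lying in $\mathbf{F}_{\Lambda}$.

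To upgrade the bijection to a biholomorphism, I would appeal to $e_{\Lambda}'(z) \equiv 1$: the derivative kills all the higher terms $\alpha_{i} z^{q^{i}}$ of \eqref{Eq.Series-expansion-of-exponential-function} in characteristic $p$. Consequently $t_{\Lambda}' = -1/e_{\Lambda}^{2}$ is nowhere zero on $\Omega_{\Lambda}$, so $t_{\Lambda}$ is a local analytic isomorphism at every point, and its set-theoretic inverse is then analytic by the non-archimedean inverse function theorem. The step I expect to be most delicate is the surjectivity argument above: the heuristic that \enquote{$z_{0}$ too close to $\Lambda$ forces $e_{\Lambda}(z_{0})$ to be small} has to be converted into a clean product estimate, and it is exactly at this point that the normalization $\lvert \lambda_{0} \rvert = 1$ plays its essential role.
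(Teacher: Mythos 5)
The paper does not reprove this statement: it is quoted verbatim from \cite{Gekeler13}, Proposition 5.6, and the reader is referred there (and to Section 5 of that paper) for the argument. So the relevant comparison is with Gekeler--Stopp's proof, not with anything in the present text.

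Your argument is correct and self-contained. It cleanly separates the pieces: $t_\Lambda = e_\Lambda^{-1}$ lands in $\mathbf{B}(1)\smallsetminus\{0\}$ by the ultrametric bound $|z-\lambda|^{-1}\le |z|_{\min}^{-1}\le 1$ on $\Omega_\Lambda$; injectivity modulo $\Lambda$ is the statement $\ker e_\Lambda = \Lambda$ (using $\mathds{F}_q$-linearity of $e_\Lambda$); surjectivity follows from the entireness/surjectivity of $e_\Lambda$ together with the product estimate $|e_\Lambda(u)| = |u|$ for $|u|<1$, which indeed requires the normalization $|\lambda_0|=1$ so that all the factors $|1-u/\lambda|$ are $1$; and the biholomorphy follows from $e_\Lambda'\equiv 1$ (the higher coefficients carry factors $q^i\equiv 0$ in characteristic $p$), hence $t_\Lambda'$ is nowhere zero and the non-archimedean inverse function theorem applies. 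One small point worth making explicit: to bound $|t_\Lambda(z)|$ by the supremum of the individual summands you are using that the series converges (terms with $|\lambda|\to\infty$ tend to $0$), which is fine but deserves a word. The approach in \cite{Gekeler13} is along the same lines but is organized so as to produce, at the same time, the precise valuation formula for $|t_\Lambda|$ in terms of $\mathrm{type}(z)$ (the function $L$ of the next proposition), whereas your argument is purely qualitative; if you only want the bijectivity and biholomorphy, your route is the shorter one.
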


\begin{Proposition}[\cite{Gekeler13} Proposition 5.7] \label{Proposition.Map-L-IQ-geq-to-IQ-leq}
	For $z \in \Omega_{\Lambda}$, the absolute value $\lvert t_{\Lambda}(z) \rvert$ depends only on $\mathrm{type}(z)$. Hence there is a well-defined map $L \colon \mathds{Q}_{\geq 0} \to \mathds{Q}_{\leq 0}$ which makes 
	the diagram
	\begin{equation}
		\begin{tikzcd}
			\Omega_{\Lambda} \ar[r, "t_{\Lambda}"] \ar[d, "\mathrm{type}"']	& \mathbf{B}(1) \smallsetminus \{0\} \ar[d, "\log"] \\
			\mathds{Q}_{\geq 0} \ar[r, "L"']																& \mathds{Q}_{\leq 0}
		\end{tikzcd}
	\end{equation}
	commutative. $L$ is bijective and strictly monotonically decreasing. Its values on integers are given by
	\begin{equation}
		L(i) = (q-1) \sum_{0 \leq j < i} r_{j}q^{j} - r_{i}q^{i}	\qquad (i \in \mathds{N}_{0}).
	\end{equation}
\end{Proposition}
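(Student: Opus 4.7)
The plan is to reduce everything to an explicit computation of $|e_{\Lambda}(z')|$ for a representative $z' \in \mathbf{F}_{\Lambda}$, then read off $|t_{\Lambda}(z)| = |e_{\Lambda}(z')|^{-1}$ via \eqref{Eq.Inverse-Exponential} and the $\Lambda$-periodicity of $e_{\Lambda}$. Once this is done, $|t_{\Lambda}|$ will visibly depend only on $|z'|$, which is determined by $\mathrm{type}(z)$, and the closed formula for $L$ will follow by summing the contributions of the product expansion \eqref{Eq.Exponential-function-of-lattice-Lambda} shell by shell.

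Concretely, I would take $z' \in \mathbf{F}_{\Lambda}$ with $\mathrm{type}(z) = i+a$, $a \in \mathds{Q} \cap [0,1)$, and evaluate each factor of $z' \prod_{0 \neq \lambda \in \Lambda}(1 - z'/\lambda)$ by the ultrametric dichotomy: the factor equals $1$ when $|\lambda| > |z'|$ and equals $|z'|/|\lambda|$ when $|\lambda| < |z'|$. The delicate step—and the main obstacle—is the equality case $|\lambda| = |z'|$, which can only occur for $a=0$ and $\lambda \in \Lambda_{i+1} \smallsetminus \Lambda_i$. Here I would invoke $z' \in \mathbf{F}_{\Lambda}$ (not merely $z' \in \Omega_{\Lambda}$) to write $|z' - \lambda| \geq |z'|_{\min} = |z'| = |\lambda|$, and then combine this with the reverse ultrametric bound $|z' - \lambda| \leq |z'|$ to conclude $|1 - z'/\lambda| = 1$. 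This is precisely where the fundamental-domain property is indispensable.

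Writing $r = (1-a)r_i + a r_{i+1}$ and partitioning the non-trivially contributing lattice elements into shells $\Lambda_{j+1} \smallsetminus \Lambda_j$ of cardinality $q^j(q-1)$ (for $j \leq i$ in the non-integer case, for $j < i$ in the integer case), a direct count gives
\begin{equation*}
   \log|e_{\Lambda}(z')| \;=\; r \;+\; (q-1)\sum_{j \leq i} q^j(r - r_j) \;=\; r\, q^{i+1} - (q-1)\sum_{j \leq i} q^j r_j
\end{equation*}
in the non-integer case, and the analogous expression with the $j=i$ term omitted (i.e.\ $r_i q^i - (q-1)\sum_{j<i} q^j r_j$) in the integer case. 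Negating yields the displayed formula for $L(i)$ and, more generally, the piecewise-linear interpolation $L(i+a) = (q-1)\sum_{j \leq i} q^j r_j - r\, q^{i+1}$ on $[i,i+1)$; the two formulas match at $a=0$, so $L$ is continuous.

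Finally, on each interval $[i, i+1]$ the map $a \mapsto L(i+a)$ is affine in $a$ with slope $-(r_{i+1} - r_i) q^{i+1} < 0$, so $L$ is strictly decreasing; it has rational coefficients and therefore sends $\mathds{Q}_{\geq 0}$ bijectively onto a subset of $\mathds{Q}_{\leq 0}$. From $r_0 = 0$ one gets $L(0) = 0$, while $L(i) \to -\infty$ as $i \to \infty$ because the $r_j$ are strictly increasing; together with piecewise linearity this forces $L$ to be a bijection $\mathds{Q}_{\geq 0} \to \mathds{Q}_{\leq 0}$. The diagram commutes by construction, since $\log|t_{\Lambda}(z)| = -\log|e_{\Lambda}(z')| = L(\mathrm{type}(z))$ by the computation above.
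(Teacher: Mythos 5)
Your argument is correct and is the natural direct computation. For the record, the paper itself gives no proof here: it cites Proposition 5.7 of the earlier paper by Gekeler and Stopp, where the same shell-by-shell evaluation of the product formula for $e_{\Lambda}$ is carried out. Your treatment of the boundary case $\lvert \lambda \rvert = \lvert z' \rvert$ via $\lvert z' - \lambda \rvert \geq \lvert z' \rvert_{\min} = \lvert z' \rvert$ together with the ultrametric bound $\lvert z' - \lambda \rvert \leq \lvert z' \rvert$ is exactly the right use of the fundamental domain, and the telescoping of $(q-1)\sum_{j} q^{j}(r - r_{j})$ into $r\,q^{i+1} - (q-1)\sum_{j} q^{j}r_{j}$ is correct, matching the displayed formula after negation; continuity and strict monotonicity also check out.

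One small looseness worth tightening: you justify $L(i) \to -\infty$ by saying ``the $r_{j}$ are strictly increasing,'' but strict monotonicity of the $r_{j}$ alone is not quite enough---a bounded strictly increasing sequence would give a finite infimum for $L$. What you actually need is $r_{i} \to \infty$, which holds because $\Lambda$ is a lattice in the sense of the paper (finite intersection with every ball). With that, the cleanest estimate is
\begin{equation*}
  L(i) = (q-1)\sum_{0 \le j < i} r_{j}q^{j} - r_{i}q^{i} < r_{i}(q^{i}-1) - r_{i}q^{i} = -r_{i} \longrightarrow -\infty,
\end{equation*}
which, combined with $L(0)=0$, continuity, the rationality of the piecewise-affine data, and the nonvanishing of the slopes $-(r_{i+1}-r_{i})q^{i+1}$, completes the bijectivity onto $\mathds{Q}_{\le 0}$ as you intended.
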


Combining the last two propositions, we find

\begin{Corollary} \label{Corollary.Zeroes-of-the-Ck-Lambda-and-G-k-Lambda}
	Given the separable $\mathds{F}$-lattice $\Lambda$ normalized as in \ref{Subsection.Normalizing-assumption}, the following statements for $k \in \mathds{N}$ are equivalent:
	\begin{enumerate}[label=$\mathrm{(\roman*)}$]
		\item All the zeroes of $C_{k, \Lambda}$ lie on critical spheres $\mathbf{S}(q^{r_{i}})$ of $\Lambda$;
		\item all the zeroes of $C_{k, \Lambda}$ are of integer type;
		\item all the zeroes $x \neq 0$ of $G_{k, \Lambda}$ satisfy $\log x = L(i)$ for some $i \in \mathds{N}_{0}$;
		\item all the slopes of the Newton polygon $\mathrm{NP}(G_{k, \Lambda})$ of $G_{k, \Lambda}$ are of shape $L(i)$ for some $i \in \mathds{N}_{0}$.
	\end{enumerate}
\end{Corollary}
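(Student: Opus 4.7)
The plan is to prove the chain (i) $\Leftrightarrow$ (ii) $\Leftrightarrow$ (iii) $\Leftrightarrow$ (iv), drawing on Propositions 1.6, 1.7, and 1.8, which already carry essentially all the substance; what remains is bookkeeping together with one careful application of Proposition 1.6 to exclude stray zeros.

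For (i) $\Leftrightarrow$ (ii), I first use that $C_{k,\Lambda}$ is $\Lambda$-periodic and, by Proposition 1.6 combined with periodicity, has no zero $z$ with $\lvert z \rvert_{\min} < 1$; hence every zero is $\Lambda$-equivalent to some $z' \in \mathbf{F}_{\Lambda}$. The definition of $\mathrm{type}(z')$ recalled just before Proposition 1.8 then makes the equivalence tautological: $z'$ lies on some critical sphere $\mathbf{S}(q^{r_i})$ (i.e.\ $\lvert z' \rvert = \lvert \lambda_i \rvert$) if and only if $\mathrm{type}(z') \in \mathds{N}_0$, the non-integer types being by definition realised by $\lvert z' \rvert = q^{r_i(1-a) + r_{i+1}a}$ with $a \in \mathds{Q} \cap (0,1)$.

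The equivalence (ii) $\Leftrightarrow$ (iii) comes from combining Propositions 1.7 and 1.8. By Proposition 1.7, $t_{\Lambda}$ restricts to a bijection $\mathbf{F}_{\Lambda}/\Lambda \to \mathbf{B}(1) \setminus \{0\}$, which via $C_{k,\Lambda} = G_{k,\Lambda} \circ t_{\Lambda}$ matches zeros of $C_{k,\Lambda}$ modulo $\Lambda$ with zeros of $G_{k,\Lambda}$ in $\mathbf{B}(1) \setminus \{0\}$. Proposition 1.8 then yields $\log t_{\Lambda}(z) = L(\mathrm{type}(z))$; since $L$ is a bijection onto $\mathds{Q}_{\leq 0}$, the condition $\mathrm{type}(z) \in \mathds{N}_0$ is precisely $\log t_{\Lambda}(z) \in L(\mathds{N}_0)$. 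To make sure that (iii), which speaks of \emph{all} nonzero zeros of $G_{k,\Lambda}$, is really equivalent to (ii), I would observe that a hypothetical zero $x$ of $G_{k,\Lambda}$ with $\lvert x \rvert > 1$ would lift under the surjection $t_{\Lambda}\colon C_{\infty}\setminus\Lambda \to C_{\infty}^{*}$ to a zero $z$ of $C_{k,\Lambda}$ with $\lvert z \rvert_{\min} < 1$, and a suitable $\Lambda$-translation would produce a zero with $0 < \lvert z \rvert < 1$, contradicting Proposition 1.6. This is the one place where a nontrivial argument rather than a definitional unpacking is used, and I expect it to be the main (and only) subtlety of the proof.

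Finally, (iii) $\Leftrightarrow$ (iv) is pure Newton-polygon bookkeeping: writing $G_{k,\Lambda}(X) = X^{m} \prod_{j}(X - x_{j})$ with $x_{j} \neq 0$, the non-vertical slopes of $\mathrm{NP}(G_{k,\Lambda})$ are, with multiplicity, exactly the numbers $\log x_{j}$, so \enquote{every slope is of shape $L(i)$} and \enquote{every $\log x_{j}$ is of shape $L(i)$} say the same thing. The whole corollary is thus a repackaging of Propositions 1.6--1.8 in a form more immediately usable in the sequel.
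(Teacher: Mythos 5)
Your proof is correct and follows exactly the route the paper intends: the paper itself gives no explicit argument, merely remarking that the Corollary results from ``combining the last two propositions,'' i.e.\ Propositions 1.7 and 1.8, and your write-up is a faithful unpacking of that remark, with the additional (and genuinely needed, though left tacit by the author) appeal to Proposition 1.6 to rule out zeroes $z$ of $C_{k,\Lambda}$ with $\lvert z \rvert_{\min} < 1$ and, dually, zeroes $x$ of $G_{k,\Lambda}$ with $\lvert x \rvert > 1$. The only place where you are a bit quicker than a fully pedantic reader might want is the step ``(i) $\Leftrightarrow$ (ii)'': statement (i) is about $\lvert z \rvert$ for \emph{all} zeroes $z$, while (ii) via $\mathrm{type}$ refers to $\lvert z' \rvert$ for the $\mathbf{F}_{\Lambda}$-representative $z'$; one should note that the zero set is $\Lambda$-invariant, that $\lvert z \rvert = \max(\lvert z' \rvert, \lvert \lambda \rvert)$ for $z = z' + \lambda$ with $z' \in \mathbf{F}_\Lambda$, and that $\lvert \lambda \rvert$ for $0 \neq \lambda \in \Lambda$ is always a critical radius, so that the two conditions really do coincide. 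This is a one-line observation and does not constitute a gap.
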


Here we use the definitions and conventions of \cite{Neukirch99} II Section 6 for the Newton polygon. It has been shown in \cite{Gekeler13} and \cite{Gekeler13-2}
that statements $\mathrm{(i)}$-$\mathrm{(iv)}$ hold true if $q=p$, and it was conjectured in \cite{Gekeler13-2} that they hold for arbitrary $q = p^{f}$ if $\Lambda = A$. However, this is far too optimistic, see Sections \ref{Section.Irregular-zeroes} and 9,
where counterexamples are given. We will see that $\mathrm{(i)}$-$\mathrm{(iv)}$ at least hold for some $k$ (depending on $q$); these will be called regular.

\subsection{}\label{Subsection.Numbers-of-zeroes} To enable precise statements about the zeroes, we make the following definitions for $i \in \mathds{N}_{0}$.
\begin{align}
	\tilde{\gamma}_{i, \Lambda}(k) 		&\defeq \# \{ \text{zeroes $z$ of $C_{k, \Lambda} \mid \log z = r_{i}$} \}, \\ 
	\tilde{\gamma}_{\Lambda}^{(i)}(k) 	&\defeq \# \{ \text{zeroes $z$ of $C_{k, \Lambda} \mid r_{i} \leq \log z < r_{i+1}$}\} \nonumber.
\end{align}
Here and in the sequel, \enquote{number of zeroes} or \enquote{$\# \{ \text{zeroes}\}$} is always understood as \enquote{numbers of zeroes counted with multiplicity}. The group 
$\Lambda_{i+1} = \sum_{0 \leq j \leq i} \mathds{F}\lambda_{j}$ identifies zeroes $z$ of $C_{k, \Lambda}$ with $r_{i} \leq \log z < r_{i}$ under $t_{\Lambda}$, which explains the factors $q^{i+1}$ in
\begin{align}
	\gamma_{i, \Lambda}(k)			&\defeq \# \{ \text{zeroes of $G_{k, \Lambda}$ with $\log x = L(i)$}\} = \tilde{\gamma}_{i, \Lambda}(k)/q^{i+1}, \\
	\gamma_{\Lambda}^{(i)}(k) 	&\defeq \# \{ \text{zeroes of $G_{k, \Lambda}$ with $L(i+1) < \log x \leq L(i)$}\} = \tilde{\gamma}_{\Lambda}^{(i)}(k)/q^{i+1}. \nonumber
\end{align}
Then $\gamma_{i, \Lambda}(k) = \gamma_{\Lambda}^{(i)}(k)$ or $\tilde{\gamma}_{i,\Lambda}(k) = \tilde{\gamma}_{\Lambda}^{(i)}(k)$ for all $i$ is equivalent with the statements in \ref{Corollary.Zeroes-of-the-Ck-Lambda-and-G-k-Lambda}.
We finally define $\gamma_{\Lambda}(k)$ (without a sub- or superscript $i$) as
\begin{equation}
	\gamma_{\Lambda}(k) \defeq \text{multiplicity of $x=0$ as a zero of $G_{k, \Lambda}$}.
\end{equation}
\subsection{} It is easy to find an expression for $\gamma_{0, \Lambda}(k) = \tilde{\gamma}_{0, \Lambda}(k)/q$. Consider the expansion
\[
	e_{\Lambda}(z) = \sum_{i \geq 0} \alpha_{i}(\Lambda) z^{q^{i}}
\]
of $e_{\Lambda}$. In view of our normalization \ref{Subsection.Normalizing-assumption}, all the $\alpha_{i}$ lie in $O_{C_{\infty}}$,  $\lvert \alpha_{i} \rvert < 1$ for $i>1$, and the coefficientwise congruence
\begin{equation}
	e_{\Lambda}(z) \equiv e_{\mathds{F}}(z) \pmod{\mathfrak{m}_{C_{\infty}}}
\end{equation}
holds, which implies
\begin{equation}
	G_{k, \Lambda}(X) \equiv G_{k, \mathds{F}}(X) \pmod{\mathfrak{m}_{C_{\infty}}}.
\end{equation}
Therefore, $\gamma_{0, \Lambda}(k) = \text{number of zeroes $x \neq 0$ of $G_{k, \mathds{F}}$}$ (as always, counted with multiplicity). Now we have the following closed expression for $G_{k, \mathds{F}}$ (\cite{Gekeler88} 3.7):
\begin{equation}
	G_{k, \mathds{F}}(X) = \sum_{j \geq 0} (-1)^{j} \binom{k-1-j(q-1)}{j}X^{k-j(q-1)}.
\end{equation}
If we put
\begin{multline*}
	\bar{j} = \bar{j}(k) \defeq \text{largest integer $j$ such that} \\ \text{the binomial coefficient $\binom{k-1-j(q-1)}{j}$ doesn't vanish $\pmod{p}$},
\end{multline*}
then:

\begin{Proposition}
	$\gamma_{0, \Lambda}(k) = \bar{j}(q-1)$, $\tilde{\gamma}_{0, \Lambda}(k) = \bar{j}q(q-1)$, independently of $\Lambda$. 
\end{Proposition}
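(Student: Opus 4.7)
The plan is to read off both quantities directly from the explicit expression for $G_{k,\mathds{F}}(X)$ displayed just before the proposition, using the reduction argument already carried out in the preceding paragraph to reduce $\gamma_{0,\Lambda}(k)$ to an invariant of $G_{k,\mathds{F}}$.

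First, I would observe that the proposition's independence-from-$\Lambda$ claim is essentially built into the preceding discussion: the congruence $G_{k,\Lambda}(X)\equiv G_{k,\mathds{F}}(X)\pmod{\mathfrak{m}_{C_{\infty}}}$, together with the fact that the nonzero roots of $G_{k,\mathds{F}}$ lie in $\bar{\mathds{F}}^{*}$ and thus have absolute value $1$, shows via a standard Hensel/Newton-polygon argument that $\gamma_{0,\Lambda}(k)$ equals the number of nonzero roots of $G_{k,\mathds{F}}(X)$ counted with multiplicity. The only remaining task is then the purely combinatorial one of determining the multiplicity of $X=0$ as a root of the polynomial $G_{k,\mathds{F}}(X)=\sum_{j\ge 0}(-1)^{j}\binom{k-1-j(q-1)}{j}X^{k-j(q-1)}$.

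Because the exponents $k-j(q-1)$ are pairwise distinct, no cancellation between monomials can occur, so the nonzero monomials of $G_{k,\mathds{F}}$ correspond exactly to those $j\ge 0$ with $\binom{k-1-j(q-1)}{j}\not\equiv 0\pmod{p}$. By \ref{Subsubsection.Properties-Goss-polynomials}(i) this polynomial is monic of degree $k$ (visible also from the $j=0$ term with coefficient $\binom{k-1}{0}=1$), and by definition the largest $j$ in the above set is $\bar{j}(k)$. Hence the lowest nonvanishing monomial is $\pm X^{k-\bar{j}(q-1)}$, so $X=0$ is a root of multiplicity $k-\bar{j}(q-1)$. Subtracting from the degree gives $\gamma_{0,\Lambda}(k)=k-(k-\bar{j}(q-1))=\bar{j}(q-1)$, and the second formula follows immediately from the relation $\tilde{\gamma}_{0,\Lambda}(k)=q^{0+1}\gamma_{0,\Lambda}(k)=q\gamma_{0,\Lambda}(k)$ of \ref{Subsection.Numbers-of-zeroes}.

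The only step requiring any care is the implicit passage from $G_{k,\Lambda}$ to $G_{k,\mathds{F}}$: one must verify that roots of $G_{k,\Lambda}$ with $|x|=1$ correspond, with multiplicities, to the nonzero roots of the reduction. This is exactly what the author packages into the sentence beginning ``Therefore, $\gamma_{0,\Lambda}(k)=\ldots$'' preceding the statement; once granted, the rest is a one-line monomial count. There is therefore no substantive obstacle; the proposition is a direct consequence of the explicit expansion of $G_{k,\mathds{F}}$ combined with the definitions in \ref{Subsection.Numbers-of-zeroes}.
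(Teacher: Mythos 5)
Your argument is correct and follows the same route as the paper: the paper itself gives no formal proof of this Proposition, since it is an immediate consequence of the reduction $G_{k,\Lambda}\equiv G_{k,\mathds{F}}\pmod{\mathfrak{m}_{C_\infty}}$ and the explicit formula for $G_{k,\mathds{F}}$ stated just before it. You have merely spelled out the Newton-polygon step and the degree-versus-lowest-exponent count, both of which are exactly what the author leaves implicit.
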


Before approaching the other vanishing numbers, we make an important observation.

\begin{Proposition}[\cite{Gekeler13} Lemma 6.7]
	Let $r \in \mathds{Q}_{\geq 0}$ and $\mathbf{B} = \mathbf{B}(q^{r})$ be the ball with radius $q^{r}$. The number of poles of $C_{k, \Lambda}$ on $\mathbf{B}$ always exceeds the number of zeroes (both counted with multiplicity)
	of $C_{k, \Lambda}$ on $\mathbf{B}$. 
\end{Proposition}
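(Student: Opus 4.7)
The plan is to exploit the factorization $G_{k,\Lambda}(X)=\prod_{j=1}^{k}(X-x_{j})$ over an algebraic closure of $C_{\infty}$, which via \eqref{Eq.Characterization-of-Goss-polynomials} yields $C_{k,\Lambda}(z)=\prod_{j=1}^{k}(t_{\Lambda}(z)-x_{j})$. We first assume that $r$ is \emph{generic} in the sense that $r_{i}<r<r_{i+1}$ for some $i$ and no $\lvert x_{j} \rvert$ equals the common value $\tau_{r}:=\lvert t_{\Lambda}(z) \rvert$ on $\partial \mathbf{B}$ (its constancy on $\partial \mathbf{B}$ follows from Proposition \ref{Proposition.Map-L-IQ-geq-to-IQ-leq}). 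Under these assumptions $C_{k,\Lambda}$ is holomorphic and nowhere zero on $\partial \mathbf{B}$, so applying the residue formula \eqref{Eq.Residue-formula} to each linear factor and summing yields
\begin{equation*}
Z(C_{k,\Lambda};\mathbf{B}) - P(C_{k,\Lambda};\mathbf{B}) = \sum_{j=1}^{k} \ord_{\partial \mathbf{B}}(t_{\Lambda}-x_{j}),
\end{equation*}
and the task reduces to showing this sum is strictly negative.

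To evaluate each summand, we first apply the residue formula to $t_{\Lambda}=1/e_{\Lambda}$ itself. Since $e_{\Lambda}$ is entire, $t_{\Lambda}$ has no zeros, and its only poles on $\mathbf{B}$ are the $q^{i+1}$ simple poles at the elements of $\Lambda_{i+1}\subset \mathbf{B}^{-}$; hence $\ord_{\partial \mathbf{B}}(t_{\Lambda})=-q^{i+1}$, and the dominant term of the Laurent expansion of $t_{\Lambda}$ on $\partial \mathbf{B}$ has modulus $\tau_{r}$ at index $-q^{i+1}$. Subtracting the constant $x_{j}$ modifies only the $w^{0}$-coefficient, so two cases arise: if $\lvert x_{j} \rvert<\tau_{r}$ the dominant term is unchanged and $\ord_{\partial \mathbf{B}}(t_{\Lambda}-x_{j})=-q^{i+1}$; if $\lvert x_{j} \rvert>\tau_{r}$ the new constant term dominates everything and the order becomes $0$. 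In particular every summand is non-positive. Strictness now comes from property \ref{Subsubsection.Properties-Goss-polynomials}(i), which says $G_{k,\Lambda}(0)=0$: the root $x=0$ appears with multiplicity $\gamma_{\Lambda}(k)\geq 1$, and since $\lvert 0 \rvert<\tau_{r}$ these copies contribute $-q^{i+1}\gamma_{\Lambda}(k)\leq -q<0$ to the sum. Hence $Z<P$ in the generic case.

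For an arbitrary $r \in \mathds{Q}_{\geq 0}$---in particular at a critical radius $r=r_{i}$, where $\partial \mathbf{B}$ itself carries poles of $C_{k,\Lambda}$, or when some $\lvert x_{j} \rvert=\tau_{r}$---we replace $r$ by a slightly larger $r' > r$ chosen generic and still lying strictly below the next critical radius. No new lattice points enter the annulus $\{q^{r}<\lvert z \rvert \leq q^{r'}\}$, so $P(\mathbf{B}(q^{r'}))=P(\mathbf{B}(q^{r}))$, while tautologically $Z(\mathbf{B}(q^{r'})) \geq Z(\mathbf{B}(q^{r}))$; the generic case applied to $r'$ then yields the desired inequality for $r$. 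The main technical point is the clean evaluation $\ord_{\partial \mathbf{B}}(t_{\Lambda})=-q^{i+1}$; factoring $C_{k,\Lambda}$ into linear pieces \emph{before} invoking the residue formula is what lets us sidestep the more delicate direct analysis of its Laurent expansion in \ref{Subsection.Laurent-expansion-of-C-k-Lambda}, where the vanishing $a_{-k}=0$ forced by $\lvert\Lambda_{i+1}\rvert\equiv 0 \pmod{p}$ would otherwise require additional care.
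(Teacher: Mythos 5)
Your argument is correct, and it is essentially the natural proof: factor $G_{k,\Lambda}(X)=\prod_j(X-x_j)$ over the algebraically closed field $C_{\infty}$, observe via the residue formula that $\ord_{\partial\mathbf{B}}(t_{\Lambda})=-q^{i+1}$ (since $t_{\Lambda}=1/e_{\Lambda}$ has $q^{i+1}$ simple poles and no zeros in $\mathbf{B}$), deduce that each $\ord_{\partial\mathbf{B}}(t_{\Lambda}-x_j)$ is $0$ or $-q^{i+1}$ according as $\lvert x_j\rvert \gtrless \tau_r$, and conclude using $G_{k,\Lambda}(0)=0$ from \ref{Subsubsection.Properties-Goss-polynomials}(i), with the generic-radius restriction removed by a monotone perturbation. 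The paper itself only cites \cite{Gekeler13} Lemma~6.7 rather than reproducing a proof, but your argument matches the ingredients the surrounding text sets up (the order function on $\partial\mathbf{B}$, Propositions~1.9--1.10, the residue formula) and is a complete, correct proof.
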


As a consequence, the following corollary holds:

\begin{Corollary}[\cite{Gekeler13} Corollary 6.8] \label{Corollary.Coefficient-condition-Laurent-expansion-of-Ck}
	In the situation of \ref{Subsection.Laurent-expansion-of-C-k-Lambda} assume that in 
	\[
		C_{k, \Lambda}(z) = \sum_{n \in \mathds{Z}} a_{n}w^{n},
	\]
	$\lvert a_{n} \rvert = \max \{ \lvert a_{m} \rvert \mid m \in \mathds{Z} \}$ holds for some $n$. Then $n<0$. 
\end{Corollary}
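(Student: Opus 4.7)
The key input is the non-archimedean residue formula \eqref{Eq.Residue-formula} together with the immediately preceding proposition, which says that on any closed ball $\mathbf{B}$ the pole count of $C_{k,\Lambda}$ strictly exceeds its zero count. My plan is to identify $n$ with $\ord_{\partial\mathbf{B}}(C_{k,\Lambda})$ and then read off its sign from the residue formula.

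First I would verify that $C_{k,\Lambda}$ is invertible on $\partial\mathbf{B}$. The hypothesis supplies an index $n$ at which $|a_n|$ attains the maximum; because $|w|=1$ on $\partial\mathbf{B}$ and the ultrametric inequality forces the (uniquely) dominant term $a_n w^n$ to control the value, $C_{k,\Lambda}$ has no zeros on $\partial\mathbf{B}$. It has no poles there either: by \eqref{Eq.C-k-lambda-meromorphic-function} the poles of $C_{k,\Lambda}$ sit at lattice points $\lambda \in \Lambda$, whose absolute values lie in the discrete set $\{|\lambda_i|\}_{i\geq 0}$, whereas $\partial\mathbf{B} = \mathbf{S}(q^r)$ has $r_i < r < r_{i+1}$ and therefore contains no such $\lambda$. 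By the definition \eqref{Eq.Laurent-expansion-on-Border-of-sphere} one then reads off $\ord_{\partial\mathbf{B}}(C_{k,\Lambda}) = n$.

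Next I would apply the residue formula \eqref{Eq.Residue-formula} to obtain
\[
\sum_{x \in \mathbf{B}} \ord_x(C_{k,\Lambda}) \;=\; \ord_{\partial\mathbf{B}}(C_{k,\Lambda}) \;=\; n.
\]
The left-hand side equals the number of zeros of $C_{k,\Lambda}$ in $\mathbf{B}$ minus the number of its poles in $\mathbf{B}$, each counted with multiplicity. By the preceding proposition the pole count strictly exceeds the zero count, so the sum on the left is strictly negative, forcing $n<0$.

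There is no real obstacle here; the only mild subtlety is that, for $\ord_{\partial\mathbf{B}}$ to be well-defined in the sense of \eqref{Eq.Laurent-expansion-on-Border-of-sphere}, the maximum must be attained uniquely. If the corollary is invoked in a situation with ties (so that $C_{k,\Lambda}$ might vanish on $\partial\mathbf{B}$), I would perturb $r$ infinitesimally to land on a nearby non-critical sphere where uniqueness holds and then pass to the limit, the pole/zero inequality of the preceding proposition being stable under such perturbations. Beyond this, the corollary is essentially direct bookkeeping with the residue formula.
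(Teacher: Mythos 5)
Your proposal is correct and follows the same route the paper intends: the corollary is stated immediately after Proposition~\ref{Corollary.Coefficient-condition-Laurent-expansion-of-Ck}'s predecessor (\cite{Gekeler13}~Lemma~6.7, that poles strictly outnumber zeros on every ball $\mathbf{B}(q^r)$) with the words ``As a consequence,'' and the intended derivation is exactly your combination of that inequality with the residue formula~\eqref{Eq.Residue-formula} to force $\ord_{\partial\mathbf{B}}(C_{k,\Lambda})<0$. Your closing remark about ties is handled most cleanly without a limit: if the maximum is attained at several indices, pick $r'$ with $r<r'<r_{i+1}$ close to $r$; since $\lvert a_m\rvert$ scales by $q^{(r'-r)m}$, the largest tied index $n_s$ becomes the unique maximum on $\mathbf{S}(q^{r'})$, the residue formula there gives $n_s<0$, and hence all tied indices (being $\le n_s$) are negative.
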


This is why in the quest for dominating terms we may focus on the coefficients $a_{-k-n}$ in \eqref{Eq.Laurent-expansion-of-Ck-on-Sphere}.

\section{Power sums and Sheats compositions} \label{Section.Power-sums-and-Sheats-compositions}

\subsection{} Still, $\Lambda$ is an $\mathds{F}$-lattice in $C_{\infty}$ with separating basis $\{ \lambda_{0}=1, \lambda_{1}, \dots \}$, $r_{i} = \log \lambda_{i}$, and $\Lambda_{i}$ is the sublattice generated by 
$\lambda_{0}, \dots, \lambda_{i-1}$ (provided that $i \leq \dim_{\mathds{F}} \Lambda$ if $\Lambda$ is finite). 

\subsection{} We first recall the Lucas congruence and some of its consequences. Let $m,n$ be numbers from $\mathds{N}_{0}$, given in their respective $p$-adic expansions
\begin{equation}
	m = \sum_{i \geq 0} m_{i}p^{i}, \qquad n = \sum_{i \geq 0} n_{i}p^{i}, \qquad m_{i},n_{i} \in P \defeq \{0,1,\dots,p-1\}.
\end{equation}
Then
\begin{equation}
	\binom{n}{m} \equiv \prod \binom{n_{i}}{m_{i}} \pmod{p}.
\end{equation}
In particular,
\begin{multline*}
	\binom{n}{m} \not\equiv 0 \pmod{p} \Longleftrightarrow \forall i : m_{i} \leq n_{i} \\ \Longleftrightarrow \text{There is no carryover of digits in $n = m+(n-m)$}.
\end{multline*}
More generally, let $\mathbf{m} = (m_{1}, \dots, m_{h})$ be an $h$-tuple of non-negative integers with $m_{1} + \dots + m_{h}= n$. Then the multinomial coefficient
\begin{equation}
	\binom{n}{\mathbf{m}} = \binom{n}{m_{1}, \dots, m_{h}} = \frac{n!}{m_{1}! \cdots m_{h}!}
\end{equation}
is incongruent to $0 \pmod{p}$ if and only if there is no carryover of $p$-adic digits in the sum $n = m_{1} + \dots + m_{h}$. In this case we write
\begin{equation}
	n = m_{1} \oplus \dots \oplus m_{h}
\end{equation}
and call it the \textbf{smooth sum} of $m_{1}, \dots, m_{h}$.

\subsection{} Recall that $S_{i, \Lambda}(n)$ denotes the power sum $\sum_{\lambda \in \Lambda_{i}} \lambda^{n}$. Writing $S_{i, \Lambda} = \sum_{\mathbf{c} \in \mathds{F}^{i}} (c_{0} \lambda_{0} + \dots + c_{i-1} \lambda_{i-1})^{n}$
for $i>0$ and appyling the multinomial theorem, we find
\[
	S_{i, \Lambda}(n) = \sum_{\mathbf{c} \in \mathds{F}^{i}} \sum_{\mathbf{m}} \binom{n}{\mathbf{m}} \mathbf{c}^{\mathbf{m}} \boldsymbol{\lambda}^{\mathbf{m}} = \sum_{\mathbf{m}} \binom{n}{\mathbf{m}} \boldsymbol{\lambda}^{\mathbf{m}} \sum_{\mathbf{c}} \mathbf{c}^{\mathbf{m}},
\]
where $\mathbf{c} = (c_{0}, \dots, c_{i-1})$, $\mathbf{m} = (m_{0}, \dots, m_{i-1})$ runs through the $i$-tuples of non-negative integers that sum up to $n$, $\mathbf{c}^{\mathbf{m}} = c_{0}^{m_{0}} \cdots c_{i-1}^{m_{i-1}}$, 
$\mathbf{\lambda}^{\mathbf{m}} = \lambda_{0}^{m_{0}} \cdots \lambda_{i-1}^{m_{i-1}}$. Now
\begin{equation}
	\sum_{c \in \mathds{F}} c^{n} = \begin{cases} -1,		&\text{if $0<n \equiv 0 \pmod{q-1}$}, \\ 0,	&\text{if $n=0$ or $n \not\equiv 0 \pmod{q-1}$}. \end{cases}
\end{equation}
Therefore the inner sum $\sum_{\mathbf{c}} \mathbf{c}^{\mathbf{m}}$ vanishes unless all the $m_{j}$ are positive and divisible by $q-1$, in which case it takes the value $(-1)^{i}$. Thus we find
\begin{equation} \label{Eq.Powersum-S-i-Lambda}
	S_{i, \Lambda}(n) = (-1)^{i} \sum_{\mathbf{m}} \binom{n}{\mathbf{m}} \boldsymbol{\lambda}^{\mathbf{m}},
\end{equation}
where $\mathbf{m}$ runs through the $i$-tuples of elements of $(q-1)\mathds{N}$ with $n = m_{0} \oplus \dots \oplus m_{i-1}$. We call such an $\mathbf{m}$ an \textbf{$i$-composition of $n$}. In particular, if $S_{i, \Lambda}(n) \neq 0$
then $n \equiv 0 \pmod{q-1}$ and $n$ admits an $i$-composition as above.

\subsection{} We introduce some more notation. For $p$-adic numbers with (not necessarily finite) expansions $a = \sum a_{i}p^{i}$, $b = \sum_{i} b_{i}p^{i}$, $a_{i}, b_{i} \in P$, we write
\begin{equation}
	a <_{p} b \vcentcolon \Longleftrightarrow \forall i : a_{i} \leq b_{i},
\end{equation}
and for $a,b \in \mathds{N}_{0}$,
\begin{equation}
	a \varprec b \vcentcolon \Longleftrightarrow a <_{p} b \text{ and } a \equiv b \pmod{q-1}.
\end{equation}
Both \enquote{$<_{p}$} and \enquote{$\varprec$} are order relations. As usual, $a \lneq_{p} b$ means $a <_{p} b$ and $a \neq  b$, ditto for \enquote{$\varprec$}. The symbols \enquote{$\inf_{p}$} and \enquote{$\sup_{p}$} denote the
infimum (resp. supremum) with respect to \enquote{$<_{p}$}. We further let $\deg_{p} a$ (resp. $\deg_{q} a$) be the degree of $a$ as a \enquote{polynomial} in $p$ (resp. $q$). Finally, we define the
\textbf{height} $\mathrm{ht}(n)$ of $n \in \mathds{N}_{0}$ as the maximal length of a chain
\begin{equation} \label{Eq.Maximal-chain}
	0 \precneq n_{1} \precneq n_{2} \precneq \dots \precneq n_{h} <_{p} n.
\end{equation}
Hence
\begin{equation}
	S_{i, \Lambda}(n) = 0 \text{ unless } n \equiv 0 \pmod{q-1} \text{ and } \mathrm{ht}(n) \geq i.
\end{equation}
\subsection{}\label{Subsection.Non-vanishing-term-in-Powersum-S-i-Lambda} For a non-vanishing term $\binom{n}{\mathbf{m}} \lambda^{\mathbf{m}}$ in \eqref{Eq.Powersum-S-i-Lambda}, 
$0 \precneq n_{1} \precneq \dots \precneq n_{i} = n$, where $n_{j} \defeq m_{0} \oplus \dots \oplus m_{j-1}$. Note that the size of such a term is given by 
\begin{equation}
	\log \boldsymbol{\lambda}^{\mathbf{m}} = m_{0} r_{0} + \dots + m_{i-1} r_{i-1}, \qquad r_{j} = \log \lambda_{j}.
\end{equation}
We therefore define the \textbf{weight of $\mathbf{m}$} with respect to the \textbf{weight system $\mathbf{r} = (r_{0}, r_{1}, \dots)$} determined by $\Lambda$ as
\begin{equation}
	\mathrm{wt}_{\mathbf{r}}(\mathbf{m}) =\log \boldsymbol{\lambda}^{\mathbf{m}} = \sum_{0 \leq j < i} m_{j} r_{j}.
\end{equation}
If there exists an $i$-composition $\mathbf{m}$ with \textbf{dominant weight}, that is, some $\mathbf{m} = (m_{0}, \dots, m_{i-1})$ such that $\mathrm{wt}_{\mathbf{r}}(\mathbf{m}) > \mathrm{wr}_{\mathbf{r}}(\mathbf{m}')$ for
all $\mathbf{m}' \neq \mathbf{m}$ in \eqref{Eq.Powersum-S-i-Lambda}, then
\begin{equation}
	\log S_{i, \Lambda}(n) = \mathrm{wt}_{\mathbf{r}}(\mathbf{m}).
\end{equation}
Especially, $S_{i, \Lambda}(n) \neq 0$ in this case.

\subsection{} Jeffrey Sheats showed in \cite{Sheats98} that for the lattice $\Lambda = A = \mathds{F}[T]$, that is, for the weight system $\mathds{N}_{0} = (0,1,2,\dots)$ associated to $A$, there always exists a dominant weight
composition $\mathbf{m}$. More precisely, Sheats' Theorem 1.2 says in our language:

Let $n$ be a natural number divisible by $q-1$ with $\mathrm{ht}(n) \geq i$. There exists a unique $i$-composition $\mathbf{X} = (X_{1}, \dots, X_{i})$ of $n$ such that $\mathrm{wt}_{\mathds{N}_{0}}(\mathbf{X})$ is dominant
among all $i$-compositions of $n$. Further, $\mathbf{X}$ is characterized as the $i$-composition of $n$ whose transpose $\mathbf{X}^{t} = (X_{i}, \dots, X_{1})$ is lexicographically maximal. We call $\mathbf{X}$ the
\textbf{Sheats $i$-composition} $\mathbf{Sh}^{(i)}(n)$ of $n$. The \textbf{Sheats composition $\mathbf{Sh}(n)$} of $n$ is $\mathbf{Sh}^{(h)}(n)$ with $h = \mathrm{ht}(n)$. It is obvious that for $i \leq h$ and
$\mathbf{Sh}(n) = (X_{1}, \dots, X_{h})$, the formula
\begin{equation}
	\mathbf{Sh}^{(i)}(n) = (X_{1}, \dots, X_{i-1}, X_{i} + X_{i+1} + \dots + X_{h})
\end{equation}
holds. Similarly,
\begin{equation} \label{Eq.Sheats-composition-is-multiplicative}
	\mathbf{Sh}(pn) = p \mathbf{Sh}(n)
\end{equation}
is straight from the definition. We finally define the \textbf{Sheats sequence} of $n$ as the sequence $(n_{0}, n_{1}, \dots, n_{h})$ with $n_{i} = \sum_{1 \leq j \leq i} X_{j}$. From the lexicographic property we see that
\begin{equation} \label{Eq.deg-of-factors-in-monotonically-increasing}
	\deg_{q}(n_{h-1}) < \deg_{q}(n)
\end{equation}
and, combined with \eqref{Eq.Sheats-composition-is-multiplicative}, even
\begin{equation}
	\deg_{p}(n_{h-1}) \leq \deg_{p}(n) - f
\end{equation}
holds. Also, since \enquote{$\log$} on $A$ agrees with the degree function \enquote{$\deg$} on polynomials, 
\begin{equation} \label{Eq.Relation-deg-of-powersum-and-weight}
	\deg S_{i, A}(n) = \mathrm{wt}_{\mathds{N}_{0}}(\mathbf{Sh}^{(i)}(n))
\end{equation}
if $i \leq \mathrm{ht}(n)$.

\begin{Remarks} \label{Remarks.Differences-to-Sheats-work}
	\begin{enumerate}[wide, label=(\roman*)]
		\item Actually Sheats worked with the weight system $\mathds{N} = (1,2,\dots)$ instead of $\mathds{N}_{0}$. But apart from the obvious formula
		\begin{equation}
			\mathrm{wt}_{\mathds{N}_{0}}(\mathbf{X}) = \mathrm{wt}_{\mathds{N}}(\mathbf{X}) - n
		\end{equation}
		for $i$-compositions $\mathbf{X}$ of $n$ (which explains the difference of \eqref{Eq.Relation-Sheats-factor-and-weight} and \eqref{Eq.Relation-deg-of-powersum-and-weight}), this doesn't affect the 
		nature and formulation of his result.
		\item The reader will have noticed that $i$-compositions of $n$ sometimes are indexed by $\{0,1,\dots,i-1\}$, sometimes by $\{1,2,\dots,i\}$. This is to some extent unavoidable since, depending on the context, the one or the
		other index set is more natural. The author is confident that this doesn't cause serious confusion. 
	\end{enumerate}
\end{Remarks}

As is discussed in the introduction, we need more precise information about Sheats compositions. This will be furnished in Section \ref{Properties-of-Sheats-compositions}. For the moment we restrict to state and prove the following property.

\begin{Proposition} \label{Proposition.Sheats-composition-of-n-J}
	Let $\mathbf{X} = (X_{1}, \dots, X_{i})$ be the Sheats $i$-composition of its value $n = \bigoplus_{1 \leq j \leq i} X_{j}$, and let $J$ be a non-empty subset of $I = \{1,2,\dots, i\}$. Then $\mathbf{X}^{(J)} \defeq (X_{j})_{j \in J}$
	is the Sheats $\lvert J \rvert$-composition of its value $n^{(J)} \defeq \bigoplus_{j \in J} X_{j}$.
\end{Proposition}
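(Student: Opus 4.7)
My plan is to argue by contradiction using the lexicographic characterization of Sheats: for each $m$ with $\mathrm{ht}(m) \geq s$, the Sheats $s$-composition $\mathbf{Sh}^{(s)}(m)$ is the \emph{unique} $s$-composition of $m$ whose transpose is lexicographically maximal. Assume toward contradiction that $\mathbf{X}^{(J)}$ is not the Sheats $|J|$-composition of $n^{(J)}$; this hypothesis makes sense because $\mathbf{X}^{(J)}$ itself is a $|J|$-composition of $n^{(J)}$, forcing $\mathrm{ht}(n^{(J)}) \geq |J|$. Writing $J = \{j_1 < j_2 < \cdots < j_s\}$ with $s = |J|$, there then exists another $s$-composition $\mathbf{Y} = (Y_1, \ldots, Y_s)$ of $n^{(J)}$ whose transpose $(Y_s, \ldots, Y_1)$ is strictly lex-greater than $(X_{j_s}, \ldots, X_{j_1})$.

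Next, I define the $i$-tuple $\mathbf{X}'$ by substitution: put $Y_k$ at position $j_k$ for each $k = 1, \ldots, s$, and keep $X_j$ at every other position. My first task is to verify that $\mathbf{X}'$ is still an $i$-composition of $n$. Each $Y_k$ lies in $(q-1)\mathds{N}$ because $\mathbf{Y}$ is a composition. Smoothness of the new decomposition reduces to carry-free bookkeeping: the partial sum $\bigoplus_{j \notin J} X_j$ equals $n - n^{(J)}$ and is carry-free, being a sub-sum of the carry-free decomposition $n = \bigoplus_{j=1}^{i} X_j$; in particular $n^{(J)} <_p n$. Since $\bigoplus_{k=1}^{s} Y_k = n^{(J)}$ is carry-free by the assumption on $\mathbf{Y}$, and since $n^{(J)} <_p n$, the two carry-free decompositions glue to give $n = \bigl(\bigoplus_{j \notin J} X_j\bigr) \oplus \bigl(\bigoplus_{k=1}^{s} Y_k\bigr)$ without carries, so $\mathbf{X}'$ is a bona fide $i$-composition of $n$.

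With $\mathbf{X}'$ in hand, I compare the transposes $(\mathbf{X}')^t$ and $\mathbf{X}^t$ in lex order by reading through the original positions $i, i-1, \ldots, 1$ in turn. At every position $j \notin J$ the two tuples agree, whereas at position $j = j_k \in J$ the entries are $Y_k$ (in $\mathbf{X}'$) versus $X_{j_k}$ (in $\mathbf{X}$). By the assumption on $\mathbf{Y}$, there is a largest index $k^\ast \in \{1, \ldots, s\}$ with $Y_{k^\ast} \neq X_{j_{k^\ast}}$, and for this $k^\ast$ moreover $Y_{k^\ast} > X_{j_{k^\ast}}$. Reading downward from $i$, the first disagreement between $(\mathbf{X}')^t$ and $\mathbf{X}^t$ therefore occurs precisely at original position $j_{k^\ast}$, and there $\mathbf{X}'$ carries the larger entry; hence $(\mathbf{X}')^t$ strictly exceeds $\mathbf{X}^t$ in lex order, contradicting the Sheats property of $\mathbf{X}$. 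The only mildly subtle point in the whole argument is the carry-free bookkeeping in the substitution step; the rest is a mechanical application of Sheats' lex-maximality criterion.
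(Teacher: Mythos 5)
Your proof is correct, and the underlying mechanism — substitute a purported better sub-composition $\mathbf{Y}$ back into $\mathbf{X}$ and compare transposes lexicographically against the Sheats characterization — is the same idea that drives the paper's proof. The difference is organizational. The paper reduces by induction to $I = J \cupdot \{k\}$ and then treats three subcases ($k = i$, $k = 1$, $1 < k < i$), pinning down the largest factor by a sandwich argument and, in the middle case, re-inserting $X_k$ into the Sheats composition of $n - X_k$ with an awkward re-numbering step. You handle an arbitrary $J$ in a single stroke by placing $Y_k$ back at its original position $j_k$; because the positions outside $J$ are untouched, no re-ordering or renumbering is needed, and the first discrepancy in $(\mathbf{X}')^t$ versus $\mathbf{X}^t$ is located immediately from the first discrepancy in $\mathbf{Y}^t$ versus $(\mathbf{X}^{(J)})^t$. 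Your carry-free verification (that the two nested smooth decompositions glue to a single smooth decomposition of $n$, since the $p$-adic digit inequalities are monotone under refinement) is the one genuine bookkeeping point, and you have it right. In short: same key lemma, same exchange idea, but without the induction on $\lvert I \smallsetminus J \rvert$ and without the case split — a cleaner organization of the paper's own argument.
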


\begin{proof}
	\begin{enumerate}[wide, label=(\roman*)]
		\item It is clear that $\mathbf{X}^{(J)}$ inherits the smooth sum property. Therefore it suffices to show that $(\mathbf{X}^{(J)})^{t}$ is lexicographically maximal among all $\lvert J \rvert$-compositions of
		$n^{(J)}$; call this property LM.
		\item By induction, we may suppose that $I = J \cupdot \{k\}$. If $k=i$ then LM for $\mathbf{X}^{(J)}$ is straight.
		\item Similarly, assume $k=1$, and let $(X_{2}', \dots, X_{i}')$ be the Sheats $(i-1)$-composition of $n-X_{1}$. Then in particular $X_{i}' \geq X_{i}$, but, as $(X_{1}, X_{2}', \dots, X_{i}')$ is an $i$-composition 
		of $n$, also $X_{i}' \leq X_{i}$. Hence $X_{i} = X_{i}'$ and $(X_{2}', \dots, X_{i-1}') = (X_{2}, \dots, X_{i-1})$ by the case $k=i$.
		\item Finally, suppose that $1 < k < i$. Let $(X_{1}'', \dots, X_{i-1}'')$ be the Sheats $(i-1)$-composition of $n-X_{k}$, so in particular $X_{i-1}'' \geq X_{i}$. Integrate $X_{k}$ into $(X_{1}'', \dots, X_{i-1}'')$, place it at the right
		position and re-number such that the resulting $(X_{1}', \dots, X_{i}')$ is a monotonically increasing composition of $n$, where $X_{k}$ is one of the $X_{j}'$. Then $X_{i}' = X_{i+1}'' \geq X_{i}$, so we have equality
		$X_{i}' = X_{i}$ since $\mathbf{X}$ is the Sheats $i$-composition of $n$. As before, this implies $X_{j}' = X_{j}$ for all $j$, and in particular $(X_{1}, \dots, \hat{X}_{k}, \dots, X_{i})$ is the Sheats $(i-1)$-composition of $n-X_{k}$. (As usual,
		$\hat{X}$ means omitting the symbol $X$.) \qedhere
	\end{enumerate}
\end{proof}

\section{$p$-adic and $q$-adic expansions} \label{Section.p-adic-and-q-adic-expansions}

\subsection{} Recall that $q = p^{f}$ with the prime number $p$. We let $F \defeq \{0,1, \dots, f-1\}$, $P\defeq \{0,1,\dots, p-1\}$ and $Q \defeq \{0,1,\dots,q-1\}$ be the natural sets of representatives modulo $f$, $p$ and $q$, respectively.
Given $n \in \mathds{N}_{0}$, we let
\begin{equation} \label{Eq.p-adic-and-q-adic-expansions-of-n}
	n = \sum_{j \geq 0} a_{j} p^{j} = \sum_{j \geq 0} b_{j}q^{j}
\end{equation}
with $a_{j} \in P$, $b_{j} \in Q$ be the expansions to base $p$ and $q$. Occasionally, we use the representation of $n$ as a $p$-string
\begin{equation}
	n = (a_{0}, \dots, a_{\alpha}) = (a_{0}, \dots, a_{\alpha})_{p}, \qquad \alpha = \deg_{p}(n)
\end{equation}
or as a $q$-string
\begin{equation*}
	n = (b_{0}, \dots, b_{\beta}) = (b_{0}, \dots, b_{\beta})_{q}, \qquad \beta = \deg_{q}(n).
\end{equation*}
We regard $n$ as the sum of its \textbf{$p$-parts} $p^{j}$ as in \ref{Eq.p-adic-and-q-adic-expansions-of-n}, where the $p$-part $p^{j}$ is of type $i \in F$ if $j \equiv i \pmod{f}$. For $i \in F$ put
\begin{equation}
	\ell_{i}(n) = \sum_{j \equiv i \pmod{f}} a_{j}
\end{equation}
for the number of $p$-parts of type $i$, and 
\begin{equation}
	\ell(n) = \sum_{i \in F} \ell_{i}(n) p^{i} = \sum_{j \geq 0} b_{j}
\end{equation}
for the $q$-adic digit sum of $n$. Let further $\mathbf{C}$ be the group of cyclic permutations of $F$ generated by the $f$-cycle $\sigma_{1} = (0,1,2,\dots,f-1)$. Then 
\begin{align}
	F \overset{\cong}{\longrightarrow}	\mathds{Z}/(f) 	&\overset{\cong}{\longrightarrow}	\mathbf{C}. \\
																						i		&\longmapsto \sigma_{i} = (\sigma_{1})^{i} \nonumber
\end{align}
We decompose $\mathds{N}_{0} = \bigcupdot_{t \in \mathds{N}_{0}} (tf + F)$ into shifts of $F$ and extend the $\mathbf{C}$-action to $\mathds{N}_{0}$ by 
\begin{equation}
	\sigma(tf + i) = tf + \sigma(i), \qquad i \in F,
\end{equation}
the \textbf{tautological action}. This in turn induces an action on $Q$ by 
\begin{equation}
	\Big( \sum_{i \in F} a_{i}p^{i} \Big)^{\sigma} \defeq \sum a_{\sigma(i)} p^{i} = \sum a_{i} p^{\sigma^{-1}(i)}.
\end{equation}
We write this action (the \textbf{true action}) as $(\cdot)^{\sigma}$ from the right in order to not confuse it with the tautological action. Of course we still have the rule $x^{\sigma \tau} = (x^{\tau})^{\sigma} = (x^{\sigma})^{\tau}$, as
$\mathbf{C}$ is abelian. The true action of $\mathbf{C}$ is extended to $\mathds{Z}_{p}$ and its subset $\mathds{N}_{0}$ by
\begin{equation}
	\Big( \sum_{j \geq 0} b_{j}q^{j} \Big)^{\sigma} = \sum_{j \geq 0} (b_{j})^{\sigma} q^{j}
\end{equation}
on $q$-adic expansions. On $p$-adic expansions it reads
\begin{equation} \label{Eq.True-action-on-p-adic-expansions}
	\Big( \sum_{j \geq 0} a_{j}p^{j} \Big)^{\sigma} = \sum_{j \geq 0} a_{\sigma(j)} p^{j} = \sum_{j \geq 0} a_{j} p^{\sigma^{-1}(j)}.
\end{equation}
Especially on $p$-parts: If $p^{j}$ is of type $i \in F$ and $\sigma = \sigma_{s}$ with $s \in F$, then
\begin{equation}\label{Eq.True-action-on-p-parts}
	(p	^{j})^{\sigma} = p^{\sigma^{-1}(j)} = \begin{cases} p^{j-s},	&\text{if } s \leq i, \\ p^{j+f-s},		&\text{if } s > i. \end{cases}
\end{equation}

Finally, we define the twisted sum-of-digits function $\ell^{\sigma}$ on $\mathds{N}_{0}$ by
\begin{equation}
	\ell^{\sigma}(n) \defeq \ell(n^{\sigma}), \qquad \sigma \in \mathbf{C}.
\end{equation}
Caution: This does not agree with $(\ell(n))^{\sigma}$! But see Corollary \ref{Corollary.Congruences-of-sigma-action}.

\begin{Lemma} \label{Lemma.Compatibility-with-smooth-sums}
	\begin{enumerate}[label=$\mathrm{(\roman*)}$]
		\item The functions $\ell_{i}$, $\ell$, $\ell^{\sigma}$ are compatible with smooth sums. That is, $\ell_{i}(m \oplus n) = \ell_{i}(m) + \ell_{i}(n)$, and ditto for $\ell$ and $\ell^{\sigma}$;
		\item $\sigma \in \mathbf{C}$ respects smooth sums, i.e., $(m \oplus n)^{\sigma} = m^{\sigma} \oplus n^{\sigma}$;
		\item $\ell(n) \equiv n \pmod{q-1}$;
		\item for $m,n \in \mathds{N}_{0}$, the equivalence holds:
		\[
			m <_{p} n \Longleftrightarrow \forall \sigma \in \mathbf{C}: m^{\sigma} \leq n^{\sigma}.
		\]
	\end{enumerate}
\end{Lemma}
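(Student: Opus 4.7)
The plan is to address the four assertions in order, with (ii) serving as the pivot for both (i) and (iv).

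For (ii), since $\sigma$ acts on $\mathds{N}_{0}$ by permuting $p$-digit positions only within each block $B_{t} = \{tf,\dots,tf+f-1\}$, the no-carryover condition $a_{j}+a_{j}' < p$ is preserved when $j$ is replaced by $\sigma(j)$. Hence $m^{\sigma}+n^{\sigma}$ is still smooth, and both $(m \oplus n)^{\sigma}$ and $m^{\sigma} \oplus n^{\sigma}$ have $j$-th $p$-digit equal to $a_{\sigma(j)}+a_{\sigma(j)}'$, so they agree. For (i), smoothness of $m \oplus n$ means its $j$-th $p$-digit equals $a_{j}+a_{j}'$; restricting to $j \equiv i \pmod{f}$ yields additivity of $\ell_{i}$, weighted summation over $i \in F$ then gives additivity of $\ell$, and additivity of $\ell^{\sigma}$ follows from $\ell^{\sigma} = \ell \circ (\cdot)^{\sigma}$ combined with (ii). For (iii), since $p^{f} = q \equiv 1 \pmod{q-1}$ the congruence $p^{j} \equiv p^{j \bmod f} \pmod{q-1}$ collapses $n = \sum_{j} a_{j} p^{j}$ to $\sum_{i \in F} p^{i} \ell_{i}(n) = \ell(n) \pmod{q-1}$.

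For (iv), the forward implication is direct: if $a_{j} \leq a_{j}'$ for every $j$, the same holds after replacing $j$ by $\sigma(j)$, and digit-wise $\leq$ in the $p$-adic expansion transmits to the numerical inequality $m^{\sigma} \leq n^{\sigma}$. The converse I would argue contrapositively. Assume $m \not<_{p} n$, and let $j_{0} = t_{0} f + i_{0}$ with $i_{0} \in F$ be the largest position at which $a_{j_{0}} > a_{j_{0}}'$. Choose $\sigma = \sigma_{s}$ with $s \equiv i_{0}+1 \pmod{f}$, so that $\sigma^{-1}$ sends $i_{0}$ to $f-1$. Then in both $m^{\sigma}$ and $n^{\sigma}$ the $p$-digit at position $t_{0} f + f - 1$ is $a_{j_{0}}$ respectively $a_{j_{0}}'$, giving a strict local excess on the $m$-side; every higher position carries a $p$-digit transported from some position $> j_{0}$, where by maximality of $j_{0}$ one has $a \leq a'$, so the high-order digits of $m^{\sigma}$ stay $\leq$ those of $n^{\sigma}$ block by block.

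The delicate step I anticipate is the final numerical comparison in (iv): digit-wise $\leq$ at positions above $t_{0} f + f - 1$ does not immediately yield that the aggregate high-order contribution to $n^{\sigma}$ is dominated by the local excess at $t_{0} f + f - 1$, since several strict inequalities $a_{j} < a_{j}'$ at $j > j_{0}$ could a priori outweigh it. The cleanest resolution I foresee is to invoke (ii) to present $n^{\sigma} - m^{\sigma}$ via a smooth decomposition of digit differences and read its sign off at the distinguished $p$-position, and if necessary to refine the choice of $j_{0}$ inductively so that the top block $B_{t_{0}}$ contains the decisive digit discrepancy.
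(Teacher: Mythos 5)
Your treatment of (i)--(iii) and of the forward direction of (iv) is correct and matches the paper's one-line justification. The real content is the converse of (iv), and the delicate step you flag at the end is not a technicality you can circumvent: the converse is false as stated once $m,n \in \mathds{N}_{0}$ occupy more than one $q$-block. Take $q=4$ (so $p=2$, $f=2$), $m=2$ with $p$-string $(0,1)$ and $n=13$ with $p$-string $(1,0,1,1)$. Then $m \not<_{p} n$ (the $p^{1}$-digit of $m$ is $1>0$), yet $m^{\mathrm{id}}=2 \leq 13 = n^{\mathrm{id}}$ and $m^{\sigma_{1}}=1 \leq 14 = n^{\sigma_{1}}$, so $m^{\sigma} \leq n^{\sigma}$ for every $\sigma \in \mathbf{C}$. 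This is exactly the obstruction you describe: after rotating the offending digit to the top $p$-position of its block $B_{t_{0}}$, the contribution to $n^{\sigma}$ from higher $q$-blocks (where $n$'s $q$-digits may strictly dominate $m$'s) swamps the local advantage of $m^{\sigma}$ inside $B_{t_{0}}$, and you cannot tune $\sigma$ per block because the same cyclic shift acts on every $q$-digit simultaneously. Neither invoking (ii) nor re-choosing $j_{0}$ repairs this; in the example there is only one index with $m_{j} > n_{j}$, so there is nothing left to re-choose.

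What the paper actually proves is the converse for $m,n \in Q$ (a single $q$-block), where your computation closes cleanly: with $\sigma = \sigma_{f-1-i}$ the digit excess $m_{i} > n_{i}$ lands at position $f-1$, and then $m^{\sigma} \geq m_{i}p^{f-1} > n_{i}p^{f-1} + (p^{f-1}-1) \geq n^{\sigma}$. The paper's opening line \enquote{it suffices to show (iv) for $m,n \in Q$} is where the reduction hides. Under the numerical reading of \enquote{$m^{\sigma} \leq n^{\sigma}$} that reduction is not available for the converse; it tacitly relies on the block-wise reading $\forall \sigma\ \forall t: b_{t}^{\sigma} \leq c_{t}^{\sigma}$ (writing $m = \sum b_{t}q^{t}$, $n = \sum c_{t}q^{t}$ with $b_{t}, c_{t} \in Q$), which is genuinely equivalent to $m <_{p} n$ and does decompose into the $Q$-case. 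Your instinct to attack the aggregation step head-on was right; the clean fix is to restate (iv) block-wise (equivalently, for $m,n \in Q$), after which your argument and the paper's coincide, and note that only the forward implication is invoked later in the paper.
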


\begin{proof}
	(i) and (ii) are straight from definitions, and (iii) is easy and well-known. It suffices to show (iv) for $m,n \in Q$. Then \enquote{$\Rightarrow$} is obvious. For the other direction, write $m = \sum_{i \in F} m_{i}p^{i}$ and
	$n = \sum_{i \in F} n_{i}p^{i}$ with $m_{i}$, $n_{i} \in P$. If $m_{i} > n_{i}$ for some $i$ then for $\sigma = \sigma_{f-1-i}$, $m^{\sigma} = m_{i}p^{f-1} + \text{smaller terms} > n^{\sigma} = n_{i}p^{f-1} + \text{smaller terms}$.
\end{proof}

For not necessarily smooth sums, there is at least a congruence.

\begin{Proposition} \label{Proposition.Congruences-for-m-sigma-and-n-sigma}
	For $m, n \in \mathds{N}_{0}$ and $\sigma \in \mathbf{C}$, the congruence $(m+n)^{\sigma} \equiv m^{\sigma} + n^{\sigma} \pmod{q-1}$ holds.
\end{Proposition}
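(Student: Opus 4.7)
The plan is to show that the true action of $\sigma = \sigma_s \in \mathbf{C}$ on $\mathds{N}_{0}$ agrees, modulo $q-1$, with multiplication by the unit $p^{-s}$ of $\mathds{Z}/(q-1)$. Since multiplication by a fixed integer is $\mathds{Z}$-linear, the stated additivity $(m+n)^{\sigma} \equiv m^{\sigma} + n^{\sigma} \pmod{q-1}$ is then immediate.

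First I would verify the claim for a single $q$-adic digit $b \in Q$. Writing $b = \sum_{i \in F} a_{i} p^{i}$ with $a_{i} \in P$, the definition of the true action gives
\[
	b^{\sigma_{s}} = \sum_{i \in F} a_{\sigma_{s}(i)} p^{i} = \sum_{i \in F} a_{(i+s) \bmod f} \, p^{i},
\]
which is just the cyclic shift of the $p$-adic digit string $(a_{0}, \dots, a_{f-1})$ by $s$ positions. On the other hand, $p^{f-s} b = \sum_{i \in F} a_{i} p^{i+f-s}$, and reducing exponents using $p^{f} \equiv 1 \pmod{q-1}$ reorganizes this sum into exactly $\sum_{i \in F} a_{(i+s) \bmod f}\, p^{i}$. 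Hence $b^{\sigma_{s}} \equiv p^{f-s} b \equiv p^{-s} b \pmod{q-1}$.

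For general $n = \sum_{j \geq 0} b_{j} q^{j}$ with $b_{j} \in Q$, the definition of the true action on $q$-adic expansions combined with $q^{j} \equiv 1 \pmod{q-1}$ yields
\[
	n^{\sigma_{s}} = \sum_{j \geq 0} (b_{j})^{\sigma_{s}} q^{j} \equiv \sum_{j \geq 0} p^{-s} b_{j} \equiv p^{-s} \sum_{j \geq 0} b_{j} \equiv p^{-s} n \pmod{q-1},
\]
where the last congruence uses $n \equiv \ell(n) \pmod{q-1}$ from Lemma~\ref{Lemma.Compatibility-with-smooth-sums}(iii). Linearity of multiplication by $p^{-s}$ then gives
\[
	(m+n)^{\sigma_{s}} \equiv p^{-s}(m+n) \equiv p^{-s} m + p^{-s} n \equiv m^{\sigma_{s}} + n^{\sigma_{s}} \pmod{q-1},
\]
and since every $\sigma \in \mathbf{C}$ is of the form $\sigma_{s}$, the proposition follows.

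The only real obstacle is matching the cyclic-shift direction with the correct power of $p$; once one notices that multiplication by $p$ modulo $q-1$ rotates the $p$-adic digits inside a single $q$-adic block, the rest is essentially a definition chase. As a byproduct one obtains the sharper statement $n^{\sigma_{s}} \equiv p^{-s} n \pmod{q-1}$, which presumably is the content of the Corollary~\ref{Corollary.Congruences-of-sigma-action} flagged after the definition of $\ell^{\sigma}$.
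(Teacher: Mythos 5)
Your proof is correct, and it takes a genuinely different and arguably cleaner route than the paper. The paper's argument reduces to the case $n=1$, then does a three-way case split on the position $\bar{i}$ of the least $p$-adic digit of the blocking $q$-adic coefficient of $m$ that is $<p-1$, and finally lifts to general $m$ via the $q$-adic expansion. You instead identify the key structural fact that the true action of $\sigma_s$ on $\mathds{N}_0$ coincides modulo $q-1$ with multiplication by the unit $p^{-s}\in(\mathds{Z}/(q-1))^{\times}$: for a single $q$-digit $b=\sum_{i\in F}a_ip^i$ the cyclic shift $b^{\sigma_s}=\sum_i a_{(i+s)\bmod f}p^i$ reorganizes to $p^{f-s}b$ once one reduces exponents via $p^f=q\equiv 1\pmod{q-1}$, and for general $n=\sum_j b_jq^j$ the relation $q^j\equiv 1$ together with $\ell(n)\equiv n$ closes the loop. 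Additivity is then immediate from $\mathds{Z}$-linearity of multiplication. The payoff is that you obtain the sharper intrinsic statement $n^{\sigma_s}\equiv p^{-s}n\pmod{q-1}$, which at a glance yields all three congruences of Corollary~\ref{Corollary.Congruences-of-sigma-action}(i) (though that corollary does not literally state your identity, it follows instantly from it); the paper's case-by-case proof is more hands-on and does not surface this unit-action viewpoint. One small inaccuracy to note: Corollary~\ref{Corollary.Congruences-of-sigma-action} records $\ell^{\sigma}(n)\equiv(\ell(n))^{\sigma}$ and the compatibility congruences, not the multiplication-by-$p^{-s}$ formula itself, so "presumably is the content of" slightly overstates the match.
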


\begin{proof}
	\begin{enumerate}[label=(\roman*), wide]
		\item By an obvious induction, it suffices to treat the case where $n=1$.
		\item First assume that both $m$ and $m+n = m+1$ belong to $Q$. Write $m = \sum_{i \in F} a_{i}p^{i}$ with $a_{i} \in P$, and let $\bar{i}$ be the least $i$ such that $a_{i} < p-1$. Then
		\[
			m+n = m+1 = (a_{\bar{i}} + 1)p^{\bar{i}} + \sum_{i > \bar{i}} a_{i}p^{i}
		\]
		and, with $\sigma = \sigma_{s}$ ($0 \neq s \in F$):
		\begin{align}
			(m+n)^{\sigma} 	&= (a_{\bar{i}} +1)p^{\sigma^{-1}(\bar{i})} + R, \qquad R \defeq \sum_{i > \bar{i}} a_{i}p^{\sigma^{-1}(i)}, \nonumber \\
			m^{\sigma} 			&= (p-1)p^{\sigma^{-1}(0)} + \dots + (p-1)p^{\sigma^{-1}(\bar{i}-1)} + a_{\bar{i}} p^{\sigma^{-1}(\bar{i})} + R. \label{Eq.Equality-for-m-sigma}
		\end{align}
		We distinguish the three cases $s > \bar{i}$, $s = \bar{i}$ and $s < \bar{i}$.
		
		If \fbox{$s > \bar{i}$} then by \eqref{Eq.True-action-on-p-parts} the right hand side of \eqref{Eq.Equality-for-m-sigma} equals
		\[
			(p-1) p^{f-s} + \dots + (p-1)p^{f-s+\bar{i}-1} + a_{\bar{i}}p^{f-s+\bar{i}} + R,
		\]
		which added to $n^{\sigma} = 1^{\sigma} = p^{f-s}$ yields $(m+n)^{\sigma}$.
		
		For \fbox{$s = \bar{i}$} we get 
		\begin{align*}
			m^{\sigma}							&= (p-1)p^{f-s} + \dots + (p-1)p^{f-1} + a_{\bar{i}} p^{0}  +R, \\
			m^{\sigma} + n^{\sigma}	&= m^{\sigma} + p^{f-s} = q-1 + (a_{\bar{i}} +1)p^{0} + R = q-1 + (m+n)^{\sigma}.
		\end{align*}
		
		Now suppose that \fbox{$s < \bar{i}$}. Then
		\[
			m^{\sigma} = (p-1)p^{f-s} + \dots + (p-1)p^{f-1} + (p-1)p^{0} + \dots + (p-1)p^{\bar{i}-s-1} + a_{\bar{i}} p^{\bar{i}-s} + R.
		\]
		In this case, 
		\[
			m^{\sigma} + n^{\sigma} = m^{\sigma} +p^{f-s} = q-1 + (a_{\bar{i}} +1)p^{\bar{i}-s} + R = q-1 + (m+n)^{\sigma}
		\]
		as before. Hence the assertion is true in all cases.
		\item At last, we handle the case of general $m = \sum b_{j}q^{j}$, $b_{j} \in Q$. Let now $\bar{j}$ be minimal with $b_{j} < q-1$. Then
		\[
			(m+n)^{\sigma} = (m+1)^{\sigma} = (b_{\bar{j}} +1)^{\sigma} q^{\bar{j}} + R, \qquad R \defeq \sum_{j > \bar{j}} b_{j}^{\sigma} q^{j}.
		\]
		Now
		\begin{align*}
			m^{\sigma}							&= (q-1)q^{0} + \dots + (q-1)q^{\bar{j}-1} + (b_{\bar{j}})^{\sigma} q^{\bar{j}} + R &&(\text{since $(q-1)^{\sigma} = q-1$}) \\
			n^{\sigma}								&= 1^{\sigma} = p^{f-s}, \text{ and} \\
			m^{\sigma} + n^{\sigma}	&= q^{\bar{j}}-1 + 1^{\sigma} + b_{\bar{j}}^{\sigma}q^{\bar{j}} + R.
		\end{align*}
		Hence
		\[
			m^{\sigma} + n^{\sigma} - (m+n)^{\sigma} = q^{\bar{j}} -1+1^{\sigma} + (b_{\bar{j}}^{\sigma} - (b_{\bar{j}} + 1)^{\sigma})q^{\bar{j}} \equiv 1^{\sigma}(1-q^{\bar{j}}) \equiv 0 \pmod{q-1},
		\]
		where the first congruence comes from (ii), applied to $(b_{\bar{j}} + 1)^{\sigma}$. \qedhere
	\end{enumerate}
\end{proof}

\begin{Corollary} \label{Corollary.Congruences-of-sigma-action}
	\begin{enumerate}[label=$\mathrm{(\roman*)}$]
		\item The following congruences modulo $q-1$ hold for $\sigma \in \mathbf{C}$:
		\begin{align}
			\text{If } m &\equiv n \text{ then } m^{\sigma} \equiv n^{\sigma}, \label{Eq.Congruence-of-m-and-n-implies-congruence-of-m-sigma-and-n-sigma}\\ 
			\intertext{in particular,}
			\text{if } m	&\equiv 0 \text{ then } m^{\sigma} \equiv 0; \label{Eq.Congruence-to-zero-implies-congruence-of-m-sigma-to-0} \\
			\ell^{\sigma}(n)	&\equiv (\ell(n))^{\sigma}.
		\end{align}
		\item If $m<_{p} n$ (resp. $m \varprec n$) then $m^{\sigma} <_{p} n^{\sigma}$ (resp. $m^{\sigma} \varprec n^{\sigma}$),
		\item For all $\sigma \in \mathbf{C}$: $\mathrm{ht}(n^{\sigma}) = \mathrm{ht}(n)$.
	\end{enumerate}
\end{Corollary}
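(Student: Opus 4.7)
The plan is to read everything off of Proposition \ref{Proposition.Congruences-for-m-sigma-and-n-sigma} (additivity of $(\cdot)^{\sigma}$ mod $q-1$) and Lemma \ref{Lemma.Compatibility-with-smooth-sums}, handling the three parts in order.

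For part (i), the first thing I would note is the trivial but crucial identity $(q-1)^{\sigma}=q-1$: indeed $q-1=\sum_{i\in F}(p-1)p^{i}$ has all $p$-digits in $F$ equal to $p-1$, so the true action just permutes them and returns $q-1$. Combined with Proposition \ref{Proposition.Congruences-for-m-sigma-and-n-sigma} applied inductively, this gives $(k(q-1))^{\sigma}\equiv k(q-1)^{\sigma}=k(q-1)\equiv 0\pmod{q-1}$ for every $k\in\mathds{N}_{0}$. Now if $m\equiv n\pmod{q-1}$, say (WLOG $m\ge n$) $m=n+k(q-1)$, one more application of Proposition \ref{Proposition.Congruences-for-m-sigma-and-n-sigma} yields $m^{\sigma}\equiv n^{\sigma}+(k(q-1))^{\sigma}\equiv n^{\sigma}\pmod{q-1}$, giving the first congruence; the case $n=0$ is the second. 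For the third, Lemma \ref{Lemma.Compatibility-with-smooth-sums}(iii) gives $\ell^{\sigma}(n)=\ell(n^{\sigma})\equiv n^{\sigma}$ and $\ell(n)\equiv n\pmod{q-1}$; applying the first congruence just proved to the latter gives $(\ell(n))^{\sigma}\equiv n^{\sigma}\pmod{q-1}$, so both sides of \eqref{Eq.Equality-for-gamma-A-k-in-most-cases}—I mean of the displayed congruence for $\ell^{\sigma}$—are congruent to $n^{\sigma}$.

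For part (ii), if $m<_{p}n$ then the difference is a smooth summand: $n=m\oplus(n-m)$. By Lemma \ref{Lemma.Compatibility-with-smooth-sums}(ii), $\sigma$ respects smooth sums, hence $n^{\sigma}=m^{\sigma}\oplus(n-m)^{\sigma}$, which is exactly $m^{\sigma}<_{p}n^{\sigma}$. The strengthening $m\varprec n\Rightarrow m^{\sigma}\varprec n^{\sigma}$ is then immediate by combining this with \eqref{Eq.Congruence-of-m-and-n-implies-congruence-of-m-sigma-and-n-sigma}.

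For part (iii), the map $n\mapsto n^{\sigma}$ is a bijection of $\mathds{N}_{0}$ with inverse $n\mapsto n^{\sigma^{-1}}$, and by (ii) both $\sigma$ and $\sigma^{-1}$ preserve the relation $\precneq$. Hence a maximal chain $0\precneq n_{1}\precneq\dots\precneq n_{h}<_{p}n$ witnessing $\mathrm{ht}(n)=h$ is transported to a chain $0\precneq n_{1}^{\sigma}\precneq\dots\precneq n_{h}^{\sigma}<_{p}n^{\sigma}$ of the same length, so $\mathrm{ht}(n^{\sigma})\ge\mathrm{ht}(n)$; applying the same argument to $\sigma^{-1}$ and $n^{\sigma}$ gives the reverse inequality. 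There is no real obstacle here: the whole corollary is a formal consequence of Proposition \ref{Proposition.Congruences-for-m-sigma-and-n-sigma} together with the fixed-point identity $(q-1)^{\sigma}=q-1$, which is the only \enquote{content} one has to isolate.
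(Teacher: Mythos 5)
Your proof is correct and, for parts (i) and (iii), follows essentially the same line as the paper: the fixed-point fact $(q-1)^{\sigma}=q-1$ plus Proposition \ref{Proposition.Congruences-for-m-sigma-and-n-sigma} and induction give the congruences in (i), Lemma \ref{Lemma.Compatibility-with-smooth-sums}(iii) gives the third congruence, and transporting a maximal chain by $\sigma$ and its inverse gives (iii).

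The one place where you genuinely deviate is part (ii). The paper justifies the implication $m<_{p}n\Rightarrow m^{\sigma}<_{p}n^{\sigma}$ by citing Lemma \ref{Lemma.Compatibility-with-smooth-sums}(iv) (the characterization $m<_{p}n\iff\forall\tau:m^{\tau}\le n^{\tau}$), which strictly speaking still requires the small observation that $\tau\mapsto\sigma\tau$ is a bijection of $\mathbf{C}$ before the characterization can be reapplied to $m^{\sigma},n^{\sigma}$. You instead observe that $m<_{p}n$ means $n=m\oplus(n-m)$ and invoke Lemma \ref{Lemma.Compatibility-with-smooth-sums}(ii), so that $n^{\sigma}=m^{\sigma}\oplus(n-m)^{\sigma}$ reads off $m^{\sigma}<_{p}n^{\sigma}$ directly. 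This is cleaner and avoids the extra group-theoretic step; both routes are valid. (Minor slip: you refer to \eqref{Eq.Equality-for-gamma-A-k-in-most-cases}, which is the vanishing-number formula from the introduction; you clearly meant the third displayed congruence in (i), and you correct yourself, so no harm done.)
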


\begin{proof}
	From \ref{Proposition.Congruences-for-m-sigma-and-n-sigma}, $(m+q-1)^{\sigma} \equiv m^{\sigma} + (q-1)^{\sigma} = m^{\sigma} +q-1 \equiv m^{\sigma}$, which gives 
	\eqref{Eq.Congruence-of-m-and-n-implies-congruence-of-m-sigma-and-n-sigma} and \eqref{Eq.Congruence-to-zero-implies-congruence-of-m-sigma-to-0}. Further, 
	$\ell^{\sigma}(n) = \ell(n^{\sigma}) \equiv n^{\sigma} \equiv (\ell(n))^{\sigma}$, where the last congruence comes from \eqref{Eq.Congruence-of-m-and-n-implies-congruence-of-m-sigma-and-n-sigma}, as $n \equiv \ell(n)$. 
	This shows (i). The assertion about \enquote{$<_{p}$} in (ii) has been given in \ref{Lemma.Compatibility-with-smooth-sums}(iv), the one concerning \enquote{$\varprec$} comes from 
	\ref{Proposition.Congruences-for-m-sigma-and-n-sigma}.
	
	Ad (iii): By (ii), the sequence \eqref{Eq.Maximal-chain} for $n$ is mapped under $\sigma$ to a similar sequence for $n^{\sigma}$.
\end{proof}

\begin{Lemma} \label{Lemma.Formula-for-sigma-of-partial-sum}
	Consider the sum $S = \sum_{j \in F} a_{j}p^{j}$ with arbitrary coefficients $a_{j} \in \mathds{N}_{0}$. For $\sigma = \sigma_{s}$ ($s\in F$) and $i$ with $1 \leq i \leq f$ let 
	\begin{equation}
		S^{(i)} \defeq \sum_{0 \leq j < i} a_{j}p^{j}
	\end{equation}
	be the partial sum and 
	\begin{equation}
		S^{\sigma} \defeq \sum_{j \in F} a_{\sigma(j)} p^{j}.
	\end{equation}
	Then the relation
	\begin{equation} \label{Eq.True-action-on-sum}
		S^{\sigma} = p^{-s}(S + (q-1)S^{(s)})
	\end{equation}
	holds.
\end{Lemma}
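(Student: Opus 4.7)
The plan is to unfold both sides into explicit sums indexed by $F$ and match them term by term; everything reduces to two observations, namely that $\sigma_s^{-1}$ acts as the shift $j \mapsto j-s$ (with wrap-around by $+f$), and that $q-1 = p^f - 1$ converts a low-index $p$-part into the difference between its wrapped-up version and the original.

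Concretely, I would first write $\sigma = \sigma_s$ and use \eqref{Eq.True-action-on-p-adic-expansions}, which gives
\begin{equation*}
S^\sigma \;=\; \sum_{j \in F} a_j\, p^{\sigma^{-1}(j)} \;=\; \sum_{s \leq j < f} a_j\, p^{j-s} \;+\; \sum_{0 \leq j < s} a_j\, p^{j+f-s},
\end{equation*}
splitting the sum according to whether $j \geq s$ (no wrap) or $j < s$ (wrap). Next, I would expand the right-hand side of \eqref{Eq.True-action-on-sum}. Using $q-1 = p^f - 1$,
\begin{equation*}
p^{-s}\bigl(S + (q-1)S^{(s)}\bigr) \;=\; \sum_{j=0}^{f-1} a_j\, p^{j-s} \;+\; \sum_{j=0}^{s-1} a_j\, p^{j+f-s} \;-\; \sum_{j=0}^{s-1} a_j\, p^{j-s}.
\end{equation*}
The first and third sums combine to leave exactly $\sum_{s \leq j < f} a_j\, p^{j-s}$, matching the first part of $S^\sigma$, while the middle sum is precisely the wrap-around part. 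The two expressions therefore agree.

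The only thing to watch is the boundary cases $s = 0$ (where $S^{(0)}$ is empty and both sides equal $S$) and the fact that the formula is stated with $a_j \in \mathds{N}_0$ rather than $a_j \in P$, so we must regard the identity as one of formal integers — no carries are needed to justify the step, since both sides are genuine polynomial identities in the $p^j$. No real obstacle arises; the lemma is essentially a book-keeping identity for a cyclic permutation of the exponents.
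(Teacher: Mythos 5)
Your computation is correct and is precisely the "direct computation" the paper omits: splitting $S^\sigma$ according to whether the index wraps around, and expanding $(q-1)S^{(s)} = (p^f - 1)S^{(s)}$ so that the negative piece cancels the low-index part of $S$, is the natural way to verify the identity. Your remarks about $s=0$ and about the identity being a formal polynomial identity in the $p^j$ (no carries involved, so the hypothesis $a_j \in \mathds{N}_0$ causes no trouble) are both accurate.
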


\begin{proof}
	Direct computation, omitted.
\end{proof}

Caution: $S^{\sigma}$ is not determined by $\sigma$ and the value of $S$, but depends on the sum presentation of $S$. 

We will apply the Lemma to $S = \ell(n) = \sum_{j \in F} \ell_{j}(n) p^{j}$ and the corresponding partial sums
\begin{equation}
	\ell^{(i)}(n) \defeq \sum_{0 \leq j < i} \ell_{j}(n)p^{j}, \qquad 1 \leq i \leq f.
\end{equation}

\begin{Corollary} \label{Corollary.Inequalities-for-ell-under-true-action}
	For $n \in \mathds{N}_{0}$ we have: $\forall \sigma \in \mathbf{C} : \ell(n) \leq \ell^{\sigma}(n)$ if and only if for each $i$ with $1 \leq i \leq f$, the inequality labeled by i
	\begin{equation} \tag{$i$}
		\ell^{(i)}(n)/(p^{i}-1) \geq \ell(n)/(q-1)
	\end{equation}
	holds.
\end{Corollary}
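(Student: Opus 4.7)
The plan is to derive the equivalence from a single application of Lemma \ref{Lemma.Formula-for-sigma-of-partial-sum}. First I would verify that $\ell^{\sigma}(n)$ coincides with the quantity $S^{\sigma}$ of that lemma when one takes $S = \ell(n) = \sum_{j \in F} \ell_{j}(n)\, p^{j}$, i.e., $a_{j} = \ell_{j}(n) \in \mathds{N}_{0}$. Writing the $q$-adic expansion $n = \sum_{j \geq 0} b_{j} q^{j}$ with $b_{j} = \sum_{i \in F} c_{j,i}\, p^{i} \in Q$, one has $n^{\sigma} = \sum_{j} b_{j}^{\sigma} q^{j}$ by definition of the true action, and each $b_{j}^{\sigma}$ again lies in $Q$; hence
\begin{equation*}
\ell^{\sigma}(n) = \ell(n^{\sigma}) = \sum_{j} b_{j}^{\sigma} = \sum_{i \in F} \Bigl( \sum_{j} c_{j,\sigma(i)} \Bigr)\, p^{i} = \sum_{i \in F} \ell_{\sigma(i)}(n)\, p^{i},
\end{equation*}
which is exactly $S^{\sigma}$ in the sense of Lemma \ref{Lemma.Formula-for-sigma-of-partial-sum}.

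Next, fixing $\sigma = \sigma_{s}$ with $s \in F$, formula \eqref{Eq.True-action-on-sum} of that lemma yields
\begin{equation*}
\ell^{\sigma}(n) = p^{-s}\bigl( \ell(n) + (q-1)\, \ell^{(s)}(n) \bigr).
\end{equation*}
Multiplying through by $p^{s}$, the inequality $\ell(n) \leq \ell^{\sigma}(n)$ becomes $(p^{s}-1)\, \ell(n) \leq (q-1)\, \ell^{(s)}(n)$, and for $s \geq 1$ this is equivalent, after division by the positive quantity $(p^{s}-1)(q-1)$, to the stated inequality labelled $(s)$. For $s=0$ the relation is $0 \leq 0$, tautologically true and corresponding to $\sigma = \mathrm{id}$; the boundary index $i=f$ in the stated range $1 \leq i \leq f$ is likewise a tautology, since $\ell^{(f)}(n) = \ell(n)$ and $p^{f}-1 = q-1$. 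Because $\mathbf{C} = \{ \sigma_{s} \mid s \in F\}$, quantifying over $\sigma \in \mathbf{C}$ is the same as quantifying over $s \in F$, so the universal statement $\ell(n) \leq \ell^{\sigma}(n)$ for all $\sigma$ is equivalent to the conjunction of the inequalities $(i)$, $1 \leq i \leq f$.

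The only point requiring any care is the identification $\ell^{\sigma}(n) = S^{\sigma}$ with $S = \ell(n)$, since the presentation-dependent action of Lemma \ref{Lemma.Formula-for-sigma-of-partial-sum} is being applied to a sum whose \enquote{coefficients} $\ell_{j}(n)$ may exceed $p-1$. This identification is legitimate precisely because the underlying $q$-adic digits $b_{j} \in Q$ are mapped by $\sigma$ to elements of $Q$ without any carries, so the digit counts $\ell_{j}(n)$ permute cleanly under $\sigma$. Beyond this conceptual point, the remainder is routine algebra and I do not anticipate a genuine obstacle.
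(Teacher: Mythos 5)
Your proof is correct and takes essentially the same route the paper intends: the paper's own proof of this corollary is just the one-line remark ``This follows from Lemma \ref{Lemma.Formula-for-sigma-of-partial-sum} by a small computation,'' and you have carried out exactly that computation, including the necessary verification that $\ell^{\sigma}(n) = S^{\sigma}$ for the presentation $S = \sum_{j\in F}\ell_{j}(n)p^{j}$, which the paper also flags (``We will apply the Lemma to $S = \ell(n)$\dots'') immediately after the caution about presentation-dependence. Nothing is missing.
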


\begin{proof}
	This follows from \ref{Lemma.Formula-for-sigma-of-partial-sum} by a small computation.
\end{proof}

\begin{Definition} \label{Definition.Indicator-function}
	The \textbf{indicator function $I$} is defined through
	\[
		I(n) \defeq \min_{\sigma \in \mathbf{C}} \ell^{\sigma}(n).
	\]
	It is strictly monotonically increasing from $(\mathds{N}_{0}, <_{p})$ to $(\mathds{N}_{0}, \leq)$. Some $\sigma \in \mathbf{C}$ or the corresponding $s \in F$ with $\sigma = \sigma_{s}$ is \textbf{critical} for $n$ if
	$I(n) = \ell^{\sigma}(n)$. 
\end{Definition}

We may now state the main technical result of this section

\begin{Theorem} \label{Theorem.Characterization-indicator-function}
	Given $n$ and $H$ in $\mathds{N}_{0}$, the following assertions are equivalent:
	\begin{enumerate}[label=$\mathrm{(\roman*)}$]
		\item $\mathrm{ht}(n) \geq H$;
		\item $I(n) \geq (q-1)H$;
		\item there exists $\sigma \in \mathbf{C}$ such that for $i = 1,2,\dots, f$ the inequalities $\ell^{(i)}(n^{\sigma}) \geq (p^{i}-1)H$ hold.
	\end{enumerate}
\end{Theorem}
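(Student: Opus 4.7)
The strategy is to prove the cycle $\mathrm{(i)} \Rightarrow \mathrm{(ii)} \Rightarrow \mathrm{(iii)} \Rightarrow \mathrm{(i)}$, with the first two implications essentially formal and the third carrying the combinatorial weight. For $\mathrm{(i)} \Rightarrow \mathrm{(ii)}$, a chain $0 \precneq n_{1} \precneq \dots \precneq n_{H} <_{p} n$ realises $n_{H}$ as a smooth sum $X_{1} \oplus \dots \oplus X_{H}$ with each $X_{k} \defeq n_{k}-n_{k-1}$ positive and divisible by $q-1$. For any $\sigma \in \mathbf{C}$, \ref{Lemma.Compatibility-with-smooth-sums}(ii),(iv) and \ref{Corollary.Congruences-of-sigma-action}(ii) give $n_{H}^{\sigma} = X_{1}^{\sigma} \oplus \dots \oplus X_{H}^{\sigma} <_{p} n^{\sigma}$, with each $X_{k}^{\sigma}$ positive and divisible by $q-1$, so $\ell(X_{k}^{\sigma}) \geq q-1$ by \ref{Lemma.Compatibility-with-smooth-sums}(iii). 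Summing, $\ell(n^{\sigma}) \geq \ell(n_{H}^{\sigma}) = \sum_{k} \ell(X_{k}^{\sigma}) \geq (q-1)H$ for every $\sigma$, whence $I(n) \geq (q-1)H$.

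For $\mathrm{(ii)} \Rightarrow \mathrm{(iii)}$, pick $\sigma$ critical for $n$, so that $\ell(n^{\sigma}) = I(n) \geq (q-1)H$. Because $\tau \mapsto \sigma\tau$ bijects $\mathbf{C}$, the identity is critical for $n^{\sigma}$: $\min_{\tau} \ell((n^{\sigma})^{\tau}) = \min_{\tau} \ell(n^{\sigma\tau}) = I(n) = \ell(n^{\sigma})$. Applying \ref{Corollary.Inequalities-for-ell-under-true-action} to $n^{\sigma}$ then yields $\ell^{(i)}(n^{\sigma}) \geq (p^{i}-1)\ell(n^{\sigma})/(q-1) \geq (p^{i}-1)H$ for each $1 \leq i \leq f$, which is $\mathrm{(iii)}$.

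For $\mathrm{(iii)} \Rightarrow \mathrm{(i)}$, by \ref{Corollary.Congruences-of-sigma-action}(iii) the height is $\mathbf{C}$-invariant, so replacing $n$ by $n^{\sigma}$ I may assume $\sigma$ is the identity and $\ell^{(i)}(n) \geq (p^{i}-1)H$ for $i = 1,\dots, f$. I would build $H$ positive, pairwise $p$-disjoint summands $X_{1}, \dots, X_{H}$, each divisible by $q-1$, whose smooth sum is $<_{p} n$, by induction on $H$: at each step peel off one packet $X$ of value exactly $q-1$ in such a way that the residual $n'$ still satisfies $\ell^{(i)}(n') \geq (p^{i}-1)(H-1)$. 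The canonical packet uses $p-1$ $p$-parts of type $i$ for each $i \in F$, giving total value $(p-1)(1+p+\dots+p^{f-1}) = q-1$; when some $\ell_{i}(n) < p-1$, one substitutes $p$ type-$(i-1)$ $p$-parts (spread over distinct positions of $n$ to preserve smoothness) for each missing type-$i$ part, preserving both value and residue modulo $q-1$. Equivalently, the problem reduces to producing non-negative integers $D_{i} \leq \ell_{i}(n)$ with $\sum_{i} D_{i}p^{i} = H(q-1)$ and then distributing $D_{i}$ type-$i$ parts across $H$ packets of value $q-1$; smoothness within and across packets is automatic since every position $p^{j}$ of $n$ has multiplicity $\leq p-1$.

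The main obstacle is the ballot-type feasibility lemma at the heart of $\mathrm{(iii)} \Rightarrow \mathrm{(i)}$, namely that the partial-sum bounds $\ell^{(i)}(n) \geq (p^{i}-1)H$ deliver the integers $D_{i}$ just described. I would settle this by a top-down digit allocation with carry propagation, observing that at each level the cumulative deficit in high-type availability is always absorbed by the cumulative low-type reserve—which is exactly what the hypothesised inequalities assert. Everything else in $\mathrm{(iii)} \Rightarrow \mathrm{(i)}$ is bookkeeping around this combinatorial core.
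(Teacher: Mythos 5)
Your implications $\mathrm{(i)}\Rightarrow\mathrm{(ii)}$ and $\mathrm{(ii)}\Rightarrow\mathrm{(iii)}$ are correct and coincide with the paper's arguments. For $\mathrm{(iii)}\Rightarrow\mathrm{(i)}$ you also identify the right inductive strategy, and in fact the same one the paper uses: peel off a single packet $X <_{p} n$ of digit sum $\ell(X)=q-1$ so that the residual $n'=n-X$ still satisfies $\ell^{(i)}(n')\geq(p^{i}-1)(H-1)$, then iterate. However, you never actually establish that such an $X$ exists with the residual inequalities intact. You explicitly flag it as ``the main obstacle'' and then settle for a gesture at ``a top-down digit allocation with carry propagation'' and an unverified claim that the ``cumulative deficit in high-type availability is always absorbed by the cumulative low-type reserve.'' That assertion is the entire technical content of the direction. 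The paper proves it by specifying the selection algorithm concretely (Steps $0$ through $f-1$, compensating a shortfall in type-$i$ parts by the maximal available amount of lower-type parts), introducing auxiliary deficits $d_{f}=0$, $d_{i}=\max(p-1+d_{i+1}-\ell_{i}(n),0)$, and running a descending induction on $i$ distinguishing $d_{i}=0$ from $d_{i}>0$. Your sketch does not substitute for that verification, and as written the proof is incomplete.

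Two further cautions. First, your ``equivalent'' global reformulation — produce $D_{i}\leq\ell_{i}(n)$ with $\sum_{i}D_{i}p^{i}=H(q-1)$ and then distribute the parts across $H$ packets — overstates what is automatic: the numerical identity $\sum D_{i}p^{i}=H(q-1)$ does not by itself guarantee a decomposition into $H$ packets each of digit sum $q-1$. For $p=2$, $f=2$, $H=2$, and $n$ with $\ell_{0}(n)=2$, $\ell_{1}(n)=3$ (so that the hypotheses of $\mathrm{(iii)}$ hold), the choice $D_{0}=0$, $D_{1}=3$ satisfies $D_{0}+2D_{1}=6=H(q-1)$ but cannot be split into two packets with $d_{0}+2d_{1}=3$, since $d_{0}=0$ forces $2d_{1}=3$. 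So the $D_{i}$ must be chosen judiciously, which is again exactly the content of the omitted algorithm. Second, you repeatedly write ``value $q-1$'' for a packet when you mean the $q$-adic digit sum $\ell=q-1$; the actual value of a packet built from $p$-parts $p^{j}$ with $j\equiv i\pmod f$ is typically much larger than $q-1$, and it is only $\ell$ that matters for divisibility by $q-1$.
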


\begin{proof}
	\enquote{(i) $\Rightarrow$ (ii)}: Let $0 = n_{0} \precneq n_{1} \precneq \dots \precneq n_{h} <_{p} n$ be a chain of length $h = \mathrm{ht}(n) \geq H$. Each difference $n_{i} - n_{i-1}$ is divisible by $q-1$, so $\ell^{\sigma}(n_{i}-n_{i-1})$
	too ($\sigma \in \mathbf{C}$, $i = 1,2,\dots, f$) by \eqref{Eq.Congruence-to-zero-implies-congruence-of-m-sigma-to-0}. As $\ell^{\sigma}$ is additive in smooth sums, $\ell^{\sigma}(n) \geq \ell^{\sigma}(n_{h}) \geq (q-1)h \geq (q-1)H$.
	
	\enquote{(ii) $\Rightarrow$ (iii)}: Replacing $n$ with $n^{\sigma}$, where $\sigma$ is critical for $n$, we may assume that $I(n) = \min_{\sigma} \ell^{\sigma}(n) = \ell(n)$. By \ref{Corollary.Inequalities-for-ell-under-true-action}, 
	for each $i$
	\[
		\ell^{(i)}(n) \geq (p^{i}-1) \ell(n)/(q-1) \geq (p^{i}-1)H.
	\]
	
	\enquote{(iii) $\Rightarrow$ (i)}: \begin{enumerate}[label=(\alph*), wide] \item As before we may assume $\sigma = \sigma_{0} = \mathrm{id}$ for the $\sigma$ in assertion (iii). We shall give an algorithm that picks some $p$-parts
	of $n$ which, summed up, will give $n_{1}$ with $\ell(n_{1}) = q-1$. Then $0 \precneq n_{1} <_{p} n$. The algorithm is such that for $n' \defeq n-n_{1}$ the system of inequalities
	\begin{equation} \label{Eq.proof-system-of-inequalities}
		\ell^{(i)}(n') \geq (p^{i}-1)(H-1) \qquad (i=1,2,\dots,f)
	\end{equation}
	holds and thus induction applies.
	\item \textbf{Algorithm:} Put $n_{1} \defeq 0$.
	
	\textbf{Step 0:} Put the $p-1$ smallest $p$-parts of $n$ of type 0 to $n_{1}$. This is possible since $\ell_{0}(n) = \ell^{(1)}(n) \geq (p-1)H \geq p-1$. Now $\ell_{0}(n_{1})=p-1$.
	
	\textbf{Step 1:} Put the $p-1$ smallest $p$-parts of type 1 to $n_{1}$. This is possible if $\ell_{1}(n) \geq p-1$. If there are not enough of them, that is, if $a \defeq \max(p-1-\ell_{1}(n),0)>0$ then compensate by putting
	the smallest $ap$ $p$-parts of type 0 not used in Step 0 to $n_{1}$. This is possible since
	\[
		\ell_{0}(n) + p\ell_{1}(n) = \ell^{(2)}(n) \geq (p^{2}-1)H \geq p^{2}-1.
	\]
	Now $\ell_{0}(n_{1}) = p-1+ap$, $\ell_{1}(n_{1}) = p-1-a$, $\ell_{0}(n_{1}) + p\ell_{1}(n) = p^{2}-1$.
	
	\textbf{Step 2:} Put the $p-1$ smallest $p$-parts of type 2 to $n_{1}$. Possible if $\ell_{2}(n) \geq p-1$. If $b \defeq \max(p-1-\ell_{2}(n), 0) > 0$, compensate by adding the largest possible number $b_{1}$ with
	$b_{1}p \leq bp^{2}$ of smallest $p$-parts of type 1 not yet used to $n_{1}$, and if necessary, of $b_{0}$ smallest not yet used $p$-parts of type 0 such that $b_{0} + b_{1}p = bp^{2}$. This is possible by 
	\[
		\ell_{0}(n) + p\ell_{1}(n) + p^{2} \ell_{2}(n) = \ell^{(3)}(n) \geq (p^{3}-1)H \geq p^{3}-1.
	\]
	The current values of the $\ell_{i}$ are 
	\[
		\ell_{0}(n_{1}) = p-1 +ap +b_{0}, \qquad \ell_{1}(n_{1}) = p-1-a+b_{1}, \qquad \ell_{2}(n_{1}) = p-1-b,
	\]
	such that $\ell_{0}(n_{1}) + p \ell_{1}(n_{1}) + p^{2} \ell_{2}(n_{1}) = p^{3}-1$.
	
	\textbf{Step 3:} \\
	$\vdots$  \\
	\textbf{Step f-1:} Continue until step $f-1$, which determines $\ell_{0}(n_{1}), \dots, \ell_{f-1}(n_{1})$ with
	\begin{equation} \label{Eq.proof-expansion-of-ln1-in-terms-of-lin}
		\sum_{0 \leq i < f} p^{i} \ell_{i}(n_{1}) = \ell^{(f)}(n_{1}) = \ell(n_{1}) = q-1.
	\end{equation}
	This finishes the algorithm. In the $i$-th step, $\ell_{i}(n_{1})$ is first determined as $\min( \ell_{i}(n), p-1)$; if the wanted number $p-1$ cannot be reached since $\ell_{i}(n) < p-1$ then compensate by the maximal available number
	of minimal unused $p$-parts of type $i-1$, then possibly of $p$-parts of type $i-2$, etc.
	\item Let $n' \defeq n-n_{1}$. It remains to show the inequalities \eqref{Eq.proof-system-of-inequalities}. This is done by descending induction on $i$, where the assertion for the starting value $i=f$ is 
	\eqref{Eq.proof-expansion-of-ln1-in-terms-of-lin}. Put $d_{f} \defeq 0$ and for $i<f$, $d_{i} \defeq \max(p-1+d_{i+1}-\ell_{i}(n),0)$, so
	\[
		d_{i} > 0 \Longleftrightarrow \ell_{i}(n) < p-1 + d_{i+1}.
	\]
	Then $d_{i}$ is such that for each $i<f$
	\begin{equation} \label{Eq.proof-Relationship-between-geometric-p-sum-and-li-expansion}
		(\ell_{i}(n_{1}) + d_{i})p^{i} + \sum_{i<j<f} \ell_{j}(n_{1})p^{j} = (p-1)(p^{i}+p^{i+1}+ \dots + p^{f-1})
	\end{equation}
	holds.
	\item Let $i<f$ and suppose that \fbox{$d_{i}=0$}. Then by \eqref{Eq.proof-Relationship-between-geometric-p-sum-and-li-expansion}, $\ell^{(i)}(n_{1}) = (p-1)(1+p+\dots+p^{i-1}) = p^{i}-1$, so
	\[
		\ell^{(i)}(n') = \ell^{(i)}(n) - \ell^{(i)}(n_{1}) \geq (p^{i}-1)H - (p^{i}-1) = (p^{i}-1)(H-1).
	\]	
	If \fbox{$d_{i}>0$} then $\ell_{i}(n_{1}) = \ell_{i}(n)$, so 
	\[
		\ell^{(i+1)}(n') = \ell^{(i)}(n') + \ell_{i}(n')p^{i} = \ell^{(i)}(n'),
	\]
	where by induction assumption, $\ell^{(i+1)}(n') \geq (p^{i+1}-1)(H-1)$. Hence $\ell^{(i)}(n') \geq (p^{i+1}-1)(H-1) \geq (p^{i}-1)(H-1)$, too. In both cases, we have the wanted inequality, which finishes the induction for
	\eqref{Eq.proof-system-of-inequalities} and thereby the proof of \enquote{(iii)$\Rightarrow$(i)}. \qedhere
	\end{enumerate}
\end{proof}

\section{Properties of the Sheats composition} \label{Properties-of-Sheats-compositions}

\subsection{} In this section, $n$ will be a natural number divisible by $q-1$ with height $h = \mathrm{ht}(n)$. Let
\begin{equation}
	\mathbf{Sh}(n) = (X_{1}, \dots, X_{h})
\end{equation}
bei its Sheats composition and $(0=n_{0}, n_{1}, \dots, n_{h}=n)$ its Sheats sequence, $n_{i} = \sum_{1 \leq j \leq i} X_{j}$. We also write $\mathrm{Sh}_{i}(n) \defeq X_{i}$ and call it the $i$-th \textbf{Sheats factor} of $n$.
Below we shall present an Algorithm $\mathbf{Alg}$ that calculates the largest factor $\mathrm{Sh}(n) \defeq \mathrm{Sh}_{\mathrm{ht}(n)}(n)$, and thus by the induction formulas
\begin{equation}
	n_{i-1} = n_{i} - \mathrm{Sh}_{i}(n) = n_{i} - \mathrm{Sh}(n_{i}), \qquad \mathrm{Sh}_{i-1}(n) = \mathrm{Sh}(n_{i-1}) \qquad (i=1,2,\dots,h)
\end{equation}
the complete Sheats composition and sequence. We start with the trivial

\begin{Observation} \label{Observation.On-sigmas-critical-for-n}
	Let $\sigma \in \mathbf{C}$ be critical for $n$. Then $\ell^{\sigma}(n_{i}) = (q-1)i$, and for $m$ with $n_{i-1} \lneq_{p} m \lneq_{p} n_{i}$, the inequalities $(q-1)(i-1) < \ell^{\sigma}(m) < (q-1)i$ hold. In particular, 
	$\sigma$ is critical for all the $n_{i}$.
\end{Observation}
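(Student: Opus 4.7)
The plan is to pin $\ell^{\sigma}(n)$ to its exact value via Theorem \ref{Theorem.Characterization-indicator-function}, then propagate that equality along the Sheats partial sums using smooth-sum additivity. Since $\sigma$ is critical, $\ell^{\sigma}(n) = I(n)$, and the assumption $\mathrm{ht}(n) = h$ yields the lower bound $I(n) \geq (q-1)h$ via Theorem \ref{Theorem.Characterization-indicator-function}. For the matching upper bound, decompose $n = X_1 \oplus \cdots \oplus X_h$ and apply smooth-sum additivity (Lemma \ref{Lemma.Compatibility-with-smooth-sums}(i)) to get $\ell^{\sigma}(n) = \sum_{j=1}^{h} \ell^{\sigma}(X_j)$. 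Each $X_j$ is a positive multiple of $q-1$; Corollary \ref{Corollary.Congruences-of-sigma-action} then makes $X_j^{\sigma}$ positive and divisible by $q-1$, and since a positive integer has a positive $q$-adic digit sum congruent to itself mod $q-1$, we find $\ell^{\sigma}(X_j) \geq q-1$. Summing gives $\ell^{\sigma}(n) \geq h(q-1)$.

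If the last inequality were strict then, being a multiple of $q-1$, we would have $\ell^{\sigma}(n) \geq (h+1)(q-1)$, whence $I(n) \geq (h+1)(q-1)$ and Theorem \ref{Theorem.Characterization-indicator-function} would give $\mathrm{ht}(n) \geq h+1$, a contradiction. Hence $\ell^{\sigma}(n) = h(q-1)$ is forced, which pins every $\ell^{\sigma}(X_j)$ to $q-1$. Smooth additivity along $n_i = X_1 \oplus \cdots \oplus X_i$ then yields the principal equality $\ell^{\sigma}(n_i) = (q-1)i$. The truncated chain $0 \precneq n_1 \precneq \cdots \precneq n_i$ exhibits $\mathrm{ht}(n_i) \geq i$, so Theorem \ref{Theorem.Characterization-indicator-function} gives $I(n_i) \geq (q-1)i$; combined with the trivial $I(n_i) \leq \ell^{\sigma}(n_i) = (q-1)i$ this yields equality, i.e.\ $\sigma$ is critical for $n_i$.

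For the strict sandwich on intermediate $m$ with $n_{i-1} \lneq_{p} m \lneq_{p} n_i$, the hypotheses $n_{i-1} <_{p} m$ and $m <_{p} n_i$ give smooth decompositions $m = n_{i-1} \oplus (m - n_{i-1})$ and $n_i = m \oplus (n_i - m)$, whence $X_i = (m - n_{i-1}) \oplus (n_i - m)$. Both $m - n_{i-1}$ and $n_i - m$ are strictly positive, so their $\ell^{\sigma}$-values are strictly positive and sum to $\ell^{\sigma}(X_i) = q-1$, forcing $0 < \ell^{\sigma}(m - n_{i-1}) < q-1$. Substituting into $\ell^{\sigma}(m) = \ell^{\sigma}(n_{i-1}) + \ell^{\sigma}(m - n_{i-1}) = (q-1)(i-1) + \ell^{\sigma}(m - n_{i-1})$ produces the required bounds. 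The main obstacle is really the forced equality at the end of the first paragraph; everything else is routine bookkeeping within the smooth-sum calculus of Section \ref{Section.p-adic-and-q-adic-expansions}.
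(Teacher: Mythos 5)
Your proof is correct: the key step — pinning $\ell^{\sigma}(n) = (q-1)h$ by squeezing the lower bound $I(n)\geq(q-1)h$ from Theorem \ref{Theorem.Characterization-indicator-function} against the reductio showing it cannot be $\geq(q-1)(h+1)$, then distributing this across the smooth Sheats decomposition — does force $\ell^{\sigma}(X_j)=q-1$ for every factor, from which all three claims follow by $\oplus$-additivity of $\ell^\sigma$ and the strict positivity of $\ell^\sigma$ on positive integers. The paper states the Observation without proof (labelling it trivial), so there is no argument to compare against; what you have written is exactly the bookkeeping the paper leaves implicit, with the one mildly nonroutine step being the forced-equality contradiction you correctly flag.
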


\subsection{}\label{Subsection.Algorithm-and-characterization-of-height} The algorithm uses in a crucial way the characterization of the height through the indicator function $I$ given in Theorem 
\ref{Theorem.Characterization-indicator-function}. As usual, we regard $n$ as the sum of its $p$-parts $d_{\nu} = p^{j_{\nu}}$. These are arranged in \textbf{decreasing order} and numbered accordingly. 
Thus $d_{1} = p^{j_{1}} \geq d_{2} = p^{j_{2}} \geq \dots$, $j_{1} = \deg_{p} n$, and each $p^{j}$ appears $a_{j}$ times, where $n = \sum a_{j}p^{j}$ is the $p$-adic expansion. The number of $p$-parts is 
\begin{equation}
	\ell_{p}(n) \defeq \ell_{0}(n) + \ell_{1}(n) + \dots + \ell_{f-1}(n) = \sum a_{j}.
\end{equation}
$\mathbf{Alg}$ dictates which of the $p$-parts have to be picked for $X_{h} = \mathrm{Sh}(n)$. It is written as pseudo-code, and we use the convention that \enquote{$m \defeq m+t$} adds $t$ to the stored variable $m$, etc.

\begin{Algorithm}[$\mathbf{Alg}$] ~\label{Algorithm.To-determine-Sheats-factor}
	\begin{enumerate}\setcounter{enumi}{-1}
		\item begin
		\item $m \defeq 0$, $m' \defeq n$, $\nu = 1$;
		\item if $I(m' -d_{\nu}) \geq (q-1)(h-1)$ then $(m \defeq m+ d_{\nu}, m' \defeq m' - d_{\nu}$);
		\item if $I(m) < q-1$ then $(\nu \defeq \nu+1$, goto (2)) \\
					else $(X_{h} \defeq m$, $n_{h-1} \defeq m'$);
		\item end
	\end{enumerate}
\end{Algorithm}

\subsection{} We say that the $\nu$-th $p$-part $d_{\nu}$ with value $p^{j_{\nu}}$ \textbf{belongs to $X_{h}$} (or is \textbf{contained in $X_{h}$}) if it has been picked for $X_{h}$ in step (2). (This depends on the number $\nu$ and not
merely on the value $p^{j_{\nu}}$ of $d_{\nu}$!)

We need to generalize this notion---somewhat long-winded---to arbitrary numbers $M <_{p} n$ with $p$-expansion $M = \sum b_{j}p^{j}$, where $b_{j} \leq a_{j}$. Given $d_{\nu}$ with value $p^{j_{\nu}}$, consider the numbers
\begin{equation}
	\alpha \defeq \min\{ \mu \mid j_{\mu} = j_{\nu}\}, \qquad \beta \defeq \max\{ \mu \mid j_{\mu} = j_{\nu}\}.
\end{equation}
Then $\alpha \leq \nu \leq \beta$, and we say that $d_{\nu}$ \textbf{belongs to $M$} if $v-\alpha < b_{j_{\nu}}$. That is, among all the $a_{j_{\nu}} = \beta - \alpha + 1$ many terms $d_{\nu}$ ($\alpha \leq \nu \leq \beta$) with value
$p^{j_{\nu}}$, only the first $b_{j_{\nu}}$ many belong to $M$. Due to the nature of $\mathbf{Alg}$, this is coherent with the former definition of $d_{\nu}$ belonging to $X_{h}$.

\begin{Theorem} \label{Theorem.On-correctness-of-Alg}
	\begin{enumerate}[label=$\mathrm{(\roman*)}$]
		\item $\mathbf{Alg}$ terminates at a step $\nu \leq \ell_{p}(n)$;
		\item the final value of $I(m)$ is $q-1$;
		\item the final values of $m$ and $m'$ are $m = X_{h} = \mathrm{Sh}(n)$ and $m' = n_{h-1} = n-\mathrm{Sh}(n)$ (as claimed in the else clause of step $\mathrm{(3)}$).
	\end{enumerate}
\end{Theorem}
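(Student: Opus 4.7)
My plan is to establish two invariants of $\mathbf{Alg}$, deduce (ii) from them, prove (iii) by a greedy-exchange argument comparing $\mathbf{Alg}$'s output with $X_{h}$, and then obtain (i) as a corollary. The invariants maintained throughout the loop are (A) $m \oplus m' = n$ as a smooth sum (immediate, since $p$-parts are only transferred from $m'$ to $m$), and (B) $I(m') \geq (q-1)(h-1)$, which is literally the test governing step (2).

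For (ii), I fix a $\sigma \in \mathbf{C}$ critical for $n$: Theorem \ref{Theorem.Characterization-indicator-function} together with the fact that $\ell^{\sigma}(n) \equiv 0 \pmod{q-1}$ (by Corollary \ref{Corollary.Congruences-of-sigma-action}, using $n \equiv 0 \pmod{q-1}$) forces $\ell^{\sigma}(n) = I(n) = (q-1)h$. Smooth-sum additivity of $\ell^{\sigma}$ on $m \oplus m' = n$ together with (B) then gives $\ell^{\sigma}(m) \leq q-1$ and hence $I(m) \leq q-1$; since $\mathbf{Alg}$ only terminates once $I(m) \geq q-1$, the final value is exactly $q-1$.

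For (iii), (ii) combined with the invariants shows that the output $m$ is a valid candidate for the last factor of an $h$-composition of $n$ ($m>0$, $m \precneq n$, $\mathrm{ht}(n-m) \geq h-1$), so Sheats' lexicographic characterisation of $\mathbf{Sh}(n)$ gives $m \leq X_{h}$. Assuming $m < X_{h}$ for a contradiction, write $m = \sum c_{j}p^{j}$ and $X_{h} = \sum c'_{j}p^{j}$ and let $j^{\ast}$ be the largest exponent with $c_{j^{\ast}} \neq c'_{j^{\ast}}$; the case $c_{j^{\ast}} > c'_{j^{\ast}}$ is excluded by the one-line $p$-adic estimate $m - X_{h} \geq p^{j^{\ast}} - (p^{j^{\ast}} - 1) = 1 > 0$, so $c_{j^{\ast}} < c'_{j^{\ast}}$. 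Setting $\nu_{0} := \alpha(j^{\ast}) + c_{j^{\ast}}$ (the first copy of $p^{j^{\ast}}$ belonging to $X_{h}$ but not to $m$), the maximality of $j^{\ast}$ forces $\mathbf{Alg}$'s selection $S_{m}$ and the canonical selection $S_{X_{h}}$ to agree on every index below $\nu_{0}$, so the algorithm's state upon entering step (2) at $\nu_{0}$ is the proper partial sum $m_{\mathrm{cur}} = X_{h} \cap \{d_{1}, \dots, d_{\nu_{0}-1}\}$. Choosing $\tau \in \mathbf{C}$ critical for $X_{h}$, smooth-sum additivity of $\ell^{\tau}$ on $X_{h} = m_{\mathrm{cur}} \oplus (X_{h} \setminus m_{\mathrm{cur}})$ with non-zero second summand gives $\ell^{\tau}(m_{\mathrm{cur}}) < q-1$, hence $I(m_{\mathrm{cur}}) < q-1$ and $\mathbf{Alg}$ does enter step (2) at $\nu_{0}$. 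The feasibility value equals $I\bigl((n - X_{h}) \oplus (X_{h} \setminus \{d_{1}, \dots, d_{\nu_{0}}\})\bigr)$, and the elementary inequality $I(A \oplus B) \geq I(A) + I(B)$ on smooth sums (a one-line consequence of $\min(f+g) \geq \min f + \min g$ applied to additivity of each $\ell^{\sigma}$) bounds it below by $I(n - X_{h}) \geq (q-1)(h-1)$. Hence the check passes and $\mathbf{Alg}$ must add $d_{\nu_{0}}$, contradicting $d_{\nu_{0}} \notin m$. This proves $m = X_{h}$, and (i) is then immediate: $\mathbf{Alg}$ adds exactly the $p$-parts of $X_{h}$, the last at index $\max(S_{X_{h}}) \leq \ell_{p}(n)$, where $I(m) = I(X_{h}) = q-1$ triggers termination.

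The main obstacle I anticipate is the bookkeeping behind the identity $m_{\mathrm{cur}} = X_{h} \cap \{d_{1}, \dots, d_{\nu_{0}-1}\}$: one must check that $\mathbf{Alg}$'s selection $S_{m}$ and $S_{X_{h}}$ (under the \enquote{leftmost copies} convention) genuinely coincide on every index below $\nu_{0}$. The key point, sketched in the paragraph following Algorithm \ref{Algorithm.To-determine-Sheats-factor}, is that once $\mathbf{Alg}$ refuses the $k$-th copy of a value $p^{j}$, $m'$ is unchanged before the $(k+1)$-th copy is tested, so the same feasibility verdict recurs and no later copy of $p^{j}$ is ever accepted; thus $\mathbf{Alg}$'s selection for each value $p^{j}$ is automatically a prefix, which combined with the maximality of $j^{\ast}$ yields the identity.
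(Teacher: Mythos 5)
Your arguments for (ii) and (iii) are essentially the paper's own. For (ii) you use the critical $\sigma$ for $n$ and the decomposition $\ell^{\sigma}(n)=\ell^{\sigma}(m)+\ell^{\sigma}(m')$ to trap $I(m)$ between $q-1$ and itself at termination; the paper does the same. For (iii) your first-disagreement exchange is exactly the paper's: the paper picks the least index $\mu$ where $m$ and $M=\mathrm{Sh}(n)$ disagree, shows $d_{\mu}\in M\setminus m$, and observes that $\bigl(\sum_{\lambda\in S}d_{\lambda}\bigr)\oplus d_{\mu}<_{p}M$ forces the height of $n-\sum_{\lambda\in S}d_{\lambda}-d_{\mu}$ to be at least $h-1$, contradicting the rejection of $d_{\mu}$. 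Your version spells this out via the superadditivity $I(A\oplus B)\geq I(A)+I(B)$ and the observation $I(n-X_{h})\geq (q-1)(h-1)$, and also makes explicit the \enquote{prefix selection} bookkeeping and the argument that the first disagreement must be one \emph{missing} from $m$ rather than extra in $m$; the paper states these more tersely but the substance is identical.

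The genuine gap is in your derivation of (i) from (iii). Your proof of (iii) is conditional on $\mathbf{Alg}$ terminating: you speak throughout of \enquote{the output $m$}, and the inequality $m\leq X_{h}$ from Corollary \ref{Corollary.Maximal-Sheats-compositions} requires the final $m$ to satisfy $\mathrm{ht}(m)=1$, which is only guaranteed at termination. From the conditional statement \enquote{if $\mathbf{Alg}$ halts then its output is $X_{h}$} you cannot conclude \enquote{$\mathbf{Alg}$ adds exactly the $p$-parts of $X_{h}$}, which is a claim about the algorithm's full run. To make that inductive claim you would need \emph{both} directions of the invariant: not only that $\mathbf{Alg}$ never refuses a $p$-part of $X_{h}$ (which is what your exchange argument gives), but also that it never \emph{accepts} a $p$-part $d_{\nu}\notin X_{h}$. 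The latter is not addressed, and in fact is not obvious: with $m_{\mathrm{cur}}\subsetneq X_{h}$ and $R\defeq X_{h}\setminus m_{\mathrm{cur}}\neq 0$, the feasibility value is $I\bigl((n_{h-1}-d_{\nu})\oplus R\bigr)$, and the positive contribution of $R$ to each $\ell^{\sigma}$ means that $I(n_{h-1}-d_{\nu})<(q-1)(h-1)$ alone does not force the test to fail. The paper therefore proves (i) independently: assuming $\mathbf{Alg}$ exhausts all $\ell_{p}(n)$ of the $p$-parts with $I(m)$ still below $q-1$, it uses the critical $\sigma$ and Theorem \ref{Theorem.Characterization-indicator-function} to produce a chain $0\precneq m_{1}'\precneq\dots\precneq m_{h-1}'\lneq_{p}m'$, and then a $p$-part of $m'$ whose removal still leaves height $\geq h-1$, contradicting that every $p$-part of $m'$ was rejected. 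You should either reproduce an independent termination argument of this kind, or upgrade your induction to show the rejection direction as well.
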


\begin{proof}
	\begin{enumerate}[label=(\roman*), wide]
		\item Suppose this is not the case, i.e., we still have $I(m) < q-1$ when $\nu$ attains its maximal value $\ell_{p}(n)$. Then all the $p$-parts $p^{j}$ of $m'$ are forebidden, that is,
		\begin{equation} \label{Eq.Inequality-for-Im-prime-minus-p-j}
			I(m' - p^{j}) < (q-1)(h-1).
		\end{equation}
		Let $\sigma \in \mathbf{C}$ be critical for $n$. As $(q-1)h = \ell^{\sigma}(n) = \ell^{\sigma}(m) + \ell^{\sigma}(m')$, then $\ell^{\sigma}(m') > (q-1)(h-1)$, so by Theorem \ref{Theorem.Characterization-indicator-function} applied
		to $(m')^{\sigma}$, there exists a sequence 
		\[
			0 \precneq m_{1}' \precneq \dots \precneq m_{h-1}' <_{p} m'.
		\]
		
		As $\ell^{\sigma}(m_{i}') = (q-1)i$ for $1 \leq i \leq h-1$, the inequality $m_{h-1}' <_{p} m'$ is strict, and there exists a $p$-part belonging to $m'$ with some value $p^{j}$ such that $\mathrm{ht}(m'-p^{j}) \geq h-1$, that is,
		$I(m'-p^{j}) \geq (q-1)(h-1)$, which contradicts \eqref{Eq.Inequality-for-Im-prime-minus-p-j}.
		\item For the last value of $\nu$, $I(m') \geq (q-1)(h-1)$ by definition and $I(m) \geq q-1$ by the exit condition. As $I(m) \leq \ell^{\sigma}(m)$, $I(m') \leq \ell^{\sigma}(m')$ and 
		$\ell^{\sigma}(m) + \ell^{\sigma}(m') = \ell^{\sigma}(n) = (q-1)h$ for the $n$-critical $\sigma \in \mathbf{C}$, we have in fact equality everywhere.
		\item Let $m$ and $m'$ be the final values of $\mathbf{Alg}$. Then $0 \precneq m \varprec n$ and $\mathrm{ht}(n-m) = \mathrm{ht}(m') \geq h-1$. Suppose that the true Sheats factor $M \defeq \mathrm{Sh}(n)$ is strictly larger 
		than $m$. Let $\mu \in \mathds{N}$ be the minimal number such that $d_{\mu} = p^{j_{\mu}}$ belongs to precisely one of $m$ or $M$. As $m<M$, $d_{\mu}$ belongs to $M$ but not to $m$. Put
		\[
			S \defeq \{ \lambda \in \mathds{N} \mid \lambda < \mu \text{ and } d_{\lambda} \text{ belongs to $m$, i.e., to $m$ and $M$}\}.
		\]
		As $\sum_{\lambda \in S} p^{j_{\lambda}} + p^{j_{\mu}} <_{p} M$, the complement $n - \sum_{\lambda \in S} d_{\lambda} - d_{\mu}$ has height $\geq h-1$, so $I(n - \sum_{\lambda \in S} d_{\lambda} - d_{\mu}) \geq (q-1)(h-1)$,
		which conflicts with $d_{\mu}$ not belonging to $m$. Therefore $m = \mathrm{Sh}(n)$ and $m' = n - \mathrm{Sh}(n) = n_{h-1}$. \qedhere
	\end{enumerate}
\end{proof}

\begin{Corollary} \label{Corollary.Maximal-Sheats-compositions}
	Let $\mathbf{Sh}(n) = (X_{1}, \dots, X_{h})$ be the Sheats composition of $n$, with Sheats sequence $(n_{i})$, $n_{i} = X_{1} + \dots + X_{i}$, $i \leq h$.
	\begin{enumerate}[label=$\mathrm{(\roman*)}$]
		\item For each $i$ with $1 \leq i < h$, $n-n_{i} = X_{i+1} + \dots + X_{h}$ is maximal among all $m \varprec n$ that satisfy $\mathrm{ht}(m) = h-i$, $\mathrm{ht}(n-m)=i$.
		\item Dually, $n_{i}$ is minimal among all $m \varprec n$ with $\mathrm{ht}(m) = i$, $\mathrm{ht}(n-m) = h-i$.
	\end{enumerate}
\end{Corollary}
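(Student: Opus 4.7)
The plan is to prove (i) directly and derive (ii) by the involution $m \leftrightarrow n - m$, which swaps the two height conditions and interchanges max with min; it therefore suffices to handle (i).

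For (i) I would induct on $h = \mathrm{ht}(n)$, with small $h$ trivial. In the inductive step, given a candidate $m$ I form the $h$-composition $\mathbf{W} = (Z_1, \ldots, Z_i, Y_1, \ldots, Y_{h-i})$ of $n$ by concatenating $\mathbf{Sh}(n-m) = (Z_1, \ldots, Z_i)$ with $\mathbf{Sh}(m) = (Y_1, \ldots, Y_{h-i})$; Sheats' lex-maximality (Theorem 1.2) yields $\mathbf{W}^t = (Y_{h-i}, \ldots, Y_1, Z_i, \ldots, Z_1) \leq_{\mathrm{lex}} (X_h, X_{h-1}, \ldots, X_1)$. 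Let $\tau$ be the first strict disagreement (or $\tau = \infty$).

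Perfect matching ($\tau = \infty$, or equivalently $\tau > h - i$) gives $m = X_{i+1} + \cdots + X_h = n - n_i$ directly. For $2 \leq \tau \leq h - i$, the top $\tau - 1$ Sheats factors of $m$ agree with $X_h, \ldots, X_{h-\tau+2}$, so the corresponding $p$-parts lie in $m$ and we can strip them off: $m' := m - (X_{h-\tau+2} + \cdots + X_h)$ sits $\varprec n_{h-\tau+1}$, has $\mathrm{ht}(m') = h - i - \tau + 1$, and satisfies $\mathrm{ht}(n_{h-\tau+1} - m') = i$. By Proposition 2.7 the Sheats composition of $n_{h-\tau+1}$ is $(X_1, \ldots, X_{h-\tau+1})$, and since $\mathrm{ht}(n_{h-\tau+1}) = h - \tau + 1 < h$ the inductive hypothesis applies to give $m' \leq X_{i+1} + \cdots + X_{h-\tau+1}$, hence $m \leq n - n_i$ after re-adding the matched factors.

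The main obstacle is the residual case $\tau = 1$: $Y_{h-i} < X_h$, so no common top factor lets us drop to a smaller-height lattice. My plan for this case is an exchange argument in the spirit of Theorem 4.4(iii): assuming for contradiction $m > n - n_i$, let $\mu$ be the smallest index where $d_\mu$ belongs to precisely one of $m$ and $n - n_i$; integer comparison forces $d_\mu \in m$, hence $d_\mu \in n_i$. Using Theorem 4.3 ($X_{j+1} \geq q X_j$) to control the rigid Sheats factorization of $n_i$ around $d_\mu$, together with Theorem 3.9's indicator-function criterion for height, one should be able to swap $d_\mu$ out of $m$ in exchange for a $p$-part of $n - n_i$ of matching residue class modulo $q - 1$, producing a strictly larger candidate $\tilde m$ with $X_h \varprec \tilde m$ that still meets the height constraints --- reducing to the already-handled case $\tau \geq 2$. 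Establishing that the required exchange always exists and simultaneously preserves smoothness, $(q-1)$-divisibility, and both heights is the main subtlety I anticipate; Theorem 4.3's geometric growth of Sheats factors is expected to furnish the counting argument ensuring that a suitable swap partner is always present in $m$.
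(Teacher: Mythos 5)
Your route is genuinely different from the paper's. The paper argues directly, for each $i$ at once, at the level of $p$-parts: given a competitor $M \varprec n$ with $\mathrm{ht}(M)=h-i$, $\mathrm{ht}(n-M)=i$ and $M > n-n_i$, it locates the first index $\mu$ at which $M$ and $n-n_i$ disagree on the $p$-part $d_\mu$, and derives a contradiction by playing the monotonicity of the indicator function $I$ against the rule by which $\mathbf{Alg}$ spurns $d_\mu$ — the very mechanism of the proof of Theorem \ref{Theorem.On-correctness-of-Alg}(iii). You instead compare at the coarser level of Sheats factors, concatenating $\mathbf{Sh}(n-m)$ with $\mathbf{Sh}(m)$ and invoking Sheats' lexicographic maximality, with an outer induction on $\mathrm{ht}(n)$.

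Within that scheme, your $\tau = \infty$ case is a tautology, and your $2 \leq \tau \leq h-i$ reduction is correct and cleanly done: you strip the shared top factors, land in $n_{h-\tau+1}$ of strictly smaller height, identify its Sheats composition via Proposition \ref{Proposition.Sheats-composition-of-n-J}, and apply the induction hypothesis. But the case $\tau = 1$ — where the top Sheats factors $Y_{h-i}$ and $X_h$ already differ — is not a residual nuisance; it is the core of the statement. When $i = h-1$ it is trivial (exactly last-entry maximality of the transposed compositions, equivalently Theorem \ref{Theorem.On-correctness-of-Alg}(iii)); but for $1 \leq i < h-1$, the lexicographic comparison of transposes tells you only $Y_{h-i} < X_h$ and gives no control whatsoever over the magnitude of $m = Y_1 + \cdots + Y_{h-i}$, so no height reduction is available. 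This is precisely the situation the paper's $p$-part exchange argument is built to handle.

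Your sketch for $\tau = 1$ is therefore a genuine gap, not a loose end, and you say so yourself. You have not shown that a $p$-part of $n - n_i$ of the required type and size exists, nor that removing $d_\mu$ from $m$ and inserting such a $p$-part simultaneously preserves $\tilde m <_p n$, $(q-1)\mid\tilde m$, $\mathrm{ht}(\tilde m)=h-i$, and $\mathrm{ht}(n-\tilde m)=i$, nor that it raises the top Sheats factor up to $X_h$. The appeal to the inequality $X_{j+1}\geq qX_j$ of Theorem \ref{Theorem.Inequalitites-for-Sheats-factors} is also logically backwards: that theorem comes later in the paper's development, its own proof already needs the full machinery of $\mathbf{Alg}$, and the paper handles $\tau = 1$ together with all other cases without it. Were you to carry the exchange through, you would in effect be rebuilding the paper's first-disagreement argument via the indicator-function criterion of Theorem \ref{Theorem.Characterization-indicator-function}. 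As written, the proposal does not yet prove the corollary.
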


\begin{proof}
	Clearly, (ii) follows from (i). Now (i) holds for $i = h-1$ as the characteristic property of the Sheats composition, and the general case follows by descending induction on $i$. As in the proof of of \ref{Theorem.On-correctness-of-Alg}(iii),
	let $M \varprec n$ be such that $\mathrm{ht}(M) = h-i$ and $\mathrm{ht}(n-M) = i$, and assume that $M > m \defeq n-n_{i}$. Let $\mu$ be the first number such that $m$ and $M$ differ in the $p$-part $d_{\mu}$; then
	$d_{\mu}$ belongs to $M$ but not to $m$. Again letting
	\[
		S \defeq \{ \lambda \in \mathds{N} \mid \lambda < \mu \text{ and $d_{\lambda}$ belongs to $m$ and $M$}\},
	\]
	then $I(n - \sum_{\lambda \in S} d_{\lambda} - d_{\mu}) \geq (q-1)i$ since $\sum d_{\lambda} + d_{\mu} <_{p} M$, but the same quantitiy must be less than $(q-1)i$ since $d_{\mu}$ has not been picked for 
	$X_{h}, X_{h-1}, \dots, X_{i+1}$. Hence $m = n-n_{i}$ has the stated maximality property.
\end{proof}

It is now the right place to show by counterexample that some likely-seeming assertions on Sheats compositions and sequences are in fact wrong. In the examples, $q=4$, numbers are given by their $p$-strings (in this case: as $0$-$1$
strings), and $\sigma$ is the non-trivial element in $\mathbf{C}$.

\begin{Examples}
	\begin{enumerate}[wide, label=(\roman*)]
		\item Unrefinable chains $0 \precneq m_{1} \precneq \dots \precneq m_{\ell} = n$ don't necessarily have length $\ell = \mathrm{ht}(n)$. Therefore the complicated conditions in the preceding corollary. A counterexample is given by 
		(compare \ref{Example.Irregular-k-Case-2})
		\begin{equation}
			n = (1,0,1,0,1,1,0,1,0,1).
		\end{equation}
		Then $\mathrm{ht}(n) = 3$ with $\mathrm{Sh}(n) = (X_{1}, X_{2}, X_{3})$ and unrefinable chain $0 \precneq m \precneq n$, where 
		\begin{align*}
			   X_{1} 	&= (1,0,0,0,0,1), \qquad X_{2} = (0,0,1,0,0,0,0,1), \qquad X_{3} = (0,0,0,0,1,0,0,0,0,1), \\
				    m &= (1,0,1,0,1).
		\end{align*}
		\item For $\sigma \in \mathbf{C}$ and $n \equiv 0 \pmod{q-1}$, $\mathrm{ht}(n) = \mathrm{ht}(n^{\sigma})$ by \ref{Corollary.Congruences-of-sigma-action}(iii). If $\mathbf{Sh}(n) = (X_{1}, \dots, X_{h})$
		then $(\mathbf{Sh}(n))^{\sigma} = (X_{1}^{\sigma}, \dots, X_{h}^{\sigma})$ is an $h$-composition of $n^{\sigma}$ (even monotonically increasing, as follows from \ref{Theorem.Inequalitites-for-Sheats-factors}), but in 
		general $(\mathbf{Sh}(n))^{\sigma} \neq \mathbf{Sh}(n^{\sigma})$. Here is a counterexample. Let
		\begin{equation}
			n = (0,1,0,1,1,1,0,1).
		\end{equation}
		Then $\mathbf{Sh}(n) = (X_{1}, X_{2})$, $\mathbf{Sh}(n^{\sigma}) = (Y_{1}, Y_{2}) \neq (\mathbf{Sh}(n))^{\sigma}$, with
		\[
			\begin{aligned}
				X_{1} 	&= (0,1,0,0,1,0)		& X_{1}^{\sigma}		&= (1,0,0,0,0,1) \\
				X_{2}	&= (0,0,0,1,0,1,0,1)	& X_{2}^{\sigma}	&= (0,0,1,0,1,0,1,0) \\
				Y_{1}	&= (1,0,1,0,1,0)		& Y_{2}						&= (0,0,0,0,0,1,1,0).
			\end{aligned}
		\]
		Fortunately, some assertions not only appear likely but are even true. The next result sharpens Sheats' characterization of the Sheats composition.
	\end{enumerate}
\end{Examples}

\begin{Corollary} \label{Corollary.Maximal-weight-of-sheats-i-composition}
	Let $\mathbf{r} = (r_{0}, r_{1}, \dots)$ be any weight system in the sense of \ref{Subsection.Non-vanishing-term-in-Powersum-S-i-Lambda}, that is, the $r_{j}$ are rational numbers with $0 \leq r_{0} < r_{1} < \dots$ Let $i \in \mathds{N}$, $i \leq h = \mathrm{ht}(n)$. Then the Sheats $i$-composition
	$\mathbf{Sh}^{(i)}(n) = (X_{1}, \dots, X_{i})$ of $n$ has dominant weight with respect to $\mathbf{r}$. That is,
	\begin{equation} \label{Eq.Corollary-maximal-weight-of-sheats-i-composition}
		\mathrm{wt}_{\mathbf{r}}(\mathbf{Sh}^{(i)}(n)) = \sum_{1 \leq j \leq i} r_{j-1}X_{j} > \mathrm{wt}_{\mathbf{r}}(\mathbf{Y})
	\end{equation}
	for each $i$-composition $\mathbf{Y} = (Y_{1}, \dots, Y_{i})$ of $n$ different from $\mathbf{Sh}^{(i)}(n)$.
\end{Corollary}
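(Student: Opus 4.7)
The plan is to combine summation by parts with a joint-maximization property of the tail sums of $\mathbf{Sh}^{(i)}(n)$, obtained by extending Corollary \ref{Corollary.Maximal-Sheats-compositions}. First, I would introduce the tail sums $s_k(\mathbf{Y}) := Y_k + Y_{k+1} + \cdots + Y_i$ (with the convention $s_{i+1}(\mathbf{Y}) := 0$). Using $Y_k = s_k(\mathbf{Y}) - s_{k+1}(\mathbf{Y})$, Abel summation gives
\[
\mathrm{wt}_{\mathbf{r}}(\mathbf{Y}) \;=\; r_0 \, n \;+\; \sum_{k=2}^{i} (r_{k-1} - r_{k-2}) \, s_k(\mathbf{Y}),
\]
and the hypothesis $0 \le r_0 < r_1 < \cdots$ makes every coefficient strictly positive. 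Thus the corollary reduces to the joint inequality $s_k(\mathbf{Sh}^{(i)}(n)) \ge s_k(\mathbf{Y})$ for all $k \in \{2, \ldots, i\}$ and every $i$-composition $\mathbf{Y}$ of $n$, with strict inequality in at least one index whenever $\mathbf{Y} \ne \mathbf{Sh}^{(i)}(n)$.

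The substantive step is the joint dominance. For the top tail $k = i$, the last entry $X_i$ of $\mathbf{Sh}^{(i)}(n)$ is the largest admissible value for the last factor of any $i$-composition of $n$: this follows from Theorem \ref{Theorem.On-correctness-of-Alg} by the natural adaptation of the algorithm $\mathbf{Alg}$ in which the height threshold $h - 1$ is replaced by $i - 1$. For $1 < k < i$, I would invoke Proposition \ref{Proposition.Sheats-composition-of-n-J} with $J = \{k, k+1, \ldots, i\}$ to identify the suffix $(X_k, \ldots, X_i)$ as the Sheats $(i - k + 1)$-composition of $s_k(\mathbf{Sh}^{(i)}(n))$, and then run an argument parallel to the proof of Corollary \ref{Corollary.Maximal-Sheats-compositions} in the relaxed regime with inequality height constraints $\mathrm{ht}(M) \ge i - k + 1$ and $\mathrm{ht}(n - M) \ge k - 1$. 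Concretely, a hypothetical $M \varprec n$ satisfying these constraints with $M > s_k(\mathbf{Sh}^{(i)}(n))$ would, at the first $p$-part on which its greedy description disagrees with the Sheats suffix, force an inconsistent choice of the adapted algorithm $\mathbf{Alg}$, yielding a contradiction.

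Strict inequality for $\mathbf{Y} \ne \mathbf{Sh}^{(i)}(n)$ then follows, since $\mathbf{Y}$ is reconstructed from its tail sums via $Y_k = s_k(\mathbf{Y}) - s_{k+1}(\mathbf{Y})$; so some $s_k$ must differ strictly, and combined with the strictly positive Abel coefficients this yields $\mathrm{wt}_{\mathbf{r}}(\mathbf{Sh}^{(i)}(n)) > \mathrm{wt}_{\mathbf{r}}(\mathbf{Y})$. The main obstacle is the middle step: extending Corollary \ref{Corollary.Maximal-Sheats-compositions}---which is formulated for the full Sheats composition $\mathbf{Sh}(n) = \mathbf{Sh}^{(h)}(n)$ with exact height equalities---to partial Sheats $i$-compositions (with $i < h$) under the correspondingly weaker inequality height constraints. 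It is precisely this extension that prevents a direct appeal and where the bulk of the genuine work lies.
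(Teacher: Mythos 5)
Your approach is, up to reformulation, the paper's own: both pass through Abel summation---your tail-sum identity is the paper's telescoping formula $\sum_{1\le j\le i} r_{j-1}X_j = r_{i-1}n - \sum_{1\le j\le i}(r_{j-1}-r_{j-2})n_{j-1}$ (with $r_{-1}=n_0=0$), since $s_k(\mathbf{Y}) = n - m_{k-1}$---and both thereby reduce \eqref{Eq.Corollary-maximal-weight-of-sheats-i-composition} to the termwise comparison $n_{j-1}\le m_{j-1}$ for $1\le j\le i$ of partial sums (equivalently, $s_k(\mathbf{Sh}^{(i)}(n))\ge s_k(\mathbf{Y})$ of tail sums), with strict inequality somewhere when $\mathbf{Y}\neq\mathbf{Sh}^{(i)}(n)$.

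You are right to isolate that comparison as the substantive step, and your diagnosis of the difficulty is accurate. Corollary \ref{Corollary.Maximal-Sheats-compositions} is formulated with the exact constraints $\mathrm{ht}(m)=j-1$, $\mathrm{ht}(n-m)=h-(j-1)$, which sum to $h$; the partial sums of an arbitrary $i$-composition satisfy only the inequalities $\mathrm{ht}(m_{j-1})\ge j-1$, $\mathrm{ht}(n-m_{j-1})\ge i-(j-1)$, summing to $i$. When $i<h$ and $q$ is not prime this relaxation is genuine, because heights of smooth summands need not add: for $q=4$ and $n=693$ (the number $\mu_3$ appearing in Example \ref{Example.Irregular-k-Case-2} and already used as a cautionary example after Corollary \ref{Corollary.Maximal-Sheats-compositions}) one has $693 = 21\oplus 672$ with $\mathrm{ht}(21)=\mathrm{ht}(672)=1$ while $\mathrm{ht}(693)=3$. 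The resulting $2$-composition $\mathbf{Y}=(21,672)$ has $m_1=21$, strictly below $X_1=33$, the first term of the Sheats sequence of $693$ constructed from $\mathbf{Sh}(693)=(33,132,528)$. So the termwise minimality is simply not available for the partial sums obtained by truncating $\mathbf{Sh}(n)$ when $i<h$; one must instead take the partial sums of the Sheats $i$-composition computed by rerunning $\mathbf{Alg}$ with the threshold $h-1$ replaced by $i-1$ at the first stage, $i-2$ at the next, and so on---which is exactly what you propose---and then prove the analogue of Corollary \ref{Corollary.Maximal-Sheats-compositions} for that sequence by the same $p$-part-by-$p$-part contradiction argument via Theorem \ref{Theorem.On-correctness-of-Alg} and Proposition \ref{Proposition.Sheats-composition-of-n-J}. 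You have correctly located and described the missing lemma, but you stop short of proving it, so the proposal remains a sketch at the one place where the paper itself is most compressed (a bare citation of Corollary \ref{Corollary.Maximal-Sheats-compositions}). That is the genuine gap; everything else in your argument is sound.
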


\begin{proof}
	For $j = 1,\dots,i$ let $n_{j} \defeq X_{1} + \dots + X_{j}$ (resp. $m_{j} = Y_{1} + \dots + Y_{j}$) be the $j$-th terms of the respective Sheats sequences. Putting $n_{0} = m_{0} = r_{-1} = 0$, with a small calculation involving telescope sums,
	\begin{equation}
		\sum_{1 \leq j \leq i} r_{j-1}X_{j} = r_{i-1}n - \sum_{1 \leq j \leq i} (r_{j-1} - r_{j-2})n_{j-1}
	\end{equation}
	and correspondingly
	\[
		\sum_{1 \leq j \leq i} r_{j-1} Y_{j} = r_{i-1}n - \sum_{1 \leq j \leq i} (r_{j-1} - r_{j-2}) m_{j-1}.
	\]
	Now the coefficients $(r_{j-1} - r_{j-2})$ are non-negative (positive for $j>1$) and $n_{j-1} \leq m_{j-1}$ by Corollary \ref{Corollary.Maximal-Sheats-compositions}, with at least two strict inequalities. Therefore 
	\eqref{Eq.Corollary-maximal-weight-of-sheats-i-composition} holds.
\end{proof}

From the above and \ref{Subsection.Non-vanishing-term-in-Powersum-S-i-Lambda} we get the following result, which generalizes \eqref{Eq.Relation-deg-of-powersum-and-weight}. (We remind the reader of Remark \ref{Remarks.Differences-to-Sheats-work}(ii)!)

\begin{Corollary} \label{Corollary.Non-vanishing-lattice-sum-formula}
	Let $\Lambda$ be a separable $\mathds{F}$-lattice of dimension larger or equal to $i$ and critical radii $q^{r_{0}}, q^{r_{1}}, \dots$ Then for $n \in \mathds{N}_{0}$ the lattice sum $S_{i, \Lambda}(n)$ vanishes if 
	$n \not\equiv 0 \pmod{q-1}$ or $\mathrm{ht}(n) < i$. Otherwise, it satisfies 
	\[
		\log S_{i, \Lambda}(n) = \mathrm{wt}_{\mathbf{r}}(\mathbf{Sh}^{(i)}(n)) = \sum_{1 \leq j \leq i} r_{j-1}X_{j}
	\]
	with $\mathbf{Sh}^{(i)}(n) = (X_{1}, \dots, X_{i})$.
\end{Corollary}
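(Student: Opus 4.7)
My plan is to combine the explicit expansion of $S_{i,\Lambda}(n)$ given in \eqref{Eq.Powersum-S-i-Lambda} with the strict-dominance statement of the preceding corollary. The two vanishing cases require no new work. If $n \not\equiv 0 \pmod{q-1}$, then for every multi-index $\mathbf{m}$ at least one entry $m_{j}$ fails to be a positive multiple of $q-1$, so every inner character sum $\sum_{\mathbf{c}\in\mathds{F}^{i}}\mathbf{c}^{\mathbf{m}}$ is zero. If instead $n \equiv 0 \pmod{q-1}$ but $\mathrm{ht}(n) < i$, then no $i$-composition of $n$ exists (such a composition would produce a $\varprec$-chain of length $i$), so the outer sum in \eqref{Eq.Powersum-S-i-Lambda} is empty.

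In the remaining case, $n \equiv 0 \pmod{q-1}$ and $\mathrm{ht}(n) \geq i$, formula \eqref{Eq.Powersum-S-i-Lambda} reads
\[
    S_{i,\Lambda}(n) = (-1)^{i}\sum_{\mathbf{m}}\binom{n}{\mathbf{m}}\boldsymbol{\lambda}^{\mathbf{m}},
\]
with $\mathbf{m} = (m_{0},\dots,m_{i-1})$ ranging over the non-empty set of $i$-compositions of $n$. By the Lucas congruence together with the smoothness $n = m_{0}\oplus\cdots\oplus m_{i-1}$, every multinomial coefficient $\binom{n}{\mathbf{m}}$ is a nonzero element of $\mathds{F}\hookrightarrow C_{\infty}$, so the term indexed by $\mathbf{m}$ has absolute value exactly $q^{\mathrm{wt}_{\mathbf{r}}(\mathbf{m})}$, where $\mathbf{r} = (r_{0},r_{1},\dots)$ is the weight system furnished by $\Lambda$.

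Applied to this $\mathbf{r}$, Corollary~\ref{Corollary.Maximal-weight-of-sheats-i-composition} asserts that $\mathbf{Sh}^{(i)}(n) = (X_{1},\dots,X_{i})$ is the unique $i$-composition of $n$ of maximal $\mathbf{r}$-weight. Consequently the displayed sum possesses a unique term of strictly largest absolute value, and the non-archimedean triangle inequality delivers
\[
    \log S_{i,\Lambda}(n) = \mathrm{wt}_{\mathbf{r}}\bigl(\mathbf{Sh}^{(i)}(n)\bigr) = \sum_{1\leq j\leq i} r_{j-1}X_{j},
\]
so in particular $S_{i,\Lambda}(n)\neq 0$. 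There is no real obstacle at this step: the genuine content lies in the algorithmic analysis of $\mathbf{Alg}$ and the strict-dominance statement Corollary~\ref{Corollary.Maximal-weight-of-sheats-i-composition} resting on it; the present corollary merely packages those results into the form of a power-sum estimate, extending \eqref{Eq.Relation-deg-of-powersum-and-weight} from $\Lambda = A$ to arbitrary separable $\mathds{F}$-lattices of dimension $\geq i$.
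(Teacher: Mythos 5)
Your proof is correct and follows exactly the route the paper intends: you combine the explicit power-sum expansion \eqref{Eq.Powersum-S-i-Lambda} and the observation in \ref{Subsection.Non-vanishing-term-in-Powersum-S-i-Lambda} (a unique dominant-weight term determines the valuation by the ultrametric inequality) with the strict maximality of $\mathbf{Sh}^{(i)}(n)$ for an arbitrary weight system from Corollary \ref{Corollary.Maximal-weight-of-sheats-i-composition}. The paper treats this corollary as immediate and supplies no separate proof; your write-up is simply the explicit form of that deduction.
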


We conclude this section with another important property of the Sheats composition.

\begin{Theorem} \label{Theorem.Inequalitites-for-Sheats-factors}
	Let $n \in \mathds{N}$ be divisible by $	q-1$ with $h = \mathrm{ht}(n) \geq 2$ and $\mathbf{Sh}(n) = (X_{1}, \dots, X_{h})$. For $1 \leq i < h$ the inequality $X_{i+1} \geq qX_{i}$ holds. 
\end{Theorem}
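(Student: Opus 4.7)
The first step is a reduction to the case $h = 2$ via Proposition~\ref{Proposition.Sheats-composition-of-n-J} applied to $J = \{i, i+1\}$: the pair $(X_i, X_{i+1})$ is then the Sheats 2-composition of $m := X_i \oplus X_{i+1}$, and this $m$ has $\mathrm{ht}(m) \geq 2$. So the claim reduces to: for every $m$ with $\mathrm{ht}(m) \geq 2$ and $(Y_1, Y_2) = \mathbf{Sh}^{(2)}(m)$, one has $Y_2 \geq q Y_1$, equivalently $Y_1 \leq m/(q+1)$.

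Next I would extract the structural consequence of the maximality of $Y_2$ through a swap argument. Suppose there were a type $i \in F$ and $p$-positions $j < j'$ of type $i$ with $p^j$ a part of $Y_2$ and $p^{j'}$ a part of $Y_1$. Exchanging them produces a pair $(Y_1', Y_2')$ still forming a 2-composition of $m$---divisibility mod $q-1$ is preserved since $p^j \equiv p^{j'} \pmod{q-1}$---with $Y_2' = Y_2 + (p^{j'} - p^j) > Y_2$, contradicting maximality. Hence, within each type $i$, the $Y_1$-positions lie weakly below the $Y_2$-positions, with at most one shared cutoff position $j^*(i)$. Fixing $\sigma \in \mathbf{C}$ critical for $m$, Observation~\ref{Observation.On-sigmas-critical-for-n} further gives $\ell^\sigma(Y_1) = \ell^\sigma(Y_2) = q - 1$, so that both $Y_1^\sigma$ and $Y_2^\sigma$ have $q$-adic digit sum exactly $q - 1$, and their $p$-smooth (hence $q$-smooth) sum is $m^\sigma$.

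To conclude $Y_2 \geq q Y_1$, I would decompose $Y_k = \sum_{i \in F} p^i Y_k^{(i)}$ according to type, and use the per-type cutoff structure together with the $q$-digit-sum constraint to bound each $Y_2^{(i)}$ against $Y_1^{(i)}$. The factor $q$ emerges by summing across types, leveraging the congruence $\sum_{i \in F} n_1^{(i)} p^i \equiv 0 \pmod{q-1}$ on the type-counts of $Y_1$ (forced by $Y_1 \equiv 0 \pmod{q-1}$). The main obstacle, which I expect to be delicate, is the shared-cutoff case where $Y_1$ and $Y_2$ both contain parts at position $j^*(i)$ for some $i$: there the per-type ratio can degenerate well below $q$, as small examples show (e.g.\ for $q=4$, $m = 39$ gives $Y_1 = 3, Y_2 = 36$ with a shift of $q$ in one type and $q^2$ in the other). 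The amortization across types, exploiting the extremality $I(Y_1) = q - 1$ and the mod-$(q-1)$ divisibility of the type-counts, is the heart of the argument and where the uniform factor $q$ must ultimately be produced.
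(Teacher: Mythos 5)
Your reduction to the case $h = 2$ via Proposition~\ref{Proposition.Sheats-composition-of-n-J} is exactly the first step of the paper's proof, and the swap argument is a sound observation: exchanging $p^{j}$ and $p^{j'}$ of the same type between $Y_1$ and $Y_2$ preserves both the smooth-sum property and divisibility by $q-1$, so maximality of $Y_2$ forces the per-type ordering of $p$-positions that you describe. The trouble is that this is where the proposal stops being a proof. You correctly identify that in the shared-cutoff case the per-type ratio drops below $q$, and that some amortization across the $f$ type-classes, using $I(Y_1) = q-1$ and the mod-$(q-1)$ constraint, is required; but you then explicitly defer that step with ``the amortization across types \ldots is the heart of the argument and where the uniform factor $q$ must ultimately be produced.'' That is precisely the content of the theorem, and it is not carried out. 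As it stands you have a setup and a diagnosis of where the difficulty lies, not a demonstration that the difficulty can be overcome by this route.

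For comparison, the paper does not use a swap argument at all. After the same reduction to $h = 2$ with $\mathbf{Sh}(n) = (a,b)$, it immediately reduces further to $\deg_q n = \deg_q a + 1$ (the other case being trivial), normalizes so the top $p$-digit of $n$ is nonzero, and then extracts from the structure of $\mathbf{Alg}$ a sequence of constraints on the $q$-adic digits $a_\alpha, b_\alpha, b_{\alpha+1} = n_{\alpha+1}$, and on the digits one level down ($a_{\alpha-1}$, $b_{\alpha-1}$), recorded in a table keyed by the cut position $k = \min\{i : a_{\alpha,i} \neq 0\}$ and the minimal critical shift $\bar s$. The factor $q$ emerges from a chain of reductions: in each case either the strict inequality $qa < b$ is forced outright, or the digits are pinned to specific values that make $qa \le b$ hold with equality in the extreme case. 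This is a genuinely different and more computational path than the per-type amortization you envision; whether your amortization can be made rigorous is unclear, but you have not shown that it can, so the gap remains.
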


\begin{proof}
	\begin{enumerate}[wide, label=(\roman*)]
		\item In view of \ref{Proposition.Sheats-composition-of-n-J} we may replace $n$ by $X_{i} \oplus X_{i+1}$. Thus, without restriction, \fbox{$h=2$} and $\mathbf{Sh}(n) = (a,b)$.
		\item Let $\nu = \deg_{q} n$ and $\alpha \defeq \deg_{q} a$ be the degrees with respect to $q$. Then $\nu \geq \alpha+1$. For $\nu \geq \alpha + 2$ the assertion $n > (q+1)a$ is obvious, from which $b > q\alpha$ follows.
		Thus assume \fbox{$\nu = \alpha +1$}. We will have a closer look to the $q$-adic coefficients $a_{j}$, $b_{j}$, $n_{j} \in Q$ of $a$, $b$, $n$, respectively. Write
		\begin{equation} \label{Eq.q-adic-coefficients-aj-bj-n-j}
			\begin{split}
				n	&= n^{*} + n_{\alpha}q^{\alpha} + n_{\alpha+1}q^{\alpha +1} \\
				b	&= b^{*} + b_{\alpha}q^{\alpha} + b_{\alpha+1}q^{\alpha+1} \\
				a	&= a^{*} + a_{\alpha}q^{\alpha},
			\end{split}
		\end{equation}
		where $b_{\alpha+1} = n_{\alpha+1}$, $a_{\alpha} \oplus b_{\alpha} = n_{\alpha}$, and the $q$-degrees of $n^{*}$, $b^{*}$, $a^{*}$ are less than $\alpha$. Also, write
		\begin{equation}
			n_{\alpha} = \sum_{i \in F} n_{\alpha, i} p^{i} \quad \text{with } n_{\alpha,i} \in P,
		\end{equation}
		and similarly for the other $q$-adic coefficients $n_{\alpha+1}$, $b_{\alpha}$, $a_{\alpha}$.
		\item Multiplying by a power of $p$ if necessary and using \eqref{Eq.Sheats-composition-is-multiplicative}, we may further assume that \fbox{$n_{\alpha+1, f-1} \neq 0$}.
		\item If $s \in F$ is critical for $n$, that is, $\sigma = \sigma_{s} \in \mathbf{C}$ satisfies $\ell^{\sigma}(n) = 2(q-1)$, then $B \defeq b_{\alpha, f-1}$ is subject to 
		\begin{equation}
			(B \cdot p^{f-1})^{\sigma} + n_{\alpha+1}^{\sigma} \leq q-1,
		\end{equation}
		as results from the algorithm $\mathbf{Alg}$ that calculates $b = \mathrm{Sh}(n)$. Together with \eqref{Eq.True-action-on-p-adic-expansions} and \eqref{Eq.True-action-on-sum} we find
		\[
			B \cdot p^{f-1-s} + p^{-s}(n_{\alpha+1} + (q-1)n_{\alpha+1}^{(s)}) \leq q-1,
		\]
		which gives the estimate for $B$:
		\begin{equation}
			B \leq ((q-1)(p^{s} - n_{\alpha+1}^{(s)}) - n_{\alpha+1})/p^{f-1} \eqdef R(s).
		\end{equation}
		A closer look to $\mathbf{Alg}$ shows that non-critical $s$ imply no restrictions on $B$. Hence
		\begin{align} 
			B 	&= \min( \min_{s \text{ critical}} [R(s)], n_{\alpha, f-1})		&&([\cdot] = \text{Gauß bracket}) \label{Eq.proof-characterization-of-B} \\
				&= \min( [R(s)], n_{\alpha, f-1}), 											&& s \in F \text{ minimal critical}, \nonumber
		\end{align}
		as $R(s)$ is monotonically increasing in $s$.
		\item If $a_{\alpha, f-1} < b_{\alpha+1, f-1} = n_{\alpha+1, f-1}$ (which is $\neq 0$ by (iii), then $qa < b$ and we are ready; otherwise
		\begin{align*}
			B 	&= n_{\alpha, f-1} - a_{\alpha, f-1} < n_{\alpha,f-1}, 
			\intertext{so by \eqref{Eq.proof-characterization-of-B}}
			B	&= [R(s)] \geq [R(0)] = p-1-n_{\alpha+1, f-1},
		\end{align*}
		so
		\begin{align}
			a_{\alpha,f-1} 	&= n_{\alpha, f-1} - B \leq n_{\alpha, f-1} - (p-1-n_{\alpha+1, f-1}) \\
										&= n_{\alpha+1, f-1} - (p-1 - n_{\alpha, f-1}). \nonumber
		\end{align}
		Thus, if the wanted $qa \leq b$ should fail, then $n_{\alpha, f-1} = p-1$ and $[R(s)] = [R(0)]$, which implies $s=0$.
		\item We can therefore assume: $s=0$ is critical, i.e., $\ell(n) = 2(q-1)$, $\ell(a) = \ell(b) = q-1$, and $a_{\alpha, f-1} = n_{\alpha+1, f-1}$, $n_{\alpha, f-1} = p-1$. Moreover, by the Observation \ref{Observation.On-sigmas-critical-for-n}, we
		may replace the target function $I(\cdot)$ for calculating $b = \mathrm{Sh}(n)$ with $\mathbf{Alg}$ by the sum-of-digits function $\ell(\cdot)$.
		\item Let $k \defeq \min\{ i\in F \mid a_{\alpha,i} \neq 0 \}$. Write each element $x = \sum_{i \in F} x_{i}p^{i}$ ($x_{i} \in P$) as 
		\begin{equation}
			x = x^{(k)} + \bar{x} \quad \text{with} \quad x^{(k)} = \sum_{i < k} x_{i}p^{i}, \quad \bar{x} = \sum_{i \geq k} x_{i}p^{i}.
		\end{equation}
		Now the condition $q-1 = \ell(b) \geq n_{\alpha+1} + b_{\alpha}$ implies $n_{\alpha+1, f-1} + b_{\alpha, f-1} \leq p-1$, and we have in fact equality, as $b_{\alpha, f-1}$ is maximal with that condition. This argument works since
		$\mathbf{Alg}$ first picks a $p$-part $d_{\nu}$ with value $p^{f-1} q^{\alpha}$ for $b$ before it spurns it and assigns it to $a$. Therefore, 
		\begin{IEEEeqnarray*}{rClCl}
			n_{\alpha+1, f-1} 	&+& b_{\alpha, f-1}	&=& p-1, \quad \text{and even} \\
			n_{\alpha+1, f-2}	&+& b_{\alpha, f-2}	&\leq & p-1, \\
											& \vdots & \\
			n_{\alpha+1, i}		&+& b_{\alpha, i}		&\leq& p-1,
		\end{IEEEeqnarray*}
		with equality as long as there exists some $a_{\alpha, j} \neq 0$ with $j \leq i$, that is, as long as $i \geq k$. We have thus shown:
		\begin{equation}
			n_{\alpha+1,i} + b_{\alpha,i} = p-1 \quad \text{for } i\geq k,
		\end{equation}
		and as a consequence
		\begin{equation} \label{Eq.proof.equality-for-bar-ns}
			\bar{n}_{\alpha+1} + \bar{n}_{\alpha} - \bar{a}_{\alpha} = (p-1)(p^{k} + p^{k+1} + \dots + p^{f-1}) = q - p^{k}
		\end{equation}
		and
		\begin{equation} \label{Eq.proof-inequality-a-alpha-n-alpha+1}
			a_{\alpha} <_{p} n_{\alpha+1}.
		\end{equation}
		\item If the inequality \eqref{Eq.proof-inequality-a-alpha-n-alpha+1} is proper, a view to \eqref{Eq.q-adic-coefficients-aj-bj-n-j} shows that $qa < b$. Therefore we may restrict to the case where \fbox{$a_{\alpha} = n_{\alpha+1}$} $\eqdef c \in Q$. Note that by 
		definition $c = \bar{c}$. Then by \eqref{Eq.proof.equality-for-bar-ns}:
		\begin{equation}
			\bar{n}_{\alpha} = q-p^{k} \quad \text{and} \quad b_{\alpha} = n_{\alpha}^{(k)} \oplus (\bar{n}_{\alpha} - c).
		\end{equation}
		Now $n_{\alpha}^{(k)} \leq p^{k}-1$; let $d = d^{(k)}$ be the difference $p^{k}-1 - n_{\alpha}^{(k)}$. Then
		\begin{equation} \label{Eq.proof-smooth-sum-decomposition-of-ell-b-star}
			b_{\alpha} = q-1-c-d \quad \text{and} \quad \ell(b^{*}) = q-1-c-b_{\alpha} = d.
		\end{equation}
		It follows that $b^{*}$ is composed of $p$-parts $p^{i}q^{j}$ with $0 \leq i <k$.
		
		The following table presents the relevant values of the $q$-coefficients of $n$, $b$, $a$ with indices $\alpha+1$, $\alpha$, $\alpha -1$ (as far as applicable). Each entry $x$ is given in the format $x = x^{(k)}|\bar{x}$, which 
		subdivides the columns into two parts. If the actual values differ from those in the table, then $qa < b$ holds.
		\begin{table}[h!]
			\begin{tabular}{c|ll|ll|ll}
				\toprule
							&	 \multicolumn{2}{c}{$\alpha-1$}	& \multicolumn{2}{c}{$\alpha$}	& \multicolumn{2}{c}{$\alpha+1$} \\
				\midrule
				$n$		& $p^{k}-1$		& $q-p^{k}-c$			& $p^{k}-1-d$	& $q - p^{k}$			& 0				& $c$ \\
				$b$		& $d$					& $0$							& $p^{k} -1-d$	& $q-p^{k}-c$		& 0				& $c$ \\
				$a$		& $p^{k}-1-d$	& $q-p^{k}-c$			& 0						& $c$						& 0				& 0 \\
				\bottomrule
			\end{tabular}
			\caption{}
		\end{table}
		Here the columns $\alpha + 1$ and $\alpha$ reflect the state of knowledge of (viii). 
		\item If $\alpha$ was 0 then $a = q-1$, $b \geq q(q-1)$, so we can discard this case and study the $(\alpha-1)$-column. From \eqref{Eq.proof-smooth-sum-decomposition-of-ell-b-star}, 
		$\bar{b}_{\alpha-1} = \bar{b}_{\alpha-2} = \dots = \bar{b}_{0} = 0$, so $\bar{n}_{\alpha-1} = \bar{a}_{\alpha-1}$. Further, $q-1 = \ell(a) \geq c + n_{\alpha +1}$ implies $\bar{n}_{\alpha-1} \leq q-1-c$ and in fact 
		$\bar{n}_{\alpha-1} \leq q-p^{k} - c = \bar{b}_{\alpha}$ (as $\bar{n}_{\alpha-1}$ and $c$ are divisible by $p^{k}$). If $\bar{n}_{\alpha-1} = \bar{a}_{\alpha-1} < \bar{b}_{\alpha}$ then $qa < b$; hence we can assume 
		$\bar{n}_{\alpha-1} = q-p^{k}-c$, which gives the right half of the $(\alpha-1)$-column.
		\item From $\ell(b^{*}) = d$ we find that $b_{\alpha-1} = b_{\alpha-1}^{(k)} \leq d$, so $b_{\alpha-1}^{(k)} \leq \inf_{p} (d, n_{\alpha-1}^{(k)})$, that is, for $0 \leq i < k$, $b_{\alpha-1, i} \leq \min(d_{i}, n_{\alpha-1,i})$,
		and the bound is actually attained by the nature of $\mathbf{Alg}$. Hence
		\begin{equation}
			a_{\alpha-1}^{(k)} = n_{\alpha-1}^{(k)} - b_{\alpha-1}^{(k)} \leq p^{k}-1 - d = b_{\alpha}^{(k)}.
		\end{equation}
		Again, proper inequality implies $qa < b$, so we may assume equality, which is possible only if \fbox{$n_{\alpha-1}^{(k)} = p^{k}-1$} and \fbox{$b_{\alpha-1}^{(k)} = d$}. Now the table is complete.
		\item The wanted inequality $qa \leq b$ holds certainly if $d = 0$ (here $n_{j} = b_{j} = a_{j} = 0$ for $j < \alpha -1$, and we have in fact $qa = b$); otherwise, $\alpha \geq 2$, 
		$a_{0} \oplus \dots \oplus a_{\alpha-2} = a_{0}^{(k)} \oplus \dots \oplus a_{\alpha-2}^{(k)} = d$, and so $q \sum_{0 \leq j \leq \alpha-2} a_{j}q^{j} \leq d q^{\alpha -1}$, which finally gives the assertion. \qedhere
	\end{enumerate}
\end{proof}

\begin{Remarks}
	\begin{enumerate}[label=(\roman*), wide]
		\item The inequalitiy $qX_{i} \leq X_{i+1}$ is optimal, as the trivial example $n = q^{h}-1$ shows. Here $\mathbf{Sh}(n) = (q-1, (q-1)q, \dots, (q-1)q^{h-1})$. 
		\item It would be desirable to dispose of a more conceptual proof of the theorem which avoids the displeasing case considerations and reductions in the proof actually given. The core of 
		\ref{Theorem.Inequalitites-for-Sheats-factors} is the following (perhaps known?) fact from elementary number theory: Let $n \in \mathds{N}$ be divisible by $q-1$ and such that there exists $a \in \mathds{N}$ with 
		\begin{enumerate}[label=(\arabic*)]
			\item $0 < a < n$;
			\item $a \equiv 0 \pmod{q-1}$;
			\item $\binom{n}{a} \not\equiv 0 \pmod{p}$.
		\end{enumerate}
		Then such an $a$ may be found with $(q+1)a \leq n$.
	\end{enumerate}
\end{Remarks}

\section{Location of the zeroes of $C_{k, \Lambda}$: The regular case} \label{Section.Location-of-zeroes-regular-case}

\subsection{} Having now the necessary input about the ingredients in \ref{Subsection.Laurent-expansion-of-C-k-Lambda}, we come back to that situation: 

$\Lambda$ is an infinite separable $\mathds{F}$-lattice in $C_{\infty}$ (the modifications necessary to treat the case of a
finite lattice will be given later) with critical radii $1 = q^{r_{0}} < q^{r_{1}} < \dots$ We put $\mathbf{r}$ for the sequence $(0=r_{0}<r_{1}< \dots)$. The principal and most important example is of course $\Lambda = A = \mathds{F}[T]$,
with $\mathbf{r} = \mathds{N}_{0} = (0,1,2,\dots)$. We let $C_{k} = C_{k, \Lambda}$, $G_{k} = G_{k, \Lambda}$ and use the notation introduced in Section \ref{Section.Non-archimedean-prerequisites}, where we mostly omit reference 
to the lattice $\Lambda$. 

In particular, $\mathbf{S}_{i} = \mathbf{S}(\lvert \lambda_{i} \rvert) = \{ z \in C_{\infty} \mid \log z = r_{i}\}$ is the $i$-th critical sphere of $\Lambda$. Our aim is to calculate and/or estimate the various vanishing numbers in
\ref{Subsection.Laurent-expansion-of-C-k-Lambda}.

\subsection{} Let $r \in \mathds{Q}$ satisfy $r_{i} < r < r_{i+1}$, where $i \in \mathds{N}_{0}$. The data $(\Lambda, k,i)$ will be fixed in this section. As in \ref{Subsection.Laurent-expansion-of-C-k-Lambda}, $\mathbf{S} = \mathbf{S}(q^{r})$ 
is the sphere with radius $q^{r}$, with coordinate $w = z/w_{0}$, $w_{0} \in C_{\infty}$ fixed with $\log w_{0} = r$. By \ref{Corollary.Coefficient-condition-Laurent-expansion-of-Ck}, we must study the coefficients $a_{-k-n}$ with 
negative subscripts $-k-n$ ($n>0$) of the Laurent expansion \eqref{Eq.Laurent-expansion-of-Ck-on-Sphere} of $C_{k}$ on $\mathbf{S}$,
\[
	a_{-k-n} = \binom{k-1+n}{n} w_{0}^{-k-n} S_{i+1}(n),
\]
where $S_{i+1}(n) = S_{i+1, \Lambda}(n) = \sum_{\lambda \in \Lambda_{i+1}} \lambda^{n}$ and $\Lambda_{i+1}$ is spanned by the first $i+1$ elements of a separating $\mathds{F}$-basis of $\Lambda$.

\subsection{} The binomial coefficient $\binom{k-1+n}{n}$, evaluated in $\mathds{F} \hookrightarrow C_{\infty}$, and up to sign, equals $\binom{-k}{n}$, where $-k$ as a $p$-adic number is given by the power series
\begin{equation} \label{Eq.p-adic-expansion-of-kappa}
	{-}k \eqdef \kappa = \sum_{j \geq 0} \kappa_{j} q^{j} \qquad (\kappa_{j} \in Q).
\end{equation}
Here
\begin{equation}
	(-1)^{n} \binom{k-1+n}{n} = \binom{-k}{n} = \prod_{j \geq 0} \binom{\kappa_{j}}{n_{j}},
\end{equation}
where the product on the right hand side is finite, as almost all the $q$-adic coefficients $n_{j} \in Q$ of $n$ vanish. Let $k-1$ have the $q$-adic expansion
\begin{equation}
	k-1 = \sum_{0 \leq j \leq \alpha} k_{j} q^{j},
\end{equation}
where $k_{j} \in Q$ and $\alpha \defeq \deg_{q}(k-1)$, i.e., $k_{\alpha} \neq 0$. Then
\begin{equation}
	\kappa_{j} = q-1-k_{j},
\end{equation}
since $\kappa+k-1 = -1 = \sum_{j \geq 0} (q-1)q^{j}$. Therefore, by Corollary \ref{Corollary.Non-vanishing-lattice-sum-formula} and the Lucas congruences, the coefficient $a_{-k-n}$ of \eqref{Eq.Laurent-expansion-of-Ck-on-Sphere} satisfies:
\begin{equation} \label{Eq.Condition-on-Vanishing-of-the-coefficients-a-k-n}
	a_{-k-n} \neq 0 \text{ if and only if the conditions on $n$ hold:}
\end{equation}
\begin{enumerate}[label=(\alph*)]
	\item $n \equiv 0 \pmod{q-1}$;
	\item $n <_{p} \kappa$ and
	\item $\mathrm{ht}(n) \geq i+1$.
\end{enumerate}
Here $n <_{p} \kappa$ is the coefficientwise subordination $n_{j} <_{p} \kappa_{j}$ for all $j$.

\textbf{Remark:} Instead of $q$-adic expansions we could have used $p$-adic expansions in the reasoning above, which would make no difference in view of the obvious relations between $p$-adic and $q$-adic expansions and
the formula $\binom{a}{b} \equiv \prod_{i \in F} \binom{a_{i}}{b_{i}} \pmod{p}$ for the $p$-adic coefficients $a_{i}$, $b_{i}$ of $a = \sum_{i \in F} a_{i}p^{i}$, $b = \sum_{i \in F} b_{i}p^{i}$.

\subsection{}\label{Subsection.Admissibility} As we are only interested in the absolute values of the $a_{-k-n}$, we will suppress the binomial coefficients and work with $n$'s only that satisfy the conditions (a), (b), (c) of 
\eqref{Eq.Condition-on-Vanishing-of-the-coefficients-a-k-n}. We call such $n$ \textbf{$(i+1)$-admissible}. (This depends of course on $k$, which however is fixed through the whole section.)

Secondly, for comparing the $\lvert a_{-k-n} \rvert$, the factor $\lvert w_{0} \rvert^{-k}$ is unimportant. Therefore, we define for $(i+1)$-admissible $n$:
\begin{equation}
	A_{n}^{(r)} \defeq \log( w_{0}^{-n} S_{i+1}(n)).
\end{equation}\stepcounter{subsubsection}%
Let $n$ have Sheats $(i+1)$-composition $\mathbf{Sh}^{(i+1)}(n) = (X_{1}, \dots, X_{i+1})$ with $h \defeq \mathrm{ht}(n) \geq i+1$. By Corollary \ref{Corollary.Non-vanishing-lattice-sum-formula},
\begin{equation} \label{Eq.Formula-for-A-n-r}
	A_{n}^{(r)} = \sum_{0 \leq j \leq i} r_{j}X_{j+1} - rn.
\end{equation}\stepcounter{subsubsection}%
The questions of maximality or dominance of coefficients $a_{-k-n}$ of $C_{k}$ on the sphere $\mathbf{S}$ are translated to the same questions about the $A_{n}^{(r)}$.

For $i \in \mathds{N}_{0}$, let $\mu_{i} = \mu_{i}(k)$ be the least $i$-admissible non-negative integer. We call it the $i$-\textbf{th approximation number} of $k$. In down-to-earth terms:
\begin{equation} \label{Eq.Definition-approximation-number}
	\mu_{i}(k) \defeq \min\{ n \in \mathds{N}_{0} \mid n \equiv 0 \pmod{q-1}, n<_{p} \kappa, \mathrm{ht}(n) \geq i \}.
\end{equation}\stepcounter{subsubsection}%
Consider the following conditions on the sequence $(\mu_{1}, \mu_{2}, \dots)$:
\subsubsection{} \begin{enumerate}[label=(\alph*), wide]
	\item $(\mu_{1}, \mu_{2}, \dots, \mu_{i})$ is the Sheats sequence of $\mu_{i}$;
	\item $(\mu_{1}, \mu_{2} - \mu_{1}, \dots, \mu_{i} - \mu_{i-1})$ is the Sheats composition $\mathbf{Sh}(\mu_{i})$ of $\mu_{i}$;
	\item $\mu_{1} \varprec \mu_{2} \varprec \dots \varprec \mu_{i}$.
\end{enumerate}

\begin{Lemma} \label{Lemma.Characterization-Regularity}
	The three conditions $\mathrm{(a)}$, $\mathrm{(b)}$, $\mathrm{(c)}$ are equivalent. We define $k$ to be $i$-\textbf{regular} if they are fulfilled for all $\mu_{j} = \mu_{j}(k)$, where $1 \leq j \leq i$, and $k$ is \textbf{regular} if 
	it is $i$-regular for all $i \in \mathds{N}$. (So each $k$ is $1$-regular by definition.)
\end{Lemma}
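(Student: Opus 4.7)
The equivalence of (a) and (b) is purely formal, since partial sums and differences encode the same composition: if $\mathbf{Sh}(\mu_i) = (X_1, \dots, X_i)$ with partial sums $(n_1, \dots, n_i)$, then (a) reads $\mu_j = n_j$ while (b) reads $X_j = \mu_j - \mu_{j-1}$, and each translates into the other. The substantive content is thus (a) $\Leftrightarrow$ (c), which I would approach through a preliminary height observation and an appeal to Corollary \ref{Corollary.Maximal-Sheats-compositions}(ii).

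First I would establish the auxiliary fact that $\mathrm{ht}(\mu_j(k)) = j$ for every $j$. The inequality $\geq j$ is built into the definition of $\mu_j$. For the reverse, if $\mathrm{ht}(\mu_j) \geq j+1$ the Sheats composition $\mathbf{Sh}(\mu_j) = (X_1, \dots, X_h)$ has length $h \geq j+1$, so its $j$-th partial sum $n_j'$ satisfies $0 < n_j' < \mu_j$, $n_j' \varprec \mu_j \varprec \kappa$, and $\mathrm{ht}(n_j') \geq j$, contradicting the defining minimality of $\mu_j$. In particular $\mathbf{Sh}(\mu_i)$ has length exactly $i$, so the statement (a) is about a genuine Sheats sequence of length $i$. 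The direction (a) $\Rightarrow$ (c) is then automatic, because a Sheats sequence is by construction a $\varprec$-chain.

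For the remaining direction (c) $\Rightarrow$ (a), assume $\mu_1 \varprec \dots \varprec \mu_i$ and let $(n_1, \dots, n_i)$ denote the partial sums of $\mathbf{Sh}(\mu_i)$. By Corollary \ref{Corollary.Maximal-Sheats-compositions}(ii), $n_j$ is the minimum over all $m \varprec \mu_i$ with $\mathrm{ht}(m) = j$ and $\mathrm{ht}(\mu_i - m) = i - j$. The bound $n_j \geq \mu_j$ is immediate since $n_j \varprec \mu_i \varprec \kappa$ and $\mathrm{ht}(n_j) \geq j$ make $n_j$ a candidate for the defining minimum of $\mu_j$. For the reverse bound I would verify that $\mu_j$ itself lies in the minimum set characterizing $n_j$: transitivity of $\varprec$ applied to (c) gives $\mu_j \varprec \mu_i$; the preliminary fact gives $\mathrm{ht}(\mu_j) = j$; and termwise subtraction of $\mu_j$ from the tail $\mu_j \varprec \mu_{j+1} \varprec \dots \varprec \mu_i$ of (c) produces a $\varprec$-chain $0 \varprec \mu_{j+1} - \mu_j \varprec \dots \varprec \mu_i - \mu_j$ of length $i-j$, so $\mathrm{ht}(\mu_i - \mu_j) \geq i - j$.

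The hard part will be pinning this last height down to equality. For that I would use the smooth-sum subadditivity $\mathrm{ht}(m \oplus n) \geq \mathrm{ht}(m) + \mathrm{ht}(n)$ (not stated explicitly in the excerpt, but a one-line consequence of concatenating a maximal $\varprec$-chain inside $m$ with the $m$-translate of one inside $n$). Since $\mu_i = \mu_j \oplus (\mu_i - \mu_j)$, this yields $i = \mathrm{ht}(\mu_i) \geq j + \mathrm{ht}(\mu_i - \mu_j)$, forcing $\mathrm{ht}(\mu_i - \mu_j) = i - j$ exactly. Then $\mu_j$ satisfies all three conditions defining the minimum in Corollary \ref{Corollary.Maximal-Sheats-compositions}(ii), so $\mu_j \geq n_j$, and combined with the earlier bound $\mu_j = n_j$ for every $j \leq i$, which is (a).
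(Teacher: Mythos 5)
Your proof is correct and follows the same route as the paper: combine the definitional minimality of $\mu_j$ with the dual minimality property of the Sheats partial sums from Corollary \ref{Corollary.Maximal-Sheats-compositions}(ii). You have also filled in the height bookkeeping---that $\mathrm{ht}(\mu_j) = j$ exactly, and that $\mathrm{ht}(\mu_i - \mu_j) = i - j$ via smooth-sum superadditivity of height---which the paper leaves implicit but which is genuinely needed to check that $\mu_j$ qualifies for the minimization in Corollary \ref{Corollary.Maximal-Sheats-compositions}(ii).
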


\begin{proof}
	(a) and (b) are equivalent by definition, and \enquote{(a)$\Rightarrow$(c)} is trivial. Suppose that (c) holds, and let $m_{1} \varprec m_{2} \varprec \dots \varprec m_{i} = \mu_{i}$ be the Sheats sequence of $\mu_{i}$. As the minimality condition
	on the $\mu_{j}$ is on a larger domain than the one imposed on the $m_{j}$ by Corollary \ref{Corollary.Maximal-Sheats-compositions}, we have $\mu_{j} \leq m_{j}$ for all $j$. But, since $\mu_{j} \varprec \mu_{i}$, also by 
	\ref{Corollary.Maximal-Sheats-compositions} equality must hold.
\end{proof}

\begin{Proposition} \label{Proposition.Regularity-and-Laurent}
	Suppose that $k$ is $(i+1)$-regular. Then the coefficient $a_{-k-\mu_{i+1}}$ in the Laurent expansion \eqref{Eq.Laurent-expansion-of-Ck-on-Sphere} dominates.
\end{Proposition}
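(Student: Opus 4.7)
The aim is to compare $\lvert a_{-k-\mu_{i+1}}\rvert$ with $\lvert a_{-k-n}\rvert$ for every other $(i+1)$-admissible $n$; by Corollary \ref{Corollary.Coefficient-condition-Laurent-expansion-of-Ck}, these are the only coefficients that can compete for the maximum. Passing to $A_n^{(r)}$ as defined in \ref{Subsection.Admissibility}, I want to show strict inequality $A_{\mu_{i+1}}^{(r)} > A_n^{(r)}$ for every $(i+1)$-admissible $n \neq \mu_{i+1}$.

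The first step is to rewrite \eqref{Eq.Formula-for-A-n-r} by a telescope-sum trick (exactly as in the proof of Corollary \ref{Corollary.Maximal-weight-of-sheats-i-composition}): if $(Y_1,\dots,Y_{i+1})=\mathbf{Sh}^{(i+1)}(n)$ with Sheats sequence $(0=n_0,n_1,\dots,n_{i+1}=n)$, then
\[
A_n^{(r)} = (r_i-r)\,n - \sum_{j=1}^{i}(r_j-r_{j-1})\,n_j .
\]
By the $(i+1)$-regularity of $k$ (Lemma \ref{Lemma.Characterization-Regularity}(a)), the Sheats sequence of $\mu_{i+1}$ is $(0,\mu_1,\dots,\mu_{i+1})$, so the analogous formula with $n_j$ replaced by $\mu_j$ holds for $A_{\mu_{i+1}}^{(r)}$. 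Subtracting gives the clean identity
\[
A_{\mu_{i+1}}^{(r)}-A_n^{(r)} \;=\; (r-r_i)(n-\mu_{i+1}) \;+\; \sum_{j=1}^{i}(r_j-r_{j-1})(n_j-\mu_j).
\]

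The second step is to observe that every coefficient on the right is nonnegative. Certainly $r-r_i>0$ (since $r_i<r<r_{i+1}$) and $r_j-r_{j-1}>0$ for $1\le j\le i$. For the factors $n_j-\mu_j$, the key point is that each term $n_j$ of the Sheats sequence of $n$ is itself $j$-admissible: $n_j$ is divisible by $q-1$ (as a smooth sum of terms in $(q-1)\mathds{N}$), it satisfies $n_j <_p n <_p \kappa$, and $\mathrm{ht}(n_j)\ge j$ by construction. The minimality built into the definition \eqref{Eq.Definition-approximation-number} of $\mu_j$ therefore gives $\mu_j\le n_j$. Finally, $\mu_{i+1}\le n$ by the same minimality, so the first summand is $\ge 0$ as well.

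The third step is strict positivity. If $n\neq\mu_{i+1}$, then by minimality of $\mu_{i+1}$ we must have $n>\mu_{i+1}$, so the term $(r-r_i)(n-\mu_{i+1})$ is already strictly positive and the difference is $>0$. (Alternatively: if one tried to have $n=\mu_{i+1}$ with different Sheats sequence, uniqueness of the Sheats composition \cite{Sheats98} forces $n_j=\mu_j$ for all $j$, so $n$ literally equals $\mu_{i+1}$ as terms.) Translating back via $\lvert a_{-k-n}\rvert = q^{A_n^{(r)}-k\log w_0}$ (up to the nonzero mod $p$ binomial coefficient, which has absolute value $1$), the strict inequality $A_{\mu_{i+1}}^{(r)}>A_n^{(r)}$ yields the claimed dominance.

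The only real content is the telescoping identity together with the two minimality observations; there is no genuine obstacle, since all the hard work is packaged in Corollary \ref{Corollary.Maximal-Sheats-compositions}, Corollary \ref{Corollary.Non-vanishing-lattice-sum-formula} and the definition of regularity. The one place to be careful is verifying that the Sheats-sequence entries $n_j$ of an admissible $n$ are themselves admissible, which reduces to the transitivity of $<_p$ and the obvious height bound.
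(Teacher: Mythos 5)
Your proof is correct and takes essentially the same route as the paper's: both reduce to comparing $A_{\mu_{i+1}}^{(r)}$ with $A_n^{(r)}$ via the telescope rearrangement, both invoke $(i+1)$-regularity to identify the Sheats sequence of $\mu_{i+1}$ with $(0,\mu_1,\dots,\mu_{i+1})$, and both rely on the minimality $\mu_j\le n_j$ for the $j$-th Sheats-sequence entry of any competing $(i+1)$-admissible $n$. You are slightly more explicit than the paper in verifying that each $n_j$ is actually $j$-admissible (divisibility by $q-1$, $n_j<_p\kappa$ by transitivity, and $\mathrm{ht}(n_j)\ge j$), which is the step the paper leaves implicit in the phrase ``by the minimality of $\mu_j$.''
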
	

\begin{proof}
	Let $m \defeq \mu_{i+1}$ be as above and $m' > m$ any $(i+1)$-admissible number, $\mathbf{Sh}(m) = \mathbf{Sh}^{(i+1)}(m) = (X_{1}, \dots, X_{i+1})$ with $X_{j+1} = \mu_{j+1} - \mu_{j}$, $\mathbf{Sh}^{(i+1})(m') = (X_{1}', \dots, X_{i+1}')$,
	$X_{j+1}' = \mu_{j+1}' - \mu_{j}'$ and $\mu_{1}' \varprec \dots \varprec \mu_{i+1}' = m'$ the Sheats sequence of $m'$.
	We must show that 
	\[
		A_{m}^{(r)} = \sum_{0 \leq j \leq i} r_{j} X_{j+1} - rm
	\]
	is strictly larger than its counterpart
	\[
		A_{m'}^{(r)} = \sum_{0 \leq j \leq i} r_{j}X_{j+1}' - rm' \qquad (see \eqref{Eq.Formula-for-A-n-r}).
	\]
	First note that 
	\begin{equation}
		A_{m}^{(r)} > A_{m'}^{(r)} \Longleftrightarrow \sum_{0 \leq j \leq i} r_{j}(X_{j+1}' - X_{j+1}) < r(m'-m).
	\end{equation}
	Now
	\begin{align*}
		\sum_{0 \leq j \leq i} r_{j}(X_{j+1}' - X_{j+1}) 	&= \sum_{0 \leq j \leq i} (r_{j}(\mu_{i+1}' - \mu_{i+1}) - r_{j}(\mu_{j}' - \mu_{j}))
	\intertext{($\mu_{0} = \mu_{0}' = 0$; all the differences $\mu_{j}' - \mu_{j}$ are non-negative by the minimality of $\mu_{j}$)}
																					&\leq \sum_{0 \leq j \leq i} (r_{j}(\mu_{j+1}' - \mu_{j+1}) - r_{j-1}(\mu_{j}' - \mu_{j}))\\
																					&= r_{i}(\mu_{i+1}' - \mu_{i+1}) < r(m'-m),
	\end{align*}
	which was to be shown.
\end{proof}

\begin{Theorem} \label{Theorem.Regularity-and-zeroes-of-Ck-Lambda}
	Suppose that $k$ is $(i+1)$-regular for some $i \in \mathds{N}_{0}$. 
	\begin{enumerate}[label=$\mathrm{(\roman*)}$]
		\item All the zeroes $z$ of $C_{k} = C_{k, \Lambda}$ with $\log z < r_{i+1}$ lie on critical spheres $\mathbf{S}_{j} = \mathbf{S}(\lvert \lambda_{j} \rvert)$ of $\Lambda$ with $j \leq i$. That is, 
		for $j \leq i$, the numbers $\tilde{\gamma}_{\Lambda}^{(j)}(k)$ and $\tilde{\gamma}_{j, \Lambda}(k)$ of \ref{Subsection.Numbers-of-zeroes} agree. Correspondingly, $\gamma_{\Lambda}^{(j)}(k) = \gamma_{j, \Lambda}(k)$, and all the slopes $s$ of the Newton
		polygon $\mathrm{NP}(G_{k, \Lambda})$ of $G_{k, \Lambda}$ with $s > L(i+1)$ are of shape $L(j)$ for some $j \in \mathds{N}_{0}$ with $j \leq i$.
		\item The number $\gamma_{j, \Lambda}(k)$ is given for $j \leq i$ by 
		\begin{equation} \label{Eq.Formula-for-the-gamma-j-k}
			\gamma_{j}(k) = \gamma_{j, \Lambda}(k) = \frac{(q-1)k + q\mu_{j}(k) - \mu_{j+1}(k)}{q^{j+1}},
		\end{equation}
		and is independent of $\Lambda$.
		\item Suppose that $k$ is even regular ($i$-regular for all $i$). Then $\gamma_{\Lambda}(k) = \gamma(k)$ is independent of $\Lambda$.
	\end{enumerate}
\end{Theorem}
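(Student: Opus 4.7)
The plan is to deploy Proposition \ref{Proposition.Regularity-and-Laurent} together with the non-archimedean residue formula \eqref{Eq.Residue-formula} on balls $\mathbf{B}(q^r)$ as $r$ varies through each open interval $(r_j, r_{j+1})$ for $0 \leq j \leq i$. The hypothesis that $k$ is $(i+1)$-regular implies that $k$ is also $(j+1)$-regular for every $j \leq i$: the chain $\mu_1 \varprec \mu_2 \varprec \dots \varprec \mu_{i+1}$ contains as an initial segment the corresponding chain for $\mu_{j+1}$, and by Lemma \ref{Lemma.Characterization-Regularity} condition \textup{(c)} implies \textup{(a)} and \textup{(b)}. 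Proposition \ref{Proposition.Regularity-and-Laurent} then identifies $a_{-k-\mu_{j+1}(k)}$ as the strictly dominant Laurent coefficient of $C_{k,\Lambda}$ on $\mathbf{S}(q^r)$, so $\operatorname{ord}_{\partial\mathbf{B}(q^r)}(C_k) = -k - \mu_{j+1}(k)$. Since the poles of $C_k$ in $\mathbf{B}(q^r)$ are precisely the $q^{j+1}$ elements of $\Lambda_{j+1}$, each of order $k$, the residue formula yields
\[
N(r) \defeq \#\{\text{zeroes of } C_k \text{ in } \mathbf{B}(q^r)\} = k q^{j+1} - k - \mu_{j+1}(k),
\]
a quantity independent of the choice of $r \in (r_j, r_{j+1})$. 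Consequently $C_k$ has no zeroes in the open annulus $r_j < \log z < r_{j+1}$; assembling these assertions over $j = 0, \dots, i$, together with the proposition in Section \ref{Section.Non-archimedean-prerequisites} ruling out zeroes with $0 < \lvert z \rvert < 1$, yields statement \textup{(i)}, the Newton-polygon formulation being Corollary \ref{Corollary.Zeroes-of-the-Ck-Lambda-and-G-k-Lambda}.

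For \textup{(ii)} I relate $N(r)$ to $\sum_{\ell \leq j} \gamma_\ell(k)$ by an orbit count. A nonzero zero $x$ of $G_{k, \Lambda}$ with $\log x = L(\ell)$ pulls back under $t_\Lambda$ to a single $\Lambda$-orbit $z_0 + \Lambda$ with $z_0 \in \mathbf{F}_\Lambda$ of type $\ell$. Using $\lvert z_0 \rvert_{\min} = \lvert z_0 \rvert = q^{r_\ell}$ and the ultrametric inequality, one verifies that for $r \in (r_j, r_{j+1})$ with $j \geq \ell$, exactly $q^{j+1}$ points of this orbit lie in $\mathbf{B}(q^r)$, each on some critical sphere $\mathbf{S}_{\ell'}$ with $\ell \leq \ell' \leq j$ (a telescoping sum $q^{\ell+1} + \sum_{\ell' = \ell+1}^{j}(q-1)q^{\ell'} = q^{j+1}$). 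Summing over all nonzero zeroes of $G_k$ gives
\[
N(r) = q^{j+1} \sum_{\ell \leq j} \gamma_\ell(k).
\]
Equating with the residue-formula expression and differencing between consecutive $j$ produces
\[
\gamma_j(k) = \frac{k + \mu_j(k)}{q^j} - \frac{k + \mu_{j+1}(k)}{q^{j+1}} = \frac{(q-1)k + q\mu_j(k) - \mu_{j+1}(k)}{q^{j+1}},
\]
(with $\mu_0 = 0$ to handle $j=0$), and the right hand side manifestly depends only on $k$ and $q$, not on $\Lambda$.

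For \textup{(iii)}, regularity makes \textup{(ii)} valid for every $j \geq 0$. Since $\deg G_{k, \Lambda} = k$ and only finitely many $\gamma_j(k)$ can be nonzero,
\[
\gamma_\Lambda(k) = k - \sum_{j \geq 0} \gamma_j(k) = \lim_{j \to \infty} \frac{k + \mu_{j+1}(k)}{q^{j+1}}
\]
by the same telescoping, which depends only on $k$ and hence is independent of $\Lambda$.

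The principal technical obstacle is the orbit count in \textup{(ii)}: verifying, via the ultrametric inequality and the defining property of $\mathbf{F}_\Lambda$, that a type-$\ell$ orbit contributes exactly $q^{j+1}$ points to $\mathbf{B}(q^r)$, all on critical spheres. Once this geometric fact is in place, everything else is a bookkeeping exercise combining Proposition \ref{Proposition.Regularity-and-Laurent} with the residue formula.
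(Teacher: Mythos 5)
Your proposal is correct and follows essentially the same route as the paper: $(i+1)$-regularity gives $(j+1)$-regularity for all $j\le i$, Proposition \ref{Proposition.Regularity-and-Laurent} pins down the dominant Laurent coefficient $a_{-k-\mu_{j+1}(k)}$ on each non-critical sphere, and the residue formula \eqref{Eq.Residue-formula} converts the order along the boundary into the zero/pole count inside the ball, which one then solves for the $\gamma_j$. The one place where you go beyond the paper is the explicit ultrametric orbit count showing that a type-$\ell$ zero of $G_{k,\Lambda}$ contributes exactly $q^{j+1}$ zeroes of $C_{k,\Lambda}$ to $\mathbf{B}(q^{r})$ when $r_j < r < r_{j+1}$; the paper states this as a consequence of the identifications in \ref{Subsection.Numbers-of-zeroes} without spelling out the telescoping $q^{\ell+1}+\sum_{\ell'=\ell+1}^{j}(q-1)q^{\ell'}=q^{j+1}$, so your version makes that step verifiable rather than merely asserted.
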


\begin{proof}
	\begin{enumerate}[wide, label=(\roman*)]
		\item Let $\mathbf{S} = \partial \mathbf{B}$ be a sphere $\mathbf{S}(q^{r})$ with $\mathbf{B} = \mathbf{B}(q^{r})$, where $r_{i} < r < r_{i+1}$. By \ref{Proposition.Regularity-and-Laurent}, $C_{k, \Lambda}$ has no zeroes on 
		$\mathbf{S}$. As this holds for each $r$ with $r_{i} < r < r_{i+1}$ and $i$-regularity implies $j$-regularity for $j < i$, (i) results.
		\item We shall apply the residue formula \eqref{Eq.Residue-formula} to $C_{k, \Lambda}$. The ingredients are:
		\[
			\sum_{x \in \mathbf{B}} \ord_{x} (C_{k, \Lambda}) = \text{number of zeroes minus number of poles of $C_{k, \Lambda}$ in $B$}
		\]
		and $\ord_{\mathbf{S}}(C_{k, \Lambda}) = -k-\mu_{i+1}(k)$ by \ref{Proposition.Regularity-and-Laurent}.
		
		Now the number of zeroes is $q^{i+1} \sum_{0 \leq j \leq i} \gamma_{j, \Lambda}(k)$ (as $\gamma_{j, \Lambda}(k) = \gamma_{\Lambda}^{(j)}(k)$ and the map $t_{\Lambda}$ from zeroes of $C_{k, \Lambda}$ to zeroes of $G_{k, \Lambda}$
		is $q^{i+1}$-to-1 on $\mathbf{B}$), and the number of poles is $k \#(\Lambda_{i}) = kq^{i+1}$. Therefore,
		\begin{equation} \label{Eq.Determining-equation-for-gamma-j-k}
			\Big( k -  \sum_{0 \leq j \leq i} \gamma_{j, \Lambda}(k) \Big) q^{i+1} = k + \mu_{i+1}(k),
		\end{equation}
		and \eqref{Eq.Formula-for-the-gamma-j-k} results from solving for $\gamma_{0, \Lambda}(k)$, $\gamma_{1, \Lambda}(k)$, \dots, $\gamma_{i, \Lambda}(k)$.
		\item If $k$ is regular, then by (i)
		\begin{equation}
			\gamma_{\Lambda}(k) = k - \gamma_{0}(k) - \gamma_{1}(k) - \dots - \gamma_{i}(k)
		\end{equation}
		for $i$ large enough. As the $\gamma_{j}(k)$ are independent of $\Lambda$, the same holds for $\gamma(k)$. \qedhere
	\end{enumerate}
\end{proof}

\begin{Corollary}
	Regardless of higher regularity of $k$, the following hold:
	\begin{enumerate}[label=$\mathrm{(\roman*)}$]
		\item The function $C_{k, \Lambda}$ has no zeroes $z$ with $0 < \log z < r_{1}$.
		\item The formula
		\begin{equation} \label{Eq.Formula-for-gamma-0-k}
			\gamma_{0}(k) = \frac{(q-1)k - \mu_{1}(k)}{q}
		\end{equation}
		is valid.
	\end{enumerate}
\end{Corollary}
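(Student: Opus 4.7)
The plan is to reduce both assertions to the previous Theorem by exploiting the fact that $k$ is $1$-regular \emph{for free}: Lemma \ref{Lemma.Characterization-Regularity} explicitly notes that every $k$ is $1$-regular, since the three defining conditions (a), (b), (c) are vacuous/trivial for a single term $\mu_1(k)$. Thus the hypothesis \enquote{$k$ is $(i+1)$-regular} of Theorem \ref{Theorem.Regularity-and-zeroes-of-Ck-Lambda} is automatically satisfied when $i=0$, and we simply specialize that theorem.

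For part (i), I would apply Theorem \ref{Theorem.Regularity-and-zeroes-of-Ck-Lambda}(i) with $i=0$: all zeroes $z$ of $C_{k,\Lambda}$ with $\log z < r_1$ lie on critical spheres $\mathbf{S}_j$ with $j \leq 0$, i.e., on $\mathbf{S}_0 = \{ z \mid \log z = r_0 = 0\}$ (using the normalization $\lambda_0 = 1$ from \ref{Subsection.Normalizing-assumption}). Consequently no zero can satisfy $0 < \log z < r_1$. (Proposition 1.6 separately disposes of zeroes with $\log z < 0$, but that is not needed here, as the claim concerns only positive logarithms strictly below $r_1$.)

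For part (ii), I would apply Theorem \ref{Theorem.Regularity-and-zeroes-of-Ck-Lambda}(ii) with $j=i=0$, which gives
\begin{equation*}
\gamma_0(k) \;=\; \frac{(q-1)k + q\,\mu_0(k) - \mu_1(k)}{q^{0+1}}.
\end{equation*}
It remains only to observe that $\mu_0(k) = 0$, which is immediate from the definition \eqref{Eq.Definition-approximation-number}: the least non-negative integer $n$ divisible by $q-1$, satisfying $n <_p \kappa$, and with $\mathrm{ht}(n) \geq 0$ is $n = 0$. Substituting yields the claimed formula $\gamma_0(k) = ((q-1)k - \mu_1(k))/q$.

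I expect no genuine obstacle here: the corollary is a pure specialization of Theorem \ref{Theorem.Regularity-and-zeroes-of-Ck-Lambda} to the minimal index, made possible by the cost-free $1$-regularity. The only small point worth flagging is the degenerate convention $\mu_0(k)=0$, which makes the general formula \eqref{Eq.Formula-for-the-gamma-j-k} collapse correctly to \eqref{Eq.Formula-for-gamma-0-k}.
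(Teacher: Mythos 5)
Your proof is correct and coincides with the paper's own one-line argument: the paper also proves the corollary by noting that every $k$ is $1$-regular and specializing Theorem \ref{Theorem.Regularity-and-zeroes-of-Ck-Lambda} to $i=0$, with the implicit convention $\mu_0(k)=0$. You have simply spelled out the details.
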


\begin{proof}
	As $k$ is $1$-regular, this is the special case $i=0$ of the theorem.
\end{proof}

\begin{Remarks}
	\begin{enumerate}[label=(\roman*), wide]
		\item Formula \eqref{Eq.Formula-for-gamma-0-k} is an alternative for the one given in Proposition 1.14. The underlying relation between the numbers $\bar{j}(k)$ and $\mu_{1}(k)$ seems however less accessible to direct proof than 
		\eqref{Eq.Formula-for-gamma-0-k} and 1.14.
		\item Later (see Corollary \ref{Corollary.Approximation-numbers-in-cases}) we will see that $k$ is regular once it is $i$-regular for $i = \deg_{q}(k-1)$ (or even smaller $i$).
		\item Suppose that $k$ is regular. As the approximation numbers $\mu_{i}(k)$ determine the $\gamma_{i}(k)$, they also determine the segments of $\mathrm{NP}(G_{k, \Lambda})$, that is, the abscissas of its break points.
		It is only the ordinates of the break points which depend on $\Lambda$, that is, on the critical radii of $\Lambda$.
		\item In the case where $q=p$, the approximation numbers $\mu_{i}(k)$ collapse to the numbers $\lambda_{i}(k)$ of \cite{Gekeler13-2} 5.11, or the $\sigma_{i}(k)$ of \cite{Gekeler13} 2.12, which have an easy description.
		This is considerably more complicated in the non-prime case $q = p^{f}$ ($f \geq 2$), and is the topic of the next section.
	\end{enumerate}
\end{Remarks}	

\section{Auxiliary results} \label{Section.Auxiliary-results}

As we have seen, the approximation numbers $\mu_{i}(k)$ have decisive influence on the location of the zeroes of $G_{k, \Lambda}$. Therefore we now develop an algorithm to determine $\mu_{i}(k)$.

\subsection{} Let $k \in \mathds{N}$ be given and $-k = \kappa = \sum \kappa_{j} q^{j}$ as in \eqref{Eq.p-adic-expansion-of-kappa}. From \eqref{Eq.Definition-approximation-number} and Theorem \ref{Theorem.Characterization-indicator-function},
\begin{equation} \label{Eq.New-characterization-approximation-numbers}
	\mu_{i}(k) = \min\{ n \in \mathds{N}_{0} \mid n \equiv 0 \pmod{q-1}, n <_{p} \kappa, I(n) \geq (q-1)i \}.
\end{equation}
Here $I(n) = \min_{\sigma \in \mathbf{C}} \ell^{\sigma}(n)$ is the indicator function of \ref{Definition.Indicator-function}. Along the lines of \ref{Subsection.Algorithm-and-characterization-of-height} and \ref{Algorithm.To-determine-Sheats-factor}, we consider 
\begin{equation} \label{Eq.Kappa-as-sum-of-pnu-j}
	\kappa = \sum_{j \geq 0} a_{j}p^{j} = \sum_{\nu \geq 1} p^{j_{\nu}} \qquad (a_{j} \in P, a_{j} = p-1 \text{ for } j \gg 0)
\end{equation}
as the sum of its $p$-parts $d_{\nu} = p^{j_{\nu}}$, but now in increasing order, $p^{j_{1}} \leq p^{j_{2}} \leq \dots$, where precisely $a_{j}$ many $j_{\nu}'s$ take the value $j \in \mathds{N}_{0}$. We note that $I$ is strictly monotonically
increasing with respect to partial sums of \eqref{Eq.Kappa-as-sum-of-pnu-j}, in the sense that 
\begin{equation}
	I\Big( \sum_{1} d_{\nu} \Big) < I\Big( \sum_{2} d_{\nu} \Big),
\end{equation}
if $\sum_{1}$ is a proper partial sum of the finite sum $\sum_{2}$.

\subsection{} Given $i \in \mathds{N}$, let $\bar{\nu}$ be the least $\nu$ with 
\begin{equation}
	I\Big( \sum_{1 \leq \nu \leq \bar{\nu}} d_{\nu} \Big) \geq (q-1)i.
\end{equation}
Put
\begin{equation}
	g_{i} = g_{i}(k) = \sum_{1 \leq \nu \leq \bar{\nu}} d_{\nu}.
\end{equation}
Then $\mathrm{ht}(g_{i}) \geq i$, and in fact $\mathrm{ht}(g_{i}) = i$ since $\ell^{\sigma}(g_{i}) < (q-1)(i+1)$ if $\sigma$ is critical for $g_{i}$. If $I(g_{i}) = (q-1)i$ then $g_{i} \equiv 0 \pmod{q-1}$ and thus
$g_{i} = \mu_{i}(k)$. Otherwise, by Theorem \ref{Theorem.Characterization-indicator-function} there exists some $p$-part $d_{\nu} = p^{j_{\nu}}$ of $g_{i}$ such that $g_{i}' \defeq g_{i} - d_{\nu}$ still satisfies $\mathrm{ht}(g_{i}') \geq i$.
This suggests the following procedure to determine $\mu_{i}(k)$:

\begin{Algorithm}[to determine $\mu_{i}(k)$] ~ \label{Algorithm.Determination-of-mu-i-k}
	\begin{enumerate}[label=$\mathrm{(\arabic*)}$] \setcounter{enumi}{-1}
		\item begin
		\item $m \defeq g_{i}(k)$, $\nu \defeq \bar{\nu}$;
		\item if $I(m) = (q-1)i$ then ($\mu_{i}(k) \defeq m$, end);
		\item $\nu \defeq \nu-1$, if $I(m-d_{\nu}) \geq (q-1)i$ then $m \defeq m-d_{\nu}$;
		\item goto (2).
	\end{enumerate}
\end{Algorithm}

\begin{Proposition}
	The algorithm terminates and outputs $\mu_{i}(k)$.
\end{Proposition}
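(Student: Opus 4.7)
My plan is to establish termination and correctness simultaneously, combining an invariant analysis with a largest-disagreement argument in the spirit of Theorem~\ref{Theorem.On-correctness-of-Alg}(iii). Throughout the loop the invariants $m\leq_p g_i\leq_p\kappa$ (each pass of step~(3) either leaves $m$ alone or replaces it by a smooth sub-sum) and $I(m)\geq(q-1)i$ (enforced by the acceptance rule of step~(3)) hold; strict monotonicity of $I$ on $(\mathds{N}_{0},<_{p})$ (\ref{Definition.Indicator-function}) together with the minimality of $\bar\nu$ in the construction of $g_i$ also gives $I(m)\leq I(g_i)<(q-1)(i+1)$. Using Corollary~\ref{Corollary.Congruences-of-sigma-action}(i) applied to $\sigma$ and $\sigma^{-1}$, together with $\ell^{\sigma}(n)\equiv n^{\sigma}\pmod{q-1}$ from Lemma~\ref{Lemma.Compatibility-with-smooth-sums}(iii), one obtains the equivalence $I(n)\equiv 0\pmod{q-1}\iff n\equiv 0\pmod{q-1}$. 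Since the only value in $[(q-1)i,(q-1)(i+1))$ divisible by $q-1$ is $(q-1)i$ itself, the exit test $I(m)=(q-1)i$ holds if and only if $m\equiv 0\pmod{q-1}$; any exiting output is therefore $i$-admissible and consequently $\geq\mu_{i}(k)$.

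The crux is to rule out $m>\mu_{i}(k)$ at exit. Write $M\defeq\mu_{i}(k)$ and assume $M<m$. Expanding both numbers as sub-sums of $p$-parts of $\kappa$ via the smallest-indices \enquote{belongs to} convention of Section~\ref{Properties-of-Sheats-compositions}, let $\mu$ be the largest index $\nu$ such that $d_{\nu}$ belongs to exactly one of $m,M$; since $M<m$ and the $d_{\nu}$ are ordered by increasing value, $d_{\mu}\in m\setminus M$. I first eliminate the case $\mu=\bar\nu$: the convention together with maximality of $\mu$ forces the support of $M$ into $\{1,\dots,\bar\nu-1\}$, so $M\leq_{p}g_{i}-d_{\bar\nu}$ and $\mathrm{ht}(M)\leq\mathrm{ht}(g_{i}-d_{\bar\nu})<i$ by the minimality of $\bar\nu$, contradicting the $i$-admissibility of $M$. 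Hence $\mu\in\{1,\dots,\bar\nu-1\}$.

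Now let $m_{\mathrm{cur}}$ denote the value of $m$ at the iteration in which the algorithm actually processes $\nu=\mu$, or, if it exited earlier, would process it in the hypothetical continuation (which we will show changes nothing). A position-by-position comparison yields $M\leq_{p}m_{\mathrm{cur}}-d_{\mu}$: at $j<j_{\mu}$ every $p$-part of $\kappa$ of value $p^{j}$ has index $<\mu$ and still lies in $m_{\mathrm{cur}}$, so $c_{j}(m_{\mathrm{cur}}-d_{\mu})=a_{j}\geq c_{j}(M)$; at $j>j_{\mu}$ parts have index $>\mu$ and by maximality of $\mu$ their membership in $m$ and $M$ coincides, so $c_{j}(m_{\mathrm{cur}}-d_{\mu})=c_{j}(m)=c_{j}(M)$; at $j=j_{\mu}$ the indices $[\alpha_{j_{\mu}},\mu]$ lie in $g_{i}$ and have not yet been processed, giving $c_{j_{\mu}}(m_{\mathrm{cur}})\geq\mu-\alpha_{j_{\mu}}+1$, while the smallest-indices convention together with $d_{\mu}\notin M$ gives $c_{j_{\mu}}(M)\leq\mu-\alpha_{j_{\mu}}\leq c_{j_{\mu}}(m_{\mathrm{cur}})-1=c_{j_{\mu}}(m_{\mathrm{cur}}-d_{\mu})$. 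Consequently $\mathrm{ht}(m_{\mathrm{cur}}-d_{\mu})\geq\mathrm{ht}(M)\geq i$, i.e., $I(m_{\mathrm{cur}}-d_{\mu})\geq(q-1)i$. If the algorithm actually reached $\nu=\mu$, step~(3) would then have removed $d_{\mu}$, contradicting $d_{\mu}\in$ final $m$; if it exited earlier, then $m_{\mathrm{cur}}=m$ with $I(m)=(q-1)i$, and strict monotonicity of $I$ yields $I(m-d_{\mu})<I(m)=(q-1)i$, contradicting the inequality just derived. Hence $m=\mu_{i}(k)$, and because $\mu_{i}(k)\equiv 0\pmod{q-1}$ the exit test triggers within $\bar\nu-1$ passes of the loop, establishing termination.

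The step I expect to be most delicate is the digit-level accounting at $j=j_{\mu}$: all three ingredients—the smallest-indices convention, the exclusion $d_{\mu}\notin M$, and the fact that indices $[\alpha_{j_{\mu}},\mu]$ have not been touched by the algorithm at the moment $\nu=\mu$—must be combined to produce the inequality $c_{j_{\mu}}(M)\leq c_{j_{\mu}}(m_{\mathrm{cur}}-d_{\mu})$. Once this is in hand, the unified two-case contradiction (via the algorithm's removal rule on the one hand and strict monotonicity of $I$ on the other) closes the argument cleanly.
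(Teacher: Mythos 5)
Your proof is correct and follows the paper's strategy: show any exit value is $i$-admissible (hence $\geq\mu_i(k)$), then rule out strict inequality by a maximal-disagreement argument on the $p$-parts of $g_i(k)$, concluding the algorithm would have discarded one more part. You supply more detail than the paper does — the explicit invariants, the equivalence $I(n)\equiv 0\pmod{q-1}\iff n\equiv 0\pmod{q-1}$, the separate elimination of $\mu=\bar\nu$ (which the paper glosses over, even though Algorithm~\ref{Algorithm.Determination-of-mu-i-k} never actually examines $d_{\bar\nu}$, since step~(3) decrements first), and the full digit-by-digit verification of $M\leq_p m_{\mathrm{cur}}-d_{\mu}$.

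The one place your write-up is less tight than the paper's is termination. Your closing sentence (``because $\mu_i(k)\equiv 0\pmod{q-1}$ the exit test triggers'') invokes the identity $m=\mu_i(k)$, but that was established under the hypothesis that the algorithm has already exited, so as written the reasoning is circular. The paper gives a direct termination argument: if $I(m)>(q-1)i$, then $m\not\equiv 0\pmod{q-1}$, so the last term $n_i$ of a maximal $\precneq$-chain below $m$ (which exists by Theorem~\ref{Theorem.Characterization-indicator-function}) satisfies $n_i\lneq_p m$, and any $p$-part of $m$ absent from $n_i$ can still be removed without dropping $I$ below $(q-1)i$; hence $I(m)$ keeps strictly decreasing until it equals $(q-1)i$. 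If you prefer to keep your structure, run the disagreement argument on the state after a hypothetical full sweep of all $\bar\nu-1$ passes, noting that the same chain observation shows $\mu_i(k)\leq m$ at every step without presupposing an exit, and then conclude that $I(m)$ reaches $(q-1)i$. This is a small, fixable wrinkle rather than a flaw in the core argument.
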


\begin{proof}
	As long as the algorithm runs, $I(m) \geq (q-1)i$. If at some value of $\nu$ we have $I(m) > (q-1)i$, then there exists $\nu' < \nu$ with $I(m-d_{\nu'}) \geq (q-1)i$. As replacing $m$ with $m - d_{\nu'}$ strictly lowers $I$, we have
	equality at some point, so the algorithm terminates, and delivers some $i$-admissible $m$. By definition, $\mu_{i}(k) \leq m$. Assume that $\mu_{i}(k) < m$. Then there exists some $p$-part $d_{\nu}$ that belongs to $m$, but not
	to $\mu_{i}(k)$, considered as partial sums of $g_{i}(k)$. Let $\nu$ be maximal with that property. Then, writing $m[\nu]$ for the current value of $m$ at the point $\nu$, $\mu_{i}(k) <_{p} m[\nu] - d_{\nu}$ and thus
	$(q-1)i = I(\mu_{i}(k)) \leq I(m[\nu] - d_{\nu})$, which conflicts with $d_{\nu}$ belonging to $m$. Hence the final value of $m$ is $\mu_{i}(k)$ as asserted.
\end{proof}

\begin{Example}
	\begin{enumerate}[label=(\roman*), wide]
		\item Let $\alpha \defeq \deg_{q}(k-1)$, such that $\kappa = \sum \kappa_{j} q^{j}$ satisfies $\kappa_{j} = q-1$ for $j > \alpha$. Assume that $\ell \defeq \sum_{0 \leq j \leq \alpha} \kappa_{j} \leq q-1$.
		Then $\mu_{1}(k) = \sum_{0 \leq j \leq \alpha} \kappa_{j}q^{j} + (q-1-\ell)q^{\alpha+1}$.
		\item Let $\alpha=1$, so $\ell = \kappa_{0} + \kappa_{1}$. If $\ell \leq q-1$ then $\mu_{1}(k) = \kappa_{0} + \kappa_{1}q + (q-1-\kappa_{0}-\kappa_{1})q^{2}$. If $\ell > q-1$ then $\mu_{1}(k) = \kappa_{0} + bq + (q-1-\kappa_{0}-b)q^{2}$,
		where $b$ is the largest non-negative integer that satisfies $b <_{p} \kappa_{1}$ and $b \leq q-1-\kappa_{0}$.
	\end{enumerate}
\end{Example}

\begin{proof}
	\begin{enumerate}[label=(\roman*), wide]
		\item Follows by formally applying and evaluating the algorithm \ref{Algorithm.Determination-of-mu-i-k}, but may be seen directly: The given number $m$ apparently has $\ell(m) = q-1$, and is minimal among all
		$n <_{p} \kappa$ with $\ell(n) = q-1$. Therefore it must agree with $\mu_{1}(k)$.
		\item The first case is a specialization of (i). For the second case, consider the expansion $\mu_{1}(k) = \sum_{j \geq 0} m_{j}q^{j}$, $m_{j} \in Q$. Then a priori, $m_{0} = \kappa_{0}$ and $m_{1} <_{p} \kappa_{1}$. Further,
		as $\ell^{\sigma}(\kappa_{0} + \kappa_{1}q + (q-1)q^{2}) \geq q-1$ for each $\sigma \in \mathbf{C}$, the $g_{1}(k)$ in the algorithm satisfies $g_{1}(k) \leq \kappa_{0} + \kappa_{1}q + (q-1)q^{2}$. In particular, $\ell(\mu_{1}(k)) = q-1$
		or $2(q-1)$, but not larger. The least $m' \in \mathds{N}$ with $\ell(m') = 2(q-1)$ and $m' <_{p} \kappa$ is obviously
		\[
			m' = \kappa_{0} + \kappa_{1}q + (2q-2 - \kappa_{0} - \kappa_{1})q^{2},
		\]
		while the least $m$ with $\ell(m) = q-1$ and $m <_{p} \kappa$ is 
		\[
			m = \kappa_{0} + bq + (q-1-\kappa_{0}-b)q^{2},
		\]
		where $b \leq q-1-\kappa_{0}$, $b <_{p} \kappa_{1}$, and is maximal with these constraints. Now, as $m$ is always less than $m'$, we must have $m = \mu_{1}(k)$. \qedhere
	\end{enumerate}
\end{proof}

The example shows that even $\mu_{1}(k)$, whose critical condition is $I(m) = q-1$, might be difficult to evaluate, depending on the complexity of the $p$-adic number $-k = \kappa = \sum \kappa_{j}q^{j}$,

\subsection{} We now give estimates on some numbers related to the Sheats composition of $n \in (q-1)\mathds{N}$. These are needed to describe the zeroes of $C_{k, A}$ and $G_{k,A}$ in the case of
irregular $k$. Until the end of the section, we make the \textbf{Assumption}
\begin{equation}
	\text{The weight system $\mathbf{r}$ equals $\mathds{N}_{0} = (0,1,2,\dots)$.}
\end{equation}
Let $n \in \mathds{N}$ be divisible by $q-1$ with $\mathbf{Sh}(n) = (X_{1}, \dots, X_{h})$, where $h = \mathrm{ht}(n)$. We define the following quantities, where we omit the subscript $\mathds{N}_{0}$ in 
$\mathrm{wt} = \mathrm{wt}_{\mathds{N}_{0}}$: The \textbf{weight} of $n$,
\begin{equation}
	\mathrm{wt}(n) \defeq \mathrm{wt}(\mathbf{Sh}(n)) = \sum_{1 \leq j \leq h} (j-1)X_{j};
\end{equation}
for $i \in \mathds{N}$ with $i \leq h$ the $i$-\textbf{th coweight}
\begin{equation}
	\mathrm{ct}^{(i)}(n) \defeq in - \mathrm{wt}(\mathbf{Sh}^{(i)}(n)) = iX_{1} + (i-1)X_{2} + \dots + 2X_{i-1} + (X_{i} + X_{i+1} + \dots + X_{h})
\end{equation}
and the (absolute) \textbf{coweight} $\mathrm{ct}(n) \defeq \mathrm{ct}^{(h)}(n)$; the \textbf{reduced $i$-th coweight}
\begin{equation}
	\mathrm{ct}_{i}(n) \defeq (i-1)n - \mathrm{wt}(\mathbf{Sh}^{(i)}(n)) = \mathrm{ct}^{(i)}(n) - n.
\end{equation}
Then for example
\begin{equation}
	n = \mathrm{ct}^{(1)}(n) < \mathrm{ct}^{(2)}(n) < \dots < \mathrm{ct}^{(h)}(n) = \mathrm{ct}(n).
\end{equation}
Another important property is: Let $m = X_{1} + \dots + X_{h-1}$ be the next-to-last term of the Sheats sequence of $n$, then
\begin{equation} \label{Eq.Relation-between-different-weights}
	\mathrm{ct}_{h}(n) = \mathrm{ct}^{(h-1)}(m) = \mathrm{ct}(m).
\end{equation}
While these are not related to the possible relation $n <_{p} \kappa$, the next result (which is the key to describing irregular zeroes of $G_{k, A}$ in the next section) requires this condition. Recall that in the present framework,
$n$ is $h$-admissible if and only if $n \equiv 0 \pmod{q-1}$, $n <_{p} \kappa$, and $\mathrm{ht}(n) \geq h$.

\begin{Proposition} \label{Proposition.Admissibility-and-weights}
	Let $n$ be $h$-admissible with $n > \mu_{h}(k)$. Then $\mathrm{ct}^{(h)}(n) > \mathrm{ct}(\mu_{h}(k))$.
\end{Proposition}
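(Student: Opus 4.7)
The idea is to rewrite the statement as a comparison of weights. Since $\mathrm{ct}^{(h)}(n) = hn - \mathrm{wt}(\mathbf{Sh}^{(h)}(n))$ and $\mathrm{ct}(\mu_h(k)) = h\mu_h(k) - \mathrm{wt}(\mathbf{Sh}(\mu_h(k)))$, setting $\delta := n - \mu_h(k) > 0$ gives
\[
\mathrm{ct}^{(h)}(n) - \mathrm{ct}(\mu_h(k)) = h\delta - \bigl[\mathrm{wt}(\mathbf{Sh}^{(h)}(n)) - \mathrm{wt}(\mathbf{Sh}(\mu_h(k)))\bigr],
\]
so the proposition is equivalent to the strict weight inequality $\mathrm{wt}(\mathbf{Sh}(\mu_h(k))) > \mathrm{wt}(\mathbf{Sh}^{(h)}(n)) - h\delta$.

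First I would dispose of the regime where $n$ is large compared to $\mu_h(k)$. Theorem \ref{Theorem.Inequalitites-for-Sheats-factors} applied to $\mathbf{Sh}(\mu_h(k)) = (Y_1,\ldots,Y_h)$ gives $Y_j \leq Y_h/q^{h-j}$, whence a geometric-series bound yields $\mathrm{ct}(\mu_h(k)) < \mu_h(k)\cdot q/(q-1)$. When $n \geq \mu_h(k)\cdot q/(q-1)$ we immediately get $\mathrm{ct}^{(h)}(n) \geq n > \mathrm{ct}(\mu_h(k))$. So we may restrict to the complementary regime $\delta < \mu_h(k)/(q-1)$, in which the same theorem applied to the truncated Sheats composition $\mathbf{Sh}^{(h)}(n) = (X_1,\ldots,X_h)$ of $n$ yields $X_h \geq n(q-1)/q > \delta$ (with plenty of slack).

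The key construction is then the candidate $h$-composition $\mathbf{Y}' := (X_1,\ldots,X_{h-1},\, X_h - \delta)$ of $\mu_h(k)$. Its entries are positive and divisible by $q-1$, they sum to $\mu_h(k)$, and its weight equals $\mathrm{wt}(\mathbf{Sh}^{(h)}(n)) - (h-1)\delta$. Provided the smooth-sum property holds for $\mathbf{Y}'$, the maximum weight property of Sheats compositions (Corollary \ref{Corollary.Maximal-weight-of-sheats-i-composition}) gives $\mathrm{wt}(\mathbf{Sh}(\mu_h(k))) \geq \mathrm{wt}(\mathbf{Y}') > \mathrm{wt}(\mathbf{Sh}^{(h)}(n)) - h\delta$, which combined with the reduction above finishes the proof and in fact yields the stronger quantitative bound $\mathrm{ct}^{(h)}(n) \geq \mathrm{ct}(\mu_h(k)) + \delta$.

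The hard part will be establishing the smoothness of $\mathbf{Y}'$: the $p$-adic support of $X_h - \delta$ has to be disjoint from the supports of $X_1,\ldots,X_{h-1}$, which could fail if the subtraction borrows into positions already occupied. A convenient sufficient condition is the coefficientwise subordination $\tilde{n} := X_1 + \ldots + X_{h-1} <_p \mu_h(k)$, for then $X_h - \delta = \mu_h(k) - \tilde{n}$ has $p$-support equal to $\mu_h(k)$'s support minus $\tilde n$'s, automatically disjoint from $\tilde n$. Proving this subordination by exploiting the extremal characterization of $\tilde n$ from Corollary \ref{Corollary.Maximal-Sheats-compositions} together with the structure of $\mu_h(k)$ provided by Algorithm \ref{Algorithm.Determination-of-mu-i-k}, or else falling back on a finer redistribution of $p$-parts among the $X_j$ when the natural subordination fails, is the technical crux of the argument.
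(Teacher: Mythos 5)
Your reduction to the weight inequality is algebraically correct, and the idea of producing a competing $h$-composition of $\mu_h(k)$ to invoke the maximal-weight property is natural. But the proof has a genuine gap, flagged by you yourself as ``the technical crux'': the smooth-sum property of $\mathbf{Y}' = (X_1,\ldots,X_{h-1},X_h-\delta)$ is never established. Your proposed sufficient condition $\tilde n := X_1 + \dots + X_{h-1} <_p \mu_h(k)$ is not proven, and I see no reason it should hold in general: $\tilde n$ and $\mu_h(k)$ are both $<_p\kappa$, but there is nothing forcing the $p$-supports of these two admissible numbers to be nested. The fallback you mention (``a finer redistribution of $p$-parts'') is precisely the work that would have to be done, and it is nontrivial --- so the argument is incomplete, not merely unpolished.

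The paper sidesteps the smoothness problem entirely with a different and cleaner device. It forms $m := \sup_p(\mu_h(k),n)$, so that $(X_1,\ldots,X_h, m-\mu_h(k))$ and $(Y_1,\ldots,Y_h, m-n)$ are both \emph{valid compositions} of length $h+1$ of $m$ in the broader sense of Sheats (\cite{Sheats98}~1.4), where the last entry is \emph{not} required to be $\equiv 0\pmod{q-1}$. The minimality of $\mu_h(k)$ identifies the first one as Sheats' greedy (hence optimal) composition, so Sheats' Theorem~1.2 gives the strict weight inequality directly; subtracting $h$ times the common smooth sum and flipping signs produces exactly the desired coweight inequality. Two further remarks on your auxiliary estimates: the geometric-series bound from Theorem~\ref{Theorem.Inequalitites-for-Sheats-factors} gives $\mathrm{ct}(\mu_h(k)) < \mu_h(k)\,(q/(q-1))^2$, not $\mu_h(k)\,q/(q-1)$; and with the corrected constant the resulting bound on $\delta$ in the complementary regime no longer guarantees $X_h > \delta$ for small $q$ (already $q=4$ fails). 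These are patchable, but they compound the larger unresolved gap.
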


\begin{proof}
	Let $\mathbf{Sh}(\mu_{h}(k)) = \mathbf{Sh}^{(h)}(\mu_{h}(k)) = (X_{1}, \dots, X_{h})$ and $\mathbf{Sh}^{(h)}(n) = (Y_{1}, \dots, Y_{h})$ be the Sheats $h$-compositions of $\mu_{h}(k)$ and $n$, respectively. Put 
	$m \defeq \sup_{p} (\mu_{h}(k), n)$, where $\sup_{p}$ is the supremum with respect to \enquote{$<_{p}$}, $X_{h+1} \defeq m - \mu_{h}(k)$, and $Y_{h+1} \defeq m-n$. Then
	\begin{equation} \label{Eq.Proof-equality-of-smooth-sums-of-Xj-and-Yj}
		\bigoplus_{1 \leq j \leq h+1} X_{j} = m = \bigoplus_{1 \leq j \leq h+1} Y_{j},
	\end{equation}
	and both of them are \textbf{valid compositions} of length $h+1$ of $m$ in the sense of Sheats \cite{Sheats98} 1.4. (This is a more general notion of composition than ours given in \eqref{Eq.Powersum-S-i-Lambda}, in that it allows
	the last factor $X_{h+1}$ resp. $Y_{h+1}$ to be $\not\equiv 0 \pmod{q-1}$. Therefore it may be applied to the present $m$, which in general is not divisible by $q-1$.) Now the valid composition
	$(X_{1}, \dots, X_{h}, X_{h+1})$ for $m$ agrees with Sheats' greedy or optimal composition (loc. cit.), due to the characterizing minimality property of $\mu_{h}(k) = \sum_{1 \leq j \leq h} X_{j}$. Hence, by Sheats' Theorem 1.2,
	\begin{equation} \label{Eq.Proof-strict-inequality-of-Xjsums-over-Yj}
		\sum_{1 \leq j \leq h+1} (j-1)X_{j} > \sum_{1 \leq j \leq h+1} (j-1) Y_{j}.
	\end{equation}
	(Sheats' theorem gives directly $\sum_{j} X_{j} > \sum_{j} Y_{j}$, which by \eqref{Eq.Proof-equality-of-smooth-sums-of-Xj-and-Yj} is equivalent with \eqref{Eq.Proof-strict-inequality-of-Xjsums-over-Yj}.) Now substracting $h$
	times the equation \eqref{Eq.Proof-equality-of-smooth-sums-of-Xj-and-Yj} from \eqref{Eq.Proof-strict-inequality-of-Xjsums-over-Yj} and multiplying by $-1$ gives
	\[
		\sum_{1 \leq j \leq h} (h+1-j) X_{j} < \sum_{1 \leq j \leq h} (h+1-j)Y_{j},
	\]
	which is nothing else than the wanted inequality $\mathrm{ct}(\mu_{h}(k)) < \mathrm{ct}^{(h)}(n)$.
\end{proof}

\section{The irregular zeroes of $G_{k} = G_{k,A}$} \label{Section.Irregular-zeroes}

We come back to the situation of Section \ref{Section.Location-of-zeroes-regular-case}, but now drop the requirement of $k$ being regular. On the other hand, we keep the assumption \eqref{Eq.New-characterization-approximation-numbers} that the weight system is $\mathds{N}_{0} = (0,1,2,\dots)$. Accordingly, $\Lambda$
is the lattice $A$, $C_{k} = C_{k,A}$ and $G_{k} = G_{k,A}$.

\subsection{} Amending our notation hitherto, we define 
\begin{equation}
	\begin{split}
		\tilde{\gamma}_{r} 	= \tilde{\gamma}_{r}(k) 	&\defeq \# \{ \text{zeroes $z$ of $C_{k}$} \mid \log z = r \} \text{ and} \\
		\gamma_{r}				= \gamma_{r}(k)	&\defeq \tilde{\gamma}_{r}/q^{i+1} \quad \text{if } i \leq r < i+1, i \in \mathds{N}_{0},
	\end{split}
\end{equation}
so $\gamma_{r} = \# \{ \text{zeroes $x$ of $G_{k}$} \mid \log x = L(r) \}$.

Fix $i \in \mathds{N}_{0}$ and let 
\begin{equation} \label{Eq.Set-R-of-radii}
	\mathbf{R} = \mathbf{R}(k,i) = \{ \rho_{1}, \dots, \rho_{t} \}
\end{equation}
be the set $\{ r \in \mathds{Q} \mid i < r < i+1 \text{ and } \tilde{\gamma}_{r}(k) > 0 \}$, ordered such that $\rho_{1} < \rho_{2} < \dots < \rho_{t}$. Recall that by Theorem \ref{Theorem.Regularity-and-zeroes-of-Ck-Lambda}, 
$\mathbf{R}$ is empty if $k$ is $(i+1)$-regular. We call zeroes $z$ of $C_{k}$ and the corresponding zeroes $x=t_{A}(z)$ of $G_{k}$ \textbf{regular} if $\log z \in \mathds{N}_{0}$, and \textbf{irregular} otherwise; so $\mathbf{R}$ is about irregular zeroes. Put further $\rho_{0} \defeq i$, $\rho_{t+1} \defeq i+1$.

\subsection{} For each $s$ with $0 \leq s \leq t$ let $\mathbf{B}_{s}$ be a ball of radius $q^{r_{s}}$, where $\rho_{s} < r_{s} < \rho_{s+1}$. Then the equation analogous with \eqref{Eq.Determining-equation-for-gamma-j-k} holds:
\begin{equation} \label{Eq.vartheta-k-i-s}
	\Big( k - \sum_{0 \leq j < i} \gamma^{(j)}(k) - \sum_{0 \leq u \leq s} \gamma_{\rho_{u}}(k) \Big) q^{i+1} = k + \vartheta(k,i,s).
\end{equation}\stepcounter{subsubsection}%
Here $\vartheta(k,i,s) \in \mathds{N}$ is such that $-k-\vartheta(k,i,s)$ is the subscript of the dominating term in the Laurent expansion \eqref{Eq.Laurent-expansion-of-Ck-on-Sphere} of $C_{k}$ on $\partial \mathbf{B}_{s}$, 
and the left hand side of \eqref{Eq.vartheta-k-i-s} comes in exactly the same way as \eqref{Eq.Determining-equation-for-gamma-j-k} from counting zeroes and poles of $C_{k}$ in $\mathbf{B}_{s}$. Note that $\vartheta(k,i,s)$ 
is strictly decreasing in $s$, since the number of zeroes inside $\mathbf{B}_{s}$ grows with $s$.

\begin{Lemma} \label{Lemma.vartheta-k-i-s-and-mu-i+1-k}
	The number $\vartheta(k,i,t)$ equals $\mu_{i+1}(k)$.
\end{Lemma}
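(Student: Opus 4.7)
The plan is to locate $\vartheta(k,i,t)$ by studying the Laurent expansion of $C_{k}$ on a sphere $\mathbf{S}(q^{r})$ with $r$ in the interval $(\rho_{t}, i+1)$ and to show that, for $r$ close enough to $i+1$, the coefficient of maximal absolute value is precisely $a_{-k-\mu_{i+1}(k)}$.

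First, on the open interval $r \in (\rho_{t}, i+1)$, the function $C_{k}$ has no zeros on $\mathbf{S}(q^{r})$ (by the very definition of $\rho_{t}$ as the largest element of $\mathbf{R}(k,i)$) and no poles (poles sit at lattice points, hence at integer radii). By the residue formula, $\ord_{\partial \mathbf{B}_{t}}(C_{k})$ is therefore the same integer for every $r_{t}$ in this interval, and by equation (6.2.1) it equals $-(k+\vartheta(k,i,t))$. It suffices to show that for \emph{some} such $r$, the Laurent coefficient $a_{-k-\mu_{i+1}(k)}$ strictly dominates all others.

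Second, Corollary \ref{Corollary.Coefficient-condition-Laurent-expansion-of-Ck} permits us to restrict attention to the $a_{-k-n}$ with $n$ an $(i+1)$-admissible integer, and Corollary \ref{Corollary.Non-vanishing-lattice-sum-formula} identifies their logarithmic absolute value (up to the additive constant $-rk$ independent of $n$) with $A_{n}^{(r)} = \mathrm{wt}(\mathbf{Sh}^{(i+1)}(n)) - rn$. Set $M = \mu_{i+1}(k)$; minimality of $M$ among all $(i+1)$-admissible integers forces $\mathrm{ht}(M)=i+1$ (otherwise a properly smaller intermediate term of a longer chain would also be $(i+1)$-admissible), so $\mathrm{ct}(M) = \mathrm{ct}^{(i+1)}(M)$. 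For any $(i+1)$-admissible $n > M$, a short substitution gives
\[
A_{M}^{(r)} - A_{n}^{(r)} = \bigl(\mathrm{ct}^{(i+1)}(n) - \mathrm{ct}(M)\bigr) - (i+1-r)(n-M).
\]
Proposition \ref{Proposition.Admissibility-and-weights} asserts that the bracketed quantity is a strictly positive integer, hence $\geq 1$. It follows that $A_{M}^{(r)} > A_{n}^{(r)}$ as soon as $(i+1-r)(n-M) < 1$, which handles all admissible $n$ with $n - M$ bounded in terms of $i+1-r$.

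Third, to control the remaining range $n - M \geq 1/(i+1-r)$, I would apply the crude weight bound $\mathrm{wt}(\mathbf{Sh}^{(i+1)}(n)) \leq in$ (the weights are $0,1,\ldots,i$ and the factors sum to $n$), giving
\[
A_{M}^{(r)} - A_{n}^{(r)} \;\geq\; \mathrm{wt}(\mathbf{Sh}^{(i+1)}(M)) - rM + (r-i)n.
\]
Since $r > i$, the right hand side is positive for every $n \geq N(r)$ with explicit threshold $N(r) = (rM - \mathrm{wt}(\mathbf{Sh}^{(i+1)}(M)))/(r-i)$, which tends to the finite value $\mathrm{ct}(M)$ as $r \to (i+1)^{-}$. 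Hence for $r$ sufficiently close to $i+1$ (yet still within $(\rho_{t}, i+1)$), the two estimates jointly cover all admissible $n \ne M$, and $a_{-k-M}$ is the unique maximal Laurent coefficient on $\partial \mathbf{B}_{t}$. This identifies $\ord_{\partial \mathbf{B}_{t}}(C_{k})$ with $-k-M$, and equation (6.2.1) yields $\vartheta(k,i,t) = M = \mu_{i+1}(k)$. The main technical obstacle is the uniformity of the comparison over infinitely many admissible $n$; this is precisely what the splitting into the "near" range (handled by Proposition \ref{Proposition.Admissibility-and-weights} together with integrality) and the "far" range (handled by the linear weight bound) is designed to resolve.
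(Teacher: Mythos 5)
Your proof is correct and follows the same essential route as the paper: reduce to a comparison of the quantities $A_{n}^{(r)}$ for $(i+1)$-admissible $n$, recast the comparison in terms of the coweights $\mathrm{ct}^{(i+1)}$, and invoke Proposition \ref{Proposition.Admissibility-and-weights} as the decisive input. The one place where you go further than the paper is worth pointing out: the paper asserts that since $r_{t}$ can be taken arbitrarily close to $i+1$, it \enquote{suffices} to prove the inequality $\mathrm{ct}^{(i+1)}(n) > \mathrm{ct}(\mu_{i+1})$, but this sufficiency is not automatic, because $(i+1-r_{t})(n-\mu_{i+1})$ is unbounded in $n$ for any fixed $r_{t} < i+1$. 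Your two-range splitting---using integrality of $\mathrm{ct}^{(i+1)}(n)-\mathrm{ct}(\mu_{i+1}) \geq 1$ when $n-\mu_{i+1}$ is small, and the crude bound $\mathrm{wt}(\mathbf{Sh}^{(i+1)}(n)) \leq in$ (equivalently $\mathrm{ct}^{(i+1)}(n) \geq n$) when $n$ is large, with the observation that the threshold $N(r)$ tends to the finite limit $\mathrm{ct}(\mu_{i+1})$ as $r \to (i+1)^{-}$---is exactly the uniformity argument needed to close that gap. You also correctly observe and justify that $\mathrm{ht}(\mu_{i+1}(k)) = i+1$, which the paper uses implicitly. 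So this is the paper's proof with a necessary step made explicit rather than a different proof.
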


\begin{proof}
	Let $\mathbf{Sh}(\mu_{i+1}(k)) = (X_{1}, \dots, X_{i+1})$ and $n > \mu_{i+1}(k)$ be any other $(i+1)$-admissible natural number with $\mathbf{Sh}^{(i+1)}(n) = (Y_{1}, \dots, Y_{i+1})$. We must show that the Laurent coefficient
	$a_{-k-\mu_{i+1}}$ in the expansion \eqref{Eq.Laurent-expansion-of-Ck-on-Sphere} of $C_{k}$ on $\partial \mathbf{B}_{t}$ is strictly larger than $a_{-k-n}$ (here and in the following we briefly write $\mu_{i+1}$ for $\mu_{i+1}(k)$).
	As in the proof of Proposition \ref{Proposition.Regularity-and-Laurent}, this amounts to showing that 
	\[
		A_{\mu_{i+1}}^{(r_{t})} = \sum_{1 \leq j \leq i+1} (j-1)X_{j} - r_{t} \mu_{i+1} > A_{n}^{(r_{t})} = \sum_{1 \leq j \leq i+1} (j-1) Y_{j} - r_{t}n,
	\]
	which is equivalent with 
	\begin{equation}
		\sum (j-1) Y_{j} - \sum(j-1) X_{j} < r_{t}(n - \mu_{i+1}).
	\end{equation}
	As we can choose $r_{t}$ arbitrarly close to $\rho_{t+1} = i+1$ in $\mathds{Q} \cap (\rho_{t}, \rho_{t+1})$, it suffices to show 
	\begin{equation}
		\sum_{1 \leq j \leq i+1} (j-1)Y_{j} - \sum_{1 \leq j \leq i+1} (j-1) X_{j} < (i+1)(n - \mu_{i+1}).
	\end{equation}
	But the latter is another way of stating the inequality $\mathrm{ct}^{(i+1)}(n) > \mathrm{ct}(\mu_{i+1})$, which has been shown in \ref{Proposition.Admissibility-and-weights}.
\end{proof}

\subsection{} \label{Subsection.Equations-for-gamma-i} As in \ref{Theorem.Regularity-and-zeroes-of-Ck-Lambda}, we may solve the system \eqref{Eq.vartheta-k-i-s} for the $\gamma_{\rho_{s}}$ and obtain
\begin{align}
	\gamma_{i} 				&= \gamma_{\rho_{0}} = k - \gamma^{(0)}(k) - \dots - \gamma^{(i-1)}(k) - (k+ \vartheta(k,i,0))/q^{i+1}; \\
	\gamma_{\rho_{s}}	&= (\vartheta(k,i,s-1) - \vartheta(k,i,s))/q^{i+1} \qquad (1 \leq s \leq t); \label{Eq.Gamma-rho-s-and-varthetas}\\
	\gamma^{(i)}				&= \sum_{0 \leq s \leq t} \gamma_{\rho_{s}} = \frac{(q-1)k + q\mu_{i} - \mu_{i+1}}{q^{i+1}}. \label{Eq.Formula-for-gamma-(i)}
\end{align}
Note that \eqref{Eq.Formula-for-gamma-(i)} collapses to \eqref{Eq.Formula-for-the-gamma-j-k} if $k$ is $(i+1)$-regular.

\begin{Corollary}
	The approximation number $\mu_{i}(k)$ satisfies 
	\[
		\mu_{i}(k) + k \equiv 0 \pmod{q^{i}}, \qquad i \in \mathds{N}.
	\]
\end{Corollary}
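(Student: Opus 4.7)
The statement is essentially a direct consequence of the counting relations already set up. The cleanest route is via equation \eqref{Eq.vartheta-k-i-s} together with Lemma \ref{Lemma.vartheta-k-i-s-and-mu-i+1-k}: taking $s = t$ in \eqref{Eq.vartheta-k-i-s} gives
\[
	\Big(k - \sum_{0 \leq j < i} \gamma^{(j)}(k) - \sum_{0 \leq u \leq t} \gamma_{\rho_{u}}(k)\Big) q^{i+1} = k + \vartheta(k,i,t) = k + \mu_{i+1}(k),
\]
and since the bracketed quantity on the left is an integer (each $\gamma^{(j)}(k)$ and $\gamma_{\rho_u}(k)$ counts zeroes of $G_k$ with multiplicity), the right-hand side is divisible by $q^{i+1}$. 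After relabeling $i+1 \to i$, this is the assertion.

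Alternatively, one can give an inductive proof entirely from \eqref{Eq.Formula-for-gamma-(i)} without invoking the $\vartheta$ language. Since $\gamma^{(i)}(k) \in \mathds{N}_0$, the numerator
\[
	(q-1)k + q\mu_i(k) - \mu_{i+1}(k) = q(k + \mu_i(k)) - (k + \mu_{i+1}(k))
\]
is divisible by $q^{i+1}$. Starting from the trivial base case $\mu_0(k) = 0$, a straightforward induction shows that if $k + \mu_i(k) \equiv 0 \pmod{q^i}$, then $q(k + \mu_i(k)) \equiv 0 \pmod{q^{i+1}}$, and hence $k + \mu_{i+1}(k) \equiv 0 \pmod{q^{i+1}}$.

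There is no real obstacle here; the only substantive input is the integrality of the vanishing numbers $\gamma^{(j)}(k)$ (which holds by their definition as multiplicities) together with the identification $\vartheta(k,i,t) = \mu_{i+1}(k)$ supplied by the preceding lemma. The corollary is therefore a bookkeeping consequence of the residue-theorem argument that produced \eqref{Eq.vartheta-k-i-s} and \eqref{Eq.Formula-for-gamma-(i)} in the first place.
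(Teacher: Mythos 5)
Your first route is exactly the paper's own argument: the paper's proof cites \eqref{Eq.Determining-equation-for-gamma-j-k} for regular $k$ and \eqref{Eq.vartheta-k-i-s} together with Lemma \ref{Lemma.vartheta-k-i-s-and-mu-i+1-k} for the rest, which is precisely your setting $s=t$ and invoking integrality of the zero counts. Your alternative induction via \eqref{Eq.Formula-for-gamma-(i)} is a harmless repackaging of the same input, so nothing further to add.
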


\begin{proof}
	For $k$ $i$-regular, this already follows from the proof of Theorem \ref{Theorem.Regularity-and-zeroes-of-Ck-Lambda}, see \eqref{Eq.Determining-equation-for-gamma-j-k}. For the other $k$, it results from \eqref{Eq.vartheta-k-i-s} in conjunction with Lemma \ref{Lemma.vartheta-k-i-s-and-mu-i+1-k}.
\end{proof}

For the rest of the section, we simplify notation and write $\vartheta(s)$ for $\vartheta(k,i,s)$.

\subsection{}\label{Subsection.rhos-and-weights} The $\rho_{s}$ ($1 \leq s \leq t$) or rather the $q^{\rho_{s}}$ are critical radii of the Laurent expansion of $C_{k}$, where the coefficients $a_{-k-\vartheta(s-1)}$ and $a_{-k-\vartheta(s)}$ 
have the same size. That is, the quantities introduced in \eqref{Eq.Formula-for-A-n-r}
\begin{align*}
	A_{\vartheta(s-1)}^{(\rho_{s})}	&= \sum_{1 \leq j \leq i+1} (j-1) X_{j} - \vartheta(s-1) \rho_{s} \eqdef A^{(s)} \qquad \text{and} \\
	A_{\vartheta(s)}^{(\rho_{s})}		&= \sum_{1 \leq j \leq i+1} (j-1) Y_{j} - \vartheta(s) \rho_{s} \eqdef B^{(s)}
\end{align*}
agree, where $\mathbf{Sh}^{(i+1)}(\vartheta(s-1)) = (X_{1}, \dots, X_{i+1})$ and $\mathbf{Sh}^{(i+1)}(\vartheta(s)) = (Y_{1}, \dots, Y_{i+1})$. Now
\begin{align*}
	-A^{(s)}	&= \vartheta(s-1) (\rho_{s} - i) + \mathrm{ct}_{i+1}(\vartheta(s-1))
\intertext{and accordingly,}
	-B^{(s)}	&= \vartheta(s)(\rho_{s} - i) + \mathrm{ct}_{i+1}(\vartheta(s)),
\end{align*}
which, as $\vartheta(s-1) > \vartheta(s)$, implies
\begin{equation}
	\mathrm{ct}_{i+1}(\vartheta(s-1)) < \mathrm{ct}_{i+1}(\vartheta(s)).
\end{equation}
This is somewhat counter-intuitive, and thereby underlines the exceptional character of such a situation.

\subsection{}\label{Subsection.Properties-vartheta-k-i-s} The numbers $\vartheta = \vartheta(k,i,s)$, where $0 \leq s \leq t$, satisfy: They are $(i+1)$-admissible, i.e.,
\begin{equation} \label{Eq.Property-1-vartheta-k-i-s}
	\vartheta \equiv 0 \pmod{q-1}, \qquad \vartheta <_{p} \kappa, \qquad \mathrm{ht}(\vartheta) \geq i+1,
\end{equation}
and moreover
\begin{align}
	\vartheta		&\leq (k - \gamma^{(0)}(k) - \dots - \gamma^{(i-1)}(k)) q^{i+1} - k \eqdef \Gamma(k,i), \label{Eq.Property-2-vartheta-k-i-s} \\
	\vartheta+k	&\equiv 0 \pmod{q^{i+1}}, \label{Eq.Property-3-vartheta-k-i-s}
\end{align}
where the last assertion follows from \eqref{Eq.vartheta-k-i-s}.

We let $\boldsymbol{\Theta} \defeq \boldsymbol{\Theta}(k,i)$ be the set of all $\vartheta \in \mathds{N}$ subject to \eqref{Eq.Property-1-vartheta-k-i-s}-\eqref{Eq.Property-3-vartheta-k-i-s}. 

\subsection{} The sequence $(\vartheta(s)))_{0 \leq s \leq t}$ has its entries in $\boldsymbol{\Theta}$ and is strictly monotonically decreasing in $s$, while $(\mathrm{ct}_{i+1}(\vartheta(s)))$ is increasing. Consider the points
\begin{equation}
	P_{s} \defeq ( \vartheta(s), \mathrm{ct}_{i+1}(\vartheta(s)) )
\end{equation}
in the euclidean plane and the polygon $\mathbf{P}(k,i)$ connecting them. 
\begin{figure}[h!]
	\centering
	\begin{tikzpicture}[scale=0.75]
		\draw[->, thick] (0,0) -- (0,5.5) node[left] {$\mathrm{ct}_{i+1}(n)$}; 
		\draw[->, thick] (0,0) -- (14.5,0) node[below] {$n$};
		
		\draw[dotted] (3,0) -- (3,5) node[above right] {$P_{t}$};
		\draw[dotted] (7,0) -- (7,2.5) node[above right] {$P_{s}$};
		\draw[dotted] (9,0) -- (9,1.75) node[above right] {$P_{s-1}$};
		\draw[dotted] (11,0) -- (11,1.5) node[above right] {$P_{0}$};
		
		\draw (3,5) -- (7,2.5) -- (9,1.75) -- (11,1.5);
		
		\draw (3,0.125) -- (3,-0.125) node[below] {$\mu_{i+1}(k) = \vartheta(k,i,t)$};
		\draw (7,0.125) -- (7,-0.125) node[below] {$\vartheta(k,i,s) = \vartheta(s)$};
		\draw (11,0.125) -- (11,-0.125) node[below] {$\vartheta(k,i,0)$};
		\draw (13,0.125) -- (13,-0.125) node[below] {$\Gamma(k,i)$};
		
		\draw[fill=black] (0,0) circle (1pt);
		\draw[fill=black] (3,5) circle (1pt);
		\draw[fill=black] (7,2.5) circle (1pt);
		\draw[fill=black] (9,1.75) circle (1pt);
		\draw[fill=black] (11,1.5) circle (1pt);
		
		\node (P) at (9,3.5) {$\mathbf{P}(k,i)$};
	\end{tikzpicture}
	\caption{} \label{Figure.Graph-of-Pi}
\end{figure}
By the equality of $A^{(s)}$ and $B^{(s)}$ in \ref{Subsection.rhos-and-weights}, the slope 
\[
	( \mathrm{ct}_{t+1}(\vartheta(s-1)) - \mathrm{ct}_{i+1}(\vartheta(s)))/(\vartheta(s-1) - \vartheta(s))
\]
between $P_{s}$ and $P_{s-1}$ equals $-(\rho_{s}-i)$, and is therefore strictly decreasing in $s = 1, \dots, t$. Hence $\mathbf{P}(k,i)$ is convex with break points $P_{1}, \dots, P_{t-1}$ (see Figure \ref{Figure.Graph-of-Pi}).
We summarize our insight in

\begin{Theorem} \label{Theorem.Irregular-zeroes-Ck}
	If there exist irregular zeroes $z$ of $C_{k}$ with $i < \log z < i+1$, that is, if the set $\mathbf{R}(k,i) = \{ \rho_{1}, \dots, \rho_{t} \}$ of \eqref{Eq.Set-R-of-radii} is non-empty, then there exists a strictly monotonically decreasing sequence
	$\vartheta(0) > \vartheta(1) > \dots > \vartheta(t) = \mu_{i+1}(k)$ of length $t$ in the set $\boldsymbol{\Theta}(k,i)$ of \ref{Subsection.Properties-vartheta-k-i-s} such that 
	\begin{equation} \label{Eq.Irregular-zeroes-Ck-and-weights}
		(\mathrm{ct}_{i+1}(\vartheta(s-1)) - \mathrm{ct}_{i+1}(\vartheta(s))) /( \vartheta(s-1) - \vartheta(s)) = i - \rho_{s}
	\end{equation}
	for $s = 1,2,\dots, t$. Here $\vartheta(s)$ is brief for $\vartheta(k,i,s)$. Conversely, assume there exists $\vartheta \in \boldsymbol{\Theta}(k,i)$ with $\mathrm{ct}_{i+1}(\vartheta) < \mathrm{ct}_{i+1}(\mu_{i+1}(k))$. Let 
	$\vartheta(0)$ be the least element $\vartheta$ of $\boldsymbol{\Theta}(k,i)$ that minimizes $\mathrm{ct}_{i+1}(\vartheta)$, and let $\mathbf{P}(k,i)$ be the lower convex hull of the point set 
	$\{ (\vartheta, \mathrm{ct}_{i+1}(\vartheta) \mid \vartheta \in \boldsymbol{\Theta}(k,i) \}$ in the euclidean plane. Number its break points, starting with $P_{0} = (\vartheta(0), \mathrm{ct}_{i+1}(\vartheta(0)))$, \dots, 
	$P_{t} = (\vartheta(t), \mathrm{ct}_{i+1}(\vartheta(t)))$, where $\vartheta(t) = \mu_{i+1}(k)$. Then the numbers $\rho_{1}, \dots, \rho_{t}$ determined by \eqref{Eq.Irregular-zeroes-Ck-and-weights} satisfy
	\[
		i = \rho_{0} < \rho_{1} < \dots < \rho_{t} < \rho_{t+1} = i+1,
	\]	
	and the term $a_{-k-\vartheta(s)}$ is a dominating term in the Laurent expansion of $C_{k}$ for $\rho_{s} < \log z < \rho_{s+1}$. The irregular zeroes $z$ of $C_{k}$ with $i < \log z < i+1$ satisfy $\log z = \rho_{s}$ for
	some $s \in \{1,2,\dots,t\}$, and their numbers are given by \eqref{Eq.Gamma-rho-s-and-varthetas}.
\end{Theorem}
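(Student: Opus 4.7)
The plan is to exploit the clean dictionary between the dominating Laurent coefficient on a sphere of radius $q^r$ with $i<r<i+1$ and the minimum of a linear functional on $\boldsymbol{\Theta}(k,i)$. Since $\Lambda=A$ and hence $r_j=j$, formula \eqref{Eq.Formula-for-A-n-r} together with the definition of the reduced coweight gives, for every $(i+1)$-admissible $n$,
\[
  -A_n^{(r)} \;=\; \mathrm{ct}_{i+1}(n) \;+\; (r-i)\,n
\]
(as already observed in \ref{Subsection.rhos-and-weights}). Up to an $n$-independent factor, this is the logarithm of $\lvert a_{-k-n}\rvert$, so isolating the dominating coefficient is equivalent to minimizing $n\mapsto\mathrm{ct}_{i+1}(n)+(r-i)n$. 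As $r$ sweeps across $(i,i+1)$ the associated slope $-(r-i)$ sweeps through $(-1,0)$, and the minimizers trace out the vertices of the lower convex hull of the points $\{(n,\mathrm{ct}_{i+1}(n))\}$, with transitions at precisely those $r$ for which two adjacent vertices tie.

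For the forward direction I pick on each open interval $(\rho_s,\rho_{s+1})$ (setting $\rho_0=i$, $\rho_{t+1}=i+1$) the unique dominating Laurent term. By Corollary \ref{Corollary.Coefficient-condition-Laurent-expansion-of-Ck} its index has the form $-k-\vartheta(s)$ with $\vartheta(s)\in\mathds{N}$; non-vanishing of the coefficient combined with Corollary \ref{Corollary.Non-vanishing-lattice-sum-formula} forces \eqref{Eq.Property-1-vartheta-k-i-s}, while the residue formula \eqref{Eq.Residue-formula} applied to $\mathbf{B}_s$ produces \eqref{Eq.vartheta-k-i-s}, hence \eqref{Eq.Property-2-vartheta-k-i-s} and \eqref{Eq.Property-3-vartheta-k-i-s}, so $\vartheta(s)\in\boldsymbol{\Theta}(k,i)$. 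Strict monotonicity $\vartheta(s-1)>\vartheta(s)$ is immediate since crossing another critical sphere adds positive zero contributions to the left side of \eqref{Eq.vartheta-k-i-s}. At $r=\rho_s$ the two neighbouring coefficients tie, i.e.\ $A^{(\rho_s)}_{\vartheta(s-1)}=A^{(\rho_s)}_{\vartheta(s)}$; plugging in the displayed identity and rearranging yields directly \eqref{Eq.Irregular-zeroes-Ck-and-weights}. Lemma \ref{Lemma.vartheta-k-i-s-and-mu-i+1-k} identifies $\vartheta(t)$ with $\mu_{i+1}(k)$.

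For the converse, $\boldsymbol{\Theta}(k,i)$ is finite by \eqref{Eq.Property-2-vartheta-k-i-s}, so $\mathbf{P}(k,i)$ is a well-defined finite convex polygon. Its leftmost vertex is $(\mu_{i+1}(k),\mathrm{ct}_{i+1}(\mu_{i+1}(k)))$: $\mu_{i+1}(k)$ is the minimum of the first coordinate because it is the smallest $(i+1)$-admissible integer, and Proposition \ref{Proposition.Admissibility-and-weights} rewritten using $\mathrm{ct}_{i+1}(n)=\mathrm{ct}^{(i+1)}(n)-n$ shows that for any other $n\in\boldsymbol{\Theta}(k,i)$ the slope from this leftmost vertex to $(n,\mathrm{ct}_{i+1}(n))$ strictly exceeds $-1$, so all slopes of $\mathbf{P}(k,i)$ lie in $(-1,0)$. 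The hypothesis on $\vartheta$ supplies at least one further vertex, and the rightmost one is $\vartheta(0)$ by construction. Each slope of $\mathbf{P}(k,i)$ determines a unique $\rho\in(i,i+1)$ via $\text{slope}=i-\rho$; convexity yields the ordering $\rho_1<\cdots<\rho_t$, and, by the displayed identity, for $r\in(\rho_s,\rho_{s+1})$ the vertex $\vartheta(s)$ uniquely minimizes $\mathrm{ct}_{i+1}(n)+(r-i)n$ over $\boldsymbol{\Theta}(k,i)$. Hence $a_{-k-\vartheta(s)}$ dominates on every sphere with radius in $(\rho_s,\rho_{s+1})$, $C_k$ has no zeros on such spheres, and applying \eqref{Eq.Residue-formula} at the jumps recovers the zero counts \eqref{Eq.Gamma-rho-s-and-varthetas}.

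The main obstacle, and the reason the set $\boldsymbol{\Theta}(k,i)$ is defined via \eqref{Eq.Property-1-vartheta-k-i-s}--\eqref{Eq.Property-3-vartheta-k-i-s} rather than just the naive $(i+1)$-admissibility, lies in matching the convex-hull data with the dynamical data $\{\vartheta(k,i,s)\}$. Conditions \eqref{Eq.Property-2-vartheta-k-i-s} and \eqref{Eq.Property-3-vartheta-k-i-s} are not automatic from the Laurent expansion but come from the residue identity \eqref{Eq.vartheta-k-i-s}; so in the converse one has to rule out that a $(i+1)$-admissible $n$ failing \eqref{Eq.Property-2-vartheta-k-i-s} could ever be a dominating index, and anchoring the left endpoint of $\mathbf{P}(k,i)$ at $\mu_{i+1}(k)$ depends delicately on Proposition \ref{Proposition.Admissibility-and-weights}.
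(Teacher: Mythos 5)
Your proposal follows the same strategy as the paper: the dictionary between dominating Laurent coefficients and minimizers of $n \mapsto \mathrm{ct}_{i+1}(n) + (r-i)n$, the residue identity \eqref{Eq.vartheta-k-i-s} to put dominating indices into $\boldsymbol{\Theta}(k,i)$, Lemma \ref{Lemma.vartheta-k-i-s-and-mu-i+1-k} to anchor $\vartheta(t)=\mu_{i+1}(k)$, and the convexity of $\mathbf{P}(k,i)$. The forward direction as you wrote it is correct and essentially identical to what the paper develops in \ref{Subsection.Equations-for-gamma-i}--\ref{Subsection.Properties-vartheta-k-i-s} and in the discussion leading to Figure \ref{Figure.Graph-of-Pi}.

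In the converse direction you flag, but do not close, the one nontrivial step: at the moment you write \enquote{Hence $a_{-k-\vartheta(s)}$ dominates}, what you have shown is only that $\vartheta(s)$ minimizes the functional over $\boldsymbol{\Theta}(k,i)$, whereas dominance requires minimization over \emph{all} $(i+1)$-admissible $n$. The gap is genuine but the resolution is available from the forward argument itself. For $r$ with $i<r<i+1$ and no zeroes of $C_k$ on $\mathbf{S}(q^r)$, the dominating index $n^*(r)$ is unique (Corollary \ref{Corollary.Coefficient-condition-Laurent-expansion-of-Ck}), and the residue formula \eqref{Eq.Residue-formula} applied to $\mathbf{B}(q^r)$ gives $-k-n^*(r) = Z - kq^{i+1}$, where $Z$ is the number of zeroes in $\mathbf{B}(q^r)$. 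Since $Z$ is a multiple of $q^{i+1}$ (zeroes come in $\Lambda_{i+1}$-orbits of size $q^{i+1}$), this gives \eqref{Eq.Property-3-vartheta-k-i-s}, and since $Z \geq q^{i+1}\sum_{j<i}\gamma^{(j)}(k)$ (those zeroes lie strictly inside $\mathbf{B}$), it gives \eqref{Eq.Property-2-vartheta-k-i-s}; hence $n^*(r)\in\boldsymbol{\Theta}(k,i)$. As $n^*(r)$ is also the global minimizer, it coincides with $\vartheta(s)$, the minimizer over $\boldsymbol{\Theta}(k,i)$. Inserting this paragraph turns your proposal into a complete proof; as written, the last step of the converse is asserted rather than proved.
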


\begin{proof}
	We have shown the first direction. The converse direction is straightforward, as the arguments may be reversed, using the same formulas.
\end{proof}

The following is a handy coarsening of the theorem.

\begin{Corollary} \label{Corollary.On-existence-of-irregular-zeroes}
	There exist irregular zeroes $z$ of $C_{k}$ with $i < \log z < i+1$ if and only if there exists some $\vartheta \in \boldsymbol{\Theta}(k,i)$ with $\mathrm{ct}_{i+1}(\vartheta) < \mathrm{ct}_{i+1}(\mu_{i+1}(k))$. 
\end{Corollary}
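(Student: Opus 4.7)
The plan is to derive this corollary as a direct consequence of both directions of Theorem \ref{Theorem.Irregular-zeroes-Ck}. The forward implication needs very little: if $\mathbf{R}(k,i)$ is non-empty then $t \geq 1$ and the theorem hands us a strictly decreasing sequence $\vartheta(0) > \vartheta(1) > \dots > \vartheta(t) = \mu_{i+1}(k)$ in $\boldsymbol{\Theta}(k,i)$. By the monotonicity observed in \ref{Subsection.rhos-and-weights} (namely $\mathrm{ct}_{i+1}(\vartheta(s-1)) < \mathrm{ct}_{i+1}(\vartheta(s))$), iterating gives $\mathrm{ct}_{i+1}(\vartheta(0)) < \mathrm{ct}_{i+1}(\vartheta(t)) = \mathrm{ct}_{i+1}(\mu_{i+1}(k))$, so taking $\vartheta \defeq \vartheta(0)$ furnishes the required element.

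For the converse direction, I would appeal to the construction in the second half of Theorem \ref{Theorem.Irregular-zeroes-Ck}. First note that $\mu_{i+1}(k)$ itself lies in $\boldsymbol{\Theta}(k,i)$: condition \eqref{Eq.Property-1-vartheta-k-i-s} is its defining property, condition \eqref{Eq.Property-2-vartheta-k-i-s} follows from $\mu_{i+1}(k) = \vartheta(k,i,t)$ in Lemma \ref{Lemma.vartheta-k-i-s-and-mu-i+1-k} (together with the fact that $\vartheta$ decreases in $s$ so the largest value is an upper bound), and condition \eqref{Eq.Property-3-vartheta-k-i-s} is the Corollary following \ref{Subsection.Equations-for-gamma-i}. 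Hence $\boldsymbol{\Theta}(k,i)$ is non-empty. Now if some $\vartheta \in \boldsymbol{\Theta}(k,i)$ satisfies $\mathrm{ct}_{i+1}(\vartheta) < \mathrm{ct}_{i+1}(\mu_{i+1}(k))$, then the minimum of $\mathrm{ct}_{i+1}$ over $\boldsymbol{\Theta}(k,i)$ (attained on a finite set because of the bound \eqref{Eq.Property-2-vartheta-k-i-s}) is strictly smaller than $\mathrm{ct}_{i+1}(\mu_{i+1}(k))$. Consequently, the distinguished point $\vartheta(0)$ defined in the theorem differs from $\mu_{i+1}(k)$, and the lower convex hull $\mathbf{P}(k,i)$ of the points $\{(\vartheta,\mathrm{ct}_{i+1}(\vartheta)) : \vartheta \in \boldsymbol{\Theta}(k,i)\}$ is non-degenerate, producing $t \geq 1$ break points. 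Then \eqref{Eq.Irregular-zeroes-Ck-and-weights} yields genuine radii $\rho_{1}, \dots, \rho_{t}$ strictly between $i$ and $i+1$, each supporting irregular zeroes by the second part of the theorem.

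The only step requiring mild care is checking that the hypothesis really forces $\vartheta(0) \neq \mu_{i+1}(k)$, so that the convex hull has a non-trivial descending edge; this is essentially immediate, since the minimizer of a real-valued function over a finite set cannot equal a point of larger value. I expect the main (minor) obstacle to be organizing the argument so that the invocation of Theorem \ref{Theorem.Irregular-zeroes-Ck} is logically clean: the theorem's forward direction manufactures the sequence $(\vartheta(s))$ from the geometric data (the $\rho_s$), whereas we wish to go from an abstract element of $\boldsymbol{\Theta}(k,i)$ to the existence of such $\rho_s$. Thus one has to carefully invoke the \emph{converse} part of the theorem, whose hypothesis is precisely what is given to us, and whose conclusion is precisely the existence of irregular zeroes with $i < \log z < i+1$. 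No new computation is needed beyond this reorganization.
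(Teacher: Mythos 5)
Your proposal is correct and takes essentially the same approach the paper intends: the paper offers no proof, presenting the corollary as a direct coarsening of Theorem \ref{Theorem.Irregular-zeroes-Ck}, and your argument is precisely the routine bookkeeping (forward: chain the inequalities from \ref{Subsection.rhos-and-weights}; converse: check $\mu_{i+1}(k)\in\boldsymbol{\Theta}(k,i)$ and that the hypothesis forces $\vartheta(0)\neq\mu_{i+1}(k)$, hence $t\geq 1$) needed to make that coarsening explicit.
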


Even more handy is the following:

\begin{Corollary} \label{Corollary.Existence-of-irregular-zeroes-and-approximation-numbers}
	The existence of irregular zeroes $z$ of $C_{k}$ with $i < \log z < i+1$ is equivalent with $\mu_{i}(k) \not\varprec \mu_{i+1}(k)$.
\end{Corollary}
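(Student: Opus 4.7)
The plan is to reduce to Corollary \ref{Corollary.On-existence-of-irregular-zeroes}, which expresses the existence of irregular zeroes in $(i, i+1)$ as the existence of $\vartheta \in \boldsymbol{\Theta}(k,i)$ with $\mathrm{ct}_{i+1}(\vartheta) < \mathrm{ct}_{i+1}(\mu_{i+1}(k))$. The key identity is
\[
	\mathrm{ct}_{i+1}(n) = \mathrm{ct}(m(n))
\]
for every $(i+1)$-admissible $n$, where $m(n) \defeq X_{1} + \dots + X_{i}$ is the $i$-th partial sum of $\mathbf{Sh}^{(i+1)}(n) = (X_{1}, \dots, X_{i+1})$. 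This follows by unfolding the definitions and invoking Proposition \ref{Proposition.Sheats-composition-of-n-J}, which identifies $(X_{1}, \dots, X_{i})$ as $\mathbf{Sh}(m(n))$. Since $m(n)$ is $i$-admissible, minimality forces $m(n) \geq \mu_{i}$, and Proposition \ref{Proposition.Admissibility-and-weights} applied at height $i$ gives $\mathrm{ct}(m(n)) \geq \mathrm{ct}(\mu_{i})$ with equality iff $m(n) = \mu_{i}$.

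Next I show $m(\mu_{i+1}) = \mu_{i}$ if and only if $\mu_{i} \varprec \mu_{i+1}$. The direction ``$\Rightarrow$'' is immediate since consecutive terms of a Sheats sequence are $\varprec$-related. For ``$\Leftarrow$'', the additivity of each $\ell^{\sigma}$ over smooth sums (Lemma \ref{Lemma.Compatibility-with-smooth-sums}) applied to $\mu_{i+1} = \mu_{i} \oplus (\mu_{i+1}-\mu_{i})$ gives $\mathrm{ht}(\mu_{i+1}-\mu_{i}) = 1$, whereupon Corollary \ref{Corollary.Maximal-Sheats-compositions}(ii) characterises $m(\mu_{i+1})$ as the minimum $m \varprec \mu_{i+1}$ with $\mathrm{ht}(m) = i$ and $\mathrm{ht}(\mu_{i+1}-m) = 1$, forcing $m(\mu_{i+1}) \leq \mu_{i}$ and hence equality. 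In the ``$\varprec$'' case, $\mathrm{ct}_{i+1}(\mu_{i+1}) = \mathrm{ct}(\mu_{i})$ already attains the universal lower bound above, so no $\vartheta \in \boldsymbol{\Theta}(k,i)$ can satisfy the strict inequality, and Corollary \ref{Corollary.On-existence-of-irregular-zeroes} rules out irregular zeroes. In the ``$\not\varprec$'' case, $m(\mu_{i+1}) > \mu_{i}$ strictly, so $\mathrm{ct}_{i+1}(\mu_{i+1}) > \mathrm{ct}(\mu_{i})$, and it suffices to exhibit $\vartheta \in \boldsymbol{\Theta}(k,i)$ with $m(\vartheta) = \mu_{i}$, for then $\mathrm{ct}_{i+1}(\vartheta) = \mathrm{ct}(\mu_{i}) < \mathrm{ct}_{i+1}(\mu_{i+1})$.

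Such $\vartheta$ I would construct as
\[
	\vartheta \defeq \mu_{i} + c\,q^{i} + (q-1-c)\,q^{N}, \qquad c \defeq \kappa_{i} - (\mu_{i})_{i} \in Q,
\]
non-negative digit-wise because $\mu_{i} <_{p} \kappa$, with $N$ chosen larger than $\deg_{q}\mu_{i}$ and $\deg_{q}(k-1)$ and large enough that $(q-1-c)\,q^{N} \geq q X_{i}^{\mu}$, where $X_{i}^{\mu}$ is the top Sheats factor of $\mu_{i}$. Setting $X \defeq c\,q^{i} + (q-1-c)\,q^{N}$, the digit-wise complementarity of $c$ and $q-1-c$ yields $\ell^{\sigma}(X) = c^{\sigma} + (q-1-c)^{\sigma} = q-1$ for every $\sigma \in \mathbf{C}$, so $\mathrm{ht}(X) = 1$; together with $X \geq q X_{i}^{\mu}$ and Theorem \ref{Theorem.Inequalitites-for-Sheats-factors} this pins down $\mathbf{Sh}(\vartheta) = (X_{1}^{\mu}, \dots, X_{i}^{\mu}, X)$ so that $m(\vartheta) = \mu_{i}$; matching of the lowest $i+1$ $q$-adic digits of $\vartheta$ with those of $\kappa$ delivers \eqref{Eq.Property-3-vartheta-k-i-s}. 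The main obstacle I foresee is the verification of the upper bound \eqref{Eq.Property-2-vartheta-k-i-s}, $\vartheta \leq \Gamma(k,i)$, which I would handle by taking $N$ minimal subject to the constraints above and comparing $\vartheta$ against $\mu_{i+1} \in \boldsymbol{\Theta}(k,i)$ (so that $\mu_{i+1} \leq \Gamma(k,i)$), noting that $\vartheta$ and $\mu_{i+1}$ share the same bottom $i+1$ $q$-adic digits and so are of comparable magnitude.
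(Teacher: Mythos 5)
Your treatment of the "easy" direction and your key reduction—via $\mathrm{ct}_{i+1}(n)=\mathrm{ct}(m(n))$ for $(i+1)$-admissible $n$, the minimality of $\mathrm{ct}^{(i)}$ at $\mu_i$ (Proposition \ref{Proposition.Admissibility-and-weights}), and Corollary \ref{Corollary.Maximal-Sheats-compositions}(ii)—matches the paper's. The genuine departure is the converse direction, where you attempt an explicit combinatorial construction of a witness $\vartheta$ and verification of all three defining conditions of $\boldsymbol{\Theta}(k,i)$ by hand. The paper does not do this; it takes $n$ minimal with $\mu_i\precneq n$, $\mathrm{Sh}(n)=n-\mu_i$, shows $a_{-k-n}$ dominates the Laurent expansion for $\log z$ just above $i$, and then lets the residue formula \eqref{Eq.Residue-formula} produce the identity $(k-\sum_{j<i}\gamma^{(j)}(k))q^{i+1}=k+n$, which yields $n\le\Gamma(k,i)$ and $k+n\equiv 0\pmod{q^{i+1}}$ simultaneously and for free. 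That analytic step is precisely what your sketch tries to replace, and this is where it breaks.

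The concrete obstruction you yourself flagged, $\vartheta\le\Gamma(k,i)$, is not resolvable by the formula you propose. Note that $\Gamma(k,i)$ collapses to $(q-1)k+q\mu_i(k)$ (telescoping \eqref{Eq.Formula-for-gamma-(i)}), so it is a fixed, rather small, number. Already in the paper's Example \ref{Example.Irregular-k-Case-2} ($q=4$, $k=75$, $i=1$) one has $\mu_1=21=(1,1,1)_4$, $\kappa_1=1=(\mu_1)_1$, so $c=0$; the smallest admissible $N$ with $N>\max(\deg_q\mu_1,\alpha)=3$ is $N=4$, giving $\vartheta=21+3q^4=789$, while $\Gamma(75,1)=309$. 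The valid witness in the paper is $\vartheta=309=\mu_1\oplus 288=\mu_1\oplus(2q^2+q^4)$, which your ansatz $\mu_i+cq^i+(q-1-c)q^N$ cannot produce (the correction sits at $q^2$ and $q^4$, not at $q^1$ and $q^N$). So the "take $N$ minimal and compare to $\mu_{i+1}$" fix does not close the gap. There is also a smaller logical slip: invoking Theorem \ref{Theorem.Inequalitites-for-Sheats-factors} to "pin down" $\mathbf{Sh}(\vartheta)$ treats the necessary separation $X_{j+1}\ge qX_j$ as a sufficient criterion, which it is not; the correct route to $\mathrm{Sh}(\vartheta)=\vartheta-\mu_i$ is via Corollary \ref{Corollary.Maximal-Sheats-compositions} and minimality of $\mu_i$ among $i$-admissible numbers. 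Altogether: your converse direction needs to be replaced, either by the paper's residue-formula argument or by a substantially more careful combinatorial construction that genuinely accounts for the bound $\Gamma(k,i)$.
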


\begin{proof}
	Suppose that $\mu_{i} \varprec \mu_{i+1}$. Then $\mu_{i} = \mu_{i+1} - \mathrm{Sh}(\mu_{i+1})$ is the next-to-last term of the Sheats sequence of $\mu_{i+1}$. By \eqref{Eq.Relation-between-different-weights}, 
	$\mathrm{ct}_{i+1}(\mu_{i+1}) = \mathrm{ct}(\mu_{i})$; more generally, if $n$ is $(i+1)$-admissible with $\mathbf{Sh}^{(i+1)}(n) = (Y_{1}, \dots, Y_{i+1})$, then $\mathrm{ct}_{i+1}(n) = \mathrm{ct}^{(i)}(Y_{1}+Y_{2} + \dots+ Y_{i})$. 
	Now by \ref{Proposition.Admissibility-and-weights}, $\mathrm{ct}(\mu_{i})$ is minimal among all $\mathrm{ct}^{(i)}(m)$, where $m$ is $i$-admissible. Hence there are no zeroes $z$ with $i < \log z < i+1$ by the last corollary.
	
	Conversely, assume $\mu_{i} \not\varprec \mu_{i+1}$. Let $\mathbf{Sh}(\mu_{i+1}) = (X_{1}, \dots, X_{i+1})$, $\mathbf{Sh}(\mu_{i}) = (Y_{1}, \dots, Y_{i})$, and $m \defeq X_{1} + \dots + X_{i}$. Then $\mu_{i} < m$. Choose $Y_{i+1}$ such that
	$n \defeq \mu_{i} \oplus Y_{i+1}$ is $(i+1)$-admissible and $\mathbf{Sh}(n) = (Y_{1}, \dots, Y_{i}, Y_{i+1})$. Now 
	\[
		\mathrm{ct}_{i+1}(n) = in - \mathrm{wt}(n) = iY_{1} + (i-1)Y_{2} + \dots + Y_{i} = \mathrm{ct}(\mu_{i}) = \mathrm{ct}^{(i)}(\mu_{i}).
	\]
	Again by \ref{Proposition.Admissibility-and-weights}, $\mathrm{ct}^{(i)}$ is minimal on $\mu_{i}$, so 
	\begin{equation} \label{Eq.Irregular-zeroes-and-weight-inequalities}
		\mathrm{ct}_{i+1}(\mu_{i+1}) = \mathrm{ct}^{(i)}(m) > \mathrm{ct}^{(i)}(\mu_{i}) = \mathrm{ct}_{i+1}(n).
	\end{equation}
	Now assume that $n$ is minimal with the conditions above ($n$ $(i+1)$-admissible, $\mu_{i} \precneq n$, $Y_{i+1} = \mathrm{Sh}(n)$). Then \eqref{Eq.Irregular-zeroes-and-weight-inequalities} implies that the coefficient 
	$a_{-k-n}$ of the expansion of $C_{k}$ on the sphere $\mathbf{S}(q^{r})$ dominates if $r > i$ is sufficiently close to $i$ (see the calculations in \ref{Subsection.Admissibility}, notably \eqref{Eq.Formula-for-A-n-r}). 
	The residue formula gives that, in analogy with \eqref{Eq.Determining-equation-for-gamma-j-k},
	\begin{equation}
		\Big( k - \sum_{0 \leq j < i} \gamma^{(j)}(k) \Big)q^{i+1} = k+n
	\end{equation}
	holds. This implies, first, that $n \leq \Gamma(k,i)$ and, second, $k+n \equiv 0 \pmod{q^{i+1}}$, together that $n \in \boldsymbol{\Theta}(k,i)$. Now we conclude by \ref{Corollary.On-existence-of-irregular-zeroes}.
\end{proof}

\section{Determination of the vanishing numbers $\gamma(k) = \gamma_{A}(k)$} \label{Section.Determination-of-vanishing-numbers}

We keep the assumptions of the last sections; so all results and notations refer to the lattice $\Lambda = A$.

\subsection{} For arbitrary $k$, regardless of regularity, the multiplicity $\gamma(k)$ of the zero $x=0$ of $G_{k}(X) = G_{k,A}(X)$ is given by
\begin{equation} \label{Eq.Formula-for-zero-multiplicity}
	\gamma(k) = \lim_{i \to \infty} \Big( k - \sum_{0 \leq j < i} \gamma^{(j)}(k) \Big) = \lim \frac{k + \mu_{i}(k)}{q^{i}},
\end{equation}
as follows from \eqref{Eq.vartheta-k-i-s} together with \ref{Lemma.vartheta-k-i-s-and-mu-i+1-k}. We are going to give a more explicit description.

\subsection{} Define
\begin{equation}
	\begin{split}
		\alpha		&= \alpha(k) \defeq \deg_{q}(k-1) \\
		M				&=M(k) \defeq \min\{ n \in \mathds{N} \text{ admissible and $n+k \equiv 0 \pmod{q^{\alpha+1}}$} \} \\
		H				&=H(k) \defeq \mathrm{ht}(M(k)).
	\end{split}
\end{equation}
Here as usual, $n$ admissible means $n \equiv 0 \pmod{q-1}$ and $n <_{p} \kappa$, which depends on $k$. These quantities along with the approximation numbers $\mu_{i}(k)$ will play a prominent role. Recall that the 
$q$-expansions are
\begin{equation}
	k-1 = \sum_{0 \leq j \leq \alpha} k_{j}q^{j}, \qquad -k = \kappa = \sum_{j \geq 0} \kappa_{j} q^{j} \qquad (k_{j}, \kappa_{j} \in Q)
\end{equation}
with $\kappa_{j} = q-1-k_{j}$. Now the condition $M+k \equiv 0 \pmod{q^{\alpha+1}}$ means that $M$ as a \enquote{power series} agrees up to the $q^{\alpha}$-term with $\kappa$. Quite generally, for any element
$\varphi = \sum_{j \geq 0} f_{j} q^{j}$ of $\mathds{Z}_{p}$ ($f_{j} \in Q$), let $\varphi^{*}$ be the part $\sum_{j < \alpha} f_{j}q^{j}$. Then 
\begin{equation}
	M = \kappa^{*} + \kappa_{\alpha}q^{\alpha} + Rq^{\alpha+1},
\end{equation}
where $(q-1) \neq R \in Q$, so $R \in Q' = \{0,1,2,\dots,q-2\}$ is determined by 
\begin{equation}
	\ell(\kappa^{*}) + \kappa_{\alpha} + R \equiv 0 \pmod{q-1}.
\end{equation}
As $\ell(\kappa^{*}) + \kappa_{\alpha} = (\alpha+1)(q-1) - \ell(k-1)$, $R$ is the representative $\pmod{q-1}$ of $\ell(k-1)$ in $Q'$. Note also that, as $k_{\alpha} \neq 0$, $\kappa_{\alpha} \in Q'$.

\subsection{} We need some more definitions. For admissible $n$ of height $h$ with $\mathbf{Sh}(n) = (X_{1}, \dots, X_{h})$ and Sheats sequence $(n_{0}, n_{1}, \dots, n_{h})$, where $n_{i} = \sum_{1 \leq j \leq i} X_{j}$, put
\begin{equation}
	^{i}n \defeq n_{h-i}.
\end{equation}
Further, for any admissible $n$
\begin{equation}
	\begin{split}
		n^{(1)} 	&\defeq \min\{ m \in \mathds{N} \mid m \text{ admissible and $n \precneq m$} \} \\
		n^{(i)}		&\defeq (n^{(i-1)})^{(1)} \quad \text{for $i \geq 2$, $n^{(0)} = n$.}
	\end{split}
\end{equation}
Then we may formally write $\mu_{1}(k) = 0^{(1)}$, and: $k$ is regular $\Leftrightarrow$ $\forall i \in \mathds{N}_{0} : \mu_{i}(k) = 0^{(i)}$ $\Leftrightarrow$ $\forall i \forall j \leq i : {}^{j}\mu_{i}(k) = \mu_{i-j}(k)$.

\subsection{}\label{Subsection.Ms-and-mus} As $^{1}M \leq \kappa^{*} + \kappa_{\alpha}q^{\alpha}$ (this follows from \eqref{Eq.deg-of-factors-in-monotonically-increasing}), we have $\mu_{H-1}(k) \leq {}^{1}M \leq \kappa^{*} - \kappa_{\alpha}q^{\alpha}$, which implies $\mu_{H-1}(k) \lneq_{p} M$. By 
definition of the Sheats sequence and the minimality property of $\mu_{H-1}$, therefore
\begin{equation}
	\mu_{H-1} = {}^{1}M.
\end{equation}
(Here and henceforth, we suppress \enquote{$k$} in the notation.) With similar arguments, we will describe $\mu_{H}$ and the higher $\mu_{H+i}$ ($i\geq 0$). For the moment we note 
\begin{equation} \label{Eq.Formula-for-M-(1)}
	M^{(1)} = \kappa^{*} + \kappa_{\alpha} q^{\alpha} + (q-1)q^{\alpha+1} + Rq^{\alpha+2},
\end{equation}
and more generally, for $i \in \mathds{N}_{0}$,
\begin{equation} \label{Eq.Formula-for-M-(i)}
	M^{(i)} = \kappa^{*} + \kappa_{\alpha}q^{\alpha} + (q-1)q^{\alpha+1} + \dots + (q-1)q^{\alpha+i} + Rq^{\alpha+i+1}.
\end{equation}
Here \eqref{Eq.Formula-for-M-(1)} results as the right and side is $M+(q-1-R)q^{\alpha+1} + Rq^{\alpha+2}$, i.e., $M$ plus the least $X$ with $\ell(X) = q-1$ and such that $M \oplus X <_{p} \kappa$. The argument for 
\eqref{Eq.Formula-for-M-(i)} is similar.

\subsection{} In what follows, we often simplify notation and describe $q$- or $p$-expansions through strings. Then, e.g., \eqref{Eq.Formula-for-M-(i)} reads 
\[
	M^{(i)} = (\kappa^{*}, \kappa_{\alpha}, \underbrace{q-1, \dots, q-1}_{i}, R),
\]
a $q$-string, or the $p$-string
\[
	(q-1) = (\underbrace{p-1, \dots, p-1}_{f}).
\]
As in \ref{Subsection.Ms-and-mus}, $\mu_{H+i-1}$, as the minimal admissible of height $H+i-1$, is less or equal to $^{1}(M^{(i)})$, which implies $\mu_{H+i-1} \lneq_{p} M^{(i)}$, and thus 
\begin{equation} \label{Eq.Formula2-for-mu-H+i-1}
	\mu_{H+i-1} ={} ^{1}(M^{(i)}), \qquad \text{for } i \geq 0.
\end{equation}
Hence in view of \eqref{Eq.Formula-for-zero-multiplicity} we should investigate the operator $n \mapsto {}^{1}n$ on $n = M^{(i)}$, or equivalently, $n \mapsto \mathrm{Sh}(n)$, where $\mathrm{Sh}(n) = n- {}^{1}n$ is the largest 
Sheats factor of $n$.

\subsection{} Recall that some $s \in F = \{0,1, \dots, f-1\}$ or its image $\sigma_{s} \in \mathbf{C}$ is critical for $M$ if $I(M) = \min_{\sigma \in \mathbf{C}} \ell^{\sigma}(M) = \ell^{\sigma_{s}}(M)$. Since 
$\ell^{\sigma}(M^{(i)}) = \ell^{\sigma}(M) + i(q-1)$, the sets of critical $s$ for $M$ and $M^{(i)}$ agree. We call that set $F(k)$ with image $\mathbf{C}(k) \defeq \{ \sigma \in \mathbf{C} \mid \sigma \text{ critical for } $M$\}$
in $\mathbf{C}$ and put 
\begin{equation}
	\bar{s} \defeq \min \{ s \in F(k) \} = \min \{ s \in F \mid s \text{ critical for } M \}.
\end{equation}
If $\bar{s} > 0$, we split each $x = \sum_{i \in F} x_{i}p^{i} \in Q$ into the parts 
\begin{equation}
	\underline{x} \defeq \sum_{0 \leq i < \bar{s}} x_{i}p^{i} \quad \text{and} \quad \bar{x} \defeq \sum_{\bar{s} \leq i < f} x_{i}p^{i}.
\end{equation}
We distinguish the two cases:
\begin{equation}
	\begin{split}
		\text{Case 1}:		&\text{$\bar{s} = 0$, or ($\bar{s} > 0$ and $\underline{R} = (p-1, \dots, p-1) = p^{\bar{s}}-1$) or ($\bar{s} > 0$ and $\overline{R} = 0$)}; \\
		\text{Case 2}:	&\text{$\bar{s} > 0$, $\underline{R} \neq p^{\bar{s}}-1$ and $\overline{R} \neq 0$},
	\end{split}
\end{equation}

\begin{Proposition} \label{Proposition.Sheats-and-Cases}
	\begin{enumerate}[label = $\mathrm{(\roman*)}$] 
		\item Assume we are in $\mathrm{Case~1}$. Then for $i \geq 1$,
		\begin{equation} \label{Eq.Case-1-formula-for-Sh-M-(i)}
			\mathrm{Sh}(M^{(i)}) = (q-1-R)q^{\alpha +i} + Rq^{\alpha + i +1}
		\end{equation}\stepcounter{subsubsection}%
		holds. Therefore $\mu_{H+i-1} = {}^{1}(M^{(i)}) = M^{(i-1)}$.
		\item Assume $\mathrm{Case~2}$ and $i \geq 2$. Then 
		\begin{equation} \label{Eq.Sheats-composition-of-M-(i)}
			\mathrm{Sh}(M^{(i)}) = (q-\overline{R}) q^{\alpha +i-1} + (q-1 - (\underline{R} +1))q^{\alpha +i} + Rq^{\alpha + i +1}.
		\end{equation}\stepcounter{subsubsection}%
		In this case,
		\begin{equation} \label{Eq.Formula-for-mu-H+i-1}
			\mu_{H+i-1} = {}^{1}(M^{(i)}) = (\kappa^{*}, \kappa_{\alpha}, \underbrace{q-1, \dots, q-1}_{i-2}, \overline{R}-1, \underline{R}+1).
		\end{equation}\stepcounter{subsubsection}%
	\end{enumerate}
\end{Proposition}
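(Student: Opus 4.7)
The plan is to identify $\mathrm{Sh}(M^{(i)})$ through the extremality characterisation of Corollary~\ref{Corollary.Maximal-Sheats-compositions}---namely that it is the largest admissible $X \varprec M^{(i)}$ with $\mathrm{ht}(X) = 1$ and $\mathrm{ht}(M^{(i)}-X) = H+i-1$---combined with the lower bound $\mathrm{Sh}(M^{(i)}) \geq q\,\mathrm{Sh}(M^{(i-1)})$ from Theorem~\ref{Theorem.Inequalitites-for-Sheats-factors}. Once $\mathrm{Sh}(M^{(i)})$ is known, formula \eqref{Eq.Formula-for-mu-H+i-1} and its Case~1 counterpart drop out of \eqref{Eq.Formula2-for-mu-H+i-1}.

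As preliminaries I would record that $\mathrm{ht}(M^{(j)}) = H+j$, that $\ell(M^{(j)}) = \ell(M)+j(q-1)$, and that the critical set $\mathbf{C}(k)$ is common to $M$ and every $M^{(j)}$: each new layer $(q-1-R)q^{\alpha+j}+Rq^{\alpha+j+1}$ contributes exactly $q-1$ to $\ell^{\sigma}$ for every $\sigma \in \mathbf{C}$. In Case~1 the candidate $X=(q-1-R)q^{\alpha+i}+Rq^{\alpha+i+1}$ has $\ell(X)=q-1$, is indecomposable as a smooth admissible sum (the $p$-digit identity $R_j+(p-1-R_j)=p-1$ leaves no room to peel off a nontrivial admissible summand), so $\mathrm{ht}(X)=1$; and $M^{(i)}-X=M^{(i-1)}$ has height $H+i-1$ by induction. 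Maximality then follows by induction on $i$, the base case $i=1$ being a direct verification: the lower bound gives $\mathrm{Sh}(M^{(i)}) \geq q\,\mathrm{Sh}(M^{(i-1)}) = X$, while any valid $X'>X$ would force $\mu_{H+i-1} \lneq M^{(i-1)}$, and a sub-case-by-sub-case use of Corollary~\ref{Corollary.Inequalities-for-ell-under-true-action} (treating $\bar s=0$, $\underline R = p^{\bar s}-1$, $\overline R=0$ via the defining structure of $\underline R$, $\overline R$, $\bar s$) rules this out.

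Case~2 is the main obstacle. Here the candidate $X = (q-\overline R)q^{\alpha+i-1} + (q-2-\underline R)q^{\alpha+i} + Rq^{\alpha+i+1}$ has $\ell(X)=2(q-1)$, so verifying $\mathrm{ht}(X)=1$ requires the delicate identity $I(X) = \ell^{\sigma_{\bar s}}(X) = q-1$. I would prove this by computing the twist of each $q$-digit of $X$ by $\sigma_{\bar s}$ via Lemma~\ref{Lemma.Formula-for-sigma-of-partial-sum}; the hypotheses $\underline R \neq p^{\bar s}-1$ and $\overline R \neq 0$ are precisely what produce the cancellations that collapse the twisted digit sum to $q-1$ (in the excluded sub-cases one such cancellation degenerates and the situation reverts to Case~1). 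The remaining checks $X \varprec M^{(i)}$ and $\mathrm{ht}(M^{(i)}-X)=H+i-1$ follow from an analogous twist computation, and maximality is established by induction on $i \geq 2$: the lower bound from Theorem~\ref{Theorem.Inequalitites-for-Sheats-factors} gives $\mathrm{Sh}(M^{(i)}) \geq X$ once the base case $i=2$ is settled by running Algorithm~\ref{Algorithm.To-determine-Sheats-factor} directly, and any hypothetical $X'>X$ is then excluded by the same $\sigma_{\bar s}$-twist analysis applied to the would-be remainder $M^{(i)}-X'$.
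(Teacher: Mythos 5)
The main inductive step in your plan does not go through, and the way you invoke Theorem \ref{Theorem.Inequalitites-for-Sheats-factors} is circular. That theorem compares consecutive Sheats factors of a \emph{single} number: applied to $M^{(i)}$ it yields $\mathrm{Sh}(M^{(i)}) \geq q\,\mathrm{Sh}\bigl({}^1(M^{(i)})\bigr)$, where ${}^1(M^{(i)})$ is the penultimate term of the Sheats sequence of $M^{(i)}$. Identifying ${}^1(M^{(i)})$ with $M^{(i-1)}$ (Case~1) is precisely the statement you are trying to prove, so you cannot read off $\mathrm{Sh}(M^{(i)}) \geq q\,\mathrm{Sh}(M^{(i-1)})$ from it. (In fact, you never need that inequality: since $M^{(i-1)}$ is $(H+i-1)$-admissible, the characterisation ${}^1(M^{(i)}) = \mu_{H+i-1}$ from \eqref{Eq.Formula2-for-mu-H+i-1} together with the minimality of $\mu_{H+i-1}$ already gives $\mathrm{Sh}(M^{(i)}) = M^{(i)} - \mu_{H+i-1} \geq M^{(i)} - M^{(i-1)}$, which is the candidate value. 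So the lower bound is the easy half and Theorem \ref{Theorem.Inequalitites-for-Sheats-factors} is neither needed nor correctly deployed.)

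What actually carries the weight of the proposition is the opposite inequality $\mu_{H+i-1} \geq M^{(i-1)}$ (Case~1), resp.\ the explicit identification of ${}^1(M^{(i)})$ in Case~2, and here your proposal is only a sketch. Ruling out a smaller admissible $m$ of height $\geq H+i-1$ is not a one-line application of Corollary \ref{Corollary.Inequalities-for-ell-under-true-action}: the element $\sigma$ that realises the minimum $I(m)$ can vary with $m$ and need not lie in $\mathbf{C}(k)$, and the contribution of the low-order digits $\kappa^{*}$ has to be controlled simultaneously for every critical $\sigma$. The paper's proof handles exactly this by running $\mathbf{Alg}$ on $M^{(i)}$ step by step over $2f$ iterations, tracking the auxiliary variables $c[\nu]$, $d[\nu]$ and showing via conditions \eqref{Eq.Condition-on-b-f-nu-prime-equiv}--\ref{subsubsection.Condition-for-b-f-nu-and-all-critical-s} that only the condition coming from $\sigma_{\bar s}$ is binding; that analysis also produces the precise coefficients $a,b$ in Case~2 and, as a byproduct, disposes of the degenerate sub-cases $\underline{R} = p^{\bar s}-1$ and $\overline{R}=0$ of Case~1. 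Your twist-by-$\sigma_{\bar s}$ heuristic is pointing in the right direction for the digit bookkeeping, but without the iteration it does not establish maximality, and as written the argument does not close.
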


\begin{proof}
	\begin{enumerate}[label=(\alph*), wide]
		\item The assertion of (i) is immediate if $\bar{s} = 0$ or $\bar{s} > 0$ and $R=0$, as $X \defeq (q-1-R)q^{\alpha+i} + Rq^{\alpha+i+1}$ has height 1, satisfies $\mathrm{ht}(M^{(i)} - X) = \mathrm{ht}(M^{(i-1)}) = H+i-1$, and $X$ is
		obviously maximal with these properties.
		\item We now suppose that $\bar{s} > 0$ and $R > 0$ and analyze the effect of the algorithm $\mathbf{Alg}$ on $M^{(i)}$. We will see that $\mathrm{Sh}(M^{(i)}) = aq^{\alpha + i -1} + bq^{\alpha+i} + Rq^{\alpha +i+1}$ with
		numbers $a,b \in Q$ which do not depend on $i$. A closer look will then reveal that $a$ and $b$ are as stated.
		\item The numbers 
		\[
			a = \sum_{j \in F} a_{j}p^{j}, \qquad b = \sum_{j \in F} b_{j}p^{j}\qquad (a_{j}, b_{j} \in P)
		\]
		are the coefficients of $q^{\alpha+i-1}$ (resp. $q^{\alpha+i}$) in $\mathrm{Sh}(M^{(i)})$ as output from $\mathbf{Alg}$. We first show that they suffice to describe $\mathrm{Sh}(M^{(i)})$, i.e., that 
		$\mathrm{Sh}(M^{(i)}) \equiv 0 \pmod{q^{\alpha +i-2}}$. We also use the dynamic variables $c,d \in \mathds{N}_{0}$, whose values change during execution of $\mathbf{Alg}$. In the $\nu$-th step $(1 \leq \nu \leq f)$, a certain
		value is assigned to $b_{f- \nu}$, then, for $f+1 \leq \nu \leq 2f$, to $a_{2f-\nu}$.
		\item Define the starting value of $c$ as $c[0] \defeq R$, and the value after the $\nu$-th step $(\nu \geq 1)$ by 
		\[
			c[\nu] \defeq c[\nu-1] + \begin{cases} b_{f-\nu} p^{f-\nu},	&\text{if } 1 \leq \nu \leq f, \\ a_{2f-\nu} p^{2f-\nu},	&\text{if } f+1 \leq \nu \leq 2f. \end{cases}
		\]
		As long as $\ell^{\sigma}(c) \leq q-1$ for all $\sigma \in \mathbf{C}(k)$, we define $d \defeq (\ell^{\sigma}(c))^{\sigma^{-1}} \in Q \smallsetminus \{0\} = Q'+1$, which is independent of $\sigma \in \mathbf{C}(k)$, and
		equals the representative modulo $q-1$ of $c$ in $Q' + 1$. In particular, $d[0] = c[0] = R < q-1$, as are all the values of $d$ except for the last one, when $\mathbf{Alg}$ terminates. We will show that $\mathbf{Alg}$
		actually stops for some $\nu \leq 2f$ (in fact, for $\nu \leq 2f - \bar{s}$) with final value $d = q-1$.
		\item Suppose until further notice that $1 \leq \nu \leq f$, that $b_{f-1}, \dots, b_{f-\nu+1}$ are assigned as prescribed by $\mathbf{Alg}$, and $d < q-1$. The condition on $b_{f-\nu}$ from \ref{Algorithm.To-determine-Sheats-factor} (2) is
		\begin{equation} \label{Eq.Condition-on-b-f-nu}
			\ell^{\sigma}(( b_{f- \nu}p^{f-\nu} + b_{f-\nu+1}p^{f-\nu+1} + \dots + b_{f-1}p^{f-1})q^{\alpha+i} + Rq^{\alpha+i+1}) \leq q-1
		\end{equation} \stepcounter{subsubsection}%
		for all the $\sigma \in \mathbf{C}(k)$. That is, $b_{f-\nu}$ is the maximal number in $P$ that satisfies \eqref{Eq.Condition-on-b-f-nu}. Equivalently, $b_{f- \nu}$ is maximal such that
		\begin{equation} \tag{\ref{Eq.Condition-on-b-f-nu}'} \label{Eq.Condition-on-b-f-nu-prime}
			\forall \sigma \in \mathbf{C}(k) : \ell^{\sigma}(b_{f-\nu} p^{f-\nu} + c) \leq q-1,
		\end{equation}
		where $c = c[\nu -1]$ is the current value of $c$. (Note that this condition does not depend on $i$!) Now
		\[
			\ell^{\sigma}(b_{f-\nu}p^{f-\nu} + c ) = b_{f-\nu}(p^{f- \nu})^{\sigma} + \ell^{\sigma}(c) = b_{f- \nu}(p^{f-\nu})^{\sigma} + d^{\sigma}
		\]
		(since $d$ and thus $\ell^{\sigma}(c) < q-1$), so \eqref{Eq.Condition-on-b-f-nu-prime} reads
		\begin{equation} \label{Eq.Condition-on-b-f-nu-prime-equiv}
			\forall \sigma \in \mathbf{C}(k) : b_{f-\nu}(p^{f- \nu})^{\sigma} + d^{\sigma} \leq q-1.
		\end{equation}\stepcounter{subsubsection}%
		Note that:
		\subsubsection{}\stepcounter{equation}%
		Throughout the procedure, the $d^{\sigma}$ $(\sigma \in \mathbf{C})$ are all strictly less than $q-1$, or all equal to $q-1$, by \eqref{Eq.Congruence-to-zero-implies-congruence-of-m-sigma-to-0};
		\subsubsection{}\stepcounter{equation}%
		The conditions analogous with \eqref{Eq.Condition-on-b-f-nu} for non-critial $\sigma$ may be discarded, as for such $\sigma$ still
		\[
			\ell^{\sigma}(b_{f-\nu} p^{f- \nu} + c) < 2(q-1)
		\]
		holds;
		\subsubsection{}\label{subsubsection.Growth-of-variable-c}\stepcounter{equation}%
		The variable $c$ is $\leq 2(q-1)$ for $\nu \leq 2f$. As it grows with $\nu$, we have $d[\nu] = c[\nu]$ for $\nu \leq n$ and $d[\nu] = c[\nu]-(q-1)$ for $\nu > n$ for some $n$.
		
		If, after assigning $b_{f-\nu}$ its value, the current $d = d[\nu]$ equals $q-1$, then the remaining $b_{f-\nu-1}, \dots, b_{0}, a_{f-1}, \dots, a_{0}$ are put to 0 and $\mathbf{Alg}$ stops.
		\item Now we analyze the condition of \eqref{Eq.Condition-on-b-f-nu-prime-equiv}. Using the rule \eqref{Eq.True-action-on-p-parts} to evaluate $(p^{f-\nu})^{\sigma}$ and writing $d = \sum_{j \in F} d_{j}p^{j}$ $(d_{j} \in P)$ as usual,
		condition \eqref{Eq.Condition-on-b-f-nu-prime-equiv} for any $\sigma = \sigma_{s}$, critical or not, and $t \defeq f- \nu$ reads: 
		\subsubsection{}\stepcounter{equation}%
		If \fbox{$t \geq s$}, 
		\begin{align*}
			&b_{t}p^{t-s} + d_{f-1}p^{f-1-s} + d_{f-2}p^{f-2-s} + \dots + d_{t}p^{t-s} + \dots + d_{s}p^{0} \\ 
			&\qquad + d_{s-1}p^{f-1} + \dots + d_{0}p^{f-s} \leq q-1,
		\end{align*}
		and for \fbox{$t<s$}
		\begin{align*}
			&b_{t}p^{f+t-s} + d_{f-1}p^{f-1-s} + \dots + d_{s}p^{0} \\ 
			&\qquad + d_{s-1}p^{f-1} + \dots + d_{t}p^{f+t-s} + \dots + d_{0}p^{f-s} \leq q-1.
		\end{align*}
		Taking carryovers into account, these are equivalent with:
		\subsubsection{}\label{subsubsection.Condition-for-b-f-nu-and-all-critical-s}\stepcounter{equation}%
		\fbox{$t \geq s$} There exists $j \in F$ with $j < s$ or $j > t$ such that $d_{j} < p-1$, or $b_{t} \leq p-1-d_{t}$;
		
		\fbox{$t<s$} There exists $j \in F$ with $t < j < s$ and $d_{j} < p-1$, or $p_{t} \leq p-1-d_{t}$.
		
		As $b_{t} = b_{f-\nu}$ is the maximal number in $P$ subject to \ref{subsubsection.Condition-for-b-f-nu-and-all-critical-s} for all critical $s$, it may take only the values $p-1$ or $p-1-d_{f-\nu}$, where
		$d = d[\nu-1]$ is the current value of $d$. Also, once the condition from one $\sigma \in \mathbf{C}(k)$ implies $b_{f-\nu} \leq p-1-d_{f-\nu}$, we have equality regardless of possible further conditions
		coming from other critial $\sigma$. Now it is easily checked that the condition coming from $\bar{\sigma} = \sigma_{\bar{s}}$ implies the condition for each $\sigma \in \mathbf{C}(k)$. Therefore, we may
		replace \eqref{Eq.Condition-on-b-f-nu-prime-equiv} by the single condition \ref{subsubsection.Condition-for-b-f-nu-and-all-critical-s} with $s = \bar{s}$.
		\item Suppose that $b_{f-\nu-1} = p-1-d_{f-\nu} = p-1 - d[\nu]_{f-\nu-1}$ for some $\nu$, $0 \leq \nu < f$. If $c[\nu] = d[\nu] < q-1$, then still
		$c[\nu+1] = c[\nu] + b_{f-\nu-1} p^{f-\nu-1} \leq q-1$ by construction. Hence $\nu$ differs from the bound $n$ in \ref{subsubsection.Growth-of-variable-c}, and in both possible cases ($c[\nu] = d[\nu]$ or
		$c[\nu] = d[\nu] +q-1$), $d[\nu+1] = d[\nu] \oplus b_{f-\nu-1} p^{f-\nu-1}$, which gives
		\begin{align} \label{Eq.Characterization-of-d-nu+1-f-nu-1}
			&\text{$d[\nu+1]_{f-\nu-1} = p-1$, if $b_{f-\nu-1} = p-1-d[\nu]_{f-\nu-1}$} \\
			&\qquad \text{for some $\nu$ with $0 \leq \nu < f$}. \nonumber
		\end{align}\stepcounter{subsubsection}%
		(We will later see a similar statement for $f \leq \nu < 2f$.)
		\item Now \ref{subsubsection.Condition-for-b-f-nu-and-all-critical-s} with $s = \bar{s}$ implies $b_{\bar{s}-1} = p-1-d_{\bar{s}-1}$; more precisely, $b_{\bar{s}-1} = p-1-d[\nu]_{\bar{s}-1}$ with the current value 
		$d[\nu]$ of $d$, $\nu = f-\bar{s}$. By \eqref{Eq.Characterization-of-d-nu+1-f-nu-1} this gives
		\begin{equation}
			d[f-\bar{s}-1]_{\bar{s}-1} = p-1.
		\end{equation}\stepcounter{subsubsection}%
		By descending induction on $j = \bar{s}-1, \bar{s}-2, \dots, 0$, one shows first, that $b_{j} = p-1-d[f-j-1]_{j}$ and second, that 
		\[
			d[f-j] = (\underbrace{*, \dots, *}_{j}, \underbrace{(p-1), \dots, (p-1)}_{\bar{s}-j}, \underbrace{*,\dots,*}_{f-\bar{s}})
		\]
		and in particular
		\begin{equation} \label{Eq.Characterization-of-df}
			d[f] = (d_{0}, \dots, d_{f-1}) = ( \underbrace{(p-1), \dots, (p-1)}_{\bar{s}}, *, \dots, *),
		\end{equation}\stepcounter{subsubsection}%
		i.e., $d[f]_{j} = p-1$ for $0 \leq j < \bar{s}$. 
		\item Having taken the first $f$ steps without stopping of $\mathbf{Alg}$, we come to the next steps, which assign values to $a_{f-1}, \dots, a_{0}$. There are the same conditions like 
		\ref{subsubsection.Condition-for-b-f-nu-and-all-critical-s} with $s = \bar{s}$ on the $a_{t}$, namely:
		\subsubsection{}\label{Subsubsection.Casedistinction-for-Conditions-on-a-t}\stepcounter{equation}%
		\fbox{$t \geq \bar{s}$} No condition on $a_{t}$ if $d_{j} < p-1$ for some $j \in F$ with $j < \bar{s}$ or $j > t$; otherwise $a_{t} \leq p-1-d_{t}$;
		
		\fbox{$t < \bar{s}$} No condition on $a_{t}$ if $d_{j} < p-1$ for some $j$ with $t<j<\bar{s}$; otherwise $a_{t} \leq p-1-d_{t}$ (and possible further conditions of type \eqref{Eq.Condition-on-b-f-nu} from non-critical $\sigma$
		wouldn't enforce these). Furthermore, with the reasoning of (g) we find
		\begin{align}
			&\text{$d[\nu+1]_{2f-\nu-1} = p-1$, if $a_{2f-\nu-1} = p-1-d[\nu]_{2f-\nu-1}$} \label{Eq.Formula-for-d-nu+1-2f-nu-1}\\
			 &\qquad \text{for some $\nu$ with $f \leq \nu < 2f$.} \nonumber 
		\end{align}
		\item We conclude from \ref{Subsubsection.Casedistinction-for-Conditions-on-a-t} and \eqref{Eq.Characterization-of-df} that $a_{f-1} \leq p-1-d[f]_{f-1}$. As $a_{f-1}$ is maximal with that condition, equality holds,
		which by \eqref{Eq.Formula-for-d-nu+1-2f-nu-1} gives $d[f+1]_{f-1} = p-1$. Again by descending induction on $j = f-1, f-2, \dots$, one shows $a_{j} = p-1-d[2f-j-1]_{j}$ and 
		\[
			d[2f-j] = ( \underbrace{(p-1), \dots, (p-1)}_{\bar{s}}, *, \dots, *, \underbrace{(p-1), \dots, (p-1)}_{f-j}),
		\]
		until at latest $d[2f-\bar{s}] = ( (p-1), \dots, (p-1)) = q-1$, and $\mathbf{Alg}$ stops.
		\item So far we know that $\mathrm{Sh}(M^{(i)}) = aq^{\alpha+i-1} + bq^{\alpha+i} + Rq^{\alpha + i +1}$ with $a,b$ independent of $i$, as announced in (b). 
		
		Suppose that \fbox{$\overline{R} = 0$}. Then, as $d[0] = c[0] = R$,
		$d[0]_{j} = 0$ for $\bar{s} \leq j < f$, and we see by descending induction on $j$ that $b_{j} = p-1-R_{j}$, so $b = q-1-R$, $\mathbf{Alg}$ stops at latest at the $f$-th step, and $a=0$.
		
		Now suppose that \fbox{$\underline{R} = p^{\overline{s}}-1$} $= ( \underbrace{(p-1), \dots, (p-1)}_{\bar{s}}, 0, \dots, 0)$. Then $b_{f-1} = p-1-R_{f-1}$, and (by induction, as usual), $b_{j} = p-1-R_{j}$ for
		$j = f-1, \dots, 0$, and so $b= q-1-R$, $a=0$ as before.
		
		This completes the proof of assertion (i) of the proposition and we may now exclude these two subcases of Case 1.
		\item Let $D$ be the value $d[\nu]$ for $\nu = f- \bar{s}$, $D=(d_{0}, \dots, d_{f-i})$. As (h) shows, we get in the $(\nu+1)$-th step $b_{\bar{s}-1} = p-1-d_{\bar{s}-1}$, and
		$d[\nu+1]$ is obtained from $D = d[\nu]$ by putting the $(\bar{s}-1)$-entry to $p-1$. More generally, for $j = \bar{s}-1$ down to $j=0$, $b_{j} = p-1-d[f-j-1]_{j}$, and $d[f-j]$ is obtained from 
		$d[f-j-1]$ by putting the $j$-th entry to $p-1$. Hence
		\begin{equation} \label{Eq.Characterizing-equation-for-b-bar-and-D-bar}
			\underline{b} + \underline{D} = p^{\overline{s}} - 1.
		\end{equation}
		\item Let $\bar{j} \in F$ be maximal with $R_{j} \neq 0$; then $\bar{j} \geq \bar{s}$ by the assumption $\overline{R} \neq 0$. $\mathbf{Alg}$ produces $b_{f-1} = \dots = b_{\bar{j}} = p-1$,
		\begin{align*}
			c[f-\bar{j}-1]		&= (R_{0}, \dots, R_{\bar{j}-1}, R_{\bar{j}}, (p-1), \dots, (p-1))< q-1 \quad \text{and} \\
			c[f-\bar{j}]			&= c[f-\bar{j}-1] + (p-1)p^{\bar{j}} > q-1.
		\end{align*}
		Hence the bound $n$ of \ref{subsubsection.Growth-of-variable-c} equals $f-\bar{j}-1$, and $d[f- \bar{j}] = c[f-j] - (q-1) = (\underline{R} +1, \overline{R}-q^{\bar{j}})$, where
		$\underline{R}+1$ and $\overline{R} - q^{\bar{j}}$ are the $(\underline{~})$- and $(\bar{~})$-components of $d[f-\bar{j}]$, respectively. As $d[f-\bar{j}]_{\bar{j}} = R_{\bar{j}}-1 < p-1$, we find from
		$\mathbf{Alg}$ that $b_{\bar{j}-1} = \dots = b_{\bar{s}} = p-1$. Therefore,
		\begin{equation} \label{Eq.Characterizing-equation-for-b-bar}
			\bar{b} = (\underbrace{0,\dots, 0}_{\bar{s}}, \underbrace{(p-1), \dots, (p-1)}_{f-\bar{s}}).
		\end{equation}
		\item We already know  from (j) that $\mathbf{Alg}$ stops after step $2f-\bar{s}$ latest. This means $a_{j} = 0$ for $j < \bar{s}$, that is,
		\begin{equation}
			\underline{a} = 0.
		\end{equation}
		Again from (j), for $j = f-1$ downto $j = \bar{s}$, 
		\[
			p-1 = a_{j} + d[2f-j-1]_{j} = a_{j} + d_{j}
		\] 
		(since steps $f-\bar{s}+1, \dots, f, \dots, 2f-j-1$ that led to the assignments of $b_{\bar{s}-1}, \dots, b_{0}$, $a_{f-1}, \dots, a_{j+1}$
		didn't change the $j$-entry from $d[f-\bar{s}] = D$ to $d[2f-j-1]$), which means 
		\begin{equation}
			\bar{a} + \overline{D} = (\underbrace{0,\dots,0}_{\bar{s}}, \underbrace{(p-1), \dots, (p-1)}_{f-\bar{s}}) = q-p^{\bar{s}}.
		\end{equation}
		\item Writing $a$ and $b$ in their respective components $a = \underline{a} \oplus \bar{a} = (\underline{a}, \bar{a})$, $b = (\underline{b}, \bar{b})$, we have found: 
		$a = (0,q-p^{\bar{s}} - \overline{D})$, $b = (p^{\bar{s}}-1- \underline{D}, q-p^{\overline{s}})$. Now $d[0] = R = (\underline{R}, \overline{R})$, and $\underline{d}$ remains unchanged from $\nu = 0$ to 
		$\nu = n = f-\bar{j}-1$; in the step $\nu = n$ to $n+1 = f-\bar{j}$, 1 is added to $\underline{d}$ (see (m)), which subsequently remains unchanged up to $\nu = f-\bar{s}$. Hence $\underline{D} = d[f-\bar{s}] = \underline{R}+1$.
		Together with \eqref{Eq.Characterizing-equation-for-b-bar-and-D-bar} and \eqref{Eq.Characterizing-equation-for-b-bar}, this gives
		\begin{equation}
			\underline{b} = p^{\overline{s}} -1 - (\underline{R}+1) \quad \text{and} \quad b = q-1-(\underline{R}+1).
		\end{equation}
		Finally, the trivial relation $a+b+R = 2(q-1)$ yields
		\begin{equation}
			a = q-\overline{R} = q-1-(\overline{R}-1).
		\end{equation}
		We have shown \eqref{Eq.Sheats-composition-of-M-(i)}, and \eqref{Eq.Formula-for-mu-H+i-1} is a trivial consequence.
	\end{enumerate}
\end{proof}

\subsection{} We come to the principal result of this section. For ease of orientation, we recall the relevant definitions and requirements. Given $k \in \mathds{N}$, let $R$ be the representative (modulo $q-1$) of $k-1$
in $Q' = \{0,1,\dots, q-2\}$. Write the $q$-adic expansion of $k-1$:
\[
	k-1 = \sum_{0 \leq j \leq \alpha} k_{j}q^{j} \quad \text{with} \quad k_{j} \in Q = \{0,1,\dots,q-1\}, k_{\alpha} \neq 0.
\]
Let $\kappa$ be the $p$-adic number $-k = \kappa = \sum_{j \geq 0} \kappa_{j}q^{j}$ with $\kappa_{j} = q-1-\kappa_{j}$ (so $\kappa_{j}=q-1$ for $j > \alpha$).

Let $M$ be the least natural number such that $M \equiv 0 \pmod{q-1}$, $M <_{p} \kappa$, $M+k \equiv 0 \pmod{q^{\alpha+1}}$. Then $M = \sum_{j \leq \alpha} \kappa_{j}q^{j} + Rq^{\alpha+1}$. Let
$H = \min_{\sigma \in \mathbf{C}} \ell^{\sigma}(M)/(q-1)$ be the height $\mathrm{ht}(M)$ of $M$ and $\bar{s}$ the minimal $s \in F = \{0,1,\dots,f-1\}$ such that 
$\ell^{\sigma_{s}}(M)/(q-1) = H$. Split $R = \sum_{j \in F} R_{j}p^{j}$ ($R_{j} \in P = \{0,1,\dots,p-1\}$) into $R = \underline{R} + \overline{R}$, where $\underline{R} = \sum_{0 \leq j < \bar{s}} R_{j}p^{j}$, 
$\overline{R} = \sum_{\bar{s} \leq j < f} R_{j}p^{j}$. 

\begin{Theorem} \label{Theorem.Vanishing-orders-of-Goss-polynomials-in-Cases}
	\begin{enumerate}[label=$\mathrm{(\roman*)}$]
		\item The vanishing order $\gamma(k)$ of the Goss polynomial $G_{k}(X) = G_{k,A}(X)$ at $x=0$ is given as follows: 
		
		\fbox{$\mathrm{Case~1}$} If $\bar{s} = 0$ or $\underline{R} = p^{\overline{s}}-1$ or $\overline{R} = 0$, then 
		\begin{equation} \label{Eq.Case-1-vanishing-order}
			\gamma(k) = (R+1)q^{\alpha-H+1};
		\end{equation}
		\fbox{$\mathrm{Case~2}$} If $\bar{s} > 0$ and neither $\underline{R} = p^{\overline{s}}-1$ nor $\overline{R} = 0$, then 
		\begin{equation} \label{Eq.Case-2-vanishing-order}
			\gamma(k) = (\overline{R} + q(\underline{R}+1))q^{\alpha -H}.
		\end{equation}
		\item If $\bar{s} = 0$, then the exponent $\alpha-H+1$ equals $[\ell(k-1)/(q-1)]$ ($[\cdot] = \text{Gauß bracket}$). In any case, $(R+1)q^{[\ell(k-1)/q-1]}$ is a lower bound for $\gamma(k)$.
		\item If $k$ is regular (see \ref{Lemma.Characterization-Regularity}), the formulas \eqref{Eq.Case-1-vanishing-order} resp. \eqref{Eq.Case-2-vanishing-order} also hold for $\gamma_{\Lambda}(k)$, where $\Lambda$ is an arbitrary infinite separable 
		lattice (instead of $\Lambda = A$).
	\end{enumerate}
\end{Theorem}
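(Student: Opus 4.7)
The plan is to combine formula \eqref{Eq.Formula-for-zero-multiplicity}, $\gamma(k) = \lim_{n\to\infty}(k+\mu_n(k))/q^n$, with the explicit description of $\mu_{H+i-1}$ delivered by Proposition \ref{Proposition.Sheats-and-Cases}. The payoff is that along the subsequence $n = H+i-1$ the quotients $(k+\mu_n)/q^n$ become \emph{constant} in $n$, so there is no genuine convergence issue: the limit is the common value.

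For part (i), the central computation is the telescoping identity
\[
    k + M^{(i-1)} = (R+1)q^{\alpha+i},
\]
which follows from $M^{(i-1)} = \sum_{j=0}^{\alpha+i-1}(q-1-k_j)q^j + Rq^{\alpha+i}$ together with $\sum_{j=0}^{\alpha+i-1}(q-1)q^j = q^{\alpha+i}-1$ and $k-1 = \sum_{j\leq\alpha}k_j q^j$. In Case 1, Proposition \ref{Proposition.Sheats-and-Cases}(i) gives $\mu_{H+i-1} = M^{(i-1)}$, so dividing by $q^{H+i-1}$ immediately yields \eqref{Eq.Case-1-vanishing-order}. In Case 2 I substitute \eqref{Eq.Formula-for-mu-H+i-1}, which differs from $M^{(i-1)}$ only in its top two $q$-digits: the pair $(q-1,R)$ at positions $\alpha+i-1,\alpha+i$ is replaced by $(\overline{R}-1,\underline{R}+1)$. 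A short rearrangement using $R = \underline{R}+\overline{R}$ collapses $k+\mu_{H+i-1}$ to $\bigl(\overline{R}+q(\underline{R}+1)\bigr)q^{\alpha+i-1}$, and division by $q^{H+i-1}$ produces \eqref{Eq.Case-2-vanishing-order}.

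For part (ii), I would first compute
\[
    \ell(M) = \sum_{j=0}^{\alpha}\kappa_j + R = (\alpha+1)(q-1) - \ell(k-1) + R = (q-1)\bigl(\alpha+1-[\ell(k-1)/(q-1)]\bigr),
\]
using that $R \in Q'$ is the residue of $\ell(k-1)$ modulo $q-1$. When $\bar{s}=0$, $\sigma_0$ is critical, so $(q-1)H = I(M) = \ell(M)$ and $\alpha-H+1 = [\ell(k-1)/(q-1)]$. In general $I(M) \leq \ell(M)$ gives $\alpha-H+1 \geq [\ell(k-1)/(q-1)]$, which delivers the lower bound in Case 1. In Case 2 I additionally need $\overline{R}+q(\underline{R}+1) \geq R+1$ (trivial from $q \geq 2$) and $\alpha-H \geq [\ell(k-1)/(q-1)]$; the latter holds because $\bar{s}>0$ means $\sigma_0$ is non-critical, and since $\ell^\sigma(M) \in (q-1)\mathds{N}_0$ by Corollary \ref{Corollary.Congruences-of-sigma-action}, the strict inequality $\ell(M) > I(M)$ forces a gap of at least $q-1$.

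Part (iii) is immediate from Theorem \ref{Theorem.Regularity-and-zeroes-of-Ck-Lambda}(iii), which already asserts $\gamma_\Lambda(k)$ is independent of the separable lattice $\Lambda$ whenever $k$ is regular. The main obstacle in the whole argument is really Proposition \ref{Proposition.Sheats-and-Cases}, which is given; once the precise form of $\mu_{H+i-1}$ is in hand, everything reduces to bookkeeping with $q$-adic expansions and a handful of telescoping sums.
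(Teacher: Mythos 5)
Your proof is correct and follows essentially the same route as the paper's: feed the explicit description of $\mu_{H+i-1}$ from Proposition \ref{Proposition.Sheats-and-Cases} into $\gamma(k)=\lim_i(k+\mu_i(k))/q^i$, observe the quotient stabilizes, and in part (ii) compare $\ell(M)=(q-1)(\alpha+1-[\ell(k-1)/(q-1)])$ against $(q-1)H=I(M)\leq\ell(M)$, with the $q-1$ gap in the non-critical case coming from divisibility. Your bookkeeping (the telescoping $k+\kappa^*+\kappa_\alpha q^\alpha=q^{\alpha+1}$, the Case 2 rearrangement, and the lower-bound argument) is a slightly more explicit version of what the paper treats as a ``brief'' or ``trivial'' computation, and part (iii) is the same appeal to Theorem \ref{Theorem.Regularity-and-zeroes-of-Ck-Lambda}(iii).
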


\begin{proof}
	\begin{enumerate}[label=(\roman*), wide]
		\item We use \eqref{Eq.Formula-for-zero-multiplicity}, $\gamma(k) = \lim_{i \to \infty} (k+\mu_{i}(k))/q^{i}$. In Case 1, by \eqref{Eq.Case-1-formula-for-Sh-M-(i)} and \eqref{Eq.Formula-for-M-(i)}, we have for $i \geq 1$:
		\begin{equation} \label{Eq.k+mu-H+i-1}
			k + \mu_{H+i-1}(k) = k + M^{(i-1)} = (R+1)q^{\alpha +i},
		\end{equation}
		whence \eqref{Eq.Case-1-vanishing-order}. In Case 2, similarly for $i \geq 2$:
		\begin{equation} \label{Eq.k+mu-H-i-1-Case2}
			k + \mu_{H+i-1}(k) = k + {}^{1}(M^{(i)}) = (\overline{R} + q(\underline{R}+1))q^{\alpha+i-1},
		\end{equation}
		which gives \eqref{Eq.Case-2-vanishing-order}.
		\item If $\bar{s} = 0$ then $H = \ell(M)/(q-1)$. A brief calculation shows $[\ell(k-1)/(q-1)] = \alpha-H+1$. The trailer ($\gamma(k) \geq (R+1)q^{\alpha-H+1}$) follows by a trivial computation, as $H < \ell(M)/(q-1)$ if $\bar{s}> 0$.
		\item This follows from Theorem \ref{Theorem.Regularity-and-zeroes-of-Ck-Lambda}(iii). \qedhere
	\end{enumerate}
\end{proof}

As to regularity of $k$, we have to following consequence of Proposition \ref{Proposition.Sheats-and-Cases}.

\begin{Corollary} \label{Corollary.Approximation-numbers-in-cases}
	The approximation numbers $\mu_{i}(k)$ of $k$ satisfy $\mu_{i}(k) \varprec \mu_{i+1}(k)$ if $i \geq H-1$ $\mathrm{(Case~1)}$ or $i \geq H$ $\mathrm{(Case~2)}$. Therefore, $k$ is regular if it is $(H-1)$-regular 
	$\mathrm{(Case~1)}$ or $H$-regular $\mathrm{(Case~2)}$.
\end{Corollary}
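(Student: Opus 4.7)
The plan is to derive the first assertion directly from the explicit formulas for $\mu_{H+i-1}$ furnished by Proposition \ref{Proposition.Sheats-and-Cases} together with \eqref{Eq.Formula2-for-mu-H+i-1}, and then to upgrade the result to full regularity via Lemma \ref{Lemma.Characterization-Regularity}. Each $\varprec$-check reduces to comparing the $p$-adic digits of consecutive $\mu$-values position by position.

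In Case~1, I would first observe that Proposition \ref{Proposition.Sheats-and-Cases}(i) identifies $\mu_{H+i-1}=M^{(i-1)}$ for $i\geq 1$ (subtract the stated Sheats factor from $M^{(i)}$). The passage from $M^{(i-1)}$ to $M^{(i)}$ affects only the $q$-digits at position $\alpha+i$ (from $R$ to $q-1$) and at position $\alpha+i+1$ (from $0$ to $R$). Since $R\in Q'$ has all its $p$-digits $\leq p-1$, both $R\lneq_{p}q-1$ and $0\lneq_{p}R$ are immediate, giving $M^{(i-1)}\lneq_{p}M^{(i)}$ and hence $\mu_{H+i-1}\varprec\mu_{H+i}$ for all $i\geq 1$. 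The missing start-up relation $\mu_{H-1}\varprec\mu_H$ is then the general fact ${}^1M\lneq_{p}M$ recorded in \ref{Subsection.Ms-and-mus}.

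In Case~2, Proposition \ref{Proposition.Sheats-and-Cases}(ii) gives, for $i\geq 2$,
\[
    \mu_{H+i-1}=(\kappa^{\ast},\kappa_{\alpha},\underbrace{q-1,\dots,q-1}_{i-2},\overline{R}-1,\underline{R}+1),
\]
and comparing $\mu_{H+i-1}$ with $\mu_{H+i}$ reduces, after canceling the common digits, to the two inequalities $\overline{R}-1\lneq_{p}q-1$ and $\underline{R}+1\lneq_{p}\overline{R}-1$. The first is trivial. For the second I would let $j_0\geq\bar{s}$ be the smallest index with $R_{j_0}\neq 0$ (which exists because $\overline{R}\neq 0$); the subtraction $\overline{R}-1$ then borrows through positions $0,\dots,j_0-1$, producing $p$-digit $p-1$ at each of those positions, in particular at positions $0,\dots,\bar{s}-1$. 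On the other hand the hypothesis $\underline{R}\neq p^{\bar{s}}-1$ forces $\underline{R}+1\leq p^{\bar{s}}-1$, so all $p$-digits of $\underline{R}+1$ occupy positions $0,\dots,\bar{s}-1$ and are individually bounded by $p-1$. This yields $\underline{R}+1\lneq_{p}\overline{R}-1$, hence $\mu_j\varprec\mu_{j+1}$ for every $j\geq H+1$.

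The hard part is the start-up relation $\mu_H\varprec\mu_{H+1}$ in Case~2, which is not directly supplied by the $i\geq 2$ formula. I would handle it by applying $\mathbf{Alg}$ to the known expression $\mu_{H+1}=(\kappa^{\ast},\kappa_{\alpha},\overline{R}-1,\underline{R}+1)$: the same $p$-part bookkeeping as in the proof of Proposition \ref{Proposition.Sheats-and-Cases}(ii) picks up first the $p$-parts of $\underline{R}+1$ at $q$-position $\alpha+2$ and then the relevant ones of $\overline{R}-1$ at position $\alpha+1$, yielding a Sheats factor whose complement in $\mu_{H+1}$ coincides with ${}^1M^{(1)}=\mu_H$; thus $\mu_H={}^1\mu_{H+1}\lneq_{p}\mu_{H+1}$. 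Once all $\varprec$-relations in the stated range are available, Lemma \ref{Lemma.Characterization-Regularity} converts them, together with the assumed $(H-1)$- (resp.\ $H$-)regularity that covers the indices $i\leq H-2$ (resp.\ $i\leq H-1$), into full regularity of $k$.
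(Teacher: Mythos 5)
Your overall strategy coincides with the paper's, and your handling of Case~1 and of Case~2 for $i\geq H+1$ is essentially the paper's own argument (including the key inequality $\underline{R}+1 <_{p}\overline{R}-1$, derived the same way; the only blemish is the assertion $0\lneq_{p}R$, which fails when $R=0$, though $R\lneq_{p}q-1$ already suffices for strictness).

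The gap is precisely where you flag the difficulty: the start-up relation $\mu_{H}\varprec\mu_{H+1}$ in Case~2. You propose to run $\mathbf{Alg}$ on $\mu_{H+1}=(\kappa^{*},\kappa_{\alpha},\overline{R}-1,\underline{R}+1)$ and invoke ``the same $p$-part bookkeeping as in the proof of Proposition~\ref{Proposition.Sheats-and-Cases}(ii)'' to conclude $\mu_{H}={}^{1}\mu_{H+1}$. But that bookkeeping was carried out for the numbers $M^{(i)}$, whose tail digits are $(q-1,\dots,q-1,R)$ and whose set of critical $\sigma$'s is $\mathbf{C}(k)$, fixed by $M$. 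The number $\mu_{H+1}$ has tail digits $(\overline{R}-1,\underline{R}+1)$, not $(q-1,R)$, and its critical set need not equal $\mathbf{C}(k)$: by Observation~\ref{Observation.On-sigmas-critical-for-n} every $\sigma\in\mathbf{C}(k)$ is critical for $\mu_{H+1}$, but additional $\sigma$'s can become critical, and these would impose extra constraints in step~(2) of $\mathbf{Alg}$. So the analysis of \ref{Proposition.Sheats-and-Cases} cannot be cited verbatim here; a fresh run of $\mathbf{Alg}$ with the correct critical set would be required, and your sketch does not supply it. Moreover, the identity $\mu_{H}={}^{1}\mu_{H+1}$ is essentially equivalent to the statement you are proving, so without an independent computation this step is circular. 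The paper avoids all of this by staying inside the $M^{(i)}$-family: it extends the analysis of \ref{Proposition.Sheats-and-Cases} to the boundary case $i=1$ (where the critical set is still $\mathbf{C}(k)$, at the cost of a correction term $\lambda<_{p}\kappa^{*}+\kappa_{\alpha}q^{\alpha}$ absorbing $p$-parts from positions $\leq\alpha$), obtains $\mu_{H}=(\kappa^{*}+\kappa_{\alpha}q^{\alpha}-\lambda)+(\underline{R}+1)q^{\alpha+1}$, and then compares $q$-digits with $\mu_{H+1}$ directly using \eqref{Eq.p-inequality-over-and-under-parts-of-R}.
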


\begin{proof}
	By \eqref{Eq.Formula2-for-mu-H+i-1}, $\mu_{H+i-1} = {}^{1}(M^{(i)})$ for $i \geq 0$. Suppose we are in Case 1. Then by \eqref{Eq.Case-1-formula-for-Sh-M-(i)}, the sequence $(\mu_{H-1}, \mu_{H}, \dots, \mu_{H+i}, \dots)$ equals 
	$(^{1}M, M, \dots, M^{(i)}, \dots)$, which gives the assertion in this case. Now suppose Case 2. First note that the numbers $\underline{R}, \overline{R}$ then satisfy
	\begin{equation} \label{Eq.p-inequality-over-and-under-parts-of-R}
		\underline{R} + 1 <_{p} \overline{R} - 1,
	\end{equation}
	since $(\overline{R}-1)_{j} = p-1$ for $0 \leq j < \bar{s}$ and $(\underline{R} +1)_{j} = 0$ for $\bar{s} \leq j < f$. The description of $\mu_{H+i-1}$ given in \eqref{Eq.Formula-for-mu-H+i-1} shows that 
	$\mu_{H+i-1} <_{p} \mu_{H+i}$ (and thus $\mu_{H+i-1} \varprec \mu_{H+i}$) for $i \geq 2$. Further, the proof of \ref{Proposition.Sheats-and-Cases} shows that (as an extension of \eqref{Eq.Sheats-composition-of-M-(i)} to $i=1$)
	\begin{equation}
		\mathrm{Sh}(M^{(1)}) = \lambda + (q-1 - (\underline{R}+1))q^{\alpha+1} + Rq^{\alpha+2}
	\end{equation}
	with some $\lambda \in \mathds{N}_{0}$, $\lambda <_{p} \kappa^{*} + \kappa_{\alpha}q^{\alpha}$. This gives 
	\[
		\mu_{H} = {}^{1}(M^{(1)}) = M - \mathrm{Sh}(M^{(1)}) = (\kappa^{*} + \kappa_{\alpha}q^{\alpha} - \lambda) + (\underline{R}+1)q^{\alpha+1},
	\]
	where the first term is $<_{p} \kappa^{*} + \kappa_{\alpha}q^{\alpha}$. This, together with \eqref{Eq.p-inequality-over-and-under-parts-of-R}, assures that $\mu_{H} <_{p} \mu_{H+1}$ (thus $\mu_{H} \varprec \mu_{H+1}$), too. \qedhere
\end{proof}

\begin{Corollary}
	\begin{enumerate}[label=$\mathrm{(\roman*)}$]
		\item Assume we are in $\mathrm{Case~1}$. Then there are no zeroes $z$ of $C_{k, A}$ with $\log z > H-1$. Accordingly, there are no zeroes $x$ of $G_{k,A}$ with $\log x < L(H-1)$.
		\item In $\mathrm{Case~2}$, there are no zeroes $z$ of $C_{k,A}$ with $\log z > H$, and no zeroes $x$ of $G_{k,A}$ with $\log x < L(H)$.
		\item If $k$ is regular, the assertions from $\mathrm{(i)}$ and $\mathrm{(ii)}$ hold for $A$ replaced by an arbitrary infinite separable $\mathds{F}$-lattice $\Lambda$, where the bound $H-1$ (resp. $H$) must be 
		replaced with $r_{H-1}$ (resp. $r_{H}$) as in Section \ref{Section.Location-of-zeroes-regular-case}.
	\end{enumerate}
\end{Corollary}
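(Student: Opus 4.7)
The proof rests on three results already in place: Corollary \ref{Corollary.Approximation-numbers-in-cases}, which gives $\mu_i(k) \varprec \mu_{i+1}(k)$ for $i\geq H-1$ in Case 1 and $i\geq H$ in Case 2; Corollary \ref{Corollary.Existence-of-irregular-zeroes-and-approximation-numbers}, which reads this condition as the absence of irregular zeroes in the open strip $\{i<\log z<i+1\}$; and the universal residue identity
\[
  \Big(k - \sum_{0\leq j\leq i}\gamma^{(j)}(k)\Big)q^{i+1} \;=\; k+\mu_{i+1}(k),
\]
which follows from \eqref{Eq.vartheta-k-i-s} together with Lemma \ref{Lemma.vartheta-k-i-s-and-mu-i+1-k}, irrespective of the regularity of $k$.

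For (i), fix Case 1. By Corollary \ref{Corollary.Approximation-numbers-in-cases} and Corollary \ref{Corollary.Existence-of-irregular-zeroes-and-approximation-numbers}, no irregular zeroes of $C_{k,A}$ lie in $(i,i+1)$ for any $i\geq H-1$; in particular, the strip $(H-1,H)$ is zero-free. Next, equation \eqref{Eq.k+mu-H+i-1} from the proof of Theorem \ref{Theorem.Vanishing-orders-of-Goss-polynomials-in-Cases} gives $k+\mu_{H+i-1}(k)=(R+1)q^{\alpha+i}$ for every $i\geq 1$, so $(k+\mu_j(k))/q^j=(R+1)q^{\alpha-H+1}=\gamma(k)$ is constant for $j\geq H$. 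Substituting in the residue identity, $\sum_{0\leq j\leq i}\gamma^{(j)}(k)=k-\gamma(k)$ is constant for $i\geq H-1$; taking consecutive differences shows $\gamma^{(i)}(k)=0$ for every $i\geq H$, which rules out zeroes with $\log z\geq H$. Combining with the strip result, $C_{k,A}$ has no zeroes with $\log z>H-1$. The corresponding statement for $G_{k,A}$ follows from the strictly monotonically decreasing correspondence $\log z \mapsto L(\log z)$ of Proposition \ref{Proposition.Map-L-IQ-geq-to-IQ-leq}.

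Part (ii) is entirely parallel, with the thresholds shifted by one. From \eqref{Eq.k+mu-H-i-1-Case2}, $(k+\mu_j(k))/q^j=\gamma(k)$ for $j\geq H+1$, so the residue identity now forces $\gamma^{(i)}(k)=0$ for $i\geq H+1$; Corollary \ref{Corollary.Approximation-numbers-in-cases} together with Corollary \ref{Corollary.Existence-of-irregular-zeroes-and-approximation-numbers} rules out irregular zeroes in $(i,i+1)$ for $i\geq H$, in particular in $(H,H+1)$. Together these exclude any zero with $\log z>H$.

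For (iii), when $k$ is regular, Theorem \ref{Theorem.Regularity-and-zeroes-of-Ck-Lambda} tells us that for an arbitrary infinite separable lattice $\Lambda$ all zeroes of $C_{k,\Lambda}$ lie on the critical spheres $\mathbf{S}(q^{r_j})$ and that the vanishing numbers $\gamma_{j,\Lambda}(k)=\gamma_j(k)$ are independent of $\Lambda$. Since steps~(i)–(ii) have established $\gamma_j(k)=\gamma^{(j)}(k)=0$ for $j\geq H$ (Case 1) respectively $j\geq H+1$ (Case 2) in the case $\Lambda=A$, the same vanishing persists for general $\Lambda$; consequently no zero of $C_{k,\Lambda}$ has $\log z>r_{H-1}$ (Case 1) or $\log z>r_H$ (Case 2), and the $G_{k,\Lambda}$-statement again follows from the $\Lambda$-version of $L$ in Proposition \ref{Proposition.Map-L-IQ-geq-to-IQ-leq}. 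The main subtlety is less a hard step than a bookkeeping trap: one must carefully separate the contributions $\gamma_i(k)$ on critical spheres from the contributions $\gamma_{\rho_s}(k)$ at irregular radii, and remember that the dominance of the $-k-\mu_{i+1}(k)$ coefficient just below $r_{i+1}$ holds universally (Lemma \ref{Lemma.vartheta-k-i-s-and-mu-i+1-k}), so that the residue identity is available even in irregular ranges where Theorem \ref{Theorem.Regularity-and-zeroes-of-Ck-Lambda} cannot be invoked directly.
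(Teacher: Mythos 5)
Your proof is correct and follows essentially the same route as the paper's: you obtain $\gamma^{(i)}(k)=0$ for $i\geq H$ (Case 1) resp.\ $i\geq H+1$ (Case 2) from the eventual stabilization of $(k+\mu_j(k))/q^j$ together with the universal residue identity (the paper arrives at the same vanishing by combining \eqref{Eq.k+mu-H+i-1}/\eqref{Eq.k+mu-H-i-1-Case2} with \eqref{Eq.Formula-for-gamma-(i)}), and you clear the remaining open strip via Corollary \ref{Corollary.Approximation-numbers-in-cases} and Corollary \ref{Corollary.Existence-of-irregular-zeroes-and-approximation-numbers} exactly as the paper does. Part (iii) is likewise handled identically, via the $\Lambda$-independence guaranteed by Theorem \ref{Theorem.Regularity-and-zeroes-of-Ck-Lambda}.
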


\begin{proof}
	\begin{enumerate}[label=(\roman*), wide]
		\item By \eqref{Eq.k+mu-H+i-1} and \eqref{Eq.Formula-for-gamma-(i)}, there are no zeroes $z$ with $\log z \geq H$. Since $\mu_{H-1} \varprec \mu_{H}$, also $\tilde{\gamma}_{H-1,A} = \tilde{\gamma}_{A}^{(H-1)}$ by 
		\ref{Corollary.Existence-of-irregular-zeroes-and-approximation-numbers}, which excludes $z$ with $\log z > H-1$.
		\item The argument is similar: no $z$ with $\log z \geq H+1$ by \eqref{Eq.k+mu-H-i-1-Case2}, and no $z$ with $H < \log z < H+1$ by $\mu_{H} \varprec \mu_{H+1}$.
		\item This is obvious, as for regular $k$ the assertions depend only on the $\mu_{i}(k)$, but not on $\Lambda$. \qedhere
	\end{enumerate}
\end{proof}

\begin{Remarks}
	\begin{enumerate}[label=(\roman*), wide]
		\item Although we have identified $\mu_{H-1}(k) ={} ^{1}M$ and $\mu_{H}(k) ={} ^{1}(M^{(1)})$, it is not clear whether $\mu_{H-1}(k) \varprec \mu_{H}(k)$ also holds in Case 2. Therefore, the results about 
		zero-free regions of $C_{k}$ and $G_{k}$ are weaker in Case 2.
		\item While in Case 1 there are plenty of examples of zeroes $z$ of $C_{k,A}$ with $\log z = H-1$, we lack so far of examples in Case 2 that the bound $H$ may be attained by $\log z$.
	\end{enumerate}
\end{Remarks}

\section{Complements and examples} \label{Section.Complements-and-examples}

\subsection{} So far, we were mainly concerned with infinite separable lattices $\Lambda$. Suppose now that $\Lambda'$ is finite, separable, of finite $\mathds{F}$-dimension $t+1$ ($t \in \mathds{N}_{0}$), and embedded into an
infinite separable $\Lambda$ such that $\Lambda' = \Lambda_{t+1}$. So $\{\lambda_{0} = 1, \lambda_{1}, \dots \}$ is a separating $\mathds{F}$-basis of $\Lambda$ the first $t+1$ elements of which form an $\mathds{F}$-basis
of $\Lambda'$.

Now, as \ref{Corollary.Coefficient-condition-Laurent-expansion-of-Ck} also holds for the finite lattice $\Lambda'$, and the distribution of zeroes of $C_{k, \Lambda}$ and $G_{k, \Lambda}$ with $\log z < r_{t+1}$ is governed by the power sums
\[
	S_{i+1, \Lambda}(n) = \sum_{\lambda \in \Lambda_{i+1}} \lambda^{n} \qquad (i \leq t)
\]
which only involve $\Lambda_{t+1} = \Lambda'$, the results of Sections \ref{Section.Location-of-zeroes-regular-case} and \ref{Section.Irregular-zeroes} also hold for $\Lambda'$, as long as only zeroes $z$ of $C_{k, \Lambda'}$ with $\log z < r_{t+1}$ are concerned. We formulate the outcome for the two 
cases with $\Lambda' = A_{t+1} = \{ a \in A \mid \deg a \leq t \}$ and $\Lambda' = \Lambda_{t+1} = \sum_{0 \leq i \leq t} \mathds{F}\lambda_{i}$ arbitrary with $\lambda_{0} = 1 < \lvert \lambda_{1} \rvert = q^{r_{1}} < \dots$, where in the second 
case we restrict to $k$ being $(t+1)$-regular.

\begin{Corollary}[to Theorems \ref{Theorem.Regularity-and-zeroes-of-Ck-Lambda} and \ref{Theorem.Irregular-zeroes-Ck}] \label{Corollary.Zeroes-of-Ck-on-finite-lattices}
	Assume $\Lambda' = A_{t+1}$.
	\begin{enumerate}[label=$\mathrm{(\roman*)}$]
		\item All the zeroes $z$ of $C_{k, A_{t+1}}$ satisfy $\log z < t+1$.
		\item Their numbers $\tilde{\gamma}_{i}(k)$, $\tilde{\gamma}^{(i)}(k)$, $\tilde{\gamma}_{r}$ ($i \leq t, r \in \mathds{Q}_{<t+1}$; we omit reference to $A_{t+1}$ in the symbols) and the corresponding numbers
		$\gamma_{i}(k)$, $\gamma^{(i)}(k), \gamma_{r}(k)$ for the polynomials $G_{k, A_{t+1}}$ agree with those for the lattice $A$ given in \eqref{Eq.Formula-for-the-gamma-j-k} and \ref{Subsection.Equations-for-gamma-i}.
		\item We have $\gamma_{A_{t+1}}(k) = k - \sum_{0 \leq i \leq t} \gamma^{(i)}(k) \geq \gamma_{A}(k)$.
	\end{enumerate}
\end{Corollary}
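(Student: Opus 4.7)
The plan is to exploit that outside $\mathbf{B}(q^{t})$ the function $C_{k,A_{t+1}}$ admits a particularly clean Laurent expansion. For any $r>t$, every $\lambda\in A_{t+1}$ satisfies $|z-\lambda|=|z|=q^{r}$ on $\mathbf{S}(q^{r})$; expanding each $(z-\lambda)^{-k}$ geometrically and summing yields
\[
	C_{k,A_{t+1}}(z) \;=\; \sum_{n\geq 0} a_{-k-n}\,w^{-k-n}, \qquad a_{-k-n} \;=\; \binom{k-1+n}{n}\,w_{0}^{-k-n}\,S_{t+1,A}(n),
\]
valid for \emph{every} $r>t$, with no non-negative powers of $w$. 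This coincides with the inner part of the Laurent expansion of $C_{k,A}$ in the annulus $t<\log z<t+1$, and since Corollary \ref{Corollary.Coefficient-condition-Laurent-expansion-of-Ck} applies verbatim to the finite lattice, the arguments of Sections \ref{Section.Location-of-zeroes-regular-case} and \ref{Section.Irregular-zeroes} transcribe without change for any sphere $\mathbf{S}(q^{r})$ with $r<t+1$; this gives part (ii) immediately.

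For part (i), the decisive input is Proposition \ref{Proposition.Admissibility-and-weights}, which says that for any $(t+1)$-admissible $n'>\mu_{t+1}(k)$ one has $\mathrm{ct}^{(t+1)}(n')>\mathrm{ct}(\mu_{t+1}(k))\geq\mathrm{ct}^{(t+1)}(\mu_{t+1}(k))$, the latter inequality holding because $\mathrm{ct}^{(i)}(\cdot)$ is monotonically increasing in $i$. Substituting this into \eqref{Eq.Formula-for-A-n-r} and rewriting $A_{n}^{(r)}=(t+1-r)n-\mathrm{ct}^{(t+1)}(n)$, I obtain
\[
	A^{(r)}_{\mu_{t+1}(k)} - A^{(r)}_{n'} \;=\; \bigl[\mathrm{ct}^{(t+1)}(n')-\mathrm{ct}^{(t+1)}(\mu_{t+1}(k))\bigr] \;+\; (r-t-1)\bigl(n'-\mu_{t+1}(k)\bigr),
\]
in which both summands are nonnegative and the first is strictly positive. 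So $a_{-k-\mu_{t+1}(k)}$ strictly dominates the Laurent expansion on every sphere $\mathbf{S}(q^{r})$ with $r\geq t+1$; Corollary \ref{Corollary.Coefficient-condition-Laurent-expansion-of-Ck} then precludes zeroes on such a sphere, and a residue count shows the number of zeroes of $C_{k,A_{t+1}}$ in $\mathbf{B}(q^{r})$ is constant for $r\geq t+1$. This is (i).

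For (iii), I would apply the residue formula \eqref{Eq.Residue-formula} to $\mathbf{B}(q^{r})$ for any sufficiently large $r$. The number of poles is $kq^{t+1}$, while by the dominance above $\mathrm{ord}_{\partial\mathbf{B}(q^{r})}(C_{k,A_{t+1}}) = -k-\mu_{t+1}(k)$, so $C_{k,A_{t+1}}$ has exactly $kq^{t+1}-k-\mu_{t+1}(k)$ zeroes in $C_{\infty}$. Since $e_{A_{t+1}}$ is a polynomial of degree $q^{t+1}$, each of the $k-\gamma_{A_{t+1}}(k)$ non-zero roots of $G_{k,A_{t+1}}$ yields $q^{t+1}$ zeroes of $C_{k,A_{t+1}}=G_{k,A_{t+1}}\circ t_{A_{t+1}}$, for a total of $(k-\gamma_{A_{t+1}}(k))q^{t+1}$. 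Equating gives $\gamma_{A_{t+1}}(k) = (k+\mu_{t+1}(k))/q^{t+1}$, which equals $k-\sum_{i=0}^{t}\gamma^{(i)}(k)$ by \eqref{Eq.Formula-for-gamma-(i)}. The inequality $\gamma_{A_{t+1}}(k)\geq \gamma_{A}(k)$ then follows at once from the limit formula \eqref{Eq.Formula-for-zero-multiplicity} together with $\gamma^{(j)}(k)\geq 0$.

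The main obstacle is the strict dominance in (i) extending cleanly through the borderline $r=t+1$: the regular-case argument of Proposition \ref{Proposition.Regularity-and-Laurent} alone only delivers dominance for $r>t$ via a non-strict weight estimate, and one might worry the finite-lattice setting could admit new zeroes on the borderline sphere $\mathbf{S}(q^{t+1})$. Proposition \ref{Proposition.Admissibility-and-weights} is exactly the Sheats-theoretic sharpening needed to close this gap, and with it in hand everything falls into place neatly.
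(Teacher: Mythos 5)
Your proposal is correct and follows essentially the same route as the paper: the paper's proof is a one-liner appealing to the proof of Lemma \ref{Lemma.vartheta-k-i-s-and-mu-i+1-k} (which in turn rests on Proposition \ref{Proposition.Admissibility-and-weights}) to get dominance of $a_{-k-\mu_{t+1}(k)}$ on $\mathbf{S}(q^{r})$ for $r\geq t+1$, and you have simply unwound that reference and carried out the telescoping computation that identifies $(k+\mu_{t+1}(k))/q^{t+1}$ with $k-\sum_{0\leq i\leq t}\gamma^{(i)}(k)$. The observation that $\mathrm{ht}(\mu_{t+1}(k))=t+1$ makes your intermediate inequality $\mathrm{ct}(\mu_{t+1}(k))\geq\mathrm{ct}^{(t+1)}(\mu_{t+1}(k))$ an equality, but the argument is unaffected.
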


\begin{proof}
	\begin{enumerate}[label=(\roman*)]
		\item As the proof of Lemma \ref{Lemma.vartheta-k-i-s-and-mu-i+1-k} shows, the term $a_{-k-\mu_{t+1}(k)}$ dominates in the Laurent expansion of $C_{k, A_{t+1}}$ on the sphere $\mathbf{S}(q^{r})$, if $r \geq t+1$ (actually, if $r > t+1-\varepsilon$ for some small
		$\varepsilon$); whence there are no zeroes $z$ with $\log z \geq t+1$.
	\end{enumerate}
	Items (ii) and (iii) follow from the foresaid. \qedhere
\end{proof}

\begin{Corollary}
	Assume that $k$ is $(t+1)$-regular and $\Lambda' = \sum_{0 \leq i \leq t} \mathds{F}\lambda_{i}$ is an arbitrary $(t+1)$-dimensional separable $\mathds{F}$-lattice with $\lvert \lambda_{i} \rvert = q^{r_{i}}$.
	\begin{enumerate}[label=$\mathrm{(\roman*)}$]
		\item All the zeroes $z$ of $C_{k, \Lambda'}$ satisfy $\log z \leq r_{t}$, and lie on critical spheres $\mathbf{S}(q^{r_{i}})$ with $0 \leq i \leq t$.
		\item The numbers $\tilde{\gamma}_{i}$ and the corresponding $\gamma_{i}$ agree with those of $A$, given in Theorem \ref{Theorem.Regularity-and-zeroes-of-Ck-Lambda}.
		\item We have $\gamma_{\Lambda'}(k) = k - \sum_{0 \leq i \leq t} \gamma_{i}(k) \geq \gamma_{A}(k)$.
	\end{enumerate}
\end{Corollary}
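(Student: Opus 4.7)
The plan is to parallel the proof of Corollary \ref{Corollary.Zeroes-of-Ck-on-finite-lattices} verbatim in the annular regions $r_{i} < r < r_{i+1}$ for $i<t$, and to supplement it with a separate argument for the exterior $r>r_{t}$. For $r_{i} < r < r_{i+1}$ with $i<t$, the Laurent expansion of $C_{k,\Lambda'}$ on $\mathbf{S}(q^{r})$ is described by the same formulas as for $\Lambda$, since the only ingredients are the power sums $S_{i+1,\Lambda'}(n)=S_{i+1,\Lambda}(n)$ (they only involve $\Lambda'_{i+1}=\Lambda_{i+1}$). By $(t+1)$-regularity, $k$ is $(i+1)$-regular for every such $i$, so Proposition \ref{Proposition.Regularity-and-Laurent} gives a unique dominant coefficient $a_{-k-\mu_{i+1}(k)}$, hence no zeroes of $C_{k,\Lambda'}$ in this annulus by Corollary \ref{Corollary.Coefficient-condition-Laurent-expansion-of-Ck}.

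For the exterior $r>r_{t}$, all of $\Lambda'$ lies inside $\mathbf{B}(q^{r})$, so the expansion of $C_{k,\Lambda'}$ on $\mathbf{S}(q^{r})$ collapses to a series in $w^{-1}$ only, with coefficients
\[
 a_{-k-n} \;=\; \binom{k-1+n}{n}\, w_{0}^{-k-n}\, S_{t+1,\Lambda'}(n)
\]
indexed by $(t+1)$-admissible $n$. The same weight comparison as in Proposition \ref{Proposition.Regularity-and-Laurent}, applied with index $i+1=t+1$, uses only $(t+1)$-regularity and Corollary \ref{Corollary.Non-vanishing-lattice-sum-formula} to show that the term $a_{-k-\mu_{t+1}(k)}$ dominates for \emph{every} $r>r_{t}$ (the inequality $A_{\mu_{t+1}}^{(r)} > A_{m'}^{(r)}$ only requires $r>r_{t}$, not $r<r_{t+1}$). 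Corollary \ref{Corollary.Coefficient-condition-Laurent-expansion-of-Ck} then rules out zeroes with $\log z > r_{t}$, proving (i). This is the main technical point to verify carefully, since unlike in the infinite-lattice case there is no longer a two-sided Laurent expansion; but the weight inequality only involved the negative-index terms in the first place, so the argument transfers.

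Part (ii) is then routine: apply the non-archimedean residue formula \eqref{Eq.Residue-formula} to $C_{k,\Lambda'}$ on each ball $\mathbf{B}(q^{r})$ with $r_{i}<r<r_{i+1}$, $i\leq t$. The pole count is $k\,\#(\Lambda'\cap\mathbf{B}(q^{r}))=k\,q^{i+1}$, exactly as for $\Lambda$, and the boundary order is $-k-\mu_{i+1}(k)$ by the dominance statement above (with the convention that, for $r>r_{t}$, $\mu_{t+1}(k)$ governs). Hence equation \eqref{Eq.Determining-equation-for-gamma-j-k} holds verbatim for $\Lambda'$, giving the same $\gamma_{i}(k)$ and $\tilde{\gamma}_{i}(k)$ as for $\Lambda=A$.

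For (iii), $G_{k,\Lambda'}$ is monic of degree $k$ (property \ref{Subsubsection.Properties-Goss-polynomials}(i) applies to any separable lattice), so its total number of zeroes with multiplicity is $k$, of which $\sum_{0\leq i\leq t}\gamma_{i}(k)$ account for the nonzero zeroes by parts (i) and (ii); the remaining $k-\sum_{i=0}^{t}\gamma_{i}(k)$ is the multiplicity $\gamma_{\Lambda'}(k)$ at $x=0$. The inequality $\gamma_{\Lambda'}(k)\geq\gamma_{A}(k)$ follows from \eqref{Eq.Formula-for-zero-multiplicity}: for any $i\geq t+1$ one has $\gamma_{A}(k)\leq(k+\mu_{i}(k))/q^{i}$ after all cancellations, and specializing to $i=t+1$ gives $\gamma_{A}(k)\leq(k+\mu_{t+1}(k))/q^{t+1}=\gamma_{\Lambda'}(k)$, where the last equality is a direct reading of \eqref{Eq.Determining-equation-for-gamma-j-k} at $i=t$. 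Equivalently, since all $\gamma^{(j)}(k)$ for $j>t$ are non-negative and coincide with $\gamma_{j}(k)$ for $j\leq t$ by $(t+1)$-regularity, the partial sum $\sum_{i=0}^{t}\gamma_{i}(k)$ is dominated by the full tail sum appearing in $k-\gamma_{A}(k)$.
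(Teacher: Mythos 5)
Your proof is correct and takes essentially the same approach as the paper: for $i<t$ the annular arguments transfer verbatim since only $\Lambda'_{i+1}=\Lambda_{i+1}$ enters, and for $r>r_t$ the $(t+1)$-regularity makes $a_{-k-\mu_{t+1}(k)}$ dominate for all such $r$ (the weight inequality in Proposition \ref{Proposition.Regularity-and-Laurent} only uses $r>r_t$, not $r<r_{t+1}$), which is exactly the observation the paper's proof records. Your extra detail in (iii) — identifying $\gamma_{\Lambda'}(k)=(k+\mu_{t+1}(k))/q^{t+1}$ and comparing with the monotone sequence in \eqref{Eq.Formula-for-zero-multiplicity} — is a valid spelling-out of what the paper calls "obvious."
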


\begin{proof}
	As in \ref{Corollary.Zeroes-of-Ck-on-finite-lattices}, the term $a_{-k-\mu_{t+1}(k)}$ dominates on $\mathbf{S}(q^{r})$ for $r > r_{t}$ (where the larger domain of domination compared to \ref{Corollary.Zeroes-of-Ck-on-finite-lattices} 
	comes from the additional $(t+1)$-regularity assumption). The rest is now obvious.
\end{proof}

\subsection{} Now we present a handful of examples that exhibit phenomena like irregular zeroes and exceptional zero numbers $\gamma(k)$ due to Case 2 in Theorem \ref{Theorem.Vanishing-orders-of-Goss-polynomials-in-Cases}. 
In what follows, we assume first that \fbox{$q=4$}. The relevant numbers $k-1$, $\kappa = -k$, $\mu_{i}(k)$, $M(k)$ are displayed as the $0$-$1$-strings (infinite for $\kappa$) of their dyadic expansions,
\[
	n = \sum_{j \geq 0} n_{j}q^{j}, \qquad n_{j} = n_{j,0} + 2n_{j,1}, \qquad n_{i,j} \in P = \{0,1\};
\]
empty spaces correspond to zeroes. 

\begin{Example}[$q=4$, $k=21$] \label{Example.Regular-k-Case-2} ~
	\begin{center}
		\begin{tabular}{l|c|c|c|c|c} \toprule
			$n$\textbackslash$j$	& 0 		& 1 			& $\alpha=2$	& 3 		& 4 			\\ \midrule
			$k-1 = 20$							& 0\,0	& 1\ 0		& 1\,0					&			&				\\
			$\kappa$							& 1\,1	& 0\,1		& 0\,1					& 1\,1	& 1\,1 		\\
			$M = 171$							& 1\,1	& 0\,1		& 0\,1					& 0\,1	&				\\
			$\mu_{1} = 3$					& 1\,1	&				&							&			&\\
			$\mu_{2} = 75$					& 1\,1	& 0\,1		& 0\,0					& 1\,0	& \\
			$\mu_{3} = 363$				& 1\,1	& 0\,1		& 0\,1					& 1\,0	& 1\,0 \\				
			\bottomrule
		\end{tabular}
	\end{center}
	Here $\alpha = 2$, $\kappa_{\alpha} = 2$, $R=2$, $H=2$, $\bar{s} = 1$, $\overline{R} = 2$, $\underline{R} = 0$. $k$ is regular as $\mu_{1} \varprec \mu_{2}$, and Case 2 of Theorem 
	\ref{Theorem.Vanishing-orders-of-Goss-polynomials-in-Cases} holds. We have 
	\begin{equation}
		\gamma_{0}(k) = 15, \qquad \gamma_{1}(k) = 0, \qquad \gamma(k) = 6.
	\end{equation}
	Hence $G_{21, \Lambda}(X) = P(X) X^{6}$, where $P$ is a monic polynomial of degree 15, $\lvert \text{coefficients} \rvert \leq 1$ and $\lvert P(0) \rvert = 1$. In fact, one may calculate $P(X)$ by hand (and with some pain),
	which gives 
	\begin{equation}
		P(X) = X^{15} + \alpha_{1} X^{12} + \alpha_{1}^{2} X^{9} + \alpha_{1}^{3} X^{6} + (\alpha_{1}^{5} + \alpha_{2}).
	\end{equation}
	Here the $\alpha_{i} = \alpha_{i}(\Lambda)$ are the coefficients of $e_{\Lambda}$ (see \ref{Eq.Series-expansion-of-exponential-function}), $\lvert \alpha_{2} \rvert < \lvert \alpha_{1} \rvert = 1$. As $k=21$ is regular, the lattice
	$\Lambda$ may be arbitrary separable with $\lambda_{0} = 1$.
\end{Example}

\begin{Example}[$q=4$, $k = 69$] \label{Example.Regular-k-Case-2-more-complicated} ~
	\begin{center}
		\begin{tabular}{l|c|c|c|c|c|c} \toprule
				$n$\textbackslash$j$		& 0 		& 1 		& 2 		& $\alpha = 3$ & 4 			& 5  \\ \midrule
				$k-1 = 68$							& 0\,0	& 1\,0	& 0\,0	& 1\,0					& 				& 		\\
				$\kappa$							& 1\,1	& 0\,1	& 1\,1	& 0\,1					& 1\,1		& 1\,1 \\
				$M = 699$							& 1\,1	& 0\,1	& 1\,1	& 0\,1					& 0\,1		& \\
				$\mu_{1} = 3$					& 1\,1	&			&			&							&				& \\
				$\mu_{2} = 27$					& 1\,1	& 0\,1	& 1\,0	&							&				& \\
				$\mu_{3} = 315$				& 1\,1	& 0\,1	& 1\,1	& 0\,0					& 1\,0		& \\
				$\mu_{4} = 1467$				& 1\,1	& 0\,1	& 1\,1	& 0\,1					& 1\,0		& 1\,0 \\ \bottomrule
		\end{tabular}
	\end{center}
	Here $\alpha=3$, $k_{\alpha} = 2$, $R=2$, $H=3$, $\bar{s} = 1$, $\overline{R} = 2$, $\underline{R} = 0$. $k$ is regular as $\mu_{1} \varprec \mu_{2} \varprec \mu_{3}$, and Case 2 holds. We have
	\begin{equation}
		\gamma_{0}(k)= 51,\qquad \gamma_{1}(k) = 12, \qquad \gamma_{2}(k) = 0, \qquad \gamma(k) = 6.
	\end{equation}
	The Newton polygon of $G_{69, A}$ has the break points $(6,48)$, $(18,0)$, $(69,0)$. (Here $48 = -12 \times L(1)$ with the function $L$ of \ref{Proposition.Map-L-IQ-geq-to-IQ-leq}.) For arbitrary $\Lambda \neq A$ with $\lambda_{0}=1$, the Newton polygon
	has the same shape, but the ordinate of the first break point will differ.
\end{Example}

\pagebreak

\begin{Example}[$q=4$, $k=75$] \label{Example.Irregular-k-Case-2} ~
	\begin{center}
		\begin{tabular}{l|c|c|c|c|c} \toprule
			$n$\textbackslash$j$		& 0			&1 		& 2 		& $\alpha=3$ 	& 4		 	\\ \midrule
			$k-1 = 74$							& 0\,1		& 0\,1	& 0\,0	& 1\,0					& 				\\ 
			$\kappa$							& 1\,0		& 1\,0	& 1\,1	& 0\,1					& 1\,1 		\\
			$M = \mu_{3} = 693$		& 1\,0		& 1\,0	& 1\,1	& 0\,1					& 0\,1		\\
			$\mu_{1} = 21$					& 1\,0		& 1\,0	& 1\,0	&							&				\\
			$\mu_{2} = 165$				& 1\,0		& 1\,0	& 0\,1	& 0\,1					&				\\
			$\vartheta = 309$			& 1\,0		& 1\,0	& 1\,1	& 0\,0					& 1\,0 \\ \bottomrule
		\end{tabular}
	\end{center}
	Here $\alpha=3$, $\kappa_{\alpha} = 2$, $R = 2$, $\bar{s} = 0$, Case 1 of Theorem \ref{Theorem.Vanishing-orders-of-Goss-polynomials-in-Cases}. As $\mu_{1} \not\varprec \mu_{2}$, $k$ is \textbf{irregular}. The set 
	$\boldsymbol{\Theta}(k,1)$ of \ref{Subsection.Properties-vartheta-k-i-s} is $\boldsymbol{\Theta}(k,1) = \{ \mu_{2}, \vartheta\}$ with $\vartheta = 309$, $\mathbf{Sh}(\vartheta) = (21, 288)$, $\mathbf{Sh}(\mu_{2}) = (33,132)$. Then 
	$\mathrm{ct}_{2}(\vartheta) = 21 < 33 = \mathrm{ct}_{2}(\mu_{2})$, and so
	\begin{equation}
		\rho = 1 + \frac{\mathrm{ct}(\mu_{2}) - \mathrm{ct}_{2}(\vartheta)}{\vartheta - \mu_{2}} = \frac{13}{12}
	\end{equation}
	equals $\log z$ for some irregular zeroes $z$ of $C_{75, A}$.
	
	Our formulas yield $\gamma_{0}(k) = 51$, $\gamma_{1}(k) = 0$, $\gamma^{(1)}(k) = \gamma_{\rho}(k) = 9$, $\gamma_{2}(k) = 3$, $\gamma(k) = 12$. To the values $\log z = 0, 13/12, 2$ correspond
	$L(\log z) = 0, -16/3, -20$ for the $\log x$, where $x = t_{A}(z)$ is the corresponding zero of $G_{75, A}$. There result the break points $(12, 108)$, $(15,48)$, $(24,0)$ and $(75,0)$ of
	$\mathrm{NP}(G_{75,A}(X))$.
\end{Example}

Next we generalize the preceding example and produce irregular $k$ for arbitrar non-prime $q = p^{f}$. For ease of presentation, we restrict to $p=2$, but one easily sees how the construction generalizes to arbitrary $p$.

\begin{Example}
	Let $q = 2^{f}$ with $f > 2$. We first write down an $M = M(k)$ with a very specific $q$-expansion, from which we derive the ($q$-expansions of the) relevant numbers $k$, $\mu_{1}$, $\mu_{2}$, 
	$\mathrm{Sh}(\mu_{2}) = \mathrm{Sh}_{2}(\mu_{2})$, $^{1}\mu_{2} = \mathrm{Sh}_{1}(\mu_{2})$, $\mu_{1}^{(1)}$. This will show that $\mu_{1} \not\varprec \mu_{2}$.
	
	In the following table, the $(n,j)$-entry is $n_{j}$, where $n$ has $q$-expansion $\sum n_{j}q^{j}$. As usual, empty spaces denote zeroes. We assume that $f>2$ since for $f=2$ several columns overlap, which changes some formulas
	and gives Example \ref{Example.Irregular-k-Case-2}.
	
	\begin{center}
		\begin{tabular}{l|c|c|c|c|c|c|c|c|c|c|c} \toprule
			$n$\textbackslash$j$			& 0			& \dots	& $q-4$ & $q-3$ & $q-2$ & $q-1$ 	& $q$ 		& \dots	& $\alpha = 2q-5$	& $2q-4$ 	& \dots \\ \midrule
			$k-1$										& $q-2$	& \dots	& \dots	& $q-2$	& 0			& 1			& \dots	& \dots	& 1								&					& 				\\
			$\kappa$								& 1			& \dots	& \dots	& 1			& $q-1$	& $q-2$	& \dots	& \dots	& $q-2$						& $q-1$		& \dots	\\
			$M$										& 1			& \dots	& \dots	& 1			& $q-1$	& $q-2$	& \dots	& \dots	& $q-2$						& $q-2$		& 				\\
			$\mu_{1}$								& 1			& \dots	& \dots	& 1			& 1			&				&				&				&									&					&				\\
			$\mu_{2}$								& 1			& \dots	& \dots	& 1			& $q-2$	& 2			&				&				&									&					&				\\
			$\mathrm{Sh}(\mu_{2})$	&				&				&				& 1			& $q-4$	& 2			&				&				&									&					&				\\
			${}^{1}\mu_{2}$						& 1			& \dots	& 1			& 0			& 2			& 				&				&				&									&					&				\\
			$\mu_{1}^{(1)}$						& 1			& \dots	& \dots	& 1			& $q-1$	& $q-2$	& 2			&				&									&					&				\\		
		\end{tabular}
	\end{center}
	
	Start with $M = \sum_{0 \leq j \leq q-3} q^{j} + (q-1)q^{q-2} + \sum_{q-1 \leq j \leq 2q-4} (q-2)q^{j}$, which gives the $M$-th row in the table, as well as $k-1$ and $\kappa = -k$. 
	(As $M_{2q-4} < q-1$, $\alpha = \deg_{q}(k-1) = 2q-5$, and the coefficients of $k-1$ are complementary to those of $\kappa$.) Summing up the geometric series gives
	\begin{equation}
		\begin{split}
			k-1	&= (q-2)(q^{q-2}-1)/(q-1) + q^{q-1}(q^{q-3}-1)/(q-1) \quad \text{and} \\
			k		&= (q^{q-2}-1)^{2}/(q-1).
		\end{split}
	\end{equation}
	In particular, the quantity $R = \text{representative of $k-1$ in $Q'$}$ equals $q-2$. The row for $\mu_{1}$ is then obvious. Since $\ell( \sum_{0 \leq j \leq q-2} \kappa_{j}q^{j}) = 2q-3 < 2(q-1)$, $\mu_{2}$ must have some entry
	$0 \neq (\mu_{2})_{j}$ with $j \geq q-1$, which must be $\geq 2$, since $1 \lneq_{p} q-2$. Therefore, $\mu_{2}$ as given is the least possible value. Again, the Sheats factors $\mathrm{Sh}_{2}(\mu_{2}) = \mathrm{Sh}(\mu_{2})$
	and $\mathrm{Sh}_{1}(\mu_{2}) = {}^{1}\mu_{2}$ are then obvious. Also, it is not difficult to show that $\mu_{1}^{(1)}$ is as indicated. We find that $\mu_{1} \not\varprec \mu_{2}$ as intended, so $k$ is irregular.
	
	A view to the table shows that $\ell_{i}(M) = q-1$ for $0 \leq i < f$, so $\ell^{\sigma}(M) = (q-1) \sum_{0 \leq i < f} 2^{i} = (q-1)^{2}$ for each $\sigma \in \mathbf{C}$, and therefore
	\begin{equation}
		H = \mathrm{ht}(M) = q-1.
	\end{equation}
	In particular, Case 1 of Theorem \ref{Theorem.Vanishing-orders-of-Goss-polynomials-in-Cases} holds, which gives 
	\begin{equation}
		\gamma(k) = (R+1) q^{[\ell(k-1)/(q-1)]} = (q-1)q^{q-3}.
	\end{equation}
	Some admissible $n$ with $\mathrm{ht}(n) = 1$ and $n < {}^{1}\mu_{2}$ must satisfy $n_{q-2} < 2$, and must agree with $\mu_{1}$. Therefore, with the considerations of Section \ref{Section.Irregular-zeroes}, we see that the set $\mathbf{R} = \mathbf{R}(k,1)$
	contains exactly one element $\rho$ which appears as $\rho = \log z$ for an irregular zero $z$ of $C_{k,A}$. It is given by
	\begin{equation}
		\rho = 1 + \frac{ {}^{1}\mu_{2} - \mu_{1}}{\mu_{1}^{(1)}-\mu_{2}} = 1 + \frac{1}{q(3q-1)},
	\end{equation}
	where the last equality comes from calculation.
\end{Example}

\begin{Concluding_Remarks}
	As we just saw, irregular numbers $k$ exists for arbitrary prime powers $q = p^{f}$ with $f \geq 2$, but by construction, these numbers are large. Therefore, we expect the share of irregular $k$ to be small; more explicitly, that for 
	given $q$,
	\[
		\limsup_{N \to \infty} \frac{ \#\{ k \leq N \mid k \text{ is irregular}\} }{N} \leq c(q)
	\]
	with a constant $c(q) < 1$ that tends to 0 for $q \to \infty$. Finding good estimates for $c(q)$ is an interesting challenge. Similarly, the question of relative frequencies of the Cases 1 and 2 in Theorem 
	\ref{Theorem.Vanishing-orders-of-Goss-polynomials-in-Cases} needs further investigation.
	Our tiny database of the cases $(q,k)$ with $q=4$ and $k < 100$ odd (we may restrict to $p \nmid k$ in view of the relation $G_{kp} = (G_{k})^{p}$) gives that
	\begin{itemize}
		\item $k$ is irregular only if $k = 75$, see Example \ref{Example.Irregular-k-Case-2};
		\item Case 2 holds if and only if $k \in \{21, 69, 81, 87, 93\}$, see Examples \ref{Example.Regular-k-Case-2} and \ref{Example.Regular-k-Case-2-more-complicated}.
	\end{itemize}
	But this is certainly not impressive enough to draw any conclusion \dots
\end{Concluding_Remarks}

\begin{bibdiv}
	\begin{biblist}
		\bib{BosserPellarin09}{article}{
			  author={Bosser, Vincent},
			  author={Pellarin, Federico},
			  title={On certain families of Drinfeld quasi-modular forms},
			  journal={J. Number Theory},
			  volume={129},
			  date={2009},
			  number={12},
			  pages={2952--2990},
			  issn={0022-314X},
			  review={\MR{2560846}},
			  doi={10.1016/j.jnt.2009.04.014},
		}
		\bib{Carlitz48}{article}{
			  author={Carlitz, L.},
			  title={Finite sums and interpolation formulas over $GF[p^n,x]$},
			  journal={Duke Math. J.},
			  volume={15},
			  date={1948},
			  pages={1001--1012},
			  issn={0012-7094},
			  review={\MR{27289}},
		}
		\bib{Gekeler88}{article}{
			  author={Gekeler, Ernst-Ulrich},
			  title={On the coefficients of Drinfel\cprime d modular forms},
			  journal={Invent. Math.},
			  volume={93},
			  date={1988},
			  number={3},
			  pages={667--700},
			  issn={0020-9910},
			  review={\MR{952287}},
			  doi={10.1007/BF01410204},
		}
		\bib{Gekeler12}{article}{
			  author={Gekeler, Ernst-Ulrich},
			  title={Zeroes of Eisenstein series for principal congruence subgroups
			  over rational function fields},
			  journal={J. Number Theory},
			  volume={132},
			  date={2012},
			  number={1},
			  pages={127--143},
			  issn={0022-314X},
			  review={\MR{2843303}},
			  doi={10.1016/j.jnt.2011.07.008},
		}
		\bib{Gekeler13}{article}{
			  author={Gekeler, Ernst-Ulrich},
			  author={Stopp, Philipp},
			  title={On the zeroes of certain periodic functions over valued fields of
			  positive characteristic},
			  journal={J. Number Theory},
			  volume={133},
			  date={2013},
			  number={3},
			  pages={940--954},
			  issn={0022-314X},
			  review={\MR{2997777}},
			  doi={10.1016/j.jnt.2012.02.006},
		}
		\bib{Gekeler13-2}{article}{
			  author={Gekeler, Ernst-Ulrich},
			  title={On the zeroes of Goss polynomials},
			  journal={Trans. Amer. Math. Soc.},
			  volume={365},
			  date={2013},
			  number={3},
			  pages={1669--1685},
			  issn={0002-9947},
			  review={\MR{3003278}},
			  doi={10.1090/S0002-9947-2012-05699-6},
		}
		\bib{GerritzenvdPut80}{book}{
			  author={Gerritzen, Lothar},
			  author={van der Put, Marius},
			  title={Schottky groups and Mumford curves},
			  series={Lecture Notes in Mathematics},
			  volume={817},
			  publisher={Springer, Berlin},
			  date={1980},
			  pages={viii+317},
			  isbn={3-540-10229-9},
			  review={\MR{590243}},
		}
		\bib{Goss80}{article}{
			  author={Goss, David},
			  title={The algebraist's upper half-plane},
			  journal={Bull. Amer. Math. Soc. (N.S.)},
			  volume={2},
			  date={1980},
			  number={3},
			  pages={391--415},
			  issn={0273-0979},
			  review={\MR{561525}},
			  doi={10.1090/S0273-0979-1980-14751-5},
		}
		\bib{Goss80-2}{article}{
			  author={Goss, David},
			  title={$\pi $-adic Eisenstein series for function fields},
			  journal={Compositio Math.},
			  volume={41},
			  date={1980},
			  number={1},
			  pages={3--38},
			  issn={0010-437X},
			  review={\MR{578049}},
		}
		\bib{Goss83}{article}{
			  author={Goss, David},
			  title={On a new type of $L$-function for algebraic curves over finite
			  fields},
			  journal={Pacific J. Math.},
			  volume={105},
			  date={1983},
			  number={1},
			  pages={143--181},
			  issn={0030-8730},
			  review={\MR{688411}},
		}
		\bib{Goss96}{book}{
			  author={Goss, David},
			  title={Basic structures of function field arithmetic},
			  series={Ergebnisse der Mathematik und ihrer Grenzgebiete (3) [Results in
			  Mathematics and Related Areas (3)]},
			  volume={35},
			  publisher={Springer-Verlag, Berlin},
			  date={1996},
			  pages={xiv+422},
			  isbn={3-540-61087-1},
			  review={\MR{1423131}},
			  doi={10.1007/978-3-642-61480-4},
		}
		\bib{Goss14}{article}{
			  author={Goss, David},
			  title={A construction of $\germ{v}$-adic modular forms},
			  journal={J. Number Theory},
			  volume={136},
			  date={2014},
			  pages={330--338},
			  issn={0022-314X},
			  review={\MR{3145337}},
			  doi={10.1016/j.jnt.2013.10.010},
		}
		\bib{Neukirch99}{book}{
			  author={Neukirch, J\"{u}rgen},
			  title={Algebraic number theory},
			  series={Grundlehren der Mathematischen Wissenschaften [Fundamental
			  Principles of Mathematical Sciences]},
			  volume={322},
			  note={Translated from the 1992 German original and with a note by Norbert
			  Schappacher;
			  With a foreword by G. Harder},
			  publisher={Springer-Verlag, Berlin},
			  date={1999},
			  pages={xviii+571},
			  isbn={3-540-65399-6},
			  review={\MR{1697859}},
			  doi={10.1007/978-3-662-03983-0},
		}
		\bib{PapanikolasZeng17}{article}{
			  author={Papanikolas, Matthew A.},
			  author={Zeng, Guchao},
			  title={Theta operators, Goss polynomials, and $v$-adic modular forms},
			  language={English, with English and French summaries},
			  journal={J. Th\'{e}or. Nombres Bordeaux},
			  volume={29},
			  date={2017},
			  number={3},
			  pages={729--753},
			  issn={1246-7405},
			  review={\MR{3745247}},
		}
		\bib{Pellarin14}{article}{
			  author={Pellarin, Federico},
			  title={Estimating the order of vanishing at infinity of Drinfeld
			  quasi-modular forms},
			  journal={J. Reine Angew. Math.},
			  volume={687},
			  date={2014},
			  pages={1--42},
			  issn={0075-4102},
			  review={\MR{3176606}},
			  doi={10.1515/crelle-2012-0048},
		}
		\bib{Sheats98}{article}{
			  author={Sheats, Jeffrey T.},
			  title={The Riemann hypothesis for the Goss zeta function for $\mathbf{F}_q[T]$},
			  journal={J. Number Theory},
			  volume={71},
			  date={1998},
			  number={1},
			  pages={121--157},
			  issn={0022-314X},
			  review={\MR{1630979}},
			  doi={10.1006/jnth.1998.2232},
		}
		\bib{Thakur04}{book}{
			  author={Thakur, Dinesh S.},
			  title={Function field arithmetic},
			  publisher={World Scientific Publishing Co., Inc., River Edge, NJ},
			  date={2004},
			  pages={xvi+388},
			  isbn={981-238-839-7},
			  review={\MR{2091265}},
			  doi={10.1142/9789812562388},
		}
		\bib{Thakur15}{article}{
			  author={Thakur, Dinesh S.},
			  title={Power sums of polynomials over finite fields and applications: a
			  survey},
			  journal={Finite Fields Appl.},
			  volume={32},
			  date={2015},
			  pages={171--191},
			  issn={1071-5797},
			  review={\MR{3293409}},
			  doi={10.1016/j.ffa.2014.08.004},
		}
	\end{biblist}
\end{bibdiv}

\end{document}